\newtheorem{thm}{Theorem}[section]
\newtheorem{cor}[thm]{Corollary}
\newtheorem{claim}[thm]{Claim}
\newtheorem{fact}[thm]{Fact}
\newtheorem{lemma}[thm]{Lemma}
\newtheorem{prop}[thm]{Proposition}
\theoremstyle{definition}
\newtheorem{definition}[thm]{Definition}
\newtheorem{remark}[thm]{Remark}
\newtheorem{question}[thm]{Question}
\title{Median sets of isometries in CAT(0) cube complexes and some applications}
\date{\today}
\author{Anthony Genevois}
\begin{document}

\maketitle

\begin{abstract}
In this article, we associate to isometries of CAT(0) cube complexes specific subspaces, referred to as \emph{median sets}, which play a similar role as minimising sets of semisimple isometries in CAT(0) spaces. Various applications are deduced, including a cubulation of centralisers, a splitting theorem, a proof that Dehn twists in mapping class groups must be elliptic for every action on a CAT(0) cube complex, a cubical version of the flat torus theorem, and a structural theorem about polycyclic groups acting on CAT(0) cube complexes.
\end{abstract}

\tableofcontents

\newpage

\section{Introduction}

In the last decades, CAT(0) cube complexes have become fruitful tools in the study of groups. The reason of this success is twofold. First, many groups of interest turn out to act non-trivially on CAT(0) cube complexes, including many Artin groups, many 3-manifold groups, Coxeter groups, many small cancellation groups, one-relator groups with torsion, many free-by-cyclic groups, random groups and some Burnside groups. And second, powerful tools are now available to answer various questions about cube complexes. As a consequence, looking for an action on a CAT(0) cube complex in order to study a given group turns out to be a good strategy in order to find valuable information on it.

In this article, we are interested in the structure of subgroups of groups acting on CAT(0) cube complexes. In finite dimension, generalising the Tits alternative proved in \cite{CubicalTits}, it follows from the combination of \cite{MR2827012} and \cite{CFI} that, if a group acts properly on a finite-dimensional CAT(0) cube complex, then any of its subgroups either contains a non-abelian free subgroup or is virtually (locally finite)-by-(free abelian). Examples of groups in the second category include for instance wreath products $F \wr \mathbb{Z}^n$ where $F$ is a finite group and $n \geq 1$ \cite[Proposition 9.33]{Qm} and Houghton groups \cite{LeeHoughton}, \cite[Example 4.3]{FarleyHughes}. However, if we allow infinite-dimensional cube complexes, the dichotomy no longer holds. For instance, Thompson's group $F$ acts on an infinite-dimensional CAT(0) cube complex \cite{MR1978047} but it does not contain any non-abelian free group and it is not virtually free abelian (in fact, it is even not virtually solvable). 

There is still a lot to understand about groups acting on infinite-dimensional CAT(0) cube complexes. The present work is motivated by the following question: 

\begin{question}\label{question:intro}
Which solvable groups act properly on (infinite-dimensional) CAT(0) cube complexes? 
\end{question}

Results in this direction can be found from the general study of groups acting on CAT(0) spaces; see \cite[Chapter II.7]{MR1744486}. For instance, a polycyclic group which acts properly on a CAT(0) space by semisimple isometries must be virtually abelian \cite[Theorem II.7.16]{MR1744486}. Such statements are fundamentally based on the description of \emph{minimising sets} of semisimple isometries: their product structures and their compatibility with normalisers. See \cite[Chapters II.6 and II.7]{MR1744486}. However, this strategy only applies to actions by semisimple isometries, and it turns out that groups may act on infinite-dimensional CAT(0) cube complexes with parabolic isometries, like Thompson's group $F$ \cite{ParabolicF, ThompsonCAT}. Following the metatheorem saying that the statements which hold for actions on CAT(0) spaces by semisimple isometries also hold for actions on CAT(0) cube complexes without assumptions on the type of isometries, the main goal of this article is to identify the subsets in CAT(0) cube complexes which play the role of minimising sets of semisimple isometries in CAT(0) spaces. 

Following \cite{HaglundAxis}, isometries of CAT(0) cube complexes can be classified in three mutually exclusive categories:
\begin{itemize}
	\item \emph{elliptic isometries}, for isometries which stabilise cubes;
	\item \emph{loxodromic isometries}, for isometries preserving bi-infinite (combinatorial) geodesics;
	\item \emph{inverting isometries}, for isometries with unbounded orbits and with a power which \emph{inverts} a hyperplane (i.e. which stabilises it and swaps the two halfspaces it delimits).
\end{itemize}
In order to illustrate and motivate our next definitions, let us consider an example. 

Let $X$ be a bi-infinite chain of squares such that any two consecutive squares intersect along a single vertex; see Figure \ref{PL}. We are interested in the cube complex $X \times \mathbb{R}$. Let $r$ denote the reflection of $X$ along the straight line passing through all the cut vertices; $t_1$ the natural translation of $X$; and $t_2$ the translation of $\mathbb{R}$. The hypothetic minimising set $M$ of $r \times t_2$ we want to define should contain an axis of $r \times t_2$; it should be invariant under the centraliser of $r \times t_2$; and it should split as a product such that $r \times t_2$ acts trivially on one factor. The latter condition, which holds in the CAT(0) setting, is fundamental in the applications to solvable groups. The axes of $r \times t_2$ are the lines $s \times \mathbb{R}$ where $s \in S := \{\text{cut-vertices of $X$}\}$. Therefore, $M$ must contain $s \times \mathbb{R}$ for some $s \in S$, and because it must be $\langle t_1,r \rangle$-invariant, there are only two possibilities: either $M= X \times \mathbb{R}$ or $M= S \times \mathbb{R}$. The former possibility seems reasonnable at first glance, but $r \times t_2$ does not act trivially on the factor $X$. Consequently, the only possibility is to define $M$ as $S \times \mathbb{R}$. 

This example shows that we cannot expect our new minimising sets to be connected subcomplexes. Nevertheless, they will be nicely embedded: they will be invariant under the \emph{median operator}, a ternary operator defined on the vertices of a CAT(0) cube complex which associates to each triple of vertices the unique vertex, referred to as the medien point, which belongs to a geodesic between any two of these vertices. See Section~\ref{section:Median} for more details. The key idea is to look for median sets instead convex or even isometrically embedded subcomplexes.

\begin{definition}\label{def:MedianSetsIntro}
Let $X$ be a CAT(0) cube complex and $g \in \mathrm{Isom}(X)$ an isometry. The \emph{median set} of $g$, denoted by $\mathrm{Med}(g)$, is
\begin{itemize}
	\item the union of all the $d$-dimensional cubes stabilised by $\langle g \rangle$ if $g$ elliptic, where $d$ denotes the minimal dimension of a cube stabilised by $\langle g \rangle$;
	\item the union of all the axes of $g$ if $g$ is loxodromic;
	\item the pre-image under $\pi$ of the union of all the axes of $g$ in $X/ \mathcal{J}$ if $g$ is inverting, where $\mathcal{J}$ is the collection of the hyperplanes inverted by powers of $g$ and where $\pi : X \to X/ \mathcal{J}$ is the canonical map to the cubical quotient $X/ \mathcal{J}$.
\end{itemize}
\end{definition}

\noindent
We refer to Section \ref{section:WallQuotient} for more information on cubical quotients. 

The main result of this article is that median sets are the good analogues of minimising sets of semisimple isometries in CAT(0) spaces. More precisely:

\begin{thm}\label{thm:MedianIntro}
Let $G$ be a group acting on a CAT(0) cube complex $X$ and $g \in G$ an isometry such that $\langle g \rangle$ is a normal subgroup of $G$. Then $\mathrm{Med}(g)$ is a median subalgebra of $X$ which is $G$-invariant and which decomposes as a product $T \times F \times Q$ of three median algebras $T,F,Q$ such that:
\begin{itemize}
	\item the action $G \curvearrowright T \times F \times Q$ decomposes as a product of three actions $G \curvearrowright T,F,Q$;
	\item $F$ is a single vertex if $g$ is elliptic, and otherwise it is a median flat on which $g$ acts by translations of length $\lim\limits_{n \to + \infty} \frac{1}{n} d(x,g^nx)>0$;
	\item $Q$ is a finite-dimensional cube, possibly reduced to a single vertex;
	\item $g$ acts trivially on $T$.
\end{itemize}
Moreover, the dimension of $Q$ is zero if $g$ is loxodromic, it coincides with the minimal dimension of a cube of $X$ stabilised by $g$ if $g$ is elliptic, and otherwise it coincides with the number of hyperplanes inverted by powers of $g$.
\end{thm}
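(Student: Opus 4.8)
The plan is to reduce the statement to three cases according to the type of isometry $g$, and in each case to produce the decomposition $T \times F \times Q$ by carefully analysing the combinatorial geometry of the relevant hyperplanes. Throughout, the key technical point is that a subcomplex (or subset of vertices) is a median subalgebra precisely when it is closed under the median operator, and that product decompositions of median algebras correspond to partitions of the pocset of ``relevant'' hyperplanes into independent classes (two hyperplanes being independent when all four quadrant-intersections are non-empty). So the strategy in each case is: (i) identify $\mathrm{Med}(g)$ concretely, (ii) show it is $G$-invariant using that $\langle g\rangle \trianglelefteq G$, (iii) find the correct tripartition of its hyperplanes.

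First I would treat the \textbf{elliptic} case, which is the cleanest. Here $\mathrm{Med}(g)$ is the union of the minimal-dimensional cubes stabilised by $\langle g\rangle$. I would show this union is a median subalgebra (any two such cubes are ``parallel'' via a cube-isometry, and medians of their vertices land in another minimal cube), that it is naturally isometric to $C \times Q$ where $Q$ is a $d$-cube and $C$ is the median algebra indexing the family of parallel copies; since $g$ fixes each minimal cube setwise but permutes its hyperplanes cyclically, one sets $F$ = point and $T = C$, and $g$ acts trivially on $T$ because it preserves each minimal cube. $G$-invariance is immediate since conjugates of $g$ equal $g^{\pm 1}$-type elements up to $\langle g\rangle$, so they stabilise the same family of minimal cubes. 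For the \textbf{loxodromic} case, $\mathrm{Med}(g)$ is the union of all combinatorial axes of $g$. The classical fact (Haglund, or the ``axes are parallel'' lemma) is that any two axes of a loxodromic isometry are parallel, so their union is isometric to $L \times \mathrm{axis}$, where $L$ indexes the parallelism classes and is a median algebra; here $Q$ is a point, $F$ is the median flat spanned by one axis direction — actually I would isolate the hyperplanes crossed by a fixed axis, let $F$ be the sub-median-algebra they generate (a flat, since these hyperplanes come in a single ``skewer'' family on which $g$ translates), and let $T$ be the orthogonal complement, on which $g$ acts trivially because it fixes each axis's parallel class. Again normality of $\langle g\rangle$ gives $G$-invariance. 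The translation length identity $\lim \frac1n d(x,g^nx)$ is the standard combinatorial translation length and equals the length of the axis period.

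For the \textbf{inverting} case I would pass to the cubical quotient $X/\mathcal{J}$ of Section~\ref{section:WallQuotient}, where $\mathcal{J}$ is the set of hyperplanes inverted by powers of $g$; by construction $g$ descends to a \emph{non-inverting} isometry $\bar g$ of $X/\mathcal{J}$, which is either elliptic or loxodromic, so the previous two cases apply to $\bar g$ and yield a decomposition $\mathrm{Med}(\bar g) = T \times F$ (no $Q$ factor at that level, or rather its $Q$ is a point). Then I would pull back along $\pi$: the preimage $\pi^{-1}(\mathrm{Med}(\bar g))$ carries, in addition, the hyperplanes of $\mathcal{J}$, and the claim is that these $|\mathcal{J}|$ hyperplanes are pairwise independent and independent from everything in $\mathrm{Med}(\bar g)$, so they split off a cube factor $Q$ of dimension $|\mathcal{J}|$; this is where one uses that each hyperplane of $\mathcal{J}$ is inverted, hence ``self-osculating in a strong sense'' and transverse to its $\langle g\rangle$-translates. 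Finiteness of $\mathcal{J}$ (hence finite-dimensionality of $Q$) follows because a power of $g$ fixes a vertex of $X/\mathcal{J}$ or an axis, and the inverted hyperplanes must all separate controlled pieces; I would spell this out via the ``a hyperplane inverted by $g^n$ meets any axis/fixed cube'' argument. Finally $G$-invariance of $\mathrm{Med}(g)$: since $\langle g\rangle \trianglelefteq G$, the set $\mathcal{J}$ is $G$-invariant (conjugation permutes inverted hyperplanes), so $G$ descends to $X/\mathcal{J}$, preserves $\mathrm{Med}(\bar g)$ by the earlier cases, and hence preserves $\pi^{-1}(\mathrm{Med}(\bar g)) = \mathrm{Med}(g)$; the product action statement is inherited factor by factor.

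The main obstacle I anticipate is the inverting case, specifically proving that the hyperplanes of $\mathcal{J}$ are mutually transverse and transverse to the hyperplanes surviving in $\mathrm{Med}(\bar g)$ — i.e.\ that pulling back genuinely splits off an honest cube $Q$ rather than something more complicated — together with the finiteness (hence finite-dimensionality) of $\mathcal{J}$. This requires a delicate analysis of how an inverted hyperplane interacts with its $\langle g\rangle$-orbit and with the geometry of $\mathrm{Med}(\bar g)$ (axes or minimal cubes upstairs), and is the place where the cubical-quotient machinery of Section~\ref{section:WallQuotient} has to be used most carefully. A secondary but routine point is verifying in each case that the candidate subset is actually closed under the median operator; I expect this to follow from the parallelism lemmas plus the standard description of medians in product complexes.
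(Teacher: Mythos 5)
Your overall architecture is the same as the paper's: split by isometry type, handle the elliptic case via the family of minimal $\langle g\rangle$-invariant cubes, the loxodromic case via the union of axes, and the inverting case by passing to $X/\mathcal{J}$ and splitting off a cube of dimension $\#\mathcal{J}$. However, in the loxodromic case, which is the heart of the theorem (the inverting case reduces to it), your plan has a genuine gap. The claim that any two axes are parallel so that their union is isometric to $L\times(\text{single axis})$ is false: for the diagonal translation of $\mathbb{Z}^2$ all axes are monotone staircases with the same pair of endpoints at infinity, the union of axes is all of $\mathbb{Z}^2$, and there is no equivariant decomposition with $F$ a single geodesic line and $g$ acting trivially on the complementary factor. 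The correct $F$ is the union of all axes sharing a fixed pair of endpoints $\zeta,\xi$ in the Roller boundary, which can have any dimension up to $\|g\|$; the paper must prove that this set is a finite-dimensional median flat (Proposition~\ref{prop:AxesFlat}), that the union of all axes is median at all (Lemma~\ref{lem:MedianAxis} -- a genuinely delicate hyperplane argument, not a routine consequence of parallelism), and that every hyperplane crossing one axis crosses all axes and separates the ``$+$'' ends from the ``$-$'' ends (Lemma~\ref{lem:HypCrossAxis}, Fact~\ref{fact:TwoDirections}), which is exactly what makes your two wall families transverse inside $\mathrm{Med}(g)$ and makes the explicit isomorphism $x\mapsto(\lim g^nx,\mu(x,\zeta,\xi))$ of Lemma~\ref{lem:ProductDeltaF} work. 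Note also that $T$ lives in the Roller boundary and $G$ need not fix $(\zeta,\xi)$, so the splitting of the $G$-action on $T\times F$ is not formally ``inherited factor by factor'' but requires the arguments with $\mu(h\cdot,\zeta,\xi)$ and $(h\delta)^+$ given in the proof of Proposition~\ref{prop:MinSet}. Your proposal defers all of these points, which are the actual content of the proof.

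In the inverting case two points you flag as obstacles are indeed load-bearing and must be settled, not just anticipated. First, $\bar g$ cannot be elliptic: one needs $\mathcal{J}$ finite (each $\langle g\rangle$-orbit in $\mathcal{J}$ is finite and meets $\mathcal{W}(x,gx)$, Lemma~\ref{lem:InvHypFiniteTransverse}) so that unbounded orbits survive in $X/\mathcal{J}$; allowing an elliptic quotient isometry is inconsistent both with the definition of $\mathrm{Med}(g)$ for inverting $g$ and with the theorem's assertion that $F$ is a flat with positive translation length. Second, splitting off the cube $Q$ requires more than pairwise transversality of $\mathcal{J}$ and a vague independence claim: the key fact is that $\mathrm{Med}_X(g)$ lies in $\bigcap_{J\in\mathcal{J}}N(J)$ (Lemma~\ref{lem:MinInNhyp}), which is what makes $x\mapsto(\mathrm{proj}_Q(x),\pi(x))$ an isometry onto $Q\times\mathrm{Med}_{X/\mathcal{J}}(g)$ (via Lemma~\ref{lem:SectionPi}); without it the preimage could fail to be a product. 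The elliptic case of your sketch does follow the paper's argument (parallel minimal cubes, projection onto a fixed cube and onto the family), and the reduction of the inverting case to the loxodromic one via equivariance of $\pi$ is correct in outline, but as it stands the proposal leaves unproved precisely the statements the paper's proof is about.
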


We refer to Section \ref{section:PseudoLines} for the definition of \emph{median flats}. The idea to keep in mind is that a median flat is isomorphic to an isometrically embedded subcomplex of a Euclidean space.

\paragraph{Applications towards Question \ref{question:intro}.} Thanks to Theorem \ref{thm:MedianIntro}, we are able to prove the following cubical version of the famous flat torus theorem \cite[Theorem II.7.1]{MR1744486}:

\begin{thm}\label{intro:FTT}
Let $G$ be a group acting on a CAT(0) cube complex $X$ and $A \leq G$ a normal finitely generated abelian subgroup. Then there exist a finite-index subgroup $H \leq G$ containing $A$ and a median subalgebra $Y \subset X$ which is $H$-invariant and which decomposes as a product $T \times F \times Q$ of median algebras $T,F,Q$ such that:
\begin{itemize}
	\item $H \curvearrowright Y$ decomposes as a product of actions $H \curvearrowright T,F,Q$;
	\item $F$ is a median flat or a single point; 
	\item $Q$ is a finite-dimensional cube;
	\item $A$ acts trivially on $T$. 
\end{itemize}
\end{thm}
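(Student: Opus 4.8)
The plan is to derive Theorem~\ref{intro:FTT} from Theorem~\ref{thm:MedianIntro} by an induction on the rank of the finitely generated abelian group $A$, passing to finite-index subgroups as needed to make the intermediate constructions equivariant. First I would treat the base case: if $A$ is finite, then $A$ stabilises a cube of $X$ (since finite groups acting on CAT(0) cube complexes are elliptic), and after passing to a finite-index subgroup $H$ of $G$ normalising a minimal such cube $C$ (the stabiliser of $C$ in $G$ has finite index because $A$ is normal and the orbit of $C$ under $G$ is finite — here one uses that $G$ permutes the minimal-dimension cubes stabilised by $A$, of which there are finitely many through any given vertex, or one passes to the action on $\mathrm{Med}$ of a generator), one sets $Y = Q = C$, with $T$ and $F$ trivial. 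More honestly, I expect the cleanest base case to come from applying Theorem~\ref{thm:MedianIntro} directly to a generator of $A$ when $A$ is infinite cyclic, and to handle the finite case by the cube-stabilisation argument above.

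Next I would carry out the inductive step. Suppose $A$ has rank $n \geq 1$ and pick a direct summand decomposition $A = A_0 \oplus \langle a \rangle$ with $\langle a \rangle$ infinite cyclic; the subgroup $\langle a \rangle$ need not be normal in $G$, but $A$ is, so after replacing $G$ by the finite-index subgroup $G'$ that normalises $\langle a \rangle$ (this has finite index since $G$ acts on the finite set of order-$n$ direct-summand decompositions of the characteristic subgroup $A$ up to the relevant equivalence — more carefully, $G$ acts on $A$ by conjugation, the image in $\mathrm{Aut}(A)$ of the centraliser-type subgroup we need is finite-index, so we may assume $\langle a\rangle \trianglelefteq G'$) I apply Theorem~\ref{thm:MedianIntro} to $a$ inside $G'$. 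This produces a $G'$-invariant median subalgebra $M = \mathrm{Med}(a) = T_1 \times F_1 \times Q_1$ with $G' \curvearrowright M$ splitting as a product of actions on $T_1, F_1, Q_1$, with $F_1$ a median flat or a point, $Q_1$ a finite-dimensional cube, and $a$ acting trivially on $T_1$. The residual abelian group $A_0$ (of rank $n-1$) acts on each of the three factors; on $F_1$ it acts by isometries of a median flat, hence of a Euclidean space, and on $Q_1$ it acts on a finite-dimensional cube. The key point is that $T_1$ is again a median algebra — indeed a CAT(0) cube complex up to the usual dictionary — on which $A_0$ acts with $a$ acting trivially, so $A/\langle a\rangle \cong A_0$ acts, and I invoke the induction hypothesis on the action of (a finite-index subgroup of) $G'$ stabilising the decomposition restricted to the $T_1$ factor, with normal finitely generated abelian subgroup (the image of) $A_0$, to obtain $T_1 \supset T' = T_2 \times F_2 \times Q_2$. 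Reassembling, $Y = T_2 \times (F_1 \times F_2) \times (Q_1 \times Q_2) = T \times F \times Q$, noting that a product of median flats (and points) is a median flat or a point, a product of finite-dimensional cubes is a finite-dimensional cube, and the product actions combine; finally $A = A_0 \oplus \langle a\rangle$ acts trivially on $T = T_2$ because $a$ acts trivially on all of $T_1 \supset T_2$ and (the image of) $A_0$ acts trivially on $T_2$ by induction. Throughout, $H$ is the intersection of the various finite-index subgroups of $G$ produced at each stage, and it contains $A$ because each reduction step was arranged to retain $A$.

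The main obstacle I anticipate is \emph{equivariance and the accounting of finite-index passages}: Theorem~\ref{thm:MedianIntro} requires $\langle g \rangle$ to be \emph{normal}, not merely $A$, so at each stage I must pass to a finite-index subgroup that normalises the chosen cyclic summand, and I must check that the product decomposition $M = T_1 \times F_1 \times Q_1$ and the induced action on the $T_1$-factor are genuinely respected by a finite-index subgroup of $G'$ — a priori $G'$ might permute the factors of the product or the decomposition furnished by the theorem might not be canonical. I would address this by noting that the three factors $T_1, F_1, Q_1$ are distinguished by intrinsic properties (the cube factor $Q_1$ is the "minimal" direction, $F_1$ carries the translation part of $a$ with positive translation length, $T_1$ is the fixed part of $a$), so any group element commuting with $a$ — in particular any element of $G'$, since $\langle a\rangle \trianglelefteq G'$ and the conjugation action of $G'$ on $\langle a\rangle$ has finite-index kernel — must preserve each factor up to the ambiguity of a finite permutation, and passing to that finite-index kernel makes the splitting strictly $G'$-equivariant. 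A secondary subtlety is that the induction is on the $T_1$-factor, which is only a median algebra; I would either carry the whole argument in the category of median algebras (CAT(0) cube complexes), checking that Theorem~\ref{thm:MedianIntro} and Theorem~\ref{intro:FTT} make sense there, or observe that $T_1$ is in fact (isomorphic to) a genuine CAT(0) cube complex as guaranteed implicitly by the structure theory, so that the induction hypothesis applies verbatim.
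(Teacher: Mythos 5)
There is a genuine gap, and it sits exactly where you flagged your ``main obstacle'': the claim that, because $A$ is normal, some finite-index subgroup $G'\leq G$ normalises the chosen cyclic summand $\langle a\rangle$. This is false as a group-theoretic statement. The conjugation action gives a map $G\to\mathrm{Aut}(A)$ whose image can be an infinite subgroup of $\mathrm{GL}_n(\mathbb{Z})$ that does not virtually preserve any cyclic subgroup of $A$: take $G=\mathbb{Z}^2\rtimes_M\mathbb{Z}$ with $M$ a hyperbolic matrix (eigendirections irrational) and $A=\mathbb{Z}^2$. Here $A$ is normal and finitely generated abelian, the set of direct-summand decompositions of $A$ is infinite, the stabiliser in $\mathrm{GL}_2(\mathbb{Z})$ of a line is of infinite index, and no finite-index subgroup of $G$ normalises any infinite cyclic subgroup of $A$. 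So your induction cannot even start on such a group, yet the theorem must (and does) cover it. The point the paper exploits is that the needed normality is a consequence of the \emph{geometry of the action}, not of the algebra: it first splits off the subgroup $A_0\leq A$ of $X$-elliptic elements (a subgroup by Lemma \ref{lem:EllipticSubgroup}, normal in $G$ since ellipticity is conjugation-invariant), applies Proposition \ref{prop:MedElliptic} to the whole bounded normal subgroup $A_0$ at once, and inducts on $A/A_0$ acting on the factor $Z'$; when $A_0$ is trivial, Proposition \ref{prop:Centralising} shows via Lemma \ref{lem:AbelianFlat} and the classical flat torus theorem that the stable translation length is a proper, conjugation-invariant function on $A$, so a finite-index subgroup $K\leq G$ containing $A$ \emph{centralises} $A$; only then is every cyclic summand $\langle a\rangle$ normal in $K$ and Theorem \ref{thm:MedianIntro} applicable. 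In the example above this mechanism forces some non-trivial element of $A$ to be elliptic for any action on a CAT(0) cube complex; your purely algebraic reduction has no way to detect this, and without a substitute for Proposition \ref{prop:Centralising} the argument fails.

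Two smaller remarks. Your base case for finite $A$ is also unjustified: a single minimal-dimension cube $C$ stabilised by $A$ need not have finite $G$-orbit; the correct move (Proposition \ref{prop:MedElliptic}) is to take the union of \emph{all} minimal-dimension cubes stabilised by $A$, which is automatically $G$-invariant and splits as $Q\times Z$ with the $G$-action splitting. On the other hand, your worry about elements of $G'$ permuting the factors $T_1,F_1,Q_1$ is not an issue: Theorem \ref{thm:MedianIntro} is stated so that, once $\langle a\rangle$ is normal in the acting group, the decomposition of the action into a product of actions on the three factors is part of the conclusion, so no extra factor-preservation argument is needed. Likewise your ``secondary subtlety'' about inducting on the median algebra $T_1$ is handled in the paper simply by phrasing the induction for median subalgebras of $X$ (the factor $Z'$ or $T'$ is a median algebra, hence a CAT(0) cube complex via its cubulation), which is unproblematic.
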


The main interest of our cubical flat torus theorem is that it provides an action on a median flat. Finding such an action is interesting because the isometry group of (the cubulation of) a median flat is quite specific, imposing severe restrictions on the action we are looking at. Our main application towards Question \ref{question:intro} is the following statement, stating that the action of a polycyclic group on a CAT(0) cube complex essentially factors through an abelian group:

\begin{thm}\label{intro:solvableInf}
Let $G$ be a polycyclic group acting on a CAT(0) cube complex $X$. Then 
\begin{itemize}
	\item $G$ contains a finite-index $H$ such that $$\mathcal{E}= \{ g \in H \mid \text{$g$ is $X$-elliptic} \}$$ defines a normal subgroup of $H$ and such that $H/\mathcal{E}$ is free abelian;
	\item and $H$ stabilises a median subalgebra which is either a median flat or a single point.
\end{itemize}
In particular, $G$ contains a finite-index subgroup which is (locally $X$-elliptic)-by-(free abelian).
\end{thm}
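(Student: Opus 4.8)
The plan is to prove Theorem~\ref{intro:solvableInf} by induction on the Hirsch length of the polycyclic group $G$, using Theorem~\ref{intro:FTT} (the cubical flat torus theorem) applied to a suitable normal abelian subgroup as the engine of the induction. First I would reduce to a convenient setting: pass to a finite-index subgroup $G_0$ that is \emph{poly-(infinite cyclic)} and torsion-free, so that its commutator subgroup is nilpotent and hence contains a non-trivial normal abelian subgroup $A$ that is also normal in $G_0$; choosing $A$ finitely generated and normal in $G_0$ is where I want to apply Theorem~\ref{intro:FTT}. That gives a further finite-index subgroup $H_0 \leq G_0$ containing $A$ and an $H_0$-invariant median subalgebra $Y = T \times F \times Q$ on which $H_0$ acts factorwise, with $F$ a median flat or a point, $Q$ a finite-dimensional cube, and $A$ acting trivially on $T$.

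The heart of the argument is to analyze the factor actions. Since $Q$ is a finite-dimensional cube, $H_0$ has a finite-index subgroup $H_1$ acting trivially on $Q$ (the automorphism group of a cube is finite). For the median flat $F$: the isometry group of (the cubulation of) a median flat is highly constrained — essentially a crystallographic-type group, an extension of a free abelian group by a finite group — so after passing to one more finite-index subgroup $H_2 \leq H_1$ the action $H_2 \curvearrowright F$ factors through a free abelian quotient, i.e.\ the kernel $N$ of $H_2 \to \mathrm{Isom}(F)$ has free abelian image and $N$ acts trivially on $F$. Now $N$ acts on $T$ (a median algebra of strictly smaller complexity in a sense I'd need to make precise — lower Hirsch length once I check $A \leq N$ fails, so instead I run the induction on the subgroup of $H_2$ acting trivially on $F \times Q$, which has strictly smaller Hirsch length because $F$ being a flat forces some element of $A$ to act as a nontrivial translation on $F$ unless $F$ is a point, and if $F$ is a point then $A$ itself acts trivially on all of $Y$, contradicting properness-type considerations or forcing $A$ to be elliptic). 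The clean way: if $F$ is a point, every element of $A$ is elliptic (it fixes $Y$ pointwise after killing $Q$), so $A \subseteq \mathcal{E}$; if $F$ is a genuine flat, some $a \in A$ acts as a nontrivial translation on $F$, hence is loxodromic or inverting, and the translation length is the obstruction that drops the Hirsch length when we intersect with the stabilizer of the $F$-direction.

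With this setup, I would define $\mathcal{E} = \{g \in H \mid g \text{ is } X\text{-elliptic}\}$ for $H$ the finite-index subgroup reached above, and show $\mathcal{E}$ is a subgroup and normal in $H$: the key input is Theorem~\ref{thm:MedianIntro} applied inside $H$ to elements of $\mathcal{E}$ — but more directly, one argues that an element $g$ of $H$ is elliptic iff its image in the free abelian quotient $H/N$ (built from the $F$- and $Q$-actions) is trivial, i.e.\ $\mathcal{E} = N \cap H$ up to finite index, because an element acting as a translation on the flat $F$ is non-elliptic, while an element acting trivially on $F \times Q$ and on the higher-complexity part $T$... here one needs that elements trivial on the flat part are elliptic, which follows by induction applied to the action on $T$ (lower Hirsch length) combined with the fact that ellipticity on a product is ellipticity on each factor. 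Then $H/\mathcal{E} \cong H/(N\cap H)$ embeds in the free abelian $\mathrm{Isom}$-image on $F$, hence is free abelian; the second bullet (stabilizing a median flat or point) is exactly the factor $F$ from Theorem~\ref{intro:FTT}; and the final ``in particular'' is the tautology that $H$ is (locally $X$-elliptic)-by-(free abelian) once $\mathcal{E}$ is a locally-elliptic normal subgroup with free abelian quotient — locally elliptic because a finitely generated subgroup of $\mathcal{E}$ generates a polycyclic, hence by the structure just obtained with trivial abelian quotient, hence elliptic (bounded orbits), subgroup.

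The main obstacle I anticipate is the inductive step gluing: making precise in what sense the action $N \curvearrowright T$ is ``simpler'' so the induction on Hirsch length is legitimate — specifically, ensuring that after stripping off the $F$ and $Q$ factors the remaining Hirsch length genuinely drops, which requires knowing that a non-trivial finitely generated free abelian normal subgroup $A$ cannot act trivially on the \emph{entire} median set when it acts non-trivially on $X$; this is where one must carefully use that $\mathrm{Med}(A)$ (or rather the median set of a generator, via Theorem~\ref{thm:MedianIntro}) captures a genuine translation or cube-rotation. A secondary technical point is controlling the ellipticity/non-ellipticity dichotomy through the product decomposition and the finite-index passages, and verifying that ``locally $X$-elliptic'' is preserved — but that is routine once the free abelian quotient structure is in hand, since subgroups of polycyclic groups with trivial (hence finite, after the torsion reduction) abelianization-image are themselves elliptic by a direct fixed-point/bounded-orbit argument on the product $T \times F \times Q$.
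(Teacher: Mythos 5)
Your plan is in the right family of ideas (cubical flat torus theorem applied to a normal abelian subgroup, then an induction), but two of its load-bearing steps are wrong as stated, and they are exactly the points where the paper has to argue differently. First, the isometry group of (the cubulation of) a median flat is \emph{not} crystallographic-type: the paper exhibits a median flat (Figure \ref{PL}) whose full isometry group is $\mathbb{Z}_2 \wr D_\infty$, which is not virtually free abelian. The correct statement is Proposition \ref{prop:Busemann} (via the Busemann morphisms of Theorem \ref{thm:Busemann}): a finite-index subgroup of $\mathrm{Isom}(F)$ carries a homomorphism to $\mathbb{Z}^n$ whose kernel is exactly the set of \emph{elliptic} elements --- not the elements acting trivially on $F$. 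Your argument needs the kernel of the induced free abelian quotient to act trivially on $F$, so that the residual action is carried entirely by $T$; with the true statement you only get ellipticity on $F$, and the identification $\mathcal{E}=N\cap H$ collapses. Second, and more seriously, the step ``elements trivial on the flat part are elliptic, which follows by induction applied to the action on $T$'' is false: the induction on $T$ does not produce ellipticity, it produces another decomposition of $T$, and $T$ generically contains further flat directions, so an element fixing $F\times Q$ pointwise can still be loxodromic (indeed, in the paper's induction the flat obtained from $T$ is multiplied onto $F$). This is precisely why the paper does not try to read off $\mathcal{E}$ factor by factor: it first proves (Proposition \ref{prop:solvable}) by induction on the \emph{derived length} --- applying Theorem \ref{thm:FTT} to the last nontrivial derived subgroup, recursing on the $T$-factor with the quotient group, multiplying the successive flats via Lemma \ref{lem:ProductFlats}, and killing the cube factor by passing to the kernel of a finite action --- that a finite-index subgroup $H$ stabilises a \emph{single} median subalgebra which is a median flat or a point, and only then applies Proposition \ref{prop:Busemann} once to that final flat, so that $\mathcal{E}$ is exactly the kernel of one morphism to $\mathbb{Z}^n$ and $H/\mathcal{E}$ embeds in $\mathbb{Z}^n$.

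Several of your secondary difficulties are symptoms of the same misalignment. There is no properness hypothesis in the theorem, so nothing is contradicted if $A$ is elliptic or even acts trivially on the invariant subalgebra; you do not need a ``Hirsch length drop coming from a nontrivial translation on $F$'' at all. Inducting on derived length with $A=G_1$ the last nontrivial derived subgroup makes the complexity drop automatic, since the $T$-action factors through $K/G_1$; this also makes your preliminary reduction (torsion-free poly-(infinite cyclic) with nilpotent commutator subgroup, which is not a consequence of torsion-freeness alone but needs Mal'cev-type finite-index passage) unnecessary --- the last derived subgroup is already a finitely generated normal abelian subgroup. Finally, for the concluding ``in particular'', note that local $X$-ellipticity of $\mathcal{E}$ is not a formal consequence of all its elements being elliptic on an infinite-dimensional complex; it is obtained because $\mathcal{E}$ consists of the elliptic elements of the action on the stabilised flat, which is finite-dimensional, where a finitely generated group of elliptic isometries is elliptic --- a point your sketch glosses over.
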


We emphasize that we are not imposing any restriction on the action and that the cube complex may be infinite-dimensional. As a consequence of Theorem \ref{intro:solvableInf}, it turns out that many solvable groups do not act properly on CAT(0) cube complexes:

\begin{cor}\label{intro:SolvableProper}
Let $G$ be a polycyclic group acting properly on a CAT(0) cube complex. Then $G$ must be virtually free abelian.
\end{cor}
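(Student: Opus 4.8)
\textbf{Proof proposal for Corollary \ref{intro:SolvableProper}.}

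The plan is to reduce quickly to Theorem \ref{intro:solvableInf} and then exploit properness. Since $G$ is polycyclic and acts properly on a CAT(0) cube complex, Theorem \ref{intro:solvableInf} supplies a finite-index subgroup $H \leq G$ such that the set $\mathcal{E}$ of $X$-elliptic elements of $H$ is a normal subgroup, $H/\mathcal{E}$ is free abelian, and $H$ stabilises a median subalgebra $Y$ which is either a median flat or a single point. Because $G$ is virtually free abelian if and only if $H$ is, it suffices to show that $H$ is virtually free abelian, and for this it is enough to show that $\mathcal{E}$ is finite: indeed, a group which is (finite)-by-(free abelian) is virtually free abelian, since the finite normal subgroup $\mathcal{E}$ has finite-index centraliser and one passes to that centraliser, which is (central finite)-by-(free abelian), hence virtually free abelian by a standard argument (or simply observe that such a group is polycyclic with no $\mathbb{Z}$-by-finite obstruction and is finitely generated virtually abelian).

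The heart of the matter is therefore: \emph{$\mathcal{E}$ is finite}. First I would show that $\mathcal{E}$ is \emph{locally finite}: this is exactly the last sentence of Theorem \ref{intro:solvableInf}, which says $H$ is (locally $X$-elliptic)-by-(free abelian), so every finitely generated subgroup of $\mathcal{E}$ is $X$-elliptic, i.e. stabilises a cube, hence is finite by properness (the cube stabiliser is finite). Now $\mathcal{E}$ is a normal subgroup of the polycyclic group $H$, hence $\mathcal{E}$ is itself polycyclic, in particular finitely generated; a finitely generated locally finite group is finite. Therefore $\mathcal{E}$ is finite, as desired.

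The remaining step is the passage from ``$\mathcal{E}$ finite, $H/\mathcal{E}$ free abelian'' to ``$H$ virtually free abelian,'' which I would handle as follows: the conjugation action of $H$ on the finite group $\mathcal{E}$ has image in $\mathrm{Aut}(\mathcal{E})$, a finite group, so $C := C_H(\mathcal{E})$ has finite index in $H$; then $\mathcal{E} \cap C = Z(C) \cap \mathcal{E}$ is a finite central subgroup of $C$ with $C/(\mathcal{E}\cap C)$ embedding into $H/\mathcal{E}$, hence free abelian; so $C$ is a finitely generated group which is central-extension of a free abelian group by a finite group, and such a group is virtually free abelian (pass to a further finite-index subgroup, e.g. the preimage of a finite-index free abelian subgroup avoiding the torsion, or invoke that finitely generated virtually-$\mathbb{Z}^n$ is detected by the rational homology / Malcev). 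Hence $H$, and so $G$, is virtually free abelian.

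The main obstacle I anticipate is purely bookkeeping rather than conceptual: one must be careful that ``locally $X$-elliptic'' genuinely yields \emph{finite} subgroups under a proper action (a cube has finitely many vertices and the action on its vertex set has finite kernel, again by properness, so cube-stabilisers are finite), and one must invoke the standard — but not entirely trivial — fact that a finitely generated (finite)-by-(free abelian) group is virtually free abelian. Everything else is a direct citation of Theorem \ref{intro:solvableInf}.
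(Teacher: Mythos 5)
Your proposal is correct and follows the same route as the paper, which deduces the corollary directly from Theorem \ref{intro:solvableInf}: properness makes the locally $X$-elliptic kernel $\mathcal{E}$ locally finite, hence finite since subgroups of polycyclic groups are finitely generated, and a finitely generated (finite)-by-(free abelian) group is virtually free abelian. You merely spell out the elementary group-theoretic steps the paper leaves implicit, and they are all accurate.
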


The statement is essentially sharp, since many solvable groups which are not polycyclic act properly on CAT(0) cube complexes. For instance, for every $n \geq 1$ and every finite group $F$, the lamplighter group $F \wr \mathbb{Z}^n$ acts properly on a CAT(0) cube complex of dimension $2n$ \cite[Proposition 9.33]{Qm}; for every $n,m \geq 1$, the wreath product $\mathbb{Z}^m \wr \mathbb{Z}^n$ acts properly on an infinite-dimensional CAT(0) cube complex \cite{Haagerupwreath, MoiLamp}; and for every $n \geq 1$, the Houghton group $H_n$ acts properly on a CAT(0) cube complex of dimension $n$ \cite{LeeHoughton}, \cite[Example 4.3]{FarleyHughes}.

Corollary \ref{intro:SolvableProper} is also proved in \cite[Corollary 6.C.8]{CornulierCommensurated}, and another proof can be obtained by combining \cite{ConnerSolvable} and \cite{HaglundAxis}. However, we expect that Theorem \ref{intro:solvableInf} will be useful in the study of groups acting on (infinite-dimensional) CAT(0) cube complexes with infinite vertex-stabilisers. In a forthcoming article, we plan to apply this strategy in order to study polycyclic subgroups in braided Thompson-like groups.

\paragraph{Comparison to other cubical flat torus theorems.} Theorem \ref{intro:FTT} is not the first cubical version of the flat torus theorem which appears in the literature. The first one is \cite[Theorem 3.6]{WiseWood}, proving that, if a group acts geometrically on a CAT(0) cube complex, then its \emph{highest} virtually abelian subgroups act geometrically on convex subcomplexes isomorphic to products of quasi-lines. So this statement deals with geometric actions and only with specific virtually abelian subgroups, but the subcomplex which is constructed is convex. Arbitrary abelian subgroups are considered in \cite{Wood}, proving that, if a virtually abelian group acts properly on a CAT(0) cube complex, then it has to preserve an isometrically embedded subcomplex isomorphic to a product of quasi-lines. Compared to this statement, Theorem \ref{intro:FTT} does not make any assumption on the action, and it applies to groups containing normal abelian subgroups. The latter difference is the key of the article: it explains why we are able to study solvable groups instead of virtually abelian groups only. Another difference compared to \cite{WiseWood, Wood} is that we find median subsets instead of isometrically embedded subcomplexes. The reason is that, if we want to replace median subalgebras with isometrically embedded subcomplexes in the statement of Theorem \ref{intro:FTT}, then the conclusion may no longer hold, as illustrated by the example given before Definition \ref{def:MedianSetsIntro}. 
Nevertheless, it turns out that a well-chosen subdivision of (the cubulation of) a median subalgebra naturally embeds isometrically in the initial complex. As a consequence, the main theorem of \cite{Wood} can be recovered from Theorem \ref{intro:FTT}. However, we did not write the construction because it was not necessary to have subcomplexes for our applications.

\paragraph{Other applications.} Theorem \ref{thm:MedianIntro} has other interesting applications. First, we are able to prove that centralisers of infinite-order elements in cocompactly cubulated groups are themselves cocompactly cubulated, improving a result from \cite{HaettelArtin}:

\begin{thm}\label{info:centraliser}
Let $G$ be a group acting geometrically on a CAT(0) cube complex $X$. For every infinite-order element $g \in G$,  the centraliser $C_G(g)$ also acts geometrically on a CAT(0) cube complex.
\end{thm}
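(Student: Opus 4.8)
The plan is to apply Theorem~\ref{thm:MedianIntro} to the infinite-order element $g$ inside its own centraliser, and then exploit the product structure of $\mathrm{Med}(g)$ together with a cocompactness argument. First I would set $H = C_G(g)$, so that $\langle g \rangle$ is normal (indeed central) in $H$, and Theorem~\ref{thm:MedianIntro} yields an $H$-invariant median subalgebra $\mathrm{Med}(g) = T \times F \times Q$ on which $H$ acts diagonally, with $g$ acting trivially on $T$, by a nontrivial translation on the median flat $F$ (here $g$ is not elliptic, or one passes to a suitable power; more precisely, since $g$ has infinite order and $G$ acts geometrically, $g$ is loxodromic or inverting, so $F$ is a genuine median flat of positive translation length), and $Q$ a finite-dimensional cube. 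Since $Q$ is a bounded finite-dimensional cube, the action $H \curvearrowright Q$ has finite image, so after passing to a finite-index subgroup $H_0 \leq H$ we may assume $H_0$ acts trivially on $Q$; it then suffices to cubulate $H_0$ acting on $T \times F$, and $g$ acts trivially on $T$ while acting cocompactly (by translations) on the flat $F$.

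The core of the argument is a cocompactness bootstrap. Because $G$ acts geometrically (hence cocompactly) on $X$ and $\mathrm{Med}(g)$ is a union of axes of $g$ (in the loxodromic case, or their lifts in the inverting case), a standard argument shows that $C_G(g)$ acts cocompactly on the set of axes $\mathrm{Med}(g)$: any axis is moved to any other by $g$-coarsely-equivariant elements, and the Arzel\`a--Ascoli/convexity machinery for minimising sets in CAT(0) cube complexes (the metatheorem invoked in the introduction) gives that the quotient of $\mathrm{Med}(g)$ by $C_G(g)$ is compact. Transporting this through the product decomposition, $H_0$ acts cocompactly on $T \times F$. Since $g$ acts trivially on $T$ and cocompactly on $F$, and $F$ is a median flat (cubically a subdivided Euclidean complex on which $\mathbb{Z} \cong \langle g \rangle$, or a finite extension thereof, acts cocompactly), the stabiliser in $H_0$ of the $F$-factor acts on $F$ with a virtually $\mathbb{Z}^k$ image for some $k$; one then checks that a finite-index subgroup splits off this translation lattice, and the complementary part acts cocompactly on $T$. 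Cubulating $T$: the cube complex underlying the median algebra $T$ (via Sageev/Roller duality between median algebras and CAT(0) cube complexes) is a CAT(0) cube complex on which $H_0$ acts cocompactly; combining it as a product with the cubulation of $F$ produces the desired geometric action of a finite-index subgroup of $C_G(g)$, and hence of $C_G(g)$ itself after inducing up.

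The hard part will be establishing properness of the resulting action: Theorem~\ref{thm:MedianIntro} guarantees $H$-invariance and the product structure, but nothing a priori prevents the kernel of $H \curvearrowright T \times F \times Q$ from being infinite. I would control this by using that $G$ acts \emph{properly} on $X$: a group element acting trivially on $\mathrm{Med}(g)$ fixes an entire axis of $g$ pointwise (or its lift), and properness of $G \curvearrowright X$ forces the pointwise stabiliser of an unbounded set to be finite, so the kernel is finite and the action of $H_0$ on $T \times F$ is metrically proper. The other delicate point is checking that $T$, as a median algebra, genuinely arises as the $0$-skeleton of a CAT(0) cube complex of the right (finite) dimension on which the action is cellular and cocompact — this requires knowing that $\mathrm{Med}(g)$ sits in $X$ with bounded-dimensional intervals, which follows from finite-dimensionality of $X$ (automatic here, since $G$ acts geometrically on $X$, so $X$ is finite-dimensional). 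Assembling these pieces, together with the elementary fact that a group acting geometrically on a product of two CAT(0) cube complexes acts geometrically on their cubical product, completes the proof; finally, since $\mathrm{Med}(g)$ is $C_G(g)$-invariant and we only passed to a finite-index subgroup to trivialise the $Q$-action, a standard induction (or the observation that a finite-index overgroup of a cocompactly cubulated group is itself cocompactly cubulated) upgrades the conclusion from $H_0$ to all of $C_G(g)$.
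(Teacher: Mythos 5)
Your overall plan (work with the $C_G(g)$-invariant median subalgebra $\mathrm{Med}(g)$ and cubulate it) is the right one and is the paper's, but the two steps that carry all the content are exactly the ones you leave unproved. The first is cocompactness: you assert that "a standard argument" together with "Arzel\`a--Ascoli/convexity machinery" and the introduction's metatheorem shows that $C_G(g)$ acts cocompactly on $\mathrm{Med}(g)$. The metatheorem is an informal heuristic, not a citable result, and there is no Arzel\`a--Ascoli compactness argument available for a family of combinatorial axes; this is precisely where the paper has to work. After subdividing so that $g$ is loxodromic, it uses $\mathrm{Med}(g)=\mathrm{Min}(g)$ (Lemma \ref{lem:MinVsMed}) and proves cocompactness directly (Lemma \ref{lem:GeomActionMin}): if $x_0,x_1,\dots$ lay in distinct $C_G(g)$-orbits of $\mathrm{Min}(g)$, cocompactness of $G\curvearrowright X$ gives elements $g_i$ with $d(x_0,g_ix_i)\le C$, the bound $d\left(x_0,g_igg_i^{-1}x_0\right)\le \|g\|+2C$ together with local finiteness of $X$ and finiteness of stabilisers lets one pass to a subsequence with $g_ix_i=g_jx_j$ and $g_igg_i^{-1}=g_jgg_j^{-1}$, and then $g_i^{-1}g_j\in C_G(g)$ contradicts the orbits being distinct. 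Without this (or an equivalent argument) your proof does not get off the ground, since everything downstream presupposes the cocompactness you invoke.

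The second problem is properness. You deduce that the action of $H_0$ on $T\times F$ is "metrically proper" from finiteness of the kernel of $H\curvearrowright T\times F\times Q$; a finite kernel is much weaker than properness and does not imply it. What is needed (and is immediate) is that stabilisers in $C_G(g)$ of vertices of $\mathrm{Med}(g)$ are finite because $G\curvearrowright X$ is proper, and that the cubulation of $\mathrm{Med}(g)$ is locally finite (Fact \ref{fact:LocallyFinite}, which bounds the number of neighbours of a vertex by $\|g\|$ --- note that $d_{\mathrm{Med}(g)}\le d_X$, so properness on the cubulation does not follow formally from properness on $X$, nor from finite-dimensionality of $X$ alone); finite stabilisers, finitely many orbits and local finiteness then give the geometric action. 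Finally, the middle portion of your argument (trivialising $Q$ on a finite-index subgroup, splitting a "translation lattice" off the stabiliser of the $F$-factor, having "the complementary part" act cocompactly on $T$, then reassembling and inducing up) is both unjustified as written and unnecessary: once cocompactness and properness on $\mathrm{Med}(g)$ are established, one cubulates $\mathrm{Med}(g)$ itself (its $C_G(g)$-invariance is Proposition \ref{prop:MinSet}) and is done, with no passage to finite-index subgroups at all.
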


\noindent
Next, we are also able to deduce a splitting theorem (compare to \cite[Theorem~II.6.12]{MR1744486}), namely:

\begin{thm}\label{intro:splitting}
Let $G$ be a group acting on a CAT(0) cube complex $X$ and $A \lhd G$ a normal finitely generated subgroup. Assume that a non-trivial element of $A$ is never elliptic. Then $A$ is a direct factor of some finite-index subgroup of $G$.
\end{thm}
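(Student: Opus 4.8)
The plan is to apply Theorem \ref{thm:MedianIntro} not to a single element but to the whole normal subgroup $A$, and then extract a genuine direct-product decomposition of a finite-index subgroup of $G$ from the product decomposition $T \times F \times Q$ of the median set. First I would record that, since no non-trivial element of $A$ is elliptic, every non-trivial $g \in A$ is loxodromic or inverting, so its median set $\mathrm{Med}(g)$ has trivial cube factor if $g$ is loxodromic and a genuine flat factor $F$ of positive dimension; by Theorem \ref{thm:MedianIntro}, for each such $g$ with $\langle g \rangle \lhd G$ we get a $G$-invariant median subalgebra splitting as $T_g \times F_g \times Q_g$ with $g$ acting trivially on $T_g$. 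The first task is to upgrade this from cyclic normal subgroups to the finitely generated normal subgroup $A$: pick generators $g_1, \dots, g_k$ of $A$. Passing to a finite-index subgroup $H \leq G$ that normalises each $\langle g_i \rangle$ (possible since $\mathrm{Aut}(A)$ acts on the finite set of relevant subgroups, or more robustly by intersecting the normalisers of the $g_i$ up to finite index), I would intersect the median sets $\mathrm{Med}(g_i)$ — an intersection of median subalgebras is a median subalgebra — to obtain an $H$-invariant median subalgebra $Y$ on which $A$ acts without fixed points and ``purely by translations'' in a suitable sense.

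The heart of the argument is then to show that the $A$-action on $Y$ respects a product decomposition $Y \cong T \times F$ in which $A$ acts trivially on $T$ and cocompactly-by-translations on the flat factor $F$, and in which the $H$-action also decomposes. For this I would run the machinery of Theorem \ref{thm:MedianIntro} for the element $g_1$ to split off a first flat direction, then restrict to the fixed-point set $T_{g_1}$ (a median subalgebra, $H$-invariant since $\langle g_1\rangle \lhd H$), observe that the images of $g_2, \dots, g_k$ still act without elliptic elements on it (using that the decomposition is an honest product of actions, so an element acting elliptically on a factor would be elliptic on the whole — contradicting the hypothesis), and iterate. After $k$ steps this yields $Y \cong T \times F$ with $F$ a median flat carrying a cocompact $A$-action by translations, $A$ acting trivially on $T$, and $H$ acting as a product $H \curvearrowright T$ and $H \curvearrowright F$. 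Since the isometry group of (the cubulation of) a median flat is virtually $\mathbb{Z}^n$ by translations — together with a finite linear part — the composite $H \to \mathrm{Isom}(F)$ has image that is virtually abelian; after passing to a further finite-index subgroup $H'$ I can assume this image is free abelian and contains the image of $A$ as a finite-index subgroup.

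The final step is the algebraic extraction. Let $\rho : H' \to \mathrm{Isom}(F)$ be the projection to the flat factor; its image $B := \rho(H')$ is a finitely generated free abelian group containing $\rho(A)$ with finite index, so after replacing $H'$ by a further finite-index subgroup I may assume $\rho(A) = B$, i.e. $H' = A \cdot \ker \rho$. It remains to see that $A \cap \ker\rho = 1$: an element of this intersection acts trivially on $F$ and lies in $A$, hence acts trivially on $T$ as well (all of $A$ does), hence acts trivially on $Y$; but a non-trivial element of $A$ is loxodromic or inverting on $X$, and I claim this forces it to act non-trivially on $Y = \mathrm{Med}(g_i) \cap \cdots$ — indeed $\mathrm{Med}(g)$ contains an axis of $g$ (or, in the inverting case, covers an axis in the wall quotient), so $g$ cannot fix $\mathrm{Med}(g)$ pointwise, and the same holds for the common median set since it still contains axis-like directions for each $g_i$. (This is exactly where the no-elliptic hypothesis is used decisively.) Thus $A \cap \ker\rho = 1$ and $H' = A \times \ker\rho$, exhibiting $A$ as a direct factor of the finite-index subgroup $H' \leq G$, as required. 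The main obstacle I anticipate is the inductive splitting in the second paragraph: making precise that restricting to the fixed factor $T_{g_i}$ preserves the ``no elliptic element'' property for the remaining generators and that the flat directions accumulated at each stage genuinely multiply into a single median flat factor — this requires care with how median flats interact under taking fixed-point subalgebras, and is the step where one must lean hardest on the structural content of Theorem \ref{thm:MedianIntro} rather than on formal manipulation.
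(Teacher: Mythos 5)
There are genuine gaps at both ends of your argument. First, your opening reduction --- passing to a finite-index subgroup of $G$ that normalises each $\langle g_i\rangle$ --- is not formal. Normality of $A$ only says that $G$ permutes the elements of $A$ by conjugation; the normaliser of a single cyclic subgroup $\langle g_i\rangle$ can have infinite index (think of a group acting on $A\cong\mathbb{Z}^2$ by an infinite-order automorphism), so ``intersecting the normalisers up to finite index'' is not available. Ruling this behaviour out under the no-elliptic hypothesis is precisely the content of Proposition \ref{prop:Centralising}, which shows that the stable translation length is a proper, conjugation-invariant function on $A$ (via Lemma \ref{lem:AbelianFlat} and the classical flat torus theorem) and hence that a finite-index subgroup of $G$ centralises $A$; you have assumed away a substantive step. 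Second, in your iteration the justification ``an element acting elliptically on a factor would be elliptic on the whole'' is backwards: ellipticity on the whole implies ellipticity on each factor, not conversely, so a remaining generator may perfectly well become elliptic on $T_{g_1}$ while staying loxodromic on $F_{g_1}$. Thus the claim that the no-elliptic property persists on the fixed factor is unjustified, and the iteration producing a single flat factor on which $A$ acts ``cocompactly by translations'' does not go through as stated (the proof of Theorem \ref{thm:FTT} has to treat exactly this phenomenon by a separate case analysis).

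The final extraction also fails. The isometry group of a median flat is \emph{not} virtually free abelian --- the paper exhibits a median flat with isometry group $\mathbb{Z}_2\wr D_\infty$ --- so all you may use is the Busemann morphism of Proposition \ref{prop:Busemann}, whose kernel is locally elliptic but possibly infinite. More decisively, $\rho(A)$ need not have finite index in $\rho(H')$: take $G=\mathbb{Z}^2$ acting on the standard square grid $\mathbb{R}^2$ and $A=\langle (1,1)\rangle$. All combinatorial axes of $(1,1)$ share the same pair of Roller endpoints (the two corner points of $\mathfrak{R}\mathbb{R}^2$), so in the decomposition of $\mathrm{Med}((1,1))$ from Proposition \ref{prop:MinSet} the factor $F$ is the whole plane and $T$ is a point; then $\ker\rho$ is trivial and $\rho(A)=\mathbb{Z}$ has infinite index in $\rho(G)=\mathbb{Z}^2$, so $A\cdot\ker\rho$ is not of finite index and no passage to finite index fixes this --- even though the conclusion of the theorem is true here. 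The paper avoids this trap: it first centralises $A$, then for a single non-trivial $g\in A$ composes the action on the flat factor with the Busemann morphism to get $\varphi:G_0\to\mathbb{Z}$ with $\varphi(g)\neq 0$, chooses $a\in A$ with $\varphi(a)$ generating $\varphi(A)$, restricts to $H=\varphi^{-1}(\varphi(A))$ (finite index since $\varphi(A)$ is a non-trivial subgroup of $\mathbb{Z}$), splits $H=(\ker\varphi\cap H)\oplus\langle a\rangle$ using centrality, and concludes by induction on the rank of $A$ --- at no point does it need $A$ to have finite-index image in the isometries of the flat.
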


\noindent
Finally, by combining Theorem \ref{thm:MedianIntro} with the proof of \cite[Theorem B]{MR2665003}, we are able to deduce some valuable information about the possible actions of mapping class groups of surfaces (of succiently high complexity) on CAT(0) cube complexes.

\begin{thm}\label{thm:MCGintro}
Let $\Sigma$ be an orientable surface of finite type with genus $\geq 3$. Whenever $\mathrm{Mod}(\Sigma)$ acts on a CAT(0) cube complex, all Dehn twists are elliptic.
\end{thm}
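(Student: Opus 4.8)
\emph{Strategy and the dichotomy.} The plan is to transport Bridson's proof of \cite[Theorem~B]{MR2665003} from semisimple actions on CAT(0) spaces to arbitrary actions on CAT(0) cube complexes, using Theorem~\ref{thm:MedianIntro} --- equivalently the cubical flat torus theorem \ref{intro:FTT} --- in place of the classical flat torus theorem; no hypothesis on the type of isometries is then needed. For an isometry $g$ of a CAT(0) cube complex $X$ write $\ell(g)=\lim_{n\to\infty}\tfrac1n d(x,g^{n}x)$ for its stable translation length; it is well defined, homogeneous and conjugacy-invariant. The first point is the \emph{dichotomy}: $g$ is elliptic if and only if $\ell(g)=0$. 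Indeed an elliptic isometry stabilises a cube, hence has bounded orbits; and by Theorem~\ref{thm:MedianIntro} (applied with $G=\langle g\rangle$) a non-elliptic isometry acts on the median flat $F\subset\mathrm{Med}(g)$ by a translation of length $\ell(g)>0$. Thus the theorem reduces to proving $\ell(\tau)=0$ for every Dehn twist $\tau$.

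\emph{Reduction to non-separating curves.} The device that replaces subadditivity of $\ell$ --- which genuinely fails, already in trees --- is that the translation vectors of \emph{commuting} isometries on a common median flat add up, while an elliptic isometry contributes the zero vector there. Given a separating curve $c$, choose a lantern $L\subset\Sigma$ with boundary curves $a_{1},a_{2},a_{3},c$, the $a_{i}$ non-separating, and with all three interior curves $b_{1},b_{2},b_{3}$ non-separating (take $[a_{1}],[a_{2}],[a_{3}]$ non-zero in $H_{1}(\Sigma)$ with non-zero pairwise sums and $[a_{1}]+[a_{2}]+[a_{3}]=0$; since $\Sigma$ has genus $\ge 3$, such a lantern exists for every topological type of separating curve, cf.\ \cite[Theorem~B]{MR2665003}). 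As $a_{1},a_{2},a_{3},c$ are pairwise disjoint, $A:=\langle t_{a_{1}},t_{a_{2}},t_{a_{3}},t_{c}\rangle$ is free abelian; applying Theorem~\ref{intro:FTT} to $A\curvearrowright X$ yields a homomorphism $\rho:A\to\mathbb{R}^{k}$ recording translation vectors on a median flat, together with a norm $\|\cdot\|$, such that $\ell|_{A}=\|\rho(\cdot)\|$ (and this flat, being built from the median sets $\mathrm{Med}(a)$, $a\in A$, is preserved by every isometry commuting with $A$). Granting the non-separating case, $t_{a_{1}},t_{a_{2}},t_{a_{3}},t_{b_{1}},t_{b_{2}},t_{b_{3}}$ are elliptic, so $\rho(t_{a_{i}})=0$ for $i=1,2,3$; and each $t_{b_{j}}$ --- which commutes with $A$, since interior and boundary curves of $L$ are disjoint --- acts on the flat by a translation of norm $\le\ell(t_{b_{j}})=0$. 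Reading the lantern relation $t_{a_{1}}t_{a_{2}}t_{a_{3}}t_{c}=t_{b_{1}}t_{b_{2}}t_{b_{3}}$ on this flat gives $\rho(t_{c})=\rho(t_{a_{1}})+\rho(t_{a_{2}})+\rho(t_{a_{3}})+\rho(t_{c})=0$, so $\ell(t_{c})=0$ and $t_{c}$ is elliptic.

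\emph{The non-separating case.} Let $\tau$ be a Dehn twist about a non-separating curve and suppose $\ell(\tau)=\lambda>0$. Because $\Sigma$ has genus $\ge 3$ there is a lantern $L\subset\Sigma$ all seven of whose curves $a_{1},a_{2},a_{3},a_{4}$ (boundary), $b_{1},b_{2},b_{3}$ (interior) are non-separating --- this is exactly where the genus hypothesis is used, and it fails when $\Sigma$ has genus $\le 2$, where $H_{1}(\mathrm{Mod}(\Sigma))\ne 0$. All seven twists are then conjugate to $\tau$, hence have translation length $\lambda$. Applying the flat torus structure of Theorem~\ref{thm:MedianIntro}/\ref{intro:FTT} to $A:=\langle t_{a_{1}},\dots,t_{a_{4}}\rangle\cong\mathbb{Z}^{4}$ gives $\rho:A\to\mathbb{R}^{k}$ and a norm with $\ell|_{A}=\|\rho(\cdot)\|$, so $\|\rho(t_{a_{i}})\|=\lambda$; each $t_{b_{j}}$ commutes with $A$ and so acts on the median flat by a translation $\rho(t_{b_{j}})$ with $\|\rho(t_{b_{j}})\|\le\lambda$, and the lantern relation yields $\rho(t_{a_{1}})+\rho(t_{a_{2}})+\rho(t_{a_{3}})+\rho(t_{a_{4}})=\rho(t_{b_{1}})+\rho(t_{b_{2}})+\rho(t_{b_{3}})$. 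One then runs the combinatorial argument of Bridson's proof of \cite[Theorem~B]{MR2665003}, which derives a contradiction from this data by further exploiting the relations among the seven twists in $\mathrm{Mod}(\Sigma)$ --- each boundary twist commutes with every other boundary twist and with every interior twist, each interior twist commutes with the product of the other two, the seven twists are all conjugate with equal translation length $\lambda$, and further lantern relations sharing curves with $L$ are available --- forcing $\lambda=0$. Hence $\ell(\tau)=0$, so $\tau$ is elliptic, and with the reduction above this proves the theorem.

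\emph{Main obstacle.} The genuine work lies not in the dichotomy or the reduction, which are bookkeeping once Theorem~\ref{thm:MedianIntro} is in hand, but in checking that the median-set machinery --- the product decomposition of median sets, its invariance under the relevant normalisers, and the resulting flat torus structure for commuting families of \emph{arbitrary} isometries --- reproduces exactly the CAT(0) inputs of Bridson's combinatorial argument, so that his extraction of a contradiction from the lantern relation carries over verbatim with ``minimising set'' replaced by ``median set'' and ``semisimple'' deleted. This is precisely the content of the paper's guiding metatheorem, so no new idea beyond Theorem~\ref{thm:MedianIntro} is expected to be required.
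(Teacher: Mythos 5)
Your opening dichotomy (elliptic if and only if the stable translation length vanishes) is a correct reading of Theorem \ref{thm:MedianIntro}, and the general slogan ``run Bridson's argument with median sets in place of minimising sets'' is indeed the intended strategy. But the proof as written has a genuine gap at its heart: the contradiction in the non-separating case is never derived. You assemble the data $\|\rho(t_{a_i})\|=\lambda$, $\|\rho(t_{b_j})\|\le\lambda$ and the lantern identity, and then defer to ``the combinatorial argument of Bridson's proof of Theorem B''. That is not what the proof of \cite[Theorem B]{MR2665003} does: Bridson's argument is that a \emph{central} hyperbolic element of a group acting on a CAT(0) space forces a homomorphism from that group to $\mathbb{R}$ which is non-zero on it (his Proposition 2.3), while a Dehn twist has finite order in the abelianisation of its centraliser when the genus is at least $3$ (his Proposition 2.4, which is where the lantern relation enters, purely group-theoretically). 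There is no norm-inequality juggling with seven conjugate twists to appeal to, and the data you list yields no contradiction by itself: four vectors of norm $\lambda$ can perfectly well sum to a vector of norm at most $3\lambda$. Moreover, the tool you invoke is stronger than anything the paper proves. Theorem \ref{intro:FTT} gives an action of a finite-index subgroup on a median flat; it does not give a homomorphism $\rho:A\to\mathbb{R}^k$ and a norm with $\ell|_A=\|\rho(\cdot)\|$, nor ``translation vectors'' for the interior twists that add along the lantern relation (the $t_{b_j}$ do not even commute with one another). An isometry of a median flat need not be a translation -- the flat of Figure \ref{PL} has isometry group $\mathbb{Z}_2\wr D_\infty$ -- and the only homomorphism available is the Busemann morphism of Proposition \ref{prop:Busemann}, defined only on a finite-index subgroup; passing to that subgroup, or replacing the $t_{b_j}$ by powers lying in the invariant-flat subgroup $H$, destroys the lantern relation your argument relies on. Your separating-curve reduction suffers from the same issues.

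For comparison, the paper's proof avoids all metric bookkeeping and makes no distinction between separating and non-separating curves: assume $\tau$ is not elliptic, apply Theorem \ref{thm:MedianIntro} with $G=C(\tau)$ (legitimate since $\langle\tau\rangle$ is central, hence normal, in $C(\tau)$) to get an action of $C(\tau)$ on the finite-dimensional median flat factor $F$ of $\mathrm{Med}(\tau)$ on which $\tau$ translates non-trivially; by \cite[Proposition 2.3]{MR2665003} this forces $\tau$ to have infinite order in the abelianisation of $C(\tau)$, contradicting \cite[Proposition 2.4]{MR2665003}, which is where the genus $\ge 3$ hypothesis and the lantern relation are consumed as a black-box fact about $\mathrm{Mod}(\Sigma)$. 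If you want to salvage your outline, the fix is to replace your translation-length bookkeeping by this abelianisation-of-centraliser argument: the only cubical input needed is that $C(\tau)$ acts on a finite-dimensional flat with $\tau$ acting as a non-trivial translation, which is exactly what Theorem \ref{thm:MedianIntro} supplies.
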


\noindent
It is still an open problem to determine whether or not mapping class groups act fixed-point freely on CAT(0) cube complexes, and Theorem \ref{thm:MCGintro} might be a first step towards a solution. A positive answer would imply that such groups do not satisfy Kazhdan's property (T) according to \cite{MR1459140}, another open problem.

\paragraph{Organisation of the article.} Section \ref{section:pre} is dedicated to the definitions, basic properties and preliminary lemmas which will be used in the rest of the article. Next, in Section~\ref{section:PseudoLines}, we show that isometry groups of (cubulations of) median flats are quite specific and we show how to associate a median flat to any loxodromic isometry. In Section~\ref{section:MedianSets}, we prove Theorem \ref{thm:MedianIntro}, the main result of the article. Finally, all the applications of Theorem \ref{thm:MedianIntro} mentioned above are proved in Section \ref{section:applications}.

\paragraph{Acknowledgments.} I am grateful to Thomas Delzant for his interesting comments on a previous version of this article, and to the anonymous referee for their comments. This work was supported by a public grant as part of the Fondation Math\'ematique Jacques Hadamard.

\section{Preliminaries}\label{section:pre}

\subsection{Cube complexes, hyperplanes, projections}

\noindent
A \textit{cube complex} is a CW complex constructed by gluing together cubes of arbitrary (finite) dimension by isometries along their faces. It is \textit{nonpositively curved} if the link of any of its vertices is a simplicial \textit{flag} complex (i.e. $n+1$ vertices span a $n$-simplex if and only if they are pairwise adjacent), and \textit{CAT(0)} if it is nonpositively curved and simply-connected. See \cite[page 111]{MR1744486} for more information.

\medskip \noindent
Fundamental tools when studying CAT(0) cube complexes are \emph{hyperplanes}. Formally, a \textit{hyperplane} $J$ is an equivalence class of edges with respect to the transitive closure of the relation identifying two parallel edges of a square. Notice that a hyperplane is uniquely determined by one of its edges, so if $e \in J$ we say that $J$ is the \textit{hyperplane dual to $e$}. Geometrically, a hyperplane $J$ is rather thought of as the union of the \textit{midcubes} transverse to the edges belonging to $J$ (sometimes referred to as its \emph{geometric realisation}). See Figure \ref{figure27}. The \textit{carrier} $N(J)$ of a hyperplane $J$ is the union of the cubes intersecting (the geometric realisation of) $J$. 
\begin{figure}
\begin{center}
\includegraphics[trim={0 13cm 10cm 0},clip,scale=0.45]{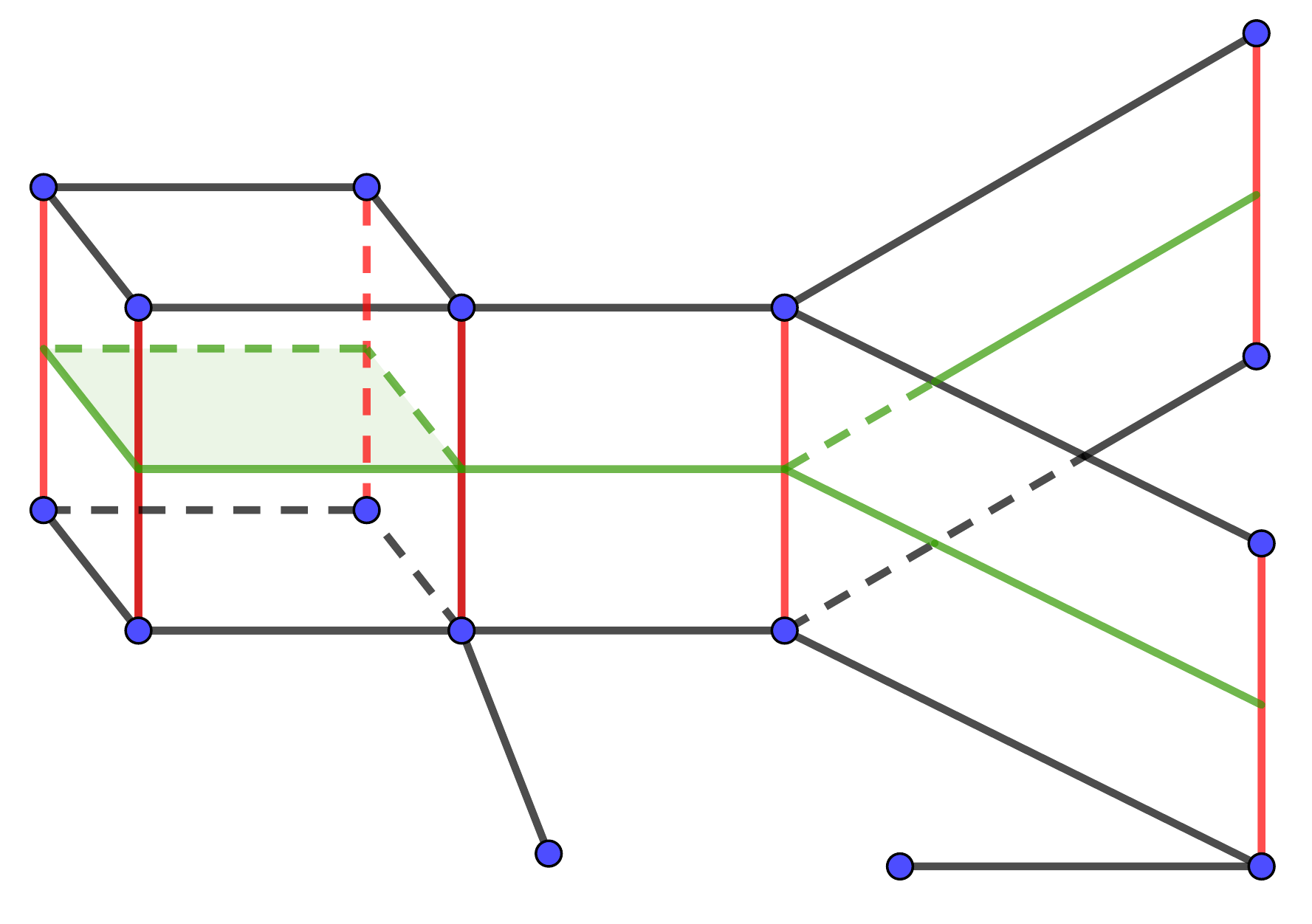}
\caption{A hyperplane (in red) and the associated union of midcubes (in green).}
\label{figure27}
\end{center}
\end{figure}

\medskip \noindent
There exist several metrics naturally defined on a CAT(0) cube complex. In this article, we are only interested in the graph metric defined on its one-skeleton, referred to as its \emph{combinatorial metric}. In fact, from now on, we will identify a CAT(0) cube complex with its one-skeleton, thought of as a collection of vertices endowed with a relation of adjacency. In particular, when writing $x \in X$, we always mean that $x$ is a vertex of $X$. 

\medskip \noindent
The following theorem will be often used along the article without mentioning it.

\begin{thm}\emph{\cite{MR1347406}}
Let $X$ be a CAT(0) cube complex.
\begin{itemize}
	\item If $J$ is a hyperplane of $X$, the graph $X \backslash \backslash J$ obtained from $X$ by removing the (interiors of the) edges of $J$ contains two connected components. They are convex subgraphs of $X$, referred to as the \emph{halfspaces} delimited by $J$.
	\item A path in $X$ is a geodesic if and only if it crosses each hyperplane at most once.
	\item For every $x,y \in X$, the distance between $x$ and $y$ coincides with the cardinality of the set $\mathcal{W}(x,y)$ of the hyperplanes separating them.
\end{itemize}
\end{thm}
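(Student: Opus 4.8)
The plan is to reduce everything to the first bullet (separation) and to the fact that a geodesic crosses each hyperplane at most once; granting those two, the distance formula and the convexity of halfspaces are short bookkeeping arguments.

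First I would bring in disc diagrams. Since $X$ is simply connected, every combinatorial loop bounds a singular square disc diagram $D\to X$, and one can take $D$ of minimal area. In such a reduced diagram the dual curves (the tracks of the midcubes) form embedded arcs and circles, no dual curve self-crosses, and two of them cross at most once; this is where nonpositive curvature enters, since the flag condition on links of $X$ is exactly what makes the area-reducing moves available (filling the missing square at a bigon or at a self-crossing corner). Transporting this to $X$ — a dual curve of $D$ maps into a single hyperplane — one gets that each hyperplane $J$ of $X$ is embedded and two-sided, so its carrier $N(J)$ splits as $J'\times[0,1]$ for a lower-dimensional CAT(0) cube complex $J'$, which is connected; hence $N(J)\setminus J$ has exactly two components $N_0,N_1$.

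For the first bullet I would fix an edge $e_0\in J$ with endpoints $p_0\in N_0$, $p_1\in N_1$, and let $C_\epsilon$ be the set of vertices joinable to $N_\epsilon$ by an edge-path avoiding $J$. Following a geodesic from an arbitrary vertex $v$ to $p_0$ and looking at its first crossing of $J$ shows $C_0\cup C_1=X^{(0)}$; and $C_0\cap C_1=\varnothing$, because a vertex in both would produce a loop crossing $J$ an odd number of times (concatenate the two avoiding paths with a single $I$-fibre of $N(J)\cong J'\times[0,1]$), whereas in a reduced disc diagram the boundary edges mapping into $J$ are paired up by the dual curves of $D$ carrying $J$, hence are even in number. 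For the second bullet, suppose a geodesic $P$ crossed some hyperplane $J$ twice; taking two consecutive crossings and closing up the intervening subpath with a path through the opposite component of $X\setminus J$ gives a loop crossing $J$ exactly twice, and analysing the unique dual curve of the associated reduced diagram that carries $J$ — it separates the disc, and any dual curve crossing it would produce two edges of $P$ dual to a common hyperplane — forces a shortening of $P$, a contradiction. (Alternatively, one can run a direct minimal-counterexample argument on the combinatorial distance between the two crossings along $P$, again using the flag condition to produce a square across which the corner of $P$ can be pushed.)

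Granting the first two bullets, the third is formal: any edge-path from $x$ to $y$ crosses every hyperplane of $\mathcal{W}(x,y)$ at least once, so $d(x,y)\geq|\mathcal{W}(x,y)|$; conversely a geodesic from $x$ to $y$ crosses each hyperplane at most once, so its $d(x,y)$ edges are dual to $d(x,y)$ pairwise distinct hyperplanes, each of which separates $x$ from $y$ (being crossed exactly once), giving $d(x,y)\leq|\mathcal{W}(x,y)|$. The convexity of the two halfspaces follows at the same time, since a geodesic between two vertices of one halfspace cannot leave it and come back without crossing $J$ twice. I expect the genuine difficulty to be entirely concentrated in the disc-diagram analysis — embeddedness and two-sidedness of hyperplanes and the no-double-crossing property — which is precisely the place where the nonpositive curvature hypothesis (the flag condition) is used; the rest is combinatorial accounting.
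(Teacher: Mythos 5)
This statement is not proved in the paper at all: it is imported as background from Sageev's article \cite{MR1347406} and used silently afterwards, so there is no internal argument to compare yours with. On its own terms, your outline follows the standard disc-diagram route to these foundational facts (closer to the treatments of Casson--Wise than to Sageev's original argument), and its architecture is sound: simple connectivity supplies square disc diagrams, minimal area together with the flag condition yields the reduced-diagram lemmas (no self-crossing dual curves, no bigons), from which one gets embeddedness and two-sidedness of hyperplanes and the product structure $N(J) \cong J' \times [0,1]$; the parity argument for separation, the deduction of the distance formula from the first two bullets, and the convexity of halfspaces are all correctly reduced to bookkeeping, and your order of proof (separation before the geodesic criterion) is the right one, since the closing path in the opposite halfspace needs the first bullet.

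The one step that does not work as written is the parenthetical analysis in your proof of the second bullet. In the reduced diagram $D$ for the loop $e_1 \cdot M \cdot e_2 \cdot \gamma$ (where $e_1,e_2$ are the two edges of $P$ dual to $J$, $M$ is the intervening subpath and $\gamma$ is the closing path in the opposite halfspace), a dual curve crossing the $J$-curve $c$ does \emph{not} produce two edges of $P$ dual to a common hyperplane: since it crosses $c$ exactly once, its two ends lie on the two boundary arcs determined by $e_1$ and $e_2$, i.e.\ one end on $M$ and one end on $\gamma$, so no repetition along $P$ arises and no shortening follows from that remark. The correct conclusion at exactly this spot is a counting argument: the squares traversed by $c$ are crossed by pairwise distinct dual curves (reducedness forbids a bigon with $c$), each of which has a distinct end-edge on $M$, so the number $t$ of such squares satisfies $t \leq |M|$; the side of this ladder of squares facing $\gamma$ is then a path in $D$, hence in $X$, from the initial vertex of $e_1$ to the terminal vertex of $e_2$ of length $t \leq |M| < |M|+2$, contradicting the geodesity of $P$. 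Alternatively, the minimal-counterexample square-pushing argument you mention at the end is the classical fix. With that step repaired, the proposal is a complete and correct proof scheme for the cited theorem.
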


\medskip \noindent
Another useful tool when studying CAT(0) cube complexes is the notion of \emph{projection} onto on a convex subcomplex, which is defined by the following proposition (see \cite[Lemma 13.8]{MR2377497}):

\begin{prop}\label{projection}
Let $X$ be a CAT(0) cube complex, $C \subset X$ a convex subcomplex and $x \in X \backslash C$ a vertex. There exists a unique vertex $y \in C$ minimizing the distance to $x$. Moreover, for any vertex of $C$, there exists a geodesic from it to $x$ passing through $y$.
\end{prop}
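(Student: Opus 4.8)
The plan is to reduce everything to the existence of median points, which is recalled in Section~\ref{section:Median}: for any three vertices $a,b,c$ of a CAT(0) cube complex $X$ there is a vertex $m(a,b,c)$ lying on a combinatorial geodesic between each pair among $a,b,c$. Combined with the definition of convexity for a subcomplex --- namely that every combinatorial geodesic joining two vertices of $C$ is contained in $C$ --- this yields the existence of the minimiser, its uniqueness and the ``moreover'' part simultaneously.

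First I would fix a vertex $y \in C$ minimising the distance $d(x,\cdot)$ among all vertices of $C$; such a vertex exists because these distances are non-negative integers. I then claim that this $y$ is the unique minimiser and that it already satisfies the ``moreover'' statement. Let $z \in C$ be arbitrary and set $m := m(x,y,z)$. Since $m$ lies on a geodesic between $y$ and $z$, concatenating a geodesic from $y$ to $m$ with a geodesic from $m$ to $z$ produces a combinatorial geodesic from $y$ to $z$; as $y,z \in C$ and $C$ is convex, this geodesic lies in $C$, so $m \in C$. On the other hand $m$ lies on a geodesic between $x$ and $y$, hence $d(x,m) + d(m,y) = d(x,y)$, and in particular $d(x,m) \leq d(x,y)$. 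By minimality of $d(x,y)$ over $C$ this forces $d(x,m) = d(x,y)$, whence $d(m,y) = 0$, i.e. $m = y$. Therefore $y = m(x,y,z)$ lies on a geodesic between $x$ and $z$, which means $d(x,z) = d(x,y) + d(y,z)$ and that there is a geodesic from $x$ to $z$ passing through $y$. Taking for $z$ an arbitrary vertex of $C$ gives the ``moreover'' assertion; taking for $z$ any other minimiser $y'$, the identity $d(x,y') = d(x,y) + d(y,y')$ together with $d(x,y') = d(x,y)$ gives $d(y,y') = 0$, so $y' = y$, which is uniqueness.

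There is essentially no serious obstacle: the whole argument rests on the existence of median points and on unwinding the definition of a convex subcomplex, so the only points requiring care are that the median of a triple of vertices is again a vertex and that the geodesics involved are the combinatorial ones --- both of which are built into the conventions fixed earlier in this section. (Alternatively one could argue directly with hyperplanes, showing that a vertex $y \in C$ closest to $x$ has the property that no hyperplane separating $x$ from $y$ crosses $C$, and deducing $\mathcal{W}(x,z) = \mathcal{W}(x,y) \sqcup \mathcal{W}(y,z)$ for every $z \in C$; but passing through median points is shorter and is the route I would take.)
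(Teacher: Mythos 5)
Your argument is correct: the median point $m(x,y,z)$ lies in $C$ by convexity (it is on a geodesic between $y,z\in C$), minimality of $d(x,y)$ forces $m=y$, and the three median equalities then give both the ``moreover'' statement and uniqueness. Note, though, that the paper does not prove this proposition at all --- it simply cites \cite[Lemma 13.8]{MR2377497} --- so there is no in-paper proof to match; the argument behind that reference (and behind the companion Lemma~\ref{lem:HypProj}) is the hyperplane one you sketch in parentheses, namely that every hyperplane separating $x$ from a closest vertex $y\in C$ must separate $x$ from all of $C$, whence $\mathcal{W}(x,z)=\mathcal{W}(x,y)\sqcup\mathcal{W}(y,z)$ for $z\in C$. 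Your median-based route is a clean, self-contained alternative; the only caveat is organisational rather than mathematical: the median operator is only recalled in Section~\ref{section:Median}, after this proposition, so if this proof were inserted here one should either cite the classical median property of CAT(0) cube complexes (\cite{mediangraphs}, \cite[Proposition 2.21]{MR2413337}) directly --- there is no circularity, since that property is not derived from this proposition --- or prefer the hyperplane argument, which uses only the tools already introduced at this point. The hyperplane route also has the mild advantage of simultaneously establishing Lemma~\ref{lem:HypProj}, which the paper needs later.
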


\noindent
Below, we record a couple of statements related to projections for future use. Proofs can be found in \cite[Lemma 13.8]{MR2377497} and \cite[Proposition 2.7]{article3} respectively.

\begin{lemma}\label{lem:HypProj}
Let $X$ be a CAT(0) cube complex, $Y \subset X$ a convex subcomplex and $x \in X$ a vertex. Any hyperplane separating $x$ from its projection onto $Y$ separates $x$ from $Y$. 
\end{lemma}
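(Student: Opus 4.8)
The plan is to prove Lemma~\ref{lem:HypProj} directly from Proposition~\ref{projection} together with the characterization of geodesics via hyperplanes. Let $y$ denote the projection of $x$ onto $Y$, and let $z \in Y$ be an arbitrary vertex. Suppose $J$ is a hyperplane separating $x$ from $y$; I want to show $J$ separates $x$ from every vertex of $Y$, equivalently that $J$ does not separate $x$ from $z$, i.e. $z$ lies on the same side of $J$ as $x$.

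First I would invoke Proposition~\ref{projection} to obtain a geodesic $\gamma$ from $z$ to $x$ that passes through $y$; concatenating the subpath from $z$ to $y$ with the subpath from $y$ to $x$ gives this $\gamma$, and since $\gamma$ is geodesic it crosses each hyperplane at most once (the second bullet of the cited theorem). The hyperplane $J$ separates $x$ from $y$, so $J$ is crossed by the subpath of $\gamma$ from $y$ to $x$. Since $\gamma$ crosses $J$ at most once in total, $J$ is not crossed by the subpath from $z$ to $y$, which means $z$ and $y$ lie on the same side of $J$. Because $J$ separates $x$ from $y$, the vertices $x$ and $y$ are on opposite sides, hence $x$ and $z$ are also on opposite sides; that is, $J$ separates $x$ from $z$. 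As $z \in Y$ was arbitrary, $J$ separates $x$ from $Y$, which is the claim.

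I do not expect a genuine obstacle here: the argument is a short manipulation of the hyperplane-crossing criterion for geodesics and the key "projection lies on a geodesic" property from Proposition~\ref{projection}. The only point requiring a little care is the degenerate case $x \in Y$ (then there is nothing to prove, as no hyperplane separates $x$ from its projection $x$ itself) and, more pertinently, making sure the concatenation of the two geodesic segments through $y$ is itself a geodesic — but this is exactly what the "moreover" part of Proposition~\ref{projection} guarantees. So the proof is essentially the three sentences above, and the main thing to be careful about is simply citing Proposition~\ref{projection} for the right vertex of $Y$ rather than just for $y$.
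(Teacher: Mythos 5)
Your argument is correct and is essentially the standard proof: the paper itself does not spell out a proof (it refers to \cite[Lemma 13.8]{MR2377497}), and your derivation from Proposition~\ref{projection} together with the fact that a geodesic crosses each hyperplane at most once is exactly that argument. One caveat: the sentence in which you state the intermediate goal (``equivalently that $J$ does not separate $x$ from $z$, i.e.\ $z$ lies on the same side of $J$ as $x$'') asserts the opposite of what is needed and of what you then correctly prove --- the goal is that $J$ \emph{does} separate $x$ from $z$, i.e.\ $z$ lies on the same side of $J$ as $y$.
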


\begin{lemma}\label{lem:HypProjSeparate}
Let $X$ be a CAT(0) cube complex and $Y \subset X$ a convex subcomplex. For every vertices $x,y \in X$, the hyperplanes separating the projections of $x$ and $y$ onto $Y$ are precisely the hyperplanes separating $x$ and $y$ which cross $Y$. As a consequence the projection onto $Y$ is $1$-Lipschitz.
\end{lemma}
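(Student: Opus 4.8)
The plan is to prove the set-theoretic equality
$\mathcal{W}(x',y') = \mathcal{W}(x,y) \cap \{\text{hyperplanes crossing } Y\}$,
where $x'$ and $y'$ denote the projections of $x$ and $y$ onto $Y$, and then to read off the Lipschitz bound from the identity $d(u,v) = |\mathcal{W}(u,v)|$. The whole argument will rest on a single observation extracted from Lemma~\ref{lem:HypProj}: if a hyperplane $J$ crosses $Y$, then it cannot separate $x$ from its projection $x'$, for otherwise Lemma~\ref{lem:HypProj} would force $J$ to separate $x$ from all of $Y$, which is impossible since $J$ has vertices of $Y$ on both of its sides. Symmetrically, such a $J$ cannot separate $y$ from $y'$. (The degenerate case $x\in Y$, where $x'=x$, is trivial and can be ignored.)

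With this in hand, both inclusions are immediate. For $\mathcal{W}(x',y')\subseteq\mathcal{W}(x,y)\cap\{\text{crossing }Y\}$: any hyperplane $J$ separating $x'$ from $y'$ crosses $Y$ because $x',y'\in Y$ and $Y$ is convex, hence connected; then by the observation $J$ separates neither $x$ from $x'$ nor $y$ from $y'$, so $x$ lies on the $x'$-side of $J$ and $y$ on the $y'$-side, and therefore $J$ separates $x$ from $y$. For the reverse inclusion: if $J$ separates $x$ from $y$ and crosses $Y$, then again $J$ separates neither $x$ from $x'$ nor $y$ from $y'$, so $x'$ and $y'$ sit on the same respective sides of $J$ as $x$ and $y$, whence $J$ separates $x'$ from $y'$.

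Combining the two inclusions yields the claimed description of $\mathcal{W}(x',y')$; in particular $\mathcal{W}(x',y')\subseteq\mathcal{W}(x,y)$, so $d(x',y') = |\mathcal{W}(x',y')| \leq |\mathcal{W}(x,y)| = d(x,y)$, which is exactly the assertion that the projection onto $Y$ is $1$-Lipschitz. I do not expect a real obstacle here: the statement is essentially a repackaging of Lemma~\ref{lem:HypProj}, and the only place demanding care is the bookkeeping of which side of each hyperplane the four points $x,y,x',y'$ lie on, together with the remark that "$J$ crosses $Y$" is precisely the negation of "$J$ separates some point from $Y$" because $Y$ is connected.
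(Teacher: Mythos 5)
Your proof is correct, and it is essentially the standard argument: the paper itself does not prove this lemma but only cites \cite[Proposition 2.7]{article3}, and your derivation from Lemma~\ref{lem:HypProj} (a hyperplane crossing $Y$ has vertices of $Y$ on both sides, hence cannot separate $x$ from $\mathrm{proj}_Y(x)$, plus the remark that a hyperplane separating two vertices of the connected subcomplex $Y$ must be dual to an edge of $Y$) is exactly the expected route. The side-tracking and the Lipschitz conclusion via $d(u,v)=|\mathcal{W}(u,v)|$ are all in order, including your handling of the degenerate case $x\in Y$.
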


\subsection{Classification of isometries}\label{section:ClassIsom}

\noindent
Following \cite{HaglundAxis}, isometries of CAT(0) cube complexes can be classified into three families. Namely, given a CAT(0) cube complex $X$ and an isometry $g \in \mathrm{Isom}(X)$, 
\begin{itemize}
	\item $g$ is \emph{elliptic} if its orbits are bounded;
	\item $g$ is \emph{inverting} if its orbits are unbounded and if one of its powers \emph{inverts} a hyperplane, i.e. it stabilises it and swap the halfspaces it delimits;
	\item $g$ is \emph{loxodromic} if there exists a bi-infinite geodesic on which $g$ acts by translation of length $\|g\| := \min \{ d(x,gx) \mid x \in X\}$, referred to as an \emph{axis} of $g$. 
\end{itemize}
More precisely, Haglund showed that, if $g$ is neither elliptic nor inverting, then, for every vertex $z$ of the \emph{minimising set} 
$$\mathrm{Min}(g):= \left\{ x \in X \mid d(x,gx)= \min\limits_{y \in X} d(y,gy)\right\}$$
and for every geodesic $[z,gz]$ between $z$ and $gz$, the concatenation $\bigcup\limits_{k \in \mathbb{Z}} g^k \cdot [z,gz]$ is a geodesic on which $g$ acts by translations of length $\|g\|$. 

\medskip \noindent
We emphasize that, as shown by \cite[Proposition 5.2]{HaglundAxis}, if $g$ stabilises a geodesic and acts on it by translations of positive lengths, then it is necessarily an axis of $g$, that is to say, $g$ automatically acts on this geodesic by translations of length $\|g\|$. 

\medskip \noindent
Finally, let us mention that an isometry is elliptic if and only if it stabilises a (finite-dimensional) cube (see for instance \cite[Theorem 11.7]{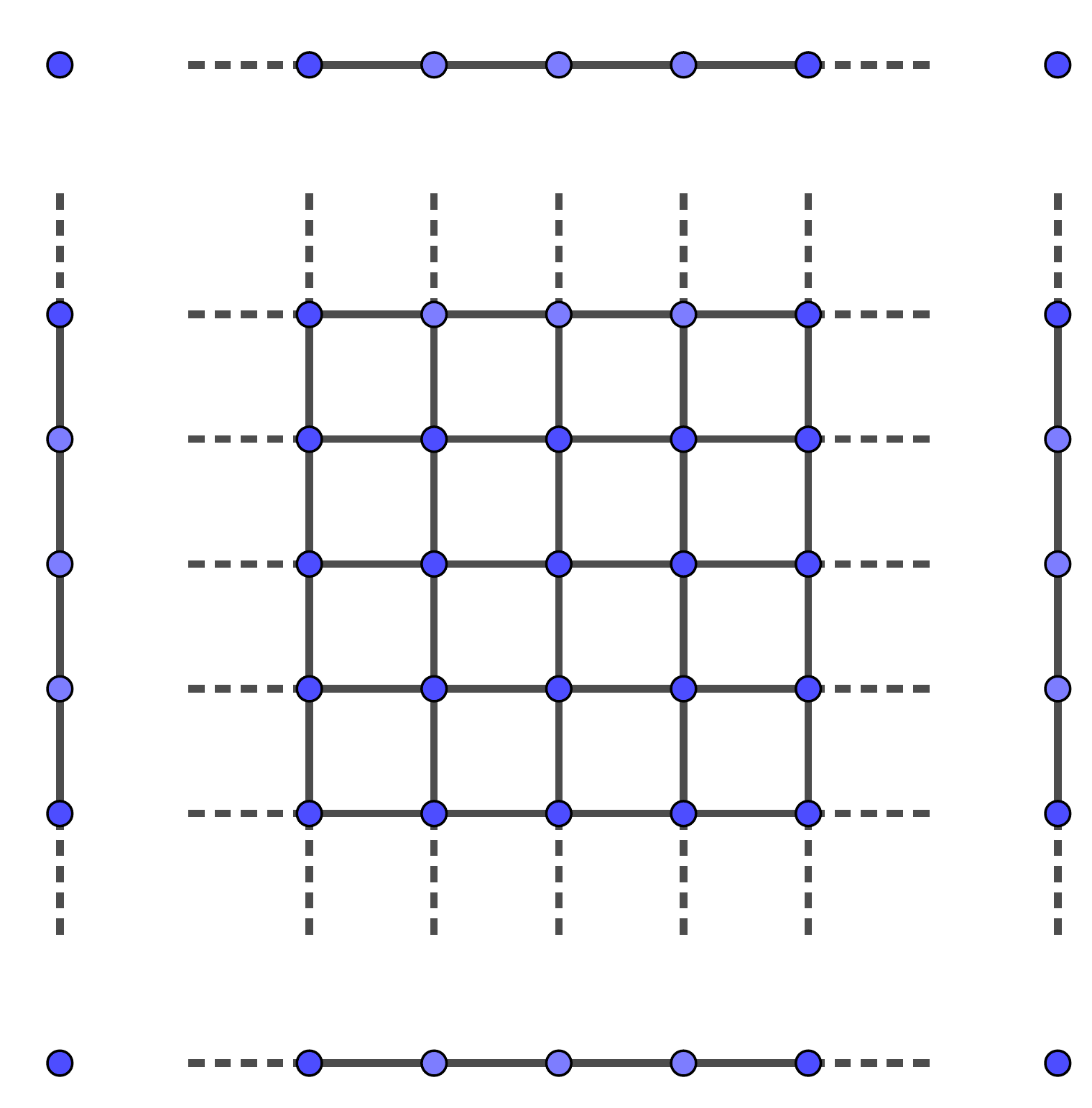} for a proof). As a consequence, an alternative characterisation is that an isometry is elliptic if and only if one of its orbits is finite.

\begin{remark}
Given a collection of isometries, up to subdividing our cube complex we can always assume that isometries with bounded orbits fix vertices and that isometries with unbounded orbits are loxodromic. Consequently, it may be tempting to restrict ourself to this situation, and this would be sufficient to recover several of the applications mentioned in Section \ref{section:applications}. However, it is worth noticing that, if $g$ is a loxodromic isometry which stabilises a median subalgebra $Y$ (see Section \ref{section:Median} for a definition), then the isometry of $Y$ (when thought of as a CAT(0) cube complex on its own) induced by $g$ may be inverting. (For instance, set $X= \mathbb{R}^2$ and $Y= \mathbb{R} \times \{1\} \cup \mathbb{R} \times \{-1\}$, and let $g$ be the product of the translation $(1,0)$ with the reflection along $\mathbb{R} \times \{0\}$.) Therefore, considering inverting isometries and elliptic isometries without fixed-vertices is fundamental in the arguments used in Sections \ref{section:FTT} and \ref{section:Polycyclic}. Another reason to consider all the possible isometries is that, in explicit constructions of CAT(0) cube complexes, it may be more natural to work with the cube complex itself rather than its subdivision.
\end{remark}

\subsection{Wallspaces, cubical quotients}\label{section:WallQuotient}

\noindent
Given a set $X$, a \emph{wall} $\{A,B\}$ is a partition of $X$ into two non-empty subsets $A,B$, referred to as \emph{halfspaces}. Two points of $X$ are \emph{separated} by a wall if they belong to two distinct subsets of the partition. 

\medskip \noindent
A \emph{wallspace} $(X, \mathcal{W})$ is the data of a set $X$ and a collection of walls $\mathcal{W}$ such that any two points are separated by only finitely many walls. Such a space is naturally endowed with the pseudo-metric
$$d : (x,y) \mapsto \text{number of walls separating $x$ and $y$}.$$
As shown in \cite{ChatterjiNiblo, NicaCubulation}, there is a natural CAT(0) cube complex associated to any wallspace. More precisely, given a wallspace $(X, \mathcal{W})$, define an \emph{orientation} $\sigma$ as a collection of halfspaces such that:
\begin{itemize}
	\item for every $\{A,B\} \in \mathcal{W}$, $\sigma$ contains exactly one subset among $\{A,B\}$;
	\item if $A$ and $B$ are two halfspaces satisfying $A \subset B$, then $A \in \sigma$ implies $B \in \sigma$.
\end{itemize}
Roughly speaking, an orientation is a coherent choice of a halfspace in each wall. As an example, if $x \in X$, then the set of halfspaces containing $x$ defines an orientation. Such an orientation is referred to as a \emph{principal orientation}. Notice that, because any two points of $X$ are separated by only finitely many walls, two principal orientations are always \emph{commensurable}, i.e. their symmetric difference is finite.
\begin{figure}
\begin{center}
\includegraphics[trim={0 18cm 27cm 0},clip,scale=0.45]{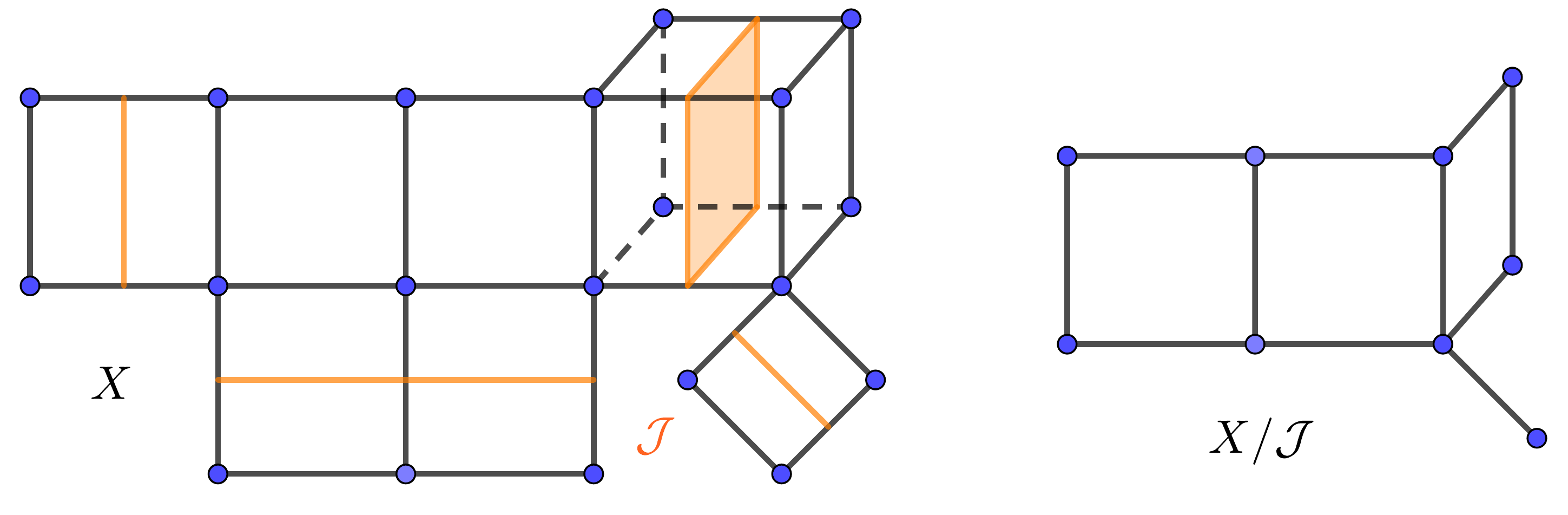}
\caption{A CAT(0) cube complex and one of its cubical quotients.}
\label{quotient}
\end{center}
\end{figure}

\medskip \noindent
The \emph{cubulation} of $(X, \mathcal{W})$ is the cube complex
\begin{itemize}
	\item whose vertices are the orientations within the commensurability class of principal orientations;
	\item whose edges link two orientations if their symmetric difference has cardinality two;
	\item whose $n$-cubes fill in all the subgraphs isomorphic to one-skeleta of $n$-cubes.
\end{itemize}
See Figure \ref{quotient} for an example.

\begin{definition}
Let $X$ be a CAT(0) cube complex and $\mathcal{J}$ be a collection of hyperplanes. Let $\mathcal{W}(\mathcal{J})$ denote the set of partitions of $X$ induced by the hyperplanes of $\mathcal{J}$. The \emph{cubical quotient} $X/ \mathcal{J}$ of $X$ by $\mathcal{J}$ is the cubulation of the wallspace $(X, \mathcal{W}(\mathcal{J})^c)$. 
\end{definition}

\noindent
See Figure \ref{quotient} for an example. It can be shown that $X/ \mathcal{J}$ can be obtained from $X$ in the following way. Given a hyperplane $J \in \mathcal{J}$, cut $X$ along $J$ to obtain $X \backslash \backslash J$. Each component of $X \backslash \backslash J$ contains a component of $N(J) \backslash \backslash J= N_1 \sqcup N_2$. Notice that $N_1$ and $N_2$ are naturally isometric: associate to each vertex of $N_1$ the vertex of $N_2$ which is adjacent to it in $N(J)$. Now, glue the two components of $X \backslash \backslash J$ together by identifying $N_1$ and $N_2$. The cube complex obtained is still CAT(0) and its set of hyperplanes corresponds naturally to $\mathcal{H}(X) \backslash \{J\}$ if $\mathcal{H}(X)$ denotes the set of hyperplanes of $X$. Thus, the same construction can repeated with a hyperplane of $\mathcal{J} \backslash \{J\}$, and so on. The cube complex which is finally obtained is the cubical quotient $X/ \mathcal{J}$.

\medskip \noindent
Notice that a quotient map is naturally associated to a cubical quotient, namely:
$$\left\{ \begin{array}{ccc} X & \to & X/ \mathcal{J} \\ x & \mapsto & \text{principal orientation defined by $x$} \end{array} \right..$$
The next lemma relates the distance between two vertices of $X$ to the distance between their images in $X/ \mathcal{J}$.

\begin{lemma}\label{lem:DistQuotient}
Let $X$ be a CAT(0) cube complex and $\mathcal{J}$ a collection of hyperplanes. If $\pi : X \to X/ \mathcal{J}$ denotes the canonical map, then
$$d_{X/ \mathcal{J}} \left( \pi(x), \pi(y) \right) = \# \left( \mathcal{W}(x,y) \backslash \mathcal{J} \right)$$
for every vertices $x,y \in X$. \qed
\end{lemma}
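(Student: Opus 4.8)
The plan is to unwind the definition of the cubical quotient and then appeal to the standard dictionary between wallspaces and their cubulations. By definition, $X/\mathcal{J}$ is the cubulation of the wallspace $(X,\mathcal{W}(\mathcal{J})^c)$, whose walls are the partitions of (the vertex set of) $X$ induced by the hyperplanes of $X$ that do not belong to $\mathcal{J}$; and $\pi$ sends a vertex $x$ to the principal orientation $\sigma_x$ consisting of all the halfspaces of these walls that contain $x$. Since $(X,\mathcal{W}(\mathcal{J})^c)$ is a wallspace, $x$ and $y$ are separated by only finitely many of these walls, so $\sigma_x$ and $\sigma_y$ are principal orientations at finite symmetric difference from one another; in particular both are genuine vertices of $X/\mathcal{J}$.

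Next I would identify the walls of $(X,\mathcal{W}(\mathcal{J})^c)$ that separate $x$ from $y$. Such a wall $\{A,B\}$ is, by construction, the partition induced by some hyperplane $J\in\mathcal{H}(X)\setminus\mathcal{J}$, and it separates $x$ from $y$ precisely when $J$ separates $x$ from $y$ in $X$. Since a hyperplane of a CAT(0) cube complex is determined by the partition of its vertex set that it induces, this gives a bijection between the walls of $(X,\mathcal{W}(\mathcal{J})^c)$ separating $x$ from $y$ and the hyperplanes belonging to $\mathcal{W}(x,y)\setminus\mathcal{J}$; in particular there are exactly $\#(\mathcal{W}(x,y)\setminus\mathcal{J})$ of them, a finite number.

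Finally I would invoke the fact, due to \cite{ChatterjiNiblo, NicaCubulation}, that the principal-orientation map of a wallspace is distance-preserving from the wall pseudo-metric to the combinatorial metric of its cubulation: the combinatorial distance between $\sigma_x=\pi(x)$ and $\sigma_y=\pi(y)$ in $X/\mathcal{J}$ equals the number of walls of $(X,\mathcal{W}(\mathcal{J})^c)$ separating $x$ from $y$. Combined with the count of the previous paragraph, this yields $d_{X/\mathcal{J}}(\pi(x),\pi(y))=\#(\mathcal{W}(x,y)\setminus\mathcal{J})$. I do not expect a genuine obstacle: the statement is essentially a repackaging of the wallspace-to-cube-complex correspondence, and the only points requiring a little care are that $\pi(x)$ and $\pi(y)$ really are vertices of $X/\mathcal{J}$ (handled above via the finiteness axiom of wallspaces) and the matching of the separating walls with $\mathcal{W}(x,y)\setminus\mathcal{J}$, which uses that distinct hyperplanes of $X$ induce distinct walls.
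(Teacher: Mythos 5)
Your proof is correct, and it coincides with what the paper intends: the lemma is stated there without proof (as an immediate consequence of the construction of the cubical quotient), and your argument is exactly the expected unwinding — identify the walls of $(X,\mathcal{W}(\mathcal{J})^c)$ separating $x$ and $y$ with the hyperplanes in $\mathcal{W}(x,y)\setminus\mathcal{J}$ (using that distinct hyperplanes induce distinct vertex partitions, e.g.\ via dual edges) and invoke the standard fact that principal orientations realise the wall pseudo-metric in the cubulation.
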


\noindent
Recall that, for every vertices $x,y \in X$, the set $\mathcal{W}(x,y)$ denotes the collection of the hyperplanes of $X$ separating $x$ and $y$.

\subsection{Roller boundary}

\noindent
Let $X$ be a CAT(0) cube complex. An \emph{orientation} of $X$ is an orientation of the wallspace $(X, \mathcal{W}(\mathcal{J}))$, as defined in the previous section, where $\mathcal{J}$ is the set of all the hyperplanes of $X$. The \emph{Roller compactification} $\overline{X}$ of $X$ is the set of the orientations of $X$. Usually, we identify $X$ with the image of the embedding
$$\left\{ \begin{array}{ccc} X & \to & \overline{X} \\ x & \mapsto & \text{principal orientation defined by $x$} \end{array} \right.$$
and we define the \emph{Roller boundary} of $X$ by $\mathfrak{R}X:= \overline{X} \backslash X$. 

\medskip \noindent
The Roller compactification is naturally endowed with a topology via the inclusion $\overline{X} \subset 2^{ \{ \text{halfspaces} \}}$ where $2^{ \{ \text{halfspaces} \}}$ is endowed with the product topology. Otherwise saying, a sequence of orientations $(\sigma_n)$ converges to $\sigma \in \overline{X}$ if, for every finite collection of halfspaces $\mathcal{D}$, there exists some $N \geq 0$ such that $\sigma_n \cap \mathcal{D}=\sigma \cap \mathcal{D}$ for every $n \geq N$. 

\medskip \noindent
The Roller compactification is also naturally a cube complex. Indeed, if we declare that two orientations are linked by an edge if their symmetric difference has cardinality two and if we declare that any subgraph isomorphic to the one-skeleton of an $n$-cube is filled in by an $n$-cube for every $n \geq 2$, then $\overline{X}$ is a disjoint union of CAT(0) cube complexes. Each such component is referred to as a \emph{cubical component} of $\overline{X}$. See Figure \ref{Roller} for an example. Notice that the distance (possibly infinite) between two vertices of $\overline{X}$ coincides with the number of hyperplanes which separate them, if we say that a hyperplane $J$ separates two orientations when they contain different halfspaces delimited by $J$. 
\begin{figure}
\begin{center}
\includegraphics[trim={0 12cm 41cm 0},clip,scale=0.45]{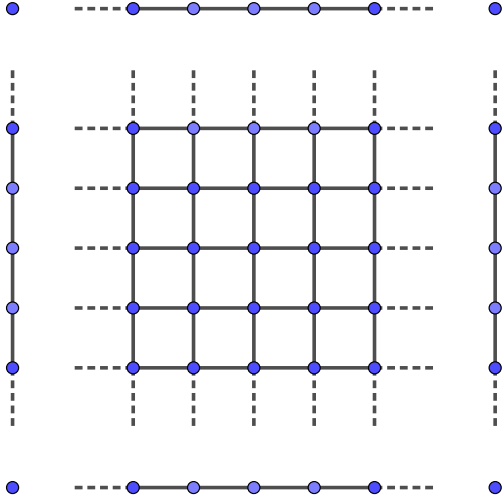}
\caption{Roller compactification of $\mathbb{R}^2$. It contains nine cubical components.}
\label{Roller}
\end{center}
\end{figure}

\medskip \noindent
An alternative description of the Roller boundary is the following. Let $X$ be a CAT(0) cube complex and $x \in X$ a basepoint. Denote by $\mathfrak{S}_xX$ the collection of the geodesic rays of $X$ starting from $x$ up to the equivalence relation which identifies two rays if they cross exactly the same hyperplanes. According to \cite[Proposition A.2]{article3}, the map
$$\left\{ \begin{array}{ccc} \mathfrak{S}_xX & \to & \mathfrak{R}X \\ r & \mapsto & \alpha(r) \end{array} \right.$$
is a bijection, where $\alpha(r)$ denotes the orientation containing the halfspaces in which the ray $r$ is eventually included. Notice that the metric in the cube complex $\overline{X}$ corresponds to the metric $(r_1,r_2) \mapsto \# \left( \mathcal{H}(r_1) \Delta \mathcal{H}(r_2) \right)$ in $\mathfrak{S}_xX$, where $\mathcal{H}(\cdot)$ denotes the collection of the hyperplanes which are crossed by the ray we are looking at and where $\Delta$ denotes the symmetric difference. 

\medskip \noindent
We conclude this subsection by proving a lemma which will be useful later.

\begin{lemma}\label{lem:SubEuclidean}
Let $X$ be a CAT(0) cube complex which is isomorphic to an isometrically embedded subcomplex of $\mathbb{R}^n$ for some $n \geq 1$. Then $\overline{X}$ has finitely many cubical components and one of them is bounded.
\end{lemma}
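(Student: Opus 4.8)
The plan is to reduce the statement to a finite combinatorial fact about hyperplanes. Since $X$ is (isomorphic to) an isometrically embedded subcomplex of $\mathbb{R}^n$, each hyperplane of $X$ is the restriction to $X$ of one of the "slab" hyperplanes of the standard cubulation of $\mathbb{R}^n$; these come in $n$ families according to which coordinate direction they are transverse to. Within a single family the hyperplanes are naturally linearly ordered (by the value of the coordinate they separate). The first observation I would record is that an orientation $\sigma \in \overline{X}$ is entirely determined by, for each of the $n$ directions, a choice of "where to sit" relative to this linearly ordered family — i.e. a cut of the order, where the point at $\pm\infty$ is allowed. Concretely, $\overline{X}$ embeds in a product of $n$ linearly ordered compactifications, each of which has finitely many "limit" cuts because the complex is a \emph{subcomplex} of $\mathbb{R}^n$ (so in each direction only finitely many hyperplane-values actually occur as hyperplanes of $X$, unless the complex is unbounded in that direction, in which case there are exactly two limiting cuts, $+\infty$ and $-\infty$).

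The key steps, in order, would be: (1) Fix an isometric embedding $X \hookrightarrow \mathbb{R}^n$ and partition $\mathcal{H}(X)$ into the $n$ direction-classes $\mathcal{H}_1, \dots, \mathcal{H}_n$; note each $\mathcal{H}_i$ is totally ordered by the induced "betweenness" coming from $\mathbb{R}$. (2) Show that the cubical component of an orientation $\sigma$ is determined by which of the $\mathcal{H}_i$'s $\sigma$ "orients towards a limit": precisely, for each $i$, either $\sigma$ agrees with a principal orientation on all of $\mathcal{H}_i$ (a vertex-like choice) or it consistently picks the halfspaces going off to $+\infty$ or to $-\infty$ in that direction. Two orientations lie in the same cubical component iff they differ in only finitely many hyperplanes, which forces them to make the \emph{same} limiting choice in each direction and differ only in finitely many directions where one is vertex-like and the other is not — but since each direction contributes at most a bounded amount, this is a finite amount of data. (3) Conclude that the set of cubical components injects into $\prod_{i=1}^n \{\,+\infty,\ -\infty,\ \text{``finite''}\,\}$ modulo the equivalence identifying things that differ only in finitely many hyperplanes; in particular there are at most $3^n$, hence finitely many, cubical components. (4) For the bounded component: take the one where every direction makes the "finite" choice — this is exactly the cubical component containing the vertices of $X$, and it is bounded because it sits inside the box $\prod_i [\min \mathcal{H}_i, \max \mathcal{H}_i]$ (interpreting these as values in $\mathbb{R}$); if $X$ is unbounded in direction $i$ then there is no "finite" choice there and one should instead observe that a bounded component arises by picking, in each direction, a cut lying between two \emph{consecutive} actual hyperplane values, which again lands in a bounded region of $\mathbb{R}^n$.

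For this I would lean on the description of $\overline{X}$ via orientations and on Lemma~\ref{lem:DistQuotient}-style bookkeeping of hyperplanes, together with the alternative description of $\mathfrak{R}X$ via geodesic rays modulo equal hyperplane-sets from \cite[Proposition A.2]{article3}: a ray in $X \subset \mathbb{R}^n$ has a well-defined direction in each coordinate (it is eventually monotone in each coordinate since a geodesic crosses each hyperplane at most once), so its equivalence class is pinned down by the finite data of "which coordinates run off to $\pm\infty$" plus the finitely many hyperplanes it crosses in the bounded directions. This makes steps (2)--(3) essentially a restatement.

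I expect the main obstacle to be step (2) — making precise and correct the claim that "same cubical component" forces "same limiting behaviour in each of the $n$ directions". The subtlety is that the linear orders $\mathcal{H}_i$ need not be well-ordered or even countable, and $X$ being a subcomplex of $\mathbb{R}^n$ rather than a convex one means the orders can have gaps; one must check that an orientation really is forced to be "eventually constant towards one end" in each direction rather than oscillating, which is where the hypothesis that the embedding is \emph{isometric} (so that crossing a hyperplane of $X$ genuinely costs distance $1$, matching $\mathbb{R}^n$) does the work, via the fact that a geodesic ray crosses each hyperplane at most once. Once that monotonicity-in-each-coordinate statement is nailed down, the finiteness count and the exhibition of a bounded component are routine.
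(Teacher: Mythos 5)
Your steps (1)--(3) --- coordinatising orientations direction by direction and counting cubical components by the pattern of limiting behaviour in each of the $n$ directions --- are essentially the paper's argument: the paper does exactly this via the geodesic-ray description of $\mathfrak{R}X$, representing each boundary point by an element of $\overline{\mathbb{Z}}^n$ and deducing that $\mathfrak{R}X$ has at most $3^n-1$ components. The genuine gap is in step (4), the exhibition of a bounded component. The component ``where every direction makes the finite choice'' is the component of the principal orientations, i.e.\ it is $X$ itself; in the only non-trivial case ($X$ unbounded) this component is unbounded, so it cannot serve. Your fallback --- ``picking in each direction a cut lying between two consecutive actual hyperplane values'' --- is again a finite choice in every direction, so (when it defines a consistent orientation at all) it still lies in the component of $X$; and the remark that such cuts ``land in a bounded region of $\mathbb{R}^n$'' says nothing about the diameter of a cubical component of the boundary, which consists of all points sharing a given pattern of infinite coordinates and whose finite coordinates may a priori range over an unbounded set.

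What is missing is precisely the nontrivial part of the paper's proof: one must locate a bounded component inside $\mathfrak{R}X$. The paper takes a point $\alpha\in\mathfrak{R}X$ whose representative in $\overline{\mathbb{Z}}^n$ has a \emph{maximal} number of infinite coordinates and shows that its component is bounded: if it were unbounded, there would be points of that component with the same infinite pattern but with some finite coordinates increasing without bound, and a diagonal argument (using that coordinatewise increasing sequences of vertices of the isometrically embedded $X$ lie on a common geodesic ray) produces a point of $\mathfrak{R}X$ with strictly more infinite coordinates, contradicting maximality. Nothing in your proposal plays the role of this maximality-plus-limiting argument; without it, the assertion that some component is bounded is unsupported, and it is exactly here --- not only in the monotonicity of rays --- that the isometric-embedding hypothesis does its decisive work.
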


\begin{proof}
If $X$ is bounded, there is nothing to prove, so we suppose that $X$ is unbounded. Without loss of generality, suppose that $X$ is an isometrically embedded subcomplex of $\mathbb{R}^n$, where $n \geq 1$. As a consequence of the description of $\mathfrak{R}X$ in terms of geodesic rays, if we choose $(0, \ldots, 0)$ as our basepoint then any point of $\mathfrak{R}X$ can be represented by an element of $\overline{\mathbb{Z}}^n$ where $\overline{\mathbb{Z}}= \mathbb{Z} \cup \{\pm \infty\}$. Moreover, the distance (possibly infinite) in the cube complex $\mathfrak{R}X$ (which coincides with the distance in $\mathfrak{R}\mathbb{R}^n$ as $X$ is isometrically embedded) between two points represented by $(a_1, \ldots, a_n)$ and $(b_1, \ldots, b_n)$ coincides with 
$$\sum\limits_{i=1}^n |a_i-b_i|, \ \text{where by convention} \ \left\{ \begin{array}{l} \infty- \infty= - \infty+ \infty=0 \\ \infty+ \infty=\infty \\ - \infty - \infty=- \infty \end{array} \right..$$
As a consequence, two distinct points of $\mathfrak{R}X$ cannot be represented by the same element of $\overline{\mathbb{Z}}^n$. From this description, it is clear that $\mathfrak{R}X$ has at most $3^n-1$ cubical components, proving the first assertion of our lemma.

\medskip \noindent
Now fix a point of $\alpha \in \mathfrak{R}X$ whose representation in $\overline{a} \in \overline{\mathbb{Z}}^n$ contains a maximal number of infinite coordinates. Up to permuting and inverting the coordinates of our representation, we may suppose without loss of generality that $\overline{a}=(a_1, \ldots, a_r, + \infty, \ldots,+ \infty)$ where $r \geq 0$ and $a_1, \ldots, a_r \in \mathbb{Z}$. Suppose by contradiction that the cubical component of $\mathfrak{R}X$ containing $\alpha$ is unbounded. It implies that, once again up to permuting and inverting coordinates, there exist some $s \geq 1$, some $b_1, \ldots, b_{r-s-1} \in \mathbb{Z}$ and increasing sequences $(a_{r-s}(k)), \ldots, (a_r(k))$ such that 
$$(b_1, \ldots, b_{r-s-1}, a_{r-s}(k), \ldots, a_r(k),+ \infty, \ldots, + \infty)$$
represents a point of $\mathfrak{R}X$ for every $k \geq 0$. By construction of our representation, there exist increasing sequences $(c_{n-r}(p)), \ldots, (c_n(p))$ such that
$$(b_1, \ldots, b_{r-s-1}, a_{r-s}(k), \ldots, a_r(k), c_{n-r}(p), \ldots, c_n(p))$$
represents a point of $X$ for every $k \geq 0$ and $p \geq 0$. Set
$$q(k) : = (b_1, \ldots, b_{r-s-1}, a_{r-s}(k), \ldots, a_r(k), c_{n-r}(k), \ldots, c_n(k))$$
for every $k \geq 0$. Notice that the fact that our sequences are increasing implies that the points $q(0),q(1), \ldots$ all belong to a common geodesic ray starting from $(0, \ldots, 0)$. As a consequence, the point $(b_1, \ldots, b_{r-s-1}, + \infty, \ldots, + \infty)$ represents a point of $\mathfrak{R}X$, contradicting the definition of $\alpha$.

\medskip \noindent
Thus, we have proved that the cubical component of $\mathfrak{R}X$ containing $\alpha$ is bounded, concluding the proof of our lemma. 
\end{proof}

\subsection{Median algebras}\label{section:Median}

\noindent
A \emph{median algebra} $(X,\mu)$ is the data of a set $X$ and a map $\mu : X \times X \times X \to X$ satisfying the following conditions:
\begin{itemize}
	\item $\mu(x,y,y)=y$ for every $x,y \in X$;
	\item $\mu(x,y,z)= \mu(z,x,y)= \mu (x,z,y)$ for every $x,y,z \in X$;
	\item $\mu\left( \mu(x,w,y), w,z \right) = \mu \left( x,w, \mu(y,w,z) \right)$ for every $x,y,z,w \in X$.
\end{itemize}
The \emph{interval} between two points $x,y \in X$ is
$$I(x,y)= \left\{ z \in X \mid \mu(x,y,z)=z \right\};$$
and a subset $Y \subset X$ is \emph{convex} if $I(x,y) \subset Y$ for every $x,y \in Y$. In this article, we are only interested in median algebras whose interval are finite; they are referred to as \emph{discrete} median algebras.

\medskip \noindent
As proved in \cite{NicaCubulation}, a discrete median algebra is naturally a wallspace. Indeed, let us say that $Y \subset X$ is a \emph{halfspace} if $Y$ and $Y^c$ are both convex. Then a \emph{wall} of $X$ is the data of halfspace and its complement, and it turns out that only finitely many walls separate two given point of $X$. The \emph{cubulation} of a discrete median algebra refers to the cubulation of this wallspace. In this specific case, it turns out that any orientation commensurable to a principal orientation must be a principal orientation itself. Consequently, the cubulation of a discrete median algebra $X$ coincides with the cube complex
\begin{itemize}
	\item whose vertex-set is $X$;
	\item whose edges link two points of $X$ if they are separated by a single wall;
	\item whose $n$-cubes fill in every subgraph of the one-skeleton isomorphic to the one-skeleton of an $n$-cube, for every $n \geq 2$. 
\end{itemize}
Therefore, a discrete median algebra may be naturally identified with its cubulation, and so may be thought of as a CAT(0) cube complex. The \emph{dimension} and the \emph{Roller compactification} of a discrete median algebra coincides with the dimension and the Roller compactification of its cubulation. 

\medskip \noindent
Conversely, a CAT(0) cube complex $X$ naturally defines a discrete median algebra (see \cite{mediangraphs} and \cite[Proposition 2.21]{MR2413337}). Indeed, for every triple of vertices $x,y,z \in X$, there exists a unique vertex $\mu(x,y,z) \in X$ satisfying
$$\left\{ \begin{array}{l} d(x,y)= d( x , \mu(x,y,z))+ d(\mu(x,y,z), y) \\ d(x,z)= d(x,\mu(x,y,z))+d(\mu(x,y,z),z) \\ d(y,z)= d(y,\mu(x,y,z))+d(\mu(x,y,z),z) \end{array} \right..$$
Otherwise saying, $I(x,y) \cap I(y,z) \cap I(x,z) = \{ \mu(x,y,z)\}$. The vertex $\mu(x,y,z)$ is referred to as the \emph{median point} of $x,y,z$. Then $(X,\mu)$ is a discrete median algebra, motivating the following terminology:

\begin{definition}
Let $X$ be a CAT(0) cube complex. A \emph{median subalgebra} $Y \subset X$ is a set of vertices stable under the median operation.
\end{definition}

\noindent
The median structure defined on $X$ can be extended continuously to the Roller compactification $\overline{X}$. Namely, given three orientations $\sigma_1,\sigma_2,\sigma_3$, define $\mu(\sigma_1,\sigma_2,\sigma_3)$ as the set of halfspaces which belong to at least two orientations among $\{\sigma_1,\sigma_2,\sigma_3\}$. Then the map $\mu : \overline{X} \times \overline{X} \times \overline{X} \to \overline{X}$ extends the previous median operation, making $(\overline{X},\mu)$ a median algebra (not discrete in general). Moreover, the ternary operator turns out to be continuous when $\overline{X}$ is endowed with the topology defined in the previous section. 

\medskip \noindent
We conclude this subsection by stating and proving a few lemmas which will be useful later. 

\begin{lemma}
Let $(X_1,\mu_1)$ and $(X_2, \mu_2)$ be two discrete median algebras of dimensions $d_1$ and $d_2$ respectively. Then $(X_1 \times X_2, \mu_1 \times \mu_2)$ is a discrete median algebra of dimension $d_1+d_2$.
\end{lemma}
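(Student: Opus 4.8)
The plan is to check the three median-algebra identities componentwise and then to read off both the finiteness of intervals and the dimension from an analysis of the intervals and the walls of the product. Since $\mu_1\times\mu_2$ acts componentwise, the three defining identities for $(X_1\times X_2,\mu_1\times\mu_2)$ follow at once from those for $\mu_1$ and $\mu_2$. The same componentwise description gives
\[
I_{X_1\times X_2}\big((x_1,x_2),(y_1,y_2)\big)=I_{X_1}(x_1,y_1)\times I_{X_2}(x_2,y_2),
\]
which is finite since $X_1$ and $X_2$ are discrete; hence $X_1\times X_2$ is a discrete median algebra. This splitting of intervals also shows that a product $C_1\times C_2$ of convex sets is convex, and in particular that the slices $X_1\times\{b\}$ and $\{a\}\times X_2$ are convex subalgebras, canonically isomorphic as median algebras to $X_1$ and $X_2$. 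Combining this with an explicit path (a geodesic in the first coordinate followed by one in the second) and with the walls $\mathfrak{h}_1\times X_2$ exhibited below, one gets that the graph metric of the cubulation of $X_1\times X_2$ is $d_{X_1}+d_{X_2}$.

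The crux is to identify the halfspaces of $X_1\times X_2$: I claim they are exactly the sets $\mathfrak{h}_1\times X_2$ and $X_1\times\mathfrak{h}_2$ with $\mathfrak{h}_i$ a halfspace of $X_i$. One inclusion is the convexity remark above. For the other, I would use the standard fact that in a discrete median algebra every proper halfspace $\mathfrak{h}$ is \emph{dual to an edge}: choosing $p\in\mathfrak{h}$ and $q\notin\mathfrak{h}$ at minimal distance and looking at a neighbour of $p$ on a geodesic $[p,q]$ forces $d(p,q)=1$, and then $\mathfrak{h}=\{x\mid d(x,p)<d(x,q)\}$. Because the metric of $X_1\times X_2$ is the sum of the two metrics, such an edge $\{p,q\}$ changes exactly one coordinate, say $p=(a_1,b)$ and $q=(a_2,b)$ with $a_1\sim a_2$ in $X_1$; then for $x=(u,v)$ we have $d(x,p)<d(x,q)\iff d_{X_1}(u,a_1)<d_{X_1}(u,a_2)$, so $\mathfrak{h}=\mathfrak{h}_1\times X_2$ with $\mathfrak{h}_1$ the halfspace of $X_1$ dual to $\{a_1,a_2\}$ (the mirror case gives $X_1\times\mathfrak{h}_2$). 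Consequently the set of walls of $X_1\times X_2$ is the disjoint union of the walls of $X_1$, realised as $\{\mathfrak{h}_1\times X_2,\mathfrak{h}_1^c\times X_2\}$, and of the walls of $X_2$, realised symmetrically.

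It then remains to compute the dimension, using that the dimension of a discrete median algebra is the supremum of the cardinalities of its families of pairwise transverse walls (an $n$-cube of the cubulation corresponding to $n$ pairwise transverse walls). Two walls of $X_1\times X_2$ coming from the same factor are transverse if and only if their images in that factor are transverse, whereas a wall from $X_1$ and a wall from $X_2$ are always transverse, since the four relevant intersections are products of two nonempty convex sets. Hence lifting a maximal pairwise transverse family of walls of $X_1$ together with one of $X_2$ produces $d_1+d_2$ pairwise transverse walls of $X_1\times X_2$, so $\dim(X_1\times X_2)\ge d_1+d_2$; conversely any pairwise transverse family of walls of $X_1\times X_2$ splits, according to the factor each wall comes from, into two pairwise transverse families of sizes at most $d_1$ and $d_2$, which gives the reverse inequality.

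The step I expect to require the most care is the halfspace identification, and in particular making sure that the ingredients invoked there — that the metric of the product is the sum of the factor metrics, that every proper halfspace of a discrete median algebra is dual to an edge, and that the dimension counts pairwise transverse walls — are all legitimately at hand at this point. A slightly more self-contained alternative, avoiding the metric-sum input, is to restrict a putative halfspace $\mathfrak{h}$ to the convex slices $X_1\times\{b\}$ and $\{a\}\times X_2$ and to exploit that a wall cuts a square of $X_1\times X_2$ (here, any $\{a,a'\}\times\{b,b'\}$ with $a\sim a'$ and $b\sim b'$) only along a pair of opposite edges, in order to propagate the cut along geodesics and conclude that $\mathfrak{h}$ does not depend on one of the two coordinates.
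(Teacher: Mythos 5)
The paper does not actually supply an argument here -- it states that the proof is straightforward and leaves it to the reader -- so there is no written proof to compare against; your proposal fills in that gap, and it is essentially correct. The componentwise verification of the three axioms, the product formula for intervals (hence discreteness and convexity of products of convex sets), the identification of the halfspaces of $X_1\times X_2$ as $\mathfrak{h}_1\times X_2$ and $X_1\times\mathfrak{h}_2$, and the transversality count giving $\dim = d_1+d_2$ all work. The one place to be careful is the one you flag yourself: as written, the claim that the metric of the product is the sum of the factor metrics cannot be fully established before the walls are identified (the ``explicit path'' upper bound already presupposes that its steps cross a single wall), so quoting the full metric-sum inside the halfspace identification is circular in order of presentation. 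Fortunately only the inequality $d_{X_1\times X_2}\geq d_{X_1}+d_{X_2}$ is needed there, and that half follows immediately from the already-exhibited product walls: it forces a minimal pair $p,q$ for a halfspace $\mathfrak{h}$ to differ in a single coordinate, with that coordinate pair adjacent in its factor, and then, since exactly one wall of the product separates $p$ from $q$ while $\mathfrak{h}_1\times X_2$ is such a wall, one concludes $\mathfrak{h}=\mathfrak{h}_1\times X_2$ directly, without the metric detour; the full equality of metrics is then a consequence rather than an ingredient. With that reordering (or with your alternative slice-and-square argument), the proof is complete; the remaining inputs you invoke -- edge-duality of proper halfspaces in a discrete median algebra and the characterisation of the dimension by maximal families of pairwise transverse walls -- are standard facts compatible with the background the paper uses.
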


\noindent
The proof is straightforward and it is left to the reader.

\begin{lemma}\label{lem:TwoCubulations}
Let $X$ be a CAT(0) cube complex and $Y \subset X$ a median subalgebra. The cubulation of $Y$ coincides with the cubulation of the wallspace $(Y, \mathcal{W})$ where $\mathcal{W}$ denotes the set of partitions of $Y$ induced by the hyperplanes of $X$ (without multiplicity).
\end{lemma}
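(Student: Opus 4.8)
The plan is to show that the two cube complexes in question have the same vertex set, the same edges, and the same cubes, by identifying their hyperplane structures. Both cube complexes have vertex set $Y$, so the content is in the walls. On one side, the cubulation of $Y$ as a discrete median algebra uses the walls of the median algebra $(Y,\mu)$, i.e. the partitions $\{H, H^c\}$ where both $H$ and $H^c$ are convex in $(Y,\mu)$; on the other side, the cubulation of the wallspace $(Y,\mathcal{W})$ uses the traces on $Y$ of the hyperplanes of $X$, that is, the partitions $\{ \mathfrak{h} \cap Y, \mathfrak{h}^c \cap Y \}$ as $\mathfrak{h}$ ranges over the halfspaces of $X$ (discarding those for which one side is empty, and taking each partition once). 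So the whole lemma reduces to the claim: a partition $\{A, A^c\}$ of $Y$ is a wall of the median algebra $(Y,\mu)$ if and only if it is induced by some hyperplane of $X$.

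First I would establish the easy direction: if $\mathfrak{h}$ is a halfspace of $X$, then $\mathfrak{h} \cap Y$ is convex in $(Y,\mu)$. This is immediate because $\mathfrak{h}$ is convex in $X$ (halfspaces of CAT(0) cube complexes are convex subcomplexes), hence median-convex in the median algebra $X$, and the trace of a median-convex subset on a median subalgebra is convex there: if $x, y \in \mathfrak{h} \cap Y$ and $z \in I_Y(x,y)$, then $z = \mu(x,y,z) \in I_X(x,y) \subset \mathfrak{h}$, and $z \in Y$ since $Y$ is a subalgebra. The same applies to $\mathfrak{h}^c$, so $\{\mathfrak{h} \cap Y, \mathfrak{h}^c \cap Y\}$ is indeed a wall of $(Y,\mu)$ (when both sides are non-empty). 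Thus every partition in $\mathcal{W}$ is a wall of the median algebra $Y$.

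The harder direction is the converse: every wall of the median algebra $(Y,\mu)$ is the restriction of some hyperplane of $X$. Here I would argue as follows. Let $\{A, A^c\}$ be a wall of $(Y,\mu)$, so $A$ and $A^c$ are both convex in $Y$. Pick an edge of the cubulation of $Y$ dual to this wall, i.e. two points $a \in A$, $b \in A^c$ with $d_Y(a,b)=1$ meaning they are separated by exactly this one wall of $Y$ — equivalently $I_Y(a,b) = \{a,b\}$. I want to find a hyperplane $J$ of $X$ whose two halfspaces $\mathfrak{h}, \mathfrak{h}^c$ satisfy $\mathfrak{h} \cap Y = A$ and $\mathfrak{h}^c \cap Y = A^c$. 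The natural candidate: among all hyperplanes of $X$ separating $a$ from $b$, one should work; the key point is that for a suitable such $J$, the halfspace $\mathfrak{h}$ containing $a$ satisfies $\mathfrak{h} \cap Y \subseteq A$ (and symmetrically). The standard tool for producing separating convex sets in median algebras is the separation theorem: two disjoint convex subsets of a (discrete) median algebra can be separated by a halfspace. Apply this inside $X$: I would like to separate $A$ from $A^c$ by a halfspace of $X$. But $A$ and $A^c$ need not be convex in $X$ — only in $Y$. This is precisely the main obstacle. To get around it, I would replace $A$ and $A^c$ by their convex hulls $\mathrm{hull}_X(A)$ and $\mathrm{hull}_X(A^c)$ in $X$ and show these are disjoint; then separate them by a hyperplane $J$ of $X$, whose trace on $Y$ necessarily refines $\{A, A^c\}$, and then use minimality (the edge $[a,b]$ is dual to a single wall of $Y$) to conclude the trace is exactly $\{A,A^c\}$. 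The disjointness of the convex hulls is the crux: it should follow from the fact that $A$ is "gated" or from the description of the convex hull of $A$ in $X$ via halfspaces — a vertex of $X$ lies in $\mathrm{hull}_X(A)$ iff it lies in every halfspace of $X$ containing $A$ — combined with the convexity of $A$ in $Y$. Concretely, I would show that a point $y \in Y$ lying in $\mathrm{hull}_X(A)$ must lie in $A$: project considerations or an argument via medians $\mu(a, y, a')$ with $a, a' \in A$ should force $y \in A$ by $Y$-convexity of $A$. Once $\mathrm{hull}_X(A) \cap Y = A$ and likewise for $A^c$, the hulls are disjoint (they meet $Y$ in disjoint sets, and any point of one hull not in $Y$ that also lay in the other hull would, by gatedness/projection to $Y$, produce a contradiction — alternatively one separates $A$ from $\mathrm{hull}_X(A^c)$ directly), a separating hyperplane exists, and we finish by the minimality remark.

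I expect the main obstacle to be exactly the verification that the convex hull in $X$ of a $Y$-convex subset meets $Y$ in that same subset, i.e. $\mathrm{hull}_X(A) \cap Y = A$ for $A$ convex in $Y$. I would prove this by induction on $d_X(A, y)$ for $y \in Y \setminus A$: if $y \notin A$ then there is a wall of $Y$ separating $y$ from $A$ (since $A$ is $Y$-convex and $Y$ is a wallspace), and pulling this back to a halfspace of $X$ via the easy direction already proven shows $y \notin \mathrm{hull}_X(A)$. Actually this gives the claim directly without induction: $A$ being $Y$-convex equals the intersection of all $Y$-halfspaces containing it; each such $Y$-halfspace is, by a symmetric version of the easy direction combined with the structure of $\mathcal{W}$, the trace of an $X$-halfspace; hence $A = \bigcap_i (\mathfrak{h}_i \cap Y) = \left(\bigcap_i \mathfrak{h}_i\right) \cap Y \supseteq \mathrm{hull}_X(A) \cap Y$, and the reverse inclusion $A \subseteq \mathrm{hull}_X(A) \cap Y$ is trivial. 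With this in hand, disjointness of the $X$-hulls of $A$ and $A^c$ follows (their intersection lies in $Y$ by the same computation since it is $X$-convex and its trace on $Y$ is $A \cap A^c = \emptyset$, forcing it to be empty — using that an $X$-convex set whose trace on $Y$ is empty... hmm, this needs that $Y \neq \emptyset$ intersects it, which may fail), so more carefully I would separate $A$ from $A^c$ directly by invoking the median separation theorem in $X$ applied to $\mathrm{hull}_X(A)$ and $\mathrm{hull}_X(A^c)$ after checking disjointness, or sidestep by separating the finite configuration $\{a\}$ from $\mathrm{hull}_X(A^c)$, obtaining a hyperplane $J$ of $X$ with $\mathrm{hull}_X(A^c)$ on one side and $a$ on the other, then checking its $Y$-trace refines $\{A, A^c\}$ and invoking minimality. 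The bookkeeping to make "refines, plus minimal, implies equal" precise is routine once the separating hyperplane is produced.
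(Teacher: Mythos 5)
Your reduction of the lemma to the statement ``a partition of $Y$ is a wall of the median algebra $(Y,\mu)$ if and only if it is the trace of a hyperplane of $X$'', and your proof of the easy direction, agree with the paper. The problem is at the crux of the hard direction: your justification of the key claim $\mathrm{hull}_X(A)\cap Y=A$ is circular. You argue that every $Y$-halfspace containing $A$ is, ``by a symmetric version of the easy direction'', the trace of an $X$-halfspace --- but that symmetric version is precisely the hard direction, i.e.\ the statement being proven. The same circularity appears in the induction sketch: the easy direction lets you pass from $X$-halfspaces to $Y$-halfspaces, not the other way around, so there is nothing to ``pull back''; and if by ``a wall of $Y$ separating $y$ from $A$'' you meant an element of $\mathcal{W}$, then the existence of a hyperplane of $X$ separating $y$ from the merely $Y$-convex set $A$ is again exactly what has to be proved. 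The claim is in fact true, but it needs an independent argument --- for instance via the gate (projection) $p(y)$ of $y$ onto the convex subset $A$ of the discrete median algebra $Y$: any hyperplane of $X$ separating $y$ from $p(y)$ must separate $y$ from all of $A$, since $p(y)\in I_X(y,a)$ for every $a\in A$ and halfspaces of $X$ are convex; this gives an $X$-halfspace containing $A$ but not $y$, so $y\notin\mathrm{hull}_X(A)$. The later steps of your plan (disjointness of the two hulls, producing a separating hyperplane, and the ``refines plus minimality implies equality'' bookkeeping) are also left at the level of intentions, with an acknowledged hole in the disjointness argument and with the fallback of separating $\{a\}$ from $\mathrm{hull}_X(A^c)$ still relying on the same unproven claim.

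For comparison, the paper's proof bypasses convex hulls and separation theorems entirely. Given a halfspace $D$ of the median algebra $Y$, it fixes $x\in D$ and $y\in D^c$ realising the distance between $D$ and $D^c$, takes an arbitrary hyperplane $J$ of $X$ separating $x$ from $y$, and shows by a short median computation that any $z\in Y$ lying on the same side of $J$ as $x$ satisfies $\mu(x,y,z)\in D$ and then $\mu(x,y,z)=x$, so that $x\in I(y,z)$ and the convexity of $D^c$ forces $z\in D$; the symmetric argument on the other side gives that the trace of $J$ on $Y$ is exactly $\{D,D^c\}$. You could either adopt this direct argument or repair your route with the gate argument sketched above, but as written your proposal has a genuine gap at its central step.
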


\begin{proof}
It is sufficient to show that, for every $D \subset Y$, $D$ is a halfspace of the median algebra $Y$ if and only if $D$ is the \emph{trace} of a halfspace of $X$ (i.e. there exists a halfspace $H$ of $X$ such that $D= H \cap Y$). It is clear that the trace of a halfspace of $X$ provides a halfspace of the median algebra of $Y$, so let $D$ be a halfspace of the median algebra $Y$. 

\medskip \noindent
Fix two points $x \in D$ and $y \in D^c$ minimising the distance between $D$ and $D^c$. Fixing a hyperplane $J$ of $X$ separating $x$ and $y$, we claim that $J$ separates $D$ and $D^c$. For convenience, let $J^-,J^+$ denote the halfspaces delimited by $J$ such that $x \in J^-$ and $y \in J^+$. 

\medskip \noindent
Let $z \in Y$ be a point which is not separated from $x$ by $J$. Notice that the median point $\mu(x,y,z)$ has to belong to $J^-$ by convexity of $J^-$. In particular, $d(y, \mu(x,y,z))>0$. Because $Y$ is median, it has to contain $\mu(x,y,z)$. But, since $d(x, \mu(x,y,z))<d(x,y)$, $\mu(x,y,z)$ cannot belong to $D^c$, hence $\mu(x,y,z) \in D$. Consequently,
$$\begin{array}{lcl} d(x, \mu(x,y,z)) & = & d(x,y)-d(\mu(x,y,z),y) = d(y,D)- d(\mu(x,y,z),y) \\ \\ & \leq & d(y, \mu(x,y,z))- d(\mu(x,y,z),y)=0, \end{array}$$
hence $\mu(x,y,z)=x$. Therefore, $\mu(x,y,z)$ belongs to the interval between $y$ and $z$. As a consequence, it $z$ belongs to $D^c$, then $\mu(x,y,z)$ has to belong to $D^c$ by convexity, which is not the case, so $z \in D$. 

\medskip \noindent
Thus, we have proved that $J^- \cap Y \subset D$. We show similarly that $J^+ \cap Y \subset D^c$, hence $J^- \cap Y=D$ and $J^+ \cap Y=D^c$. The proof of our lemma is complete. 
\end{proof}

\begin{lemma}\label{lem:HypSepMedian}
Let $X$ be a CAT(0) cube complex, and let $\{a,b,c \}$ and $\{x,y,z\}$ be two triples of vertices of $X$. If a hyperplane $J$ separates the median points $\mu(a,b,c)$ and $\mu(x,y,z)$, then it has to separate $a$ and $x$, or $b$ and $y$, or $c$ and $z$.
\end{lemma}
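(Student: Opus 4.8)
The plan is to reduce the statement to an elementary observation about how the side of a median point with respect to a hyperplane is determined by the sides of its three inputs: for any hyperplane $J$ of $X$, with halfspaces $J^-$ and $J^+$, and any vertices $p,q,r$, the median point $\mu(p,q,r)$ lies in whichever of $J^-, J^+$ contains at least two of $p,q,r$. First I would record this fact. It is immediate from the extension of the median operator to the Roller compactification recalled in Section~\ref{section:Median}: viewing $\mu(p,q,r)$ as an orientation, it consists precisely of the halfspaces lying in at least two of the orientations $p,q,r$. (It can also be checked directly: $\mu(p,q,r)$ lies on a geodesic from $q$ to $r$, so if $q$ and $r$ lie on the same side of $J$ then so does $\mu(p,q,r)$ by convexity of halfspaces, and the case where $q,r$ are separated by $J$ is handled symmetrically using that $\mu(p,q,r) \in I(p,q) \cap I(p,r)$.)

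Given this observation, the rest is a short pigeonhole count on a three-element set. Write $(a_1,a_2,a_3)=(a,b,c)$ and $(x_1,x_2,x_3)=(x,y,z)$, and let $J$ be a hyperplane separating $\mu(a,b,c)$ from $\mu(x,y,z)$; choose the labelling of the halfspaces so that $\mu(a,b,c)\in J^-$ and $\mu(x,y,z)\in J^+$. By the observation, the set $S:=\{\, i\in\{1,2,3\} \mid a_i\in J^- \,\}$ has cardinality at least $2$, and at least two of $x_1,x_2,x_3$ lie in $J^+$. Now suppose for contradiction that $J$ separates none of the pairs $(a,x),(b,y),(c,z)$. Then $a_i$ and $x_i$ lie on the same side of $J$ for each $i$, so $\{\, i \mid x_i\in J^- \,\}=S$, and hence $\{\, i\mid x_i\in J^+ \,\}=\{1,2,3\}\setminus S$ has cardinality at least $2$. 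But then $3=|S|+|\{1,2,3\}\setminus S|\ge 2+2=4$, a contradiction. Therefore $J$ separates one of the pairs $(a,x),(b,y),(c,z)$, which is exactly the claim.

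I do not expect any genuine obstacle here: the entire content is the observation in the first paragraph, which is essentially built into the median structure on $\overline{X}$ already set up in the preliminaries, and the remaining step is a counting argument on three objects. The only point requiring a little care is making sure the observation is stated for arbitrary hyperplanes (not just those separating the relevant vertices), but that is automatic from the Roller-boundary description of $\mu$.
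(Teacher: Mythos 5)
Your proof is correct and follows essentially the same route as the paper: the key point in both is that, by convexity of halfspaces, a median point lies in whichever halfspace contains at least two of its three arguments, after which the conclusion is a short combinatorial check (you phrase it as a pigeonhole count, the paper as a brief case analysis). No gaps.
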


\begin{proof}
Let $J^-$ and $J^+$ denote the halfspaces delimited by $J$ which contain $\mu(a,b,c)$ and $\mu(x,y,z)$ respectively. By convexity of halfspaces, $J^-$ must contain at least two vertices of $\{a,b,c\}$ and similarly $J^+$ must contain at least two vertices of $\{x,y,z\}$. If $J^-$ does not contain $a$, then $J^+$ has to contain either $y$ or $z$, so that $J$ separates $b$ and $y$ or $c$ and $z$. One argues similarly if $J^-$ does not contain $b$ or $c$, concluding the proof of our lemma.
\end{proof}

\subsection{Busemann morphisms}

\noindent
An interesting observation is that, if $T$ is a simplicial tree and $\alpha \in \partial T$ a point at infinity, then there exists a natural morphism $\beta_\alpha : \mathrm{stab}(\alpha) \to \mathbb{Z}$ which can be described in the following way. Fixing a basepoint $x \in T$ and an isometry $g \in \mathrm{stab}(\alpha)$, notice that $g \cdot [x,\alpha) \cap [x, \alpha)$ contains a infinite subray. So $g$ acts like a translation on a ray pointing to $\alpha$. The value of $\beta_\alpha(g)$ is precisely the length of this translation (positive it is directed to $\alpha$, and negative otherwise). Moreover, it follows from this description that the kernel of our morphism coincides with the set of elliptic isometries of $\mathrm{stab}(\alpha)$.

\medskip \noindent
The goal of this section is to explain how to extend such a construction to finite-dimensional CAT(0) cube complexes. More precisely, we want to prove the following (slight) improvement of \cite[Theorem B.1]{CFI}:

\begin{thm}\label{thm:Busemann}
Let $X$ be a finite-dimensional CAT(0) cube complex and $\alpha \in \mathfrak{R}X$ a point at infinity. There exist a subgroup $\mathrm{stab}_0(\alpha)$ of $\mathrm{stab}(\alpha) \leq \mathrm{Isom}(X)$ of index at most $\dim(X)!$ and a morphism $\beta : \mathrm{stab}_0(\alpha) \to \mathbb{Z}^n$, where $n \leq \dim(X)$, such that 
$$\mathrm{ker}(\beta)= \{ g \in \mathrm{stab}_0(\alpha) \mid \text{$g$ is $X$-elliptic} \}.$$
Consequently, $\mathrm{stab}(\alpha)$ is virtually (locally $X$-elliptic)-by-(free abelian).
\end{thm}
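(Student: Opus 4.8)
The plan is to extract a morphism from the action of $\mathrm{stab}(\alpha)$ on the combinatorial geometry near $\alpha$, generalising the tree case where $\beta_\alpha(g)$ measures the translation length of $g$ along a ray pointing to $\alpha$. The point $\alpha$ is an orientation of $X$; recall from Section~\ref{section:Median} that the extended median operator $\mu$ on $\overline{X}$ is continuous and that $\alpha$ is represented by a geodesic ray. First I would consider, for a fixed basepoint $x \in X$, the function $g \mapsto \mu(x, gx, \alpha) \in X$, and more precisely the behaviour of the hyperplanes separating $x$ from $\alpha$ under the action of $g \in \mathrm{stab}(\alpha)$. Since $g$ fixes $\alpha$, the set $\mathcal{W}(x,\alpha) \,\Delta\, \mathcal{W}(gx,\alpha)$ is finite, so $g$ permutes ``most'' of the hyperplanes pointing to $\alpha$; the quantity $\#\big(\mathcal{W}(x,\alpha) \setminus \mathcal{W}(gx,\alpha)\big) - \#\big(\mathcal{W}(gx,\alpha)\setminus\mathcal{W}(x,\alpha)\big)$ is a candidate for a single integer-valued quasimorphism-like invariant, but as in the tree case one expects it to genuinely split the hyperplanes into finitely many ``directions'' — hence the need for a $\mathbb{Z}^n$ rather than $\mathbb{Z}$ target and a finite-index passage.

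The cleanest route, and the one I would follow, is to reduce to \cite[Theorem B.1]{CFI} rather than redo the whole construction: that theorem already provides, for a finite-dimensional $X$ and $\alpha \in \mathfrak{R}X$, a finite-index subgroup of $\mathrm{stab}(\alpha)$ together with a homomorphism to $\mathbb{Z}^n$ with $n \le \dim X$; the only thing to verify is the precise identification of the kernel with the $X$-elliptic elements, together with the explicit index bound $\dim(X)!$. So the key steps are: (i) invoke \cite[Theorem B.1]{CFI} to get $\mathrm{stab}_0(\alpha) \le \mathrm{stab}(\alpha)$ of index dividing $\dim(X)!$ and $\beta : \mathrm{stab}_0(\alpha) \to \mathbb{Z}^n$; (ii) show $\ker(\beta)$ consists of $X$-elliptic isometries — here one uses that an element of the kernel fixes, up to finite error, every ``direction-component'' of the ray to $\alpha$, so it has a bounded orbit on the relevant convex subcomplex (carrier of the combinatorial ray, or an appropriate hyperplane-intersection), hence a finite orbit, hence is elliptic by the characterisation recalled in Section~\ref{section:ClassIsom}; (iii) conversely, show any $X$-elliptic $g \in \mathrm{stab}_0(\alpha)$ lies in $\ker(\beta)$, which is immediate since an elliptic isometry has bounded displacement and therefore cannot translate along any direction pointing to $\alpha$ (its image under each coordinate of $\beta$, being a translation length, must vanish). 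Finally, (iv) the last sentence follows formally: $\ker(\beta)$ is a normal subgroup in which every element is $X$-elliptic — and since being $X$-elliptic is a property of single elements but ``locally $X$-elliptic'' means every finitely generated subgroup is elliptic, one checks a finitely generated subgroup of $\ker(\beta)$ has a global fixed cube because... (this needs the Helly-type/finite-dimensionality argument); then $\mathrm{stab}_0(\alpha)/\ker(\beta) \hookrightarrow \mathbb{Z}^n$ is free abelian, giving the (locally $X$-elliptic)-by-(free abelian) structure on the finite-index subgroup, and a standard argument promotes this to $\mathrm{stab}(\alpha)$ itself being virtually of this form.

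The main obstacle I anticipate is step (ii) together with the ``locally $X$-elliptic'' upgrade in step (iv): showing that $\ker(\beta)$ is not merely a group of elliptic elements but is \emph{locally} elliptic requires knowing that any finitely generated subgroup all of whose elements have bounded orbits actually stabilises a cube — this is where finite-dimensionality is essential (it fails in infinite dimension, cf.\ Thompson's $F$), and one would invoke the relevant fixed-point result for finitely generated groups of elliptic isometries on finite-dimensional CAT(0) cube complexes. A secondary subtlety is getting the index bound exactly $\dim(X)!$: the finite-index subgroup in \cite{CFI} arises from choosing a coherent ordering/labelling of the finitely many ``directions to $\alpha$'', and since there are at most $\dim(X)$ of them, the stabiliser of such a choice has index at most $\dim(X)!$ in $\mathrm{stab}(\alpha)$ — so one should make sure the version of the CFI construction being cited gives exactly this, or else re-derive it by letting $\mathrm{stab}(\alpha)$ act on the (finite, of size $\le \dim X$) set of asymptotic directions and passing to the kernel of that permutation action.
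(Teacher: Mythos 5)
Your overall route coincides with the paper's: keep the Busemann morphism of \cite[Appendix B]{CFI} (built from Hagen's decomposition of the UBS $\mathcal{U}(x,\alpha)$ into at most $\dim(X)$ minimal UBS, with $\mathrm{stab}_0(\alpha)$ the subgroup preserving their commensurability classes, whence the index bound $\dim(X)!$), quote \cite[Theorem B.1]{CFI} for the inclusion $\ker(\beta)\subset\{\text{elliptic elements}\}$, and verify the rest. The genuine gap is your step (iii). You declare the converse inclusion, elliptic $\Rightarrow$ $\ker(\beta)$, to be ``immediate since \ldots its image under each coordinate of $\beta$, being a translation length, must vanish.'' The coordinates of $\beta$ are not translation lengths: they are transfer characters $\mathrm{tr}_{\mathcal{U}_i}(g)=|\mathcal{U}_i\setminus g^{-1}\mathcal{U}_i|-|g^{-1}\mathcal{U}_i\setminus\mathcal{U}_i|$ attached to the minimal UBS $\mathcal{U}_1,\ldots,\mathcal{U}_n$, and bounded orbits do not obviously force each of them to vanish: an infinite-order elliptic isometry could a priori produce an unbounded drift of hyperplanes from one component $\mathcal{U}_i$ into another $\mathcal{U}_j$, which boundedness of $d(x,g^kx)$ only controls in the aggregate (it bounds $\mathrm{tr}_{\mathcal{U}(x,\alpha)}(g^k)$, i.e.\ the sum of the coordinates, not each one). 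This inclusion is exactly the ``slight improvement'' over \cite[Theorem B.1]{CFI} that the theorem asserts, and it is where the paper's entire written proof is spent: up to replacing $g$ by a power one may assume $g$ fixes a vertex $x$; the hyperplanes of $\mathcal{U}(x,\alpha)$ at a fixed distance $i$ from $x$ form a $\langle g\rangle$-invariant family of pairwise transverse hyperplanes, hence of cardinality at most $\dim(X)$; therefore $g^{\dim(X)!}$ stabilises every hyperplane of $\mathcal{U}(x,\alpha)$, so each $\mathrm{tr}_{\mathcal{U}_i}$ vanishes on $g^{\dim(X)!}$, and then on $g$ because $\mathbb{Z}^n$ is torsion-free. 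This argument cannot be run while treating $\beta$ as a black box, so as written your proposal does not establish the equality of the kernel with the elliptic elements.

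Two smaller remarks. Your step (ii) is the inclusion the paper simply imports from the proof of \cite[Theorem B.1]{CFI}; citing it is fine, but your own sketch (``bounded orbit on the carrier of the ray'') is too vague to replace that citation. And the worry you raise in step (iv) is legitimate but also left implicit in the paper: passing from ``every element of $\ker(\beta)$ is elliptic'' to ``$\ker(\beta)$ is locally $X$-elliptic'' uses the standard fixed-point theorem for finitely generated groups of elliptic isometries of finite-dimensional CAT(0) cube complexes, which is where finite dimensionality enters, exactly as you suspect.
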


\noindent
The construction of such a morphism comes from the appendix of \cite{CFI}. First of all, we need the following statement \cite[Proposition 4.H.1]{CornulierCommensurated}:

\begin{lemma}
Let $G$ be a group acting on a set $S$. Assume that there exists a subset $M \subset S$ such that the symmetric difference $gM \Delta M$ is finite for every $g \in G$. Then
$$\mathrm{tr}_M : g \mapsto |M \backslash g^{-1}M | - |g^{-1}M \backslash M|$$
defines a morphism $G \to \mathbb{Z}$. Moreover, if $N \subset S$ is another subset such that $M \Delta N$ is finite, then $\mathrm{tr}_M= \mathrm{tr}_N$.
\end{lemma}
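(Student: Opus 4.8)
The plan is to verify the three defining properties of a group morphism directly from the formula $\mathrm{tr}_M(g) = |M \setminus g^{-1}M| - |g^{-1}M \setminus M|$, and then prove the independence statement separately. The finiteness hypothesis on all the symmetric differences $gM \Delta M$ guarantees that every quantity appearing below is a finite cardinality, so all arithmetic is legitimate; I would record this observation first.

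First I would handle the identity: $\mathrm{tr}_M(1) = |M \setminus M| - |M \setminus M| = 0$. For the homomorphism property, fix $g,h \in G$ and compute $\mathrm{tr}_M(gh)$ by comparing $M$ with $(gh)^{-1}M = h^{-1}g^{-1}M$. The key bookkeeping identity is that for any three sets $A,B,C$ with pairwise finite symmetric differences, one has $|A \setminus C| - |C \setminus A| = (|A \setminus B| - |B \setminus A|) + (|B \setminus C| - |C \setminus B|)$; indeed, the function $A \mapsto |A \setminus C| - |C \setminus A|$ is (up to the constant obtained by intersecting with a large finite set containing all the relevant symmetric differences) just a signed count, and this "additivity of signed differences" is the combinatorial heart of the matter. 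Applying it with $A = M$, $B = h^{-1}M$, $C = h^{-1}g^{-1}M$ gives
\[
\mathrm{tr}_M(gh) = \bigl(|M \setminus h^{-1}M| - |h^{-1}M \setminus M|\bigr) + \bigl(|h^{-1}M \setminus h^{-1}g^{-1}M| - |h^{-1}g^{-1}M \setminus h^{-1}M|\bigr).
\]
The first bracket is exactly $\mathrm{tr}_M(h)$. For the second bracket, since $h^{-1}$ is a bijection of $S$, it preserves cardinalities of set differences, so $|h^{-1}M \setminus h^{-1}g^{-1}M| = |M \setminus g^{-1}M|$ and likewise for the other term; hence the second bracket equals $\mathrm{tr}_M(g)$. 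This yields $\mathrm{tr}_M(gh) = \mathrm{tr}_M(g) + \mathrm{tr}_M(h)$, which also gives $\mathrm{tr}_M(g^{-1}) = -\mathrm{tr}_M(g)$ automatically.

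For the last assertion, suppose $N \subset S$ with $M \Delta N$ finite. I would show $\mathrm{tr}_M = \mathrm{tr}_N$ by again invoking the additivity of signed differences: for each $g$, compare $M$ and $g^{-1}M$ by passing through $N$ and $g^{-1}N$, i.e. write the signed difference between $M$ and $g^{-1}M$ as the signed difference between $M$ and $N$, plus that between $N$ and $g^{-1}N$, plus that between $g^{-1}N$ and $g^{-1}M$. The outer two terms are negatives of each other (the second is the $g^{-1}$-image of the reflection of the first, and $g^{-1}$ preserves the relevant cardinalities while swapping the roles of the two sets), so they cancel and leave exactly $\mathrm{tr}_N(g)$. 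One must check that $N \Delta g^{-1}N$ is finite so that $\mathrm{tr}_N$ is well-defined in the first place: this follows from the triangle-inequality-type inclusion $N \Delta g^{-1}N \subset (N \Delta M) \cup (M \Delta g^{-1}M) \cup (g^{-1}M \Delta g^{-1}N)$, each term of which is finite.

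The main obstacle is purely organizational rather than conceptual: one must set up the "signed difference is additive along a chain of sets" lemma cleanly once, taking care that all sets involved have pairwise finite symmetric differences so that one can fix a single finite set $\Omega$ containing all of them, reduce every $|A \setminus B|$ to $|(\Omega \cap A) \setminus (\Omega \cap B)|$, and then the additivity becomes the obvious statement that $x \mapsto \mathbf{1}_{\Omega \cap A}(x)$ summed against signs telescopes. Once that lemma is in hand, both the homomorphism property and the independence of the base set are immediate, using only that elements of $G$ act by bijections and hence preserve cardinalities of set differences.
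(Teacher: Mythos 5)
Your proof is correct. There is nothing in the paper to compare it against step by step: the lemma is imported with a citation (Proposition 4.H.1 of Cornulier's work on commensurated subsets) and the paper gives no argument of its own, so a self-contained verification like yours is exactly what is missing if one wants the statement proved on the spot. Your argument is the standard one and all the pieces check out: the hypothesis applied to $g^{-1}$ (or, equivalently, the identity $M \Delta g^{-1}M = g^{-1}(gM \Delta M)$) makes every cardinality finite; the additivity of the signed difference $\delta(A,B) = |A \setminus B| - |B \setminus A|$ along a chain of sets with pairwise finite symmetric differences is legitimate and is proved exactly as you indicate, by intersecting with a finite set $\Omega$ containing the relevant symmetric differences, after which $\delta(A,B) = |\Omega \cap A| - |\Omega \cap B|$ and the telescoping is immediate; the identification of $\delta(h^{-1}M, h^{-1}g^{-1}M)$ with $\mathrm{tr}_M(g)$ uses only that $h^{-1}$ is a bijection of $S$, hence preserves cardinalities of set differences; and in the independence argument the two outer terms $\delta(M,N)$ and $\delta(g^{-1}N, g^{-1}M) = \delta(N,M)$ indeed cancel, while the inclusion $N \Delta g^{-1}N \subset (N \Delta M) \cup (M \Delta g^{-1}M) \cup g^{-1}(M \Delta N)$ guarantees that $\mathrm{tr}_N$ is well defined. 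The only organizational remark worth making explicit in a written-up version is the one you already flag: state the additivity lemma once, with the hypothesis of pairwise finite symmetric differences, and check that hypothesis each time it is invoked (it holds in both applications, as you verify).
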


\noindent
Next, we need to introduce some vocabulary. A collection of hyperplanes $\mathcal{U}$ is:
\begin{itemize}
	\item \emph{inseparable} if each hyperplane separating two elements of $\mathcal{U}$ belongs to $\mathcal{U}$;
	\item \emph{unidirectional} if, for every $J \in \mathcal{U}$, one of the two halfspaces bounded by $J$ contains only finitely many elements of $\mathcal{U}$;	
	\item a \emph{facing triple} if it is a collection of three pairwise non-transverse hyperplanes none of them separating the other two;
	\item a \emph{UBS} (for \emph{Unidirectional Boundary Set}) if it is infinite, inseparable, unidirectional and contains no facing triple;
	\item a \emph{minimal UBS} if every UBS contained into $\mathcal{U}$ has finite symmetric difference with~$\mathcal{U}$;
	\item \emph{almost transverse} to another collection of hyperplanes $\mathcal{V}$ if each $J \in \mathcal{U}$ crosses all but finitely many hyperplanes in $\mathcal{V}$ and if each $H \in \mathcal{V}$ crosses all but finitely many hyperplanes in $\mathcal{U}$. 
\end{itemize}
The link between UBS and the Roller boundary is made explicit by the following result \cite[Lemma B.7]{CFI}:

\begin{lemma}
Let $X$ be a CAT(0) cube complex, $x \in X$ a basepoint and $\alpha \in \mathfrak{R}X$ a point at infinity. The collection $\mathcal{U}(x,\alpha)$ of the hyperplanes separating $x$ and $\alpha$ is a UBS.
\end{lemma}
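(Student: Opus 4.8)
The plan is to translate every assertion into a statement about a single combinatorial geodesic ray. Using the identification $\mathfrak{S}_xX \to \mathfrak{R}X$ recalled above, I would choose a combinatorial geodesic ray $r = (x=r(0), r(1), r(2), \ldots)$ with $\alpha(r)=\alpha$. The first point to check is that $\mathcal{U}(x,\alpha)$ coincides with the set $\mathcal{H}(r)$ of hyperplanes crossed by $r$: a hyperplane $J$ separates $x$ from $\alpha$ precisely when $x$ lies in one halfspace of $J$ while $r$ is eventually in the other, and since $x=r(0)$ this happens precisely when $r$ crosses $J$. (This does not depend on the chosen representative, as equivalent rays cross the same hyperplanes.) Because $r$ is an infinite geodesic and crosses each hyperplane at most once, $\mathcal{H}(r)$ is infinite, which settles the first of the four defining properties of a UBS.

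For the other three I would fix notation once and for all: for $J \in \mathcal{H}(r)$ let $t_J$ be the integer with $[r(t_J), r(t_J+1)]$ dual to $J$, let $J^-$ be the halfspace of $J$ containing $r(0), \ldots, r(t_J)$ (so $x \in J^-$), and let $J^+$ be the one containing $r(t_J+1), r(t_J+2), \ldots$; since $r$ crosses $J$ exactly once this is a genuine dichotomy, and distinct hyperplanes of $\mathcal{H}(r)$ carry distinct parameters, being dual to distinct edges of $r$. The elementary fact worth isolating is: if a hyperplane $K$ is non-transverse to $J$ and its carrier $N(K)$ meets $J^-$, then $N(K) \subseteq J^-$ (because $N(K)$ is connected and disjoint from $J$), and symmetrically for $J^+$.

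With this in hand the remaining properties are each essentially one line. \emph{Inseparability}: if $J_1, J_2 \in \mathcal{H}(r)$ with $t_{J_1} < t_{J_2}$ and $H$ separates $J_1$ from $J_2$, then $r$ meets $N(J_1)$ and $N(J_2)$, which lie in opposite halfspaces of $H$, so $r$ has vertices on both sides of $H$ and therefore crosses it; hence $H \in \mathcal{H}(r) = \mathcal{U}(x,\alpha)$. \emph{Unidirectionality}: fix $J \in \mathcal{H}(r)$; any $K \in \mathcal{H}(r)$ with $N(K) \subseteq J^-$ has its dual edge inside $N(K) \subseteq J^-$, forcing $t_K \leq t_J-1$, so there are at most $t_J$ such $K$ and the halfspace $J^-$ contains only finitely many elements of $\mathcal{U}(x,\alpha)$. \emph{No facing triple}: if $J_1, J_2, J_3 \in \mathcal{H}(r)$ are pairwise non-transverse with $t_{J_1} < t_{J_2} < t_{J_3}$, then $r$ meets $N(J_1)$ before crossing $J_2$ and $N(J_3)$ after, so $N(J_1)$ meets $J_2^-$ and $N(J_3)$ meets $J_2^+$; the isolated fact gives $N(J_1) \subseteq J_2^-$ and $N(J_3) \subseteq J_2^+$, so $J_2$ separates $J_1$ and $J_3$, and $\{J_1, J_2, J_3\}$ is not a facing triple.

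The only step requiring genuine care is the bookkeeping of the second paragraph: one must pin down precisely that a combinatorial geodesic crosses each hyperplane exactly once, so that the vertices it visits split into an initial part lying in $J^-$ and a terminal part lying in $J^+$, and then combine this with the carrier fact for non-transverse hyperplanes. Granting that, inseparability, unidirectionality, and the absence of facing triples are immediate; the passage to rays, the finiteness count, and the non-transversality dichotomy are routine.
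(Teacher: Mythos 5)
Your proof is correct. One caveat on the comparison: the paper does not actually prove this lemma — it is quoted directly from \cite[Lemma B.7]{CFI} — so there is no internal argument to measure your proposal against; what you have written is a self-contained substitute for the citation. Your route, replacing $\alpha$ by a combinatorial geodesic ray $r$ issued from $x$ via the bijection $\mathfrak{S}_xX \to \mathfrak{R}X$ recalled in the paper, identifying $\mathcal{U}(x,\alpha)$ with $\mathcal{H}(r)$ (a hyperplane separates $x$ from $\alpha$ exactly when $r$ crosses it), and then reading everything off the crossing times $t_J$, is sound and fits the paper's toolkit. The only ingredient you use beyond what the paper states explicitly is the standard fact that a hyperplane meeting the carrier $N(K)$ is either $K$ itself or transverse to $K$ (equivalently, if $J \neq K$ and $J$, $K$ are not transverse, then no edge of $N(K)$ is dual to $J$, so the connected set $N(K)$ lies in a single halfspace of $J$); your parenthetical justification is exactly this, and it is indeed what makes the three key steps work: a hyperplane separating $J_1$ from $J_2$ is non-transverse to both, so their carriers — which the ray visits at times $t_{J_1}$ and $t_{J_2}$ — lie in opposite halfspaces and the ray must cross it (inseparability); a $K \in \mathcal{H}(r)$ contained in $J^-$ has its dual edge of $r$ inside $N(K) \subseteq J^-$, forcing $t_K < t_J$, which bounds the number of such $K$ (unidirectionality, noting that containment of $K$ in $J^-$ is equivalent to $N(K) \subseteq J^-$ by the same carrier fact); and for three pairwise non-transverse hyperplanes of $\mathcal{H}(r)$ ordered by crossing time, the middle one separates the outer two, so no facing triple occurs. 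The infiniteness claim is also fine, since distinct edges of a geodesic ray are dual to distinct hyperplanes.
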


\noindent
The only thing we need to know about UBS is the following decomposition result \cite[Theorem 3.10]{simplicialboundary}:

\begin{prop}\label{prop:Hagen}
Let $X$ be a finite-dimensional CAT(0) cube complex. Given a UBS $\mathcal{U}$, there exists a UBS $\mathcal{U}'$ which has finite symmetric difference with $\mathcal{U}$ and which is partioned into a union of $k \leq \dim(X)$ pairwise almost transverse minimal UBS, say $\mathcal{U}_1, \ldots, \mathcal{U}_k$. Moreover, if $\mathcal{U}''$ is another UBS which has finite symmetric difference with $\mathcal{U}$ and which is partitioned into a finite union of minimal UBS $\mathcal{U}_1'', \ldots, \mathcal{U}_r''$ which are pairwise almost transverse, then $k=r$ and, up to reordering, the symmetric difference between $\mathcal{U}_i$ and $\mathcal{U}_i''$ is finite for every $i$. 
\end{prop}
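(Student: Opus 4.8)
This is \cite[Theorem~3.10]{simplicialboundary}, and the plan is to reconstruct its proof. Throughout I write $\mathcal{V}=^{\ast}\mathcal{W}$ to mean that $\mathcal{V}\Delta\mathcal{W}$ is finite and $\mathcal{V}\subseteq^{\ast}\mathcal{W}$ to mean that $\mathcal{V}\setminus\mathcal{W}$ is finite; I call such collections \emph{commensurable}. The argument splits into three parts: (1) every UBS contains a minimal UBS; (2) the minimal sub-UBS of $\mathcal{U}$, considered up to commensurability, are finite in number, their number is at most $\dim X$, and their union is commensurable to $\mathcal{U}$, which yields the decomposition; (3) the uniqueness statement. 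A basic observation used repeatedly is that, since $\mathcal{U}$ is inseparable, unidirectional and free of facing triples, the inseparable closure (taken inside $X$) of any infinite subset of $\mathcal{U}$ is again a sub-UBS of $\mathcal{U}$: it is infinite, inseparable by construction, and unidirectional and facing-triple-free because these last two properties pass to subsets.

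\textbf{Part 1.} To produce a minimal sub-UBS I would argue by induction on the largest cardinality $d(\mathcal{U})\le\dim X$ of a family of pairwise transverse hyperplanes of $\mathcal{U}$. When $d(\mathcal{U})=1$ there are no two transverse hyperplanes in $\mathcal{U}$, so inseparability, unidirectionality and the absence of facing triples force $\mathcal{U}$ to be commensurable to a single chain of hyperplanes, and such a chain is minimal. For $d(\mathcal{U})\ge 2$, one fixes a deep hyperplane $J\in\mathcal{U}$ (one whose finite side carries only finitely many elements of $\mathcal{U}$) and considers, inside $J$ regarded as a CAT(0) cube complex of dimension $<\dim X$, the traces of the hyperplanes of $\mathcal{U}$ transverse to $J$; these form a UBS of strictly smaller transversality rank, to which the induction applies, and lifting a minimal UBS back to $X$ and saturating appropriately produces a minimal sub-UBS of $\mathcal{U}$. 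I expect \emph{this} to be the main obstacle: one must verify that the trace construction really outputs a UBS, that minimality survives the lift, and — as also in the exhaustion step below — that the bookkeeping with inseparable closures never enlarges the relevant commensurability classes.

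\textbf{Part 2.} Given a minimal $\mathcal{U}_1\subseteq\mathcal{U}$, I would peel it off: let $\mathcal{U}'$ be the inseparable closure inside $\mathcal{U}$ of the set of $H\in\mathcal{U}$ that cross all but finitely many elements of $\mathcal{U}_1$. Then $\mathcal{U}'$ is a sub-UBS of $\mathcal{U}$ by the basic observation, it is almost transverse to $\mathcal{U}_1$ by construction, and one shows $\mathcal{U}_1\cup\mathcal{U}'=^{\ast}\mathcal{U}$: a hyperplane of $\mathcal{U}$ outside $\mathcal{U}'$ fails to cross infinitely many elements of $\mathcal{U}_1$, and using minimality of $\mathcal{U}_1$ together with inseparability one traps all but finitely many such hyperplanes inside $\mathcal{U}_1$. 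Iterating on $\mathcal{U}'$ produces minimal UBS $\mathcal{U}_1,\mathcal{U}_2,\dots$ that are pairwise almost transverse and whose union is commensurable to $\mathcal{U}$. Since they are pairwise almost transverse, one may discard in each $\mathcal{U}_i$ the finitely many hyperplanes that do not cross a generic element of $\mathcal{U}_j$ for the other indices $j$, and thereby choose $J_i\in\mathcal{U}_i$ that are pairwise transverse; such hyperplanes all cross a common cube of $X$, so the process yields at most $k\le\dim X$ pieces and then stops. Replacing $\mathcal{U}$ by $\mathcal{U}_1\sqcup\dots\sqcup\mathcal{U}_k$ (disjointifying the union costs only a finite error, again by inseparability) gives the asserted $\mathcal{U}'$ of the statement.

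\textbf{Part 3.} For uniqueness, let $\mathcal{U}''=\mathcal{U}_1''\sqcup\dots\sqcup\mathcal{U}_r''$ be a second such decomposition. Since $\mathcal{U}''=^{\ast}\mathcal{U}=^{\ast}\mathcal{U}_1\sqcup\dots\sqcup\mathcal{U}_k$, each $\mathcal{U}_i''$ is, up to commensurability, a minimal UBS contained in $\mathcal{U}_1\cup\dots\cup\mathcal{U}_k$. The key point is an irreducibility lemma: a minimal UBS $\mathcal{V}$ contained in a finite union of pairwise almost transverse UBS $\mathcal{U}_1,\dots,\mathcal{U}_k$ is commensurable to one of them. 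Here inseparability does the work: because each $\mathcal{U}_j$ is inseparable, the inseparable closure of $\mathcal{V}\cap\mathcal{U}_j$ is contained in $\mathcal{U}_j$; if two of the sets $\mathcal{V}\cap\mathcal{U}_j$ were infinite, minimality of $\mathcal{V}$ would force $\mathcal{V}$ to be commensurable to subsets of two almost transverse families, impossible since two almost transverse UBS meet in a finite set while $\mathcal{V}$ is infinite. Hence exactly one $\mathcal{V}\cap\mathcal{U}_j$ is infinite; it is a sub-UBS of $\mathcal{U}_j$, and minimality of both $\mathcal{V}$ and $\mathcal{U}_j$ gives $\mathcal{V}=^{\ast}\mathcal{U}_j$. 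Applying this to each $\mathcal{U}_i''$ yields a map $i\mapsto\sigma(i)$ with $\mathcal{U}_i''=^{\ast}\mathcal{U}_{\sigma(i)}$; since the $\mathcal{U}_i''$ are pairwise almost transverse they are pairwise non-commensurable, so $\sigma$ is injective and $r\le k$, and by symmetry $r=k$ and $\sigma$ is a bijection realizing the claimed matching.
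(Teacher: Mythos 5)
First, a point of comparison: the paper does not prove this proposition at all; it simply quotes \cite[Theorem 3.10]{simplicialboundary}, so your reconstruction has to stand on its own, and unfortunately its central step does not.

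The genuine gap is in Part 2, and it cannot be repaired as stated, because with the paper's \emph{symmetric} definition of ``almost transverse'' the statement you are trying to prove is strictly stronger than Hagen's theorem and is in fact false. Test it on the staircase: the subcomplex of $\mathbb{R}^2$ consisting of the squares $[i,i+1]\times[j,j+1]$ with $0\le j\le i$, and $\mathcal{U}$ the set of \emph{all} of its hyperplanes. One checks that $\mathcal{U}$ is a UBS, that its only minimal sub-UBS up to commensurability are the chain $V$ of vertical hyperplanes and the chain $H$ of horizontal ones, and that every horizontal crosses all but finitely many verticals while every vertical crosses only \emph{finitely} many horizontals. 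Consequently no decomposition of $\mathcal{U}$ into pairwise almost transverse (in the symmetric sense) minimal UBS can exist, since its pieces would have to be commensurable to $V$ and $H$. This pinpoints two errors in your peeling argument: the claim that $\mathcal{U}'$ ``is almost transverse to $\mathcal{U}_1$ by construction'' only gives one direction (and the inseparable closure may even add hyperplanes without the defining crossing property), and the exhaustion claim $\mathcal{U}_1\cup\mathcal{U}'=^{\ast}\mathcal{U}$ fails for an arbitrary choice of $\mathcal{U}_1$: in the staircase, peeling off $\mathcal{U}_1=H$ first makes your generating set $\{W\in\mathcal{U}\mid W \text{ crosses all but finitely many elements of } \mathcal{U}_1\}$ empty, and the verticals are never recovered. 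What Hagen actually proves is an \emph{ordered}, one-sided condition: after reordering, for $i<j$ every hyperplane of $\mathcal{U}_j$ crosses all but finitely many hyperplanes of $\mathcal{U}_i$. That is the version a peeling argument can reach (one must remove the \emph{dominated} piece first, or follow Hagen's induction), and it is also all that the present paper needs, since only the existence and uniqueness of the minimal pieces enter the construction of the Busemann morphisms.

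Two further remarks. Part 1 is incomplete, as you yourself anticipate: the restriction-to-$J$ step needs infinitely many elements of $\mathcal{U}$ transverse to $J$, and there are UBS in which every hyperplane crosses only finitely many others (all hyperplanes of a width-one staircase strip), so the induction requires a separate case in which $\mathcal{U}$ is itself commensurable to a minimal UBS; the ``lift and saturate'' step is also only asserted. Part 3, by contrast, is essentially correct under the stated hypotheses, provided you justify the sub-claim that two almost transverse UBS intersect in a finite set; this is exactly where $\dim X<\infty$ enters (an infinite intersection would yield arbitrarily large families of pairwise crossing hyperplanes), and the same argument adapts to the one-sided ordered version.
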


\noindent
We are now ready to extend Busemann morphisms from tree to finite-dimensional CAT(0) cube complexes. 

\begin{definition}
Let $X$ be a finite-dimensional CAT(0) cube complex and $\alpha \in \mathfrak{R}X$ a point at infinity. Fixing a basepoint $x \in X$, let $\mathcal{U}_1 \cup \cdots \cup \mathcal{U}_n$ be the decomposition of $\mathcal{U}(x,\alpha)$ as a union of minimal UBS. Denote by $\mathrm{stab}_0(\alpha)$ the finite-index subgroup of $\mathrm{stab}(\alpha)$ which preserves the commensurability classes of $\mathcal{U}_1, \ldots, \mathcal{U}_n$. The \emph{Busemann morphim} of $\alpha$ is
$$\beta : = \bigoplus\limits_{i=1}^n \mathrm{tr}_{\mathcal{U}_i} : \mathrm{stab}_0(\alpha) \to \mathbb{Z}^n.$$
It does not depend on the choice of the basepoint $x$.
\end{definition}

\noindent
Our theorem can be proved now.

\begin{proof}[Proof of Theorem \ref{thm:Busemann}.]
The inclusion $\mathrm{ker}(\beta) \subset \{ g \in \mathrm{stab}_0(\alpha) \mid \text{$g$ is $X$-elliptic} \}$ follows from the proof of \cite[Theorem B.1]{CFI}. Conversely, let $g \in \mathrm{stab}_0(\alpha)$ be an $X$-elliptic isometry. As mentioned in Section \ref{section:ClassIsom}, $g$ has to stabilise a cube. Up to replacing $g$ with one of its powers, we will suppose that $g$ fixes a vertex $x \in X$. Decompose $\mathcal{U}(x,\alpha)$ as the disjoint union $\mathcal{H}_0 \sqcup \mathcal{H}_1 \sqcup \cdots$ where $\mathcal{H}_i = \{ J \in \mathcal{U}(x,\alpha) \mid d(x,N(J))=i\}$ for every $i \geq 0$. Clearly, each $\mathcal{H}_i$ is a $\langle g \rangle$-invariant collection of pairwise transverse hyperplanes; it follows in particular that $\mathcal{H}_i$ has cardinality at most $\dim(X)$. As a consequence, $g^{\dim(X)!}$ stabilises each hyperplane of $\mathcal{U}(x,\alpha)$. Let $\mathcal{U}_1 \cup \cdots \cup \mathcal{U}_n$ denote the decomposition of a UBS $\mathcal{U}$ which has finite symmetric difference with $\mathcal{U}(x,\alpha)$ as given by Proposition \ref{prop:Hagen}. Up to replacing $\mathcal{U}$ with $\mathcal{U} \cap \mathcal{U}(x,\alpha)$, we may suppose without loss of generality that $\mathcal{U} \subset \mathcal{U}(x, \alpha)$. We deduce that
$$\beta(g)= \frac{1}{\dim(X)!} \cdot \beta \left( g^{\dim(X)!} \right) = \frac{1}{\dim(X)!} \left( \mathrm{tr}_{\mathcal{U}_1} \left( g^{\dim(X)!} \right), \ldots, \mathrm{tr}_{\mathcal{U}_n} \left( g^{\dim(X)!} \right) \right) = 0,$$
i.e. $g \in \mathrm{ker}(g)$. The proof of the theorem is complete. 
\end{proof}

\section{Median flats and their isometries}\label{section:PseudoLines}

\noindent
In the traditional flat torus theorem, the abelian subgroup acts on a product of geodesic lines. In our cubical version of the theorem, these subspaces are replaced with \emph{median flats}. This section is dedicated to the study of these median algebras.

\begin{definition}
A \emph{median flat} is a non-empty, finite-dimensional, discrete median algebra which is included into the interval of two points of its Roller boundary.
\end{definition}

\noindent
For instance, the Euclidean spaces $\mathbb{Z}^n$ are median flats, as well as the chain of squares illustrated by Figure \ref{PL}. However, $[0,1] \times \mathbb{Z}$ or a line on which we glue an additional edge are not median flats. It is worth noticing that being a median flat is preserved under products:

\begin{lemma}\label{lem:ProductFlats}
A product of finitely many median flats is a median flat.
\end{lemma}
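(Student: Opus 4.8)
The plan is to reduce the statement to the two defining properties of a median flat: being included in an interval between two Roller boundary points, and finite-dimensionality (non-emptiness and discreteness being obviously preserved under finite products, using the preceding lemma on products of discrete median algebras). So let $F_1, \dots, F_k$ be median flats; it suffices by induction to treat the case $k = 2$, and I would write $F = F_1 \times F_2$.

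First I would recall that the Roller compactification of a product is the product of the Roller compactifications: $\overline{F_1 \times F_2} = \overline{F_1} \times \overline{F_2}$, and that under this identification the extended median operator is the coordinatewise one. This is essentially because the hyperplanes of $F_1 \times F_2$ are of the form $J \times F_2$ with $J$ a hyperplane of $F_1$, or $F_1 \times J'$ with $J'$ a hyperplane of $F_2$, so an orientation of $F_1 \times F_2$ is precisely a pair consisting of an orientation of $F_1$ and an orientation of $F_2$; and a pair of orientations is principal (resp. commensurable to a principal one) if and only if both coordinates are. Consequently $\mathfrak{R}(F_1 \times F_2)$ contains in particular all points of the form $(\xi_1, \xi_2)$ with $\xi_i \in \mathfrak{R} F_i$.

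Next, by hypothesis there are points $a_i, b_i \in \mathfrak{R} F_i$ with $F_i \subset I(a_i, b_i)$ for $i = 1, 2$. Set $a = (a_1, a_2)$ and $b = (b_1, b_2)$, which lie in $\mathfrak{R}(F_1 \times F_2)$ by the previous paragraph. Since the median operator on the product is coordinatewise, one has $I(a, b) = I(a_1, b_1) \times I(a_2, b_2)$ in $\overline{F_1 \times F_2}$; indeed $\mu((a_1,a_2),(b_1,b_2),(x_1,x_2)) = (x_1,x_2)$ if and only if $\mu(a_i,b_i,x_i) = x_i$ for each $i$. Therefore $F_1 \times F_2 \subset I(a_1,b_1) \times I(a_2,b_2) = I(a,b)$, which is the interval-inclusion property. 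Finite-dimensionality of the product is exactly the earlier lemma ($\dim(F_1 \times F_2) = \dim F_1 + \dim F_2 < \infty$), and discreteness follows since intervals in the product are products of finite intervals, hence finite.

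The only mildly delicate point — and the one I would spend a sentence or two justifying carefully rather than asserting — is the identification $\overline{F_1 \times F_2} = \overline{F_1} \times \overline{F_2}$ together with the coordinatewise behaviour of the extended median; everything else is a direct unwinding of definitions. One has to be a little careful that the relevant Roller boundary points $a_i, b_i$ genuinely pair up to a point of the product's Roller boundary (not just its compactification minus the vertex set), but since each $a_i, b_i$ is a non-principal orientation of $F_i$, the pair $(a_i, \text{anything})$ is non-principal as an orientation of the product, so $(a_1, a_2), (b_1, b_2) \in \mathfrak{R}(F_1 \times F_2)$ as required.
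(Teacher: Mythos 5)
Your proof is correct and follows essentially the same route as the paper: pair up the Roller boundary points coordinatewise and use the coordinatewise median to see that the product lies in the interval between the paired points. The extra care you take with the identification $\overline{F_1\times F_2}=\overline{F_1}\times\overline{F_2}$ and with dimension/discreteness only fills in details the paper leaves implicit.
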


\begin{proof}
Let $F_1, \ldots, F_n$ be median flats. So, for every $1 \leq i \leq n$, there exist $\zeta_i,\xi_i \in \mathfrak{R}F_i$ such that $F_i$ coincides with the interval $I(\zeta_i,\xi_i)$. For every $(a_1, \ldots, a_n) \in F_1 \times \cdots \times F_n$, we have
$$\begin{array}{lcl} \mu \left( (\zeta_1, \ldots, \zeta_n), (a_1, \ldots, a_n), (\xi_1, \ldots, \xi_n) \right) & = & \left( \mu(\zeta_1,a_1,\xi_1), \ldots, \mu( \zeta_n, a_n, \xi_n) \right) \\ \\ & = & \left( a_1, \ldots, a_n \right). \end{array}$$
Therefore, $F_1 \times \cdots \times F_n$ coincides with the interval between $(\zeta_1, \ldots, \zeta_n)$ and $(\xi_1, \ldots, \xi_n)$.
\end{proof}

\noindent
In order to motivate the analogy between products of geodesic lines in CAT(0) spaces and median flats, let us mention that, as a consequence of \cite[Theorem 1.14]{NibloPropA}, a flat always embeds into a Euclidean space:

\begin{lemma}\label{lem:FlatInEuclidean}
Let $F$ be a median flat of dimension $d$. Then its cubulation isometrically embeds into the cube complex $\mathbb{Z}^d$. 
\end{lemma}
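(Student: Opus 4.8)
The plan is to realize $F$ as a wallspace with finitely many walls "in each direction" and then apply the cited characterization of spaces that cube-embed into $\mathbb{Z}^d$. Concretely, since $F$ is a median flat of dimension $d$, fix $\zeta, \xi \in \mathfrak{R}F$ with $F = I(\zeta,\xi)$. Every hyperplane $J$ of $F$ (equivalently, every wall of the median algebra $F$, by Lemma \ref{lem:TwoCubulations}) separates $\zeta$ from $\xi$: indeed, if both $\zeta$ and $\xi$ lay on the same side $J^+$ of $J$, then by convexity of $J^+$ in $\overline{F}$ the whole interval $I(\zeta,\xi) = F$ would lie in $J^+$, so $J$ would not be a hyperplane of $F$. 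Orient each wall $J$ so that its chosen halfspace is the one containing $\zeta$; this gives a consistent "direction" to all walls.

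The key structural input is that the $\dim$-one-skeleton constraint forces the hyperplanes of $F$ to organize into at most $d$ classes of pairwise-transverse "parallelism". More precisely, I would show that $F$ is a \emph{finite union} of $d$ totally-ordered families of walls, in the following sense: call two walls \emph{parallel} if they are non-transverse, and recall that a collection of pairwise-transverse hyperplanes in a $d$-dimensional complex has size at most $d$. Using that $F$ is an interval between two points of its Roller boundary (hence contains no "facing triple" of hyperplanes pointing away from each other — this is where the interval condition does real work, ruling out branching like a tree or an extra glued edge), one deduces that the parallelism classes are linearly ordered and there are at most $d$ of them; equivalently $\mathcal{H}(F)$, as a poset under the separation order of the oriented halfspaces, has width at most $d$. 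By Dilworth, $\mathcal{H}(F)$ decomposes into $d$ chains $\mathcal{C}_1, \dots, \mathcal{C}_d$, and each chain is a totally ordered family of nested walls, i.e. the wall-structure of a (possibly bi-infinite, possibly finite) line.

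Granting this decomposition, the cubical embedding is immediate: the wallspace $(F, \mathcal{H}(F))$ is the "union" of $d$ line-like wallspaces $(F, \mathcal{C}_i)$, and the map
\[
F \longrightarrow \mathbb{Z}^d, \qquad x \longmapsto \big( n_1(x), \dots, n_d(x) \big),
\]
where $n_i(x)$ counts, with sign relative to the fixed basepoint and the chosen orientation, the walls of $\mathcal{C}_i$ separating $x$ from the basepoint, is well-defined (finitely many walls separate any two points of $F$), injective (two distinct vertices of $F$ are separated by some wall, which lies in exactly one $\mathcal{C}_i$), and an isometry onto its image since the combinatorial distance in $F$ is the total number of separating walls, which is $\sum_i |n_i(x) - n_i(y)| = d_{\mathbb{Z}^d}(\text{images})$. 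That the image is a subcomplex and that the embedding respects the cube structure follows because walls in distinct chains that are transverse span squares on both sides, matching the product structure of $\mathbb{Z}^d$. This is exactly the form of the conclusion, and it is also precisely the content one extracts from \cite[Theorem 1.14]{NibloPropA} applied to $F$ — so an alternative (and probably the intended) route is simply to verify the hypotheses of that theorem for a median flat rather than rebuilding the chain decomposition by hand.

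The main obstacle is the combinatorial heart of the second paragraph: proving that the interval condition $F = I(\zeta,\xi)$ genuinely bounds the number of parallelism classes of hyperplanes by $\dim(F)$, i.e. that $F$ cannot contain an "infinite antichain" of walls. A space like a line with one extra edge glued on has dimension $1$ but its walls do not form a single chain — the extra edge's wall is parallel to all the line's walls yet incomparable to those near it — and what excludes this is precisely that the extra vertex is not on any interval between two boundary points. I would handle this by arguing that if $\mathcal{H}(F)$ had width $> d$ then one could extract either a facing triple (contradicting that an interval between two Roller-boundary points has none, cf. the UBS discussion in the Busemann section) or a collection of more than $d$ pairwise-transverse hyperplanes (contradicting $\dim(F) = d$); making this dichotomy airtight, rather than the bookkeeping that follows it, is where the care is needed.
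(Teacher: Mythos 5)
Your proposal is correct, but note that the paper does not actually prove this lemma: it is stated as a direct consequence of \cite[Theorem 1.14]{NibloPropA}, which is exactly the route you mention at the end as the ``intended'' one. What you wrote is a self-contained reconstruction of the proof of that cited result: orient every wall towards $\zeta$, bound the width of the resulting poset by $d$, split it into $d$ chains by Dilworth (you need the infinite version, which does hold for posets of finite width), and read off the $\ell^1$-isometric embedding by counting signed crossings chain by chain. The citation buys brevity; your argument buys transparency and makes visible exactly where the interval condition enters. One substantive comment: the step you single out as the ``main obstacle'' is in fact immediate, and no facing-triple dichotomy or appeal to the UBS discussion is needed. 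Since $F \subseteq I(\zeta,\xi)$ (the definition only gives inclusion, equality is not needed), every wall $\{J^{\zeta},J^{\xi}\}$ of $F$ separates $\zeta$ from $\xi$. For two such walls $J,H$ the corners $J^{\zeta}\cap H^{\zeta}$ and $J^{\xi}\cap H^{\xi}$ are automatically non-empty, because two halfspaces selected by one orientation cannot be disjoint: if $J^{\zeta}\cap H^{\zeta}=\emptyset$ then $J^{\zeta}\subseteq H^{\xi}$ would force $H^{\xi}$ into the orientation $\zeta$, contradicting that $\zeta$ contains $H^{\zeta}$. Hence two distinct walls are either nested or transverse, incomparability for the order $J\leq H \Leftrightarrow J^{\zeta}\subseteq H^{\zeta}$ is exactly transversality, and any antichain is a pairwise transverse family, so has size at most $d=\dim F$; in particular there is no infinite antichain and no facing triple to rule out. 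The one computation you gloss over and should write down is the consistency of signs along a chain, which is what makes $|n_i(x)-n_i(y)|$ equal to the number of separating walls in $\mathcal{C}_i$: if $J\leq H$ both separate $x$ from $y$ with opposite signs, then $y\in J^{\zeta}\cap H^{\xi}=\emptyset$, a contradiction. Finally, the remark about the image being a subcomplex is not needed for the statement as the paper identifies cube complexes with their one-skeleta, though it does hold, since your map is in fact a homomorphism of median algebras.
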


\noindent
The main interest of our cubical flat torus theorem is that it provides an action on a median flat. Finding such an action is interesting because the isometry group of (the cubulation of) a median flat is quite specific, imposing severe restrictions on the action we are looking at. The following proposition motivates this idea:

\begin{prop}\label{prop:Busemann}
Let $X$ be the cubulation of a median flat. There exist a finite-index subgroup $\mathrm{Isom}_0(X) \leq \mathrm{Isom}(X)$, an integer $n \geq 1$ and a morphism $\beta : \mathrm{Isom}_0(X) \to \mathbb{Z}^n$ such that $\mathrm{ker}(\beta) = \{ g \in \mathrm{Isom}_0(X) \mid \text{$g$ is $X$-elliptic} \}$. In particular, $\mathrm{ker}(\beta)$ is locally $X$-elliptic. 
\end{prop}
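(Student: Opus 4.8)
The plan is to deduce Proposition~\ref{prop:Busemann} from the Busemann morphism machinery of Theorem~\ref{thm:Busemann}, by first producing a point at infinity of $X$ which is fixed by a finite-index subgroup of $\mathrm{Isom}(X)$. Since $X$ is the cubulation of a median flat, Lemma~\ref{lem:FlatInEuclidean} shows that $X$ is isomorphic to an isometrically embedded subcomplex of $\mathbb{Z}^d$ where $d=\dim(X)$; in particular $X$ is finite-dimensional, and it is unbounded because its Roller boundary is non-empty (it contains the two boundary points whose interval defines the median flat). Applying Lemma~\ref{lem:SubEuclidean}, $\overline{X}$ has only finitely many cubical components and one of them is bounded; as $X$ itself is an unbounded component, this bounded component, call it $C_0$, lies in $\mathfrak{R}X$. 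Moreover $C_0$ is in fact \emph{finite}: this follows from the description of $\mathfrak{R}X$ inside $\overline{\mathbb{Z}}^d$ used in the proof of Lemma~\ref{lem:SubEuclidean}, since two points of a bounded cubical component must share the same (finite) set of infinite coordinates, while their remaining, integer, coordinates lie in a bounded range, leaving only finitely many representatives.

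Next I would use that $\mathrm{Isom}(X)$ acts by cubical isometries on $\overline{X}$ and hence permutes its finitely many cubical components; consequently a finite-index subgroup $G_1\leq\mathrm{Isom}(X)$ preserves $C_0$. Since $C_0$ is a finite cube complex, $\mathrm{Isom}(C_0)$ is finite, so the kernel $G_2$ of the induced homomorphism $G_1\to\mathrm{Isom}(C_0)$ has finite index in $G_1$, hence in $\mathrm{Isom}(X)$, and $G_2$ fixes $C_0$ pointwise. Fixing any vertex $\alpha\in C_0\subset\mathfrak{R}X$, we get $G_2\leq\mathrm{stab}(\alpha)$.

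Now apply Theorem~\ref{thm:Busemann} to the point $\alpha\in\mathfrak{R}X$: it yields a finite-index subgroup $\mathrm{stab}_0(\alpha)\leq\mathrm{stab}(\alpha)$ and a morphism $\beta:\mathrm{stab}_0(\alpha)\to\mathbb{Z}^n$ with $n\leq\dim(X)$ and $\ker(\beta)=\{g\in\mathrm{stab}_0(\alpha)\mid g \text{ is } X\text{-elliptic}\}$. Here one must note that $n\geq1$: the collection $\mathcal{U}(x,\alpha)$ of hyperplanes separating a basepoint $x$ from $\alpha$ is a UBS, hence infinite, so its decomposition into minimal UBS (Proposition~\ref{prop:Hagen}) is non-empty. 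Setting $\mathrm{Isom}_0(X):=G_2\cap\mathrm{stab}_0(\alpha)$ — which has finite index in $\mathrm{Isom}(X)$ because $G_2$ does and $\mathrm{stab}_0(\alpha)$ has finite index in $\mathrm{stab}(\alpha)\supseteq G_2$ — and restricting $\beta$ to $\mathrm{Isom}_0(X)$ gives the required morphism, whose kernel is $\mathrm{Isom}_0(X)\cap\ker(\beta)=\{g\in\mathrm{Isom}_0(X)\mid g \text{ is } X\text{-elliptic}\}$. The last assertion is then formal, exactly as in Theorem~\ref{thm:Busemann}: a group all of whose elements are $X$-elliptic is locally $X$-elliptic.

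I expect the main obstacle to be not a hard estimate but the boundary bookkeeping: checking that the bounded cubical component $C_0$ of $\mathfrak{R}X$ is genuinely finite (so that one may pass to a finite-index subgroup fixing it pointwise rather than merely preserving it), and confirming that the integer $n$ delivered by Theorem~\ref{thm:Busemann} is at least $1$. Both points are handled by the explicit picture of $\mathfrak{R}X$ inside $\overline{\mathbb{Z}}^d$ recorded in the proof of Lemma~\ref{lem:SubEuclidean} together with the fact that a UBS is, by definition, infinite.
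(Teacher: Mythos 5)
Your proof is correct and follows essentially the same route as the paper: both use Lemmas \ref{lem:FlatInEuclidean} and \ref{lem:SubEuclidean} to locate a bounded cubical component of $\mathfrak{R}X$ among finitely many, deduce that some $\alpha\in\mathfrak{R}X$ has a stabiliser of finite index in $\mathrm{Isom}(X)$, and then restrict the Busemann morphism of Theorem \ref{thm:Busemann} to a suitable finite-index intersection. Your extra bookkeeping (finiteness of the bounded component, so one can fix it pointwise, and the check that $n\geq 1$ because a UBS is infinite) only makes explicit what the paper leaves implicit.
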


\noindent
A subgroup is called \emph{locally elliptic} if every finitely generated subgroup is elliptic (or equivalently in CAT(0) cube complexes, has a finite orbit or stabilises a cube).

\begin{proof}[Proof of Proposition \ref{prop:Busemann}.]
Let $\mathrm{Isom}^+(X)$ the subgroup of $\mathrm{Isom}(X)$ containing the isometries that stabilise each cubical component of $\mathfrak{R}X$. Notice that, by combining Lemmas \ref{lem:FlatInEuclidean} and \ref{lem:SubEuclidean}, we know that $X$ has only finitely many cubical components, so that $\mathrm{Isom}^+(X)$ has to be a finite-index subgroup of $\mathrm{Isom}(X)$. We also know that $\mathfrak{R}X$ contains a cubical component which is bounded. Therefore, there exists some $\alpha \in \mathfrak{R}X$ such that $\mathrm{Isom}^+(X) \cap \mathrm{stab}(\alpha)$ has finite index in $\mathrm{Isom}(X)$. According to Theorem \ref{thm:Busemann}, there exists a finite-index subgroup $\mathrm{stab}_0(\alpha) \leq \mathrm{stab}_{\mathrm{Isom}(X)}(\alpha)$, an integer $n \geq 1$ and a morphism $\beta : \mathrm{stab}_0(\alpha) \to \mathbb{Z}^n$ such that $\mathrm{ker}(\varphi)= \{ g \in \mathrm{stab}_0(\alpha) \mid \text{$g$ is $X$-elliptic} \}$. Therefore, the restriction of $\beta$ to $\mathrm{Isom}_0(X):= \mathrm{Isom}^+(X) \cap \mathrm{stab}_0(\alpha)$ provides the desired morphism.
\end{proof}

\noindent
We emphasize that the isometry group of a median flat is not necessarily virtually free abelian. For instance, the median flat illustrated by Figure \ref{PL} has its isometry group isomorphic to the wreath product $\mathbb{Z}_2 \wr D_\infty$. Nevertheless, any group acting properly on a median flat turns out to be virtually free abelian.  
\begin{figure}
\begin{center}
\includegraphics[trim={0 23cm 35cm 0},clip,scale=0.45]{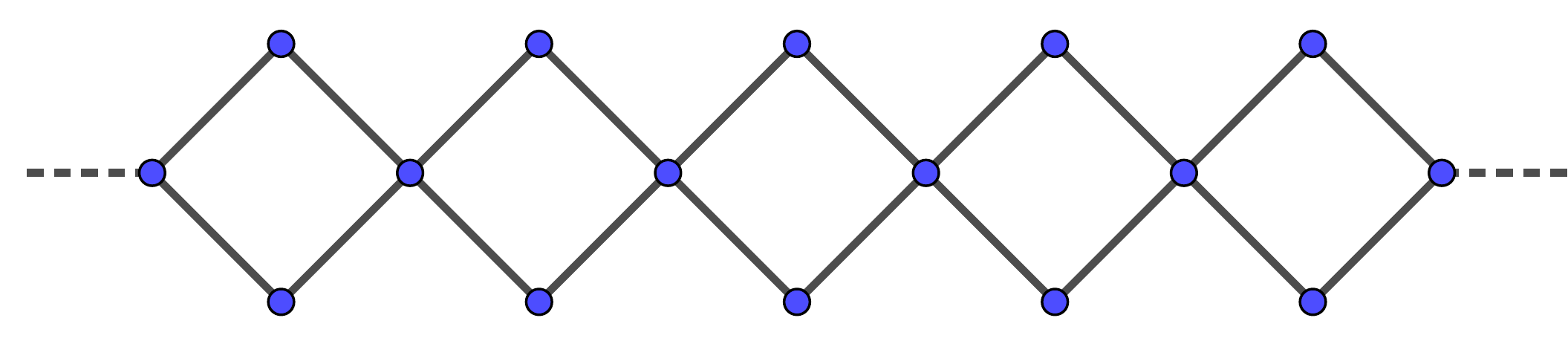}
\caption{A median flat whose isometry group is isomorphic to $\mathbb{Z}_2 \wr D_{\infty}$.}
\label{PL}
\end{center}
\end{figure}

\medskip \noindent
The notion of median flat arises naturally when we study the dynamics of loxodromic isometries of CAT(0) cube complexes, as justified by the next statement:

\begin{prop}\label{prop:AxesFlat}
Let $X$ be a CAT(0) cube complex and $g \in \mathrm{Isom}(X)$ a loxodromic isometry. Fix two points $\zeta, \xi \in \mathfrak{R}X$ such that $g$ admits an axis having $\zeta$ and $\xi$ as points at infinity. The union of all the axes of $g$ having $\zeta$ and $\xi$ as points at infinity is a median flat.
\end{prop}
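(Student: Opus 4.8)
The plan is to denote by $Y$ the union of all axes of $g$ with endpoints $\zeta$ and $\xi$, and to show $Y$ is a nonempty, finite-dimensional, discrete median subalgebra which coincides with the interval $I(\zeta,\xi)$ computed in $\overline{X}$. Nonemptiness and the fact that all these axes are genuine geodesics on which $g$ translates by $\|g\|$ come from Haglund's results recalled in Section~\ref{section:ClassIsom}; finite-dimensionality is inherited from $X$ as soon as we know $Y$ is a median subalgebra, since $\dim Y \le \dim X$, and discreteness is automatic for a median subalgebra of a CAT(0) cube complex. So the heart of the matter is the identity $Y = I(\zeta,\xi) \cap X$, from which "median subalgebra" follows because an interval is always convex, hence in particular stable under the median operator.

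First I would prove $Y \subset I(\zeta,\xi)$. If $\gamma$ is an axis of $g$ with endpoints $\zeta,\xi$, then for a vertex $x \in \gamma$ the hyperplanes separating $\zeta$ and $\xi$ are exactly those crossing $\gamma$, and each such hyperplane separates $x$ from exactly one of $\zeta,\xi$; using the description of the median on $\overline{X}$ (a halfspace lies in $\mu(\zeta,x,\xi)$ iff it lies in at least two of the three orientations) one checks that no hyperplane separates $x$ from $\mu(\zeta,x,\xi)$, so $x \in I(\zeta,\xi)$. Conversely, for $x \in I(\zeta,\xi) \cap X$ I would build an axis through $x$: the key point is that a hyperplane separating $x$ from $\zeta$ is, by the interval condition, not separating $x$ from $\xi$, so $\mathcal{W}(x,\zeta)$ and $\mathcal{W}(x,\xi)$ partition the hyperplanes crossing a fixed axis. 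Then one argues that the concatenation $\bigcup_{k\in\mathbb{Z}} g^k[x,gx]$ crosses each hyperplane at most once — equivalently is a geodesic — and has $\zeta,\xi$ as endpoints; the "at most once" is where one uses that $x\in I(\zeta,\xi)$ together with $g$-equivariance of the $\zeta/\xi$ dichotomy. By the remark after the classification (a stabilized geodesic with positive translation length is automatically an axis), this concatenation is an axis of $g$ with endpoints $\zeta,\xi$, so $x \in Y$.

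Given $Y = I(\zeta,\xi)\cap X$, the remaining clauses are quick: convexity of intervals gives that $Y$ is a median subalgebra of $X$; $\dim Y \le \dim X < \infty$; discreteness holds because $I(x,y)\subseteq Y$ is finite for $x,y\in Y$ (intervals in a CAT(0) cube complex are finite). Hence $Y$ is a finite-dimensional discrete median algebra sitting inside the interval of two points $\zeta,\xi$ of $\mathfrak{R}Y$ — one should note $\zeta,\xi \in \mathfrak{R}Y$, not merely $\mathfrak{R}X$, which follows because the axes are bi-infinite geodesics entirely contained in $Y$, so the rays representing $\zeta,\xi$ live in $Y$. That is exactly the definition of a median flat.

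The main obstacle I expect is the converse inclusion $I(\zeta,\xi)\cap X \subset Y$: concretely, showing that for $x$ in the interval the naive concatenation $\bigcup_k g^k[x,gx]$ is actually a geodesic. One must rule out that some hyperplane $H$ is crossed twice, i.e. that $H$ and $g^nH$ are both crossed by $[x,gx]\cup g[x,gx]\cup\cdots$ for $n\neq 0$; the argument is that if $H$ separates $x$ from $gx$ then, $x$ being in $I(\zeta,\xi)$, $H$ separates $\zeta$ from $\xi$ (the relevant orientation argument via the median description on $\overline X$), hence $H$ is crossed by every axis and is transverse to no $g$-translate of itself along the axis; translating by $g$ and comparing orientations of $\zeta,\xi$ then forces the $g^k[x,gx]$ to cross pairwise disjoint sets of hyperplanes. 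Making this orientation bookkeeping precise — especially keeping track of which side of each hyperplane $\zeta$, $\xi$, $x$, $gx$ lie on, and invoking finite-dimensionality to bound how many hyperplanes can be pairwise transverse — is the technical core; everything else is formal manipulation of intervals and the median operator on $\overline{X}$.
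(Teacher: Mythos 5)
Your first inclusion (every point of an axis with endpoints $\zeta,\xi$ lies in $I(\zeta,\xi)$) is fine, but the converse identity $Y=I(\zeta,\xi)\cap X$, on which your whole proof rests, is false, and the step of your sketch that breaks is exactly the assertion that a hyperplane crossing $[x,gx]$ is ``transverse to no $g$-translate of itself''. Take $X$ to be the infinite staircase: squares $S_m$, $m\in\mathbb{Z}$, where $S_m$ and $S_{m+1}$ share an edge and the two edges that $S_m$ shares with its neighbours are adjacent edges of $S_m$; write $v_m$ for the vertices of the inner-corner line and $w_m$ for the outer corner of $S_m$, and let $g$ be the automorphism shifting $S_m$ to $S_{m+1}$. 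Then $g$ is loxodromic with $\|g\|=1$ and its unique axis is the line $(v_m)_{m\in\mathbb{Z}}$; every hyperplane of $X$ crosses this axis (the hyperplane dual to $v_{m-1}v_m$ is transverse to its $g$-image, dual to $v_mv_{m+1}$), so no hyperplane separates any vertex from both $\zeta$ and $\xi$ and hence $I(\zeta,\xi)\cap X$ is all of $X$. Yet $d(w_m,gw_m)=3>\|g\|$, so $w_m$ lies on no axis: the interval strictly contains the union of axes, and for $x=w_m$ the concatenation $\bigcup_k g^k[x,gx]$ is not a geodesic. Consequently your route to ``$Y$ is a median subalgebra'' (convexity of the interval) and to the flat property collapses; the paper instead proves median-closedness directly, by showing (Lemma \ref{lem:MedianAxis}) that the $g$-orbit concatenation through the median of three points \emph{lying on axes} is a geodesic, and then identifying the endpoints of the resulting axis by a limit computation --- an argument that genuinely uses that the three points are on axes, not merely in $I(\zeta,\xi)$.

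There is a second, independent gap: you deduce finite-dimensionality from $\dim Y\le\dim X<\infty$, but the proposition makes no finite-dimensionality assumption on $X$ (the whole point of the paper is to allow infinite-dimensional complexes), so this gives nothing in general. Finite-dimensionality of $F$ is real content: the paper proves that any two adjacent vertices of $F$ lie on a common axis (Claim \ref{claim:AxisPassing}), that distinct hyperplanes crossing $F$ induce distinct walls of $F$ (Claim \ref{claim:HypPartition}), and deduces that every vertex of the cubulation of $F$ has degree at most $\|g\|$, whence $\dim F\le\|g\|$. Nothing in your proposal substitutes for this bound. (Your remarks that discreteness is automatic and that $\zeta,\xi$ may be viewed in $\mathfrak{R}Y$ are fine, but they are the easy parts.)
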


\noindent
The first step is to show that such a union turns out to be a median subalgebra of $X$, which is essentially a consequence of the following lemma:

\begin{lemma}\label{lem:MedianAxis}
Let $X$ be a CAT(0) cube complex and $g \in \mathrm{Isom}(X)$ a loxodromic isometry. Fix three axes $\alpha, \beta, \gamma$ of $g$ and three vertices $x \in \alpha$, $y \in \beta$, $z \in \gamma$. If $m$ denotes the median point $\mu(x,y,z)$ and if $[m,gm]$ is an arbitrary geodesic between $m$ and $gm$, then $\bigcup\limits_{k \in \mathbb{Z}} g^k [m,gm]$ is an axis of $g$.
\end{lemma}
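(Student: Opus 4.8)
The statement has two parts: first, that $\bigcup_{k \in \mathbb{Z}} g^k [m,gm]$ is a bi-infinite path which is a geodesic, and second, that $g$ acts on it by translations (which, by the remark recalled after the classification of isometries, automatically makes it an axis). The plan is to deduce everything from the key claim that $m \in \mathrm{Min}(g)$, i.e.\ that $d(m,gm) = \|g\|$; once this is known, Haglund's description of $\mathrm{Min}(g)$ (recalled in Section~\ref{section:ClassIsom}) gives directly that the concatenation $\bigcup_{k} g^k[m,gm]$ is a geodesic on which $g$ translates by $\|g\|$, which is exactly an axis.

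\textbf{Reducing to the distance computation.} So the real work is to show $d(\mu(x,y,z), g\mu(x,y,z)) = \|g\|$, knowing that $x,y,z$ lie on axes $\alpha,\beta,\gamma$ of $g$, so $d(x,gx) = d(y,gy) = d(z,gz) = \|g\|$. The idea is to use the fact (Lemma~\ref{lem:HypSepMedian}) that every hyperplane separating $m = \mu(x,y,z)$ from $gm = \mu(gx,gy,gz)$ must separate one of the pairs $\{x,gx\}$, $\{y,gy\}$, $\{z,gz\}$. This gives a map from $\mathcal{W}(m,gm)$ into $\mathcal{W}(x,gx) \cup \mathcal{W}(y,gy) \cup \mathcal{W}(z,gz)$, but it is not injective, so a naive count only yields $d(m,gm) \le 3\|g\|$, which is not good enough. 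I would instead exploit that on an axis $\alpha$ of a loxodromic isometry the hyperplanes dual to the edges of $\alpha$ behave coherently under $g$: $g$ shifts the ``track'' of hyperplanes $\ldots, g^{-1}J, J, gJ, \ldots$ crossing $\alpha$, and crucially $J$ and $g^kJ$ are never transverse for $k \ne 0$ (otherwise $g$ would not be loxodromic, or the axis would cross a hyperplane twice). Using this, and the orientation data encoded in $\zeta, \xi$, one shows that the hyperplanes separating $m$ from $gm$ are in bijection with the hyperplanes separating $x$ from $gx$ along a single axis — equivalently, that $\mathcal{W}(m,gm)$ has exactly $\|g\|$ elements.

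\textbf{The main obstacle.} The delicate point is precisely the injectivity/bijectivity of the correspondence between $\mathcal{W}(m,gm)$ and the hyperplanes crossing a single axis. A cleaner route, which I expect to be the intended one, is to argue via the Roller boundary: since $g$ is loxodromic with an axis having endpoints $\zeta$ and $\xi$, each of $x,y,z$ lies in the interval $I(\zeta,\xi)$ (an axis with those endpoints crosses exactly the hyperplanes separating $\zeta$ from $\xi$), hence by convexity of intervals $m = \mu(x,y,z) \in I(\zeta,\xi)$. Then $m$ itself lies on a geodesic from $\zeta$ to $\xi$, i.e.\ on an axis-like bi-infinite geodesic pointing to $\zeta$ and $\xi$; applying $g$ (which fixes $\zeta$ and $\xi$) and using that $g$ translates toward $\xi$ on any such geodesic by $\|g\|$ (Haglund), we get $d(m,gm) = \|g\|$. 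So the plan is: (1) show $x,y,z \in I(\zeta,\xi)$; (2) conclude $m \in I(\zeta,\xi)$ by convexity; (3) show any vertex of $I(\zeta,\xi)$ lies on an axis of $g$, so $m \in \mathrm{Min}(g)$; (4) invoke Haglund to conclude $\bigcup_k g^k[m,gm]$ is an axis. The technical heart is step (3) — identifying when a vertex of $I(\zeta,\xi)$ extends to a full bi-infinite $g$-axis — and reconciling it with the possibility that $\mathrm{Min}(g)$ is larger than $I(\zeta,\xi)$ (there may be axes with other pairs of endpoints); but since we only need $d(m,gm) = \|g\|$, it suffices to note that $g$ translates by $\|g\|$ along \emph{any} bi-infinite geodesic joining $\zeta$ to $\xi$ through $m$, which exists because $m \in I(\zeta,\xi)$ and $\zeta,\xi$ are distinct points of $\mathfrak{R}X$ with $m$ on a geodesic between them.
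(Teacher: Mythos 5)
Your reduction (``show $d(m,gm)=\|g\|$, then invoke Haglund's description of $\mathrm{Min}(g)$'') is a legitimate framing, but neither of the two routes you offer actually establishes that distance estimate, and the one you call the intended route rests on a false premise. The lemma allows $\alpha,\beta,\gamma$ to be \emph{three arbitrary axes} of $g$, and distinct axes of a loxodromic isometry need not have the same pair of endpoints in the Roller boundary: take $X=Y\times\mathbb{R}$ with $g=\mathrm{id}_Y\times(\text{translation by }1)$, whose axes are the lines $\{y\}\times\mathbb{R}$, $y\in Y$, with pairwise distinct endpoint pairs. In that example $I(\zeta,\xi)$ for the endpoints of one axis is just that single line, so your step (1), ``$x,y,z\in I(\zeta,\xi)$'', fails as soon as $y$ or $z$ lies on a different axis; knowing that all axes cross the same hyperplanes would not repair this, since hyperplanes \emph{not} crossing the axes (the $Y$-hyperplanes above) are exactly the ones separating $y$ from both $\zeta$ and $\xi$. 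This generality is not optional: the lemma is used to prove that $\mathrm{Med}(g)$, the union of \emph{all} axes, is median (Lemma \ref{lem:MinMedian}); the fixed-endpoint situation is only the later Proposition \ref{prop:AxesFlat}. Your first route (a bijection between $\mathcal{W}(m,gm)$ and the hyperplanes crossing a single axis) is precisely the step you acknowledge you cannot carry out, so the proposal contains no complete argument for $d(m,gm)=\|g\|$.

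There is a second, independent gap even in the fixed-endpoint setting: the assertion that $g$ translates by $\|g\|$ along \emph{any} bi-infinite geodesic from $\zeta$ to $\xi$ through $m$ is not Haglund's statement (his results concern points of $\mathrm{Min}(g)$ and $g$-invariant geodesics), and a point of $I(\zeta,\xi)$ is not obviously in $\mathrm{Min}(g)$; the paper proves the relevant special case only later, as Claim \ref{claim:YmedianInf} inside Lemma \ref{lem:ProductDeltaF}, and that proof already uses the medianness of $\mathrm{Med}(g)$, i.e.\ the present lemma, so following that thread would be circular. For comparison, the paper's proof avoids computing $d(m,gm)$ altogether: it assumes some hyperplane $J$ crosses both $[m,gm]$ and $g^k[m,gm]$, uses Lemma \ref{lem:HypSepMedian} and convexity of halfspaces to force $J$ to separate $x$ and $gx$ (say) and also some $g^ky'$ and $g^{k+1}y'$ for $y'\in\{y,z\}$, and then derives a contradiction by translating the configuration by powers of $g$ and counting: the hyperplanes $J,gJ,\dots,g^{N-1}J$ would all separate two vertices at distance $N-1$, which is impossible (this is Fact \ref{fact:TwoDirections} plus a second case analysis). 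Once the concatenation is known to be a geodesic, the fact that it is an axis is exactly what \cite[Proposition 5.2]{HaglundAxis} gives. If you want to salvage your outline, that direct double-crossing argument is the missing ingredient; the interval $I(\zeta,\xi)$ cannot serve as a common home for $x,y,z$.
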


\begin{proof}
Let $x,y,z \in \mathrm{Med}(g)$ be three vertices, and let $m$ denote the median point of $\{x,y,z\}$. In order to show that $m$ belongs to $\mathrm{Med}(g)$, it is sufficient to prove that $\bigcup\limits_{k \in \mathbb{Z}} g^k [m,gm]$ is a geodesic.

\medskip \noindent
Suppose by contradiction that $\bigcup\limits_{k \in \mathbb{Z}} g^k [m,gm]$ is not a geodesic. So there exist an index $k \in \mathbb{Z} \backslash \{0\}$ and a hyperplane $J$ such that $J$ intersects both $[m,gm]$ and $g^k[m,gm]$. Let $J^-$ and $J^+$ denote the halfspaces delimited by $J$ such that $m \in J^-$ and $gm \in J^+$. By convexity of halfspaces, $J^-$ has to contain at least two vertices among $\{x,y,z\}$ and similarly $J^+$ has to contain at least two vertices among $\{gx,gy,gz\}$. Therefore, we must have $x \in J^-$ and $gx \in J^+$, or $y \in J^-$ and $gy \in J^+$, or $z \in J^-$ and $gz \in J^+$. Without loss of generality, suppose that $x \in J^-$ and $gx \in J^+$. We also know from Lemma \ref{lem:HypSepMedian} that $J$ has to separate $g^kx$ and $g^{k+1}x$, or $g^ky$ and $g^{k+1}y$, or $g^kz$ and $g^{k+1}z$. Notice that $J$ cannot separate $g^kx$ and $g^{k+1}x$ since it already separates $x$ and $gx$ and that $\bigcup\limits_{k \in \mathbb{Z}} g^k [x,gx]$ is a geodesic (for any choice of a geodesic $[x,gx]$ between $x$ and $gx$). Without loss of generality, suppose that $J$ separates $g^ky$ and $g^{k+1}y$. Now, we distinguish two cases.

\medskip \noindent
\textbf{Case 1:} $g^{k+1}y$ belongs to $J^-$ and $g^ky$ to $J^+$. 

\medskip \noindent
So $J$ separates $\{x,g^{k+1}y\}$ and $\{gx,g^ky\}$. Set $N= d( x, g^{k+1}y)+1$. Because, for every $0 \leq j \leq N-1$, the hyperplane $g^jJ$ separates $\{g^jx,g^{k+j+1}y\}$ and $\{g^{j+1}x,g^{k+j}y\}$, it follows that $J, gJ, \ldots, g^{N-1}J$ all separate $g^Nx$ and $g^{k+N+1}y$. Therefore,
$$d\left( x, g^{k+1}y \right) = d \left( g^Nx, g^{k+N+1}y \right) \geq N = d \left( x, g^{k+1}y \right)+1,$$
a contradiction. Let us record what we have just proved for future use:

\begin{fact}\label{fact:TwoDirections}
Let $X$ be a CAT(0) cube complex and $g \in \mathrm{Isom}(X)$ a loxodromic isometry. For every vertices $x,y \in \mathrm{Med}(g)$ and integer $k \in \mathbb{Z}$, no hyperplane can separate  $\{x,g^{k+1}y\}$ and $\{gx,g^ky\}$.
\end{fact}

\noindent
Now let us focus on the second case we have to consider.

\medskip \noindent
\textbf{Case 2:} $g^{k+1}y$ belongs to $J^+$ and $g^ky$ to $J^-$. 

\medskip \noindent
So far, we know that $\{x, y, gy, g^ky \} \subset J^-$ and $\{gx, g^kx,g^{k+1}x, g^{k+1}y \} \subset J^+$. Because $gm \in J^+$, necessarily at least two vertices among $\{gx,gy,gz\}$ have to belong to $J^+$, so that $gz$ has to belong to $J^+$. Since $g^{k+1}x$ and $g^{k+1}y$ belong to $J^+$, necessarily $g^{k+1} m \in J^+$, so that $g^k m \in J^-$. Therefore, at least two vertices among $\{g^kx, g^ky,g^kz\}$ have to belong to $J^-$, hence $g^kz \in J^-$. Thus, we have proved that $J$ separates $gz$ and $g^kz$ with $gz \in J^+$ and $g^kz \in J^-$. Necessarily, $z \in J^+$ and $g^{k+1}z \in J^-$. By applying Fact \ref{fact:TwoDirections} to  $\{x, g^{k+1}z \}$ and $\{gx, gz\}$, we find a contradiction. 

\medskip \noindent
Thus, we have proved that $\gamma:= \bigcup\limits_{k \in \mathbb{Z}} g^k [m,gm]$ is a geodesic, concluding the proof of our lemma.
\end{proof}

\begin{proof}[Proof of Proposition \ref{prop:AxesFlat}.]
Let $F$ denote the union of all the axes of $g$ having $\zeta$ and $\xi$ as points at infinity. We first verify that $F$ is a median subalgebra of $X$. So let $x,y,z \in F$ be three vertices. We know from Lemma \ref{lem:MinMedian} that $\mu(x,y,z)$ belongs to a geodesic $\gamma$ on which $g$ acts by translations. We have
$$\gamma(+ \infty) = \lim\limits_{n \to + \infty} g^n \mu(x,y,z) = \mu \left( \lim\limits_{n\to + \infty} g^nx, \lim\limits_{n \to + \infty} g^ny, \lim\limits_{n \to + \infty} g^nz \right) = \mu( \xi , \xi, \xi)=\xi.$$
Similarly, one shows that $\gamma(- \infty)= \zeta$. Thus, we have proved that $\mu(x,y,z)$ belongs to an axis of $g$ having $\zeta$ and $\xi$ as points at infinity, i.e. $\mu(x,y,z) \in F$.

\medskip \noindent
It remains to show that that $F$ is finite-dimensional. We begin by proving two preliminary claims.

\begin{claim}\label{claim:AxisPassing}
If $x,y \in F$ are adjacent in $X$, then there exists an axis of $g$ passing through both of them.
\end{claim}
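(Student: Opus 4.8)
The plan is to exploit the fact that $x$ and $y$ both lie on axes of $g$ with the same endpoints $\zeta, \xi \in \mathfrak{R}X$, so there are axes $\alpha \ni x$ and $\beta \ni y$, both equivalent rays pointing to $\zeta$ in one direction and $\xi$ in the other. The idea is to build a single geodesic through $\{x,y\}$ that is $g$-periodic by combining appropriate sub-rays of $\alpha$ and $\beta$. Concretely, I would first use the Fact \ref{fact:TwoDirections} machinery (no hyperplane separates $\{x, g^{k+1}y\}$ from $\{gx, g^{k}y\}$, and its mirror image) to control how hyperplanes can interact with the pair $(x,y)$ and their $g$-translates. Since $x$ and $y$ are adjacent, the unique hyperplane $H$ dual to the edge $[x,y]$ is the only hyperplane separating them; I would show that $H$ is not crossed by any $g^k[x,y]$ for $k \neq 0$, using that $\bigcup_k g^k[x,y']$ is a geodesic (for any choice of geodesic between $x$ and $gx$, resp.\ $y$ and $gy$) together with Fact \ref{fact:TwoDirections}. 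This gives that $\bigcup_{k\in\mathbb{Z}} g^k[x,y]$, suitably concatenated, is itself a bi-infinite geodesic.

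More carefully, the key step is to produce the axis. I would take the geodesic segment $[x,y] = [x,y]$ (the single edge), then form the bi-infinite path $\gamma := \bigcup_{k \in \mathbb{Z}} g^k\big([x, gx] * [gx, \ldots]\big)$ — more precisely, I would argue that one can interleave: pick a geodesic $[y, gx]$ and consider the concatenation $\cdots * [g^{-1}y, x] * [x,y] * [y, gx] * [gx, gy] * \cdots$. To see this is a geodesic, I would verify (via the standard criterion: a path is geodesic iff it crosses each hyperplane at most once) that no hyperplane is crossed twice, applying Fact \ref{fact:TwoDirections} and its reflected version to rule out each possible repetition, exactly as in the Case 1 / Case 2 analysis of Lemma \ref{lem:MedianAxis}. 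Once $\gamma$ is a bi-infinite geodesic stabilised by $g$ and moved by translations of positive length, \cite[Proposition 5.2]{HaglundAxis} (quoted in Section \ref{section:ClassIsom}) upgrades it automatically to an axis of $g$, and by construction it passes through both $x$ and $y$. One should also check the endpoints of $\gamma$ are $\zeta$ and $\xi$, which follows as in the proof of Proposition \ref{prop:AxesFlat} by pushing the median/limit argument, or simply because $\gamma$ contains the forward $g$-orbit of $x \in \alpha$, hence is asymptotic to $\zeta$ (resp.\ $\xi$).

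The main obstacle I anticipate is the combinatorial bookkeeping in showing that the interleaved concatenation crosses each hyperplane only once — in particular handling hyperplanes that are transverse to the "axis direction" and could, a priori, be crossed once by an $\alpha$-segment and once by a $\beta$-segment. This is precisely where Fact \ref{fact:TwoDirections} (and the symmetric statement obtained by swapping the roles of the two halfspaces, which one gets for free by applying the Fact to $g^{-1}$) does the work: any such hyperplane $J$ separating, say, $\{x, g^{k+1}y\}$ from $\{gx, g^{k}y\}$ is forbidden. I expect the cleanest writeup mirrors the two-case dichotomy already carried out in Lemma \ref{lem:MedianAxis}, so the claim should follow with only modest extra effort beyond invoking that lemma's internal argument.
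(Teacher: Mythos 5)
Your route is genuinely different from the paper's (the paper keeps the two axes $\alpha\ni x$, $\beta\ni y$ and pushes the edge dual to $J$, the hyperplane separating $x$ and $y$, along $\beta$ and then $\alpha$ by flipping squares until both axes pass through $x$ and $y$; you instead build a $g$-periodic geodesic directly by interleaving), and it can be made to work, in fact more efficiently than you anticipate: it suffices to show that the single segment $[x,y]\cup[y,gx]$ is a geodesic from $x$ to $gx$, because $x\in\mathrm{Min}(g)$ and the result of Haglund quoted in Section~\ref{section:ClassIsom} then says that $\bigcup_{k\in\mathbb{Z}}g^k\bigl([x,y]\cup[y,gx]\bigr)$ is an axis; it passes through $x$ and $y$ and has endpoints $\zeta,\xi$ since it contains $\{g^kx\}$. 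So no bi-infinite hyperplane bookkeeping is needed. However, as written your outline has two genuine gaps. First, your fixed interleaving can simply fail: the only obstruction to $[x,y]\cup[y,gx]$ being geodesic is that $J$ might separate $y$ from $gx$, and this really happens in one of the two possible configurations (when $J$ separates $y$ from $\{x,gx,gy\}$), in which case your path crosses $J$ twice and no application of Fact~\ref{fact:TwoDirections} will repair it; you must then route the other way, through $[y,x]\cup[x,gy]$. Your ``reflected version'' remark concerns Fact~\ref{fact:TwoDirections} itself, not this necessary case distinction on whether to exchange the roles of $x$ and $y$.

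Second, and more seriously, to know that at least one of the two interleavings works you must exclude the configuration where $J$ separates $\{x,gx\}$ from $\{y,gy\}$; in that case $J$ separates the adjacent vertices $gx,gy$, hence $gJ=J$ with both halfspaces preserved, and $J$ separates the whole orbit of $x$ from the whole orbit of $y$. You claim to rule this kind of behaviour out ``using that the axes are geodesics together with Fact~\ref{fact:TwoDirections}'', but those tools cannot do it: Fact~\ref{fact:TwoDirections} holds for arbitrary points of $\mathrm{Med}(g)$, where such a separating $g$-invariant hyperplane genuinely occurs (take $g$ a translation of $\mathbb{Z}^2$ and $x,y$ adjacent vertices on two parallel axes; there is then no axis through both, so the claim itself is false without the hypothesis $x,y\in F$). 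The exclusion must use that $\alpha$ and $\beta$ have the same endpoints $\zeta,\xi$ in $\mathfrak{R}X$: a $g$-invariant hyperplane separating the two orbits would force the limit orientations $\lim g^n x$ and $\lim g^n y$ to contain opposite halfspaces of $J$, hence to differ. This is exactly the opening step of the paper's proof, and it is the ingredient your outline omits -- you invoke the common endpoints only at the very end, to identify the endpoints of the new axis, not where they are actually indispensable.
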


\noindent
Fix two axes $\alpha$ and $\beta$ passing through $x$ and $y$ respectively. Let $J$ denote the hyperplane separating $x$ and $y$. Notice that, for every $n \neq 0$, the hyperplane $J$ does not separate $g^nx$ and $g^ny$. Indeed, otherwise $g^n$ would stabilise $J$, as well as the halfspaces it delimits because $g^n$ does not invert any hyperplane, so that $J$ would separate $\{ g^{nk}x \mid k \in \mathbb{Z} \}$ and $\{ g^{nk}y \mid k \in \mathbb{Z} \}$, contradicting the fact that $\alpha$ and $\beta$ have the same points at infinity. On the other hand, once again because $\alpha$ and $\beta$ have the same points at infinity, $J$ necessarily crosses $\alpha$ and $\beta$. The conclusion is that $J$ has to separate $\{ g^{-1}x,g^{-1}y \}$ and $\{gx,gy\}$. Now, two cases may happen: either $J$ separates $x$ from $\{gx,gy\}$ or it separates $y$ from $\{gx,gy\}$. Without loss of generality, we suppose that we are in the former case. 
\begin{figure}
\begin{center}
\includegraphics[trim={0 21cm 33cm 0},clip,scale=0.5]{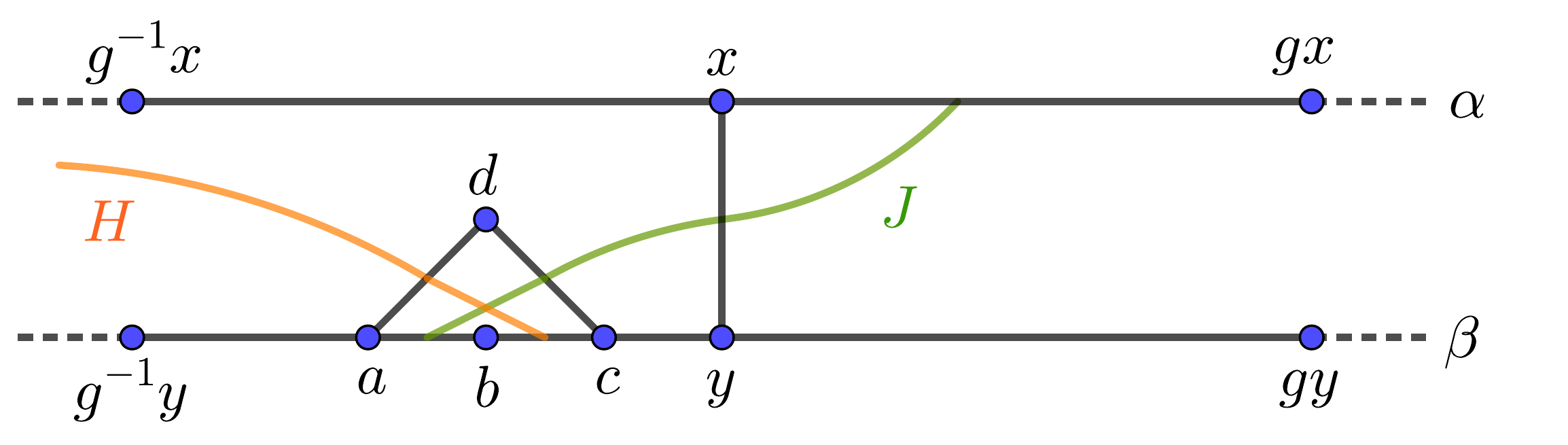}
\caption{Configuration from the proof of Claim \ref{claim:AxisPassing}.}
\label{AxisAdj}
\end{center}
\end{figure}

\medskip \noindent
Let $a,b \in \beta$ denote the endpoints of the edge of $\beta$ crossed by $J$ (so that $b$ is between $a$ and $y$ along $\beta$). Assume that $b \neq y$. Let $c \in \beta$ denote the neighbor of $b$ which is distinct from $a$. See Figure \ref{AxisAdj}. Notice that the hyperplane $H$ separating $b$ and $c$ is transverse to $J$. Indeed, because $H$ does not separate $x$ and $y$ and that $H$ has to cross $\alpha$, it follows that $H$ separates $x$ from some $g^{-k}x$. So $H$ separates $b$ and $c$, and $x$ and $g^{-k}x$, but $J$ separates $\{b,c\}$ and $\{x,g^{-k}x\}$. Therefore, $J$ and $H$ must be transverse. As a consequence, the two edges $[b,a]$ and $[b,c]$ have to span a square. Let $d$ denote the fourth vertex of this square, and let $\delta$ denote the geodesic between $g^{-1}y$ and $y$ obtained from the subsegment $[g^{-1}y,y] \subset \beta$ by replacing $[a,b] \cup [b,c]$ with $[a,d] \cup [d,c]$. The concatenation $\gamma:= \bigcup\limits_{k \in \mathbb{Z}} g^k \delta$ defines an axis of $g$ having the same points at infinity as $\alpha$ and $\beta$. Notice that, by construction, the vertex $y$ still belongs to $\gamma$. Moreover, the edge of $\gamma$ crossed by $J$ is now closer to $y$. 

\medskip \noindent
By iterating this argument to $\beta$, and next to $\alpha$, it follows that there exist two axes $\alpha'$ and $\beta'$ passing through $x$ and $y$ respectively and such that the edges of $\alpha'$ and $\beta'$ crossed by $J$ share an endpoint with $[x,y]$. Because two adjacent edges cannot be crossed by the same hyperplane, it follows that $\alpha'$ and $\beta'$ both pass through $x$ and $y$, concluding the proof of our claim. 

\begin{claim}\label{claim:HypPartition}
Two distinct hyperplanes of $X$ separating at least two vertices of $F$ induce different partitions of $F$.
\end{claim}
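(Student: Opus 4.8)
Claim \ref{claim:HypPartition} asserts that two distinct hyperplanes $J$ and $H$ of $X$, each separating at least two vertices of $F$, cannot induce the same partition of $F$. Here $F$ is the union of all axes of the loxodromic isometry $g$ having fixed endpoints $\zeta,\xi \in \mathfrak RX$.

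\medskip\noindent
\textbf{The plan.} Suppose for contradiction that $J \neq H$ but $J$ and $H$ induce the same partition $\{F^-, F^+\}$ of $F$. Since both $J$ and $H$ separate vertices of $F$, and $F$ is stable under $g$, the collections $\{g^kJ\}_{k\in\mathbb Z}$ and $\{g^kH\}_{k\in\mathbb Z}$ also separate vertices of $F$ (applying $g$ to a separated pair). I would first want to understand the geometry of a single hyperplane $J$ relative to $F$: I claim that $J$ crosses \emph{every} axis of $g$ in $F$ exactly once (it crosses each axis because $J$ separates two points of $F$, which lie on axes with the common endpoints $\zeta,\xi$; more carefully, if $J$ separates $x\in\alpha$ from $y\in\beta$ then, since $\alpha$ and $\beta$ both limit to $\zeta$ and to $\xi$, and $J$ separates one point of $F^-\cap F$ from one point of $F^+\cap F$, convexity of halfspaces forces $J$ to cross each of $\alpha,\beta$; it crosses each at most once because an axis is a geodesic). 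The key structural fact I would then extract — essentially Fact \ref{fact:TwoDirections} — is that for $x,y\in F$ and $k\in\mathbb Z$, no hyperplane separates $\{x,g^{k+1}y\}$ from $\{gx,g^ky\}$; applied with $k=0$ this says that, whenever $J$ separates $x$ from $gx$ (equivalently $J$ crosses the axis through $x$ between $x$ and $gx$), then $J$ does not separate $x$ from $gy$ for any $y\in F$ on the $F^-$ side.

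\medskip\noindent
\textbf{The core argument.} Given the hypothetical $J$ and $H$ with the same trace on $F$, pick $x \in F^-\cap F$ and $y \in F^+\cap F$ realising the minimal distance between $F^-\cap F$ and $F^+\cap F$ inside $F$; since $J,H$ both separate $F^-\cap F$ from $F^+\cap F$, they both separate $x$ from $y$, so both cross the interval $I(x,y)$, which by minimality is a single edge $[x,y]$ of $X$ — but then $J$ and $H$ are both dual to the edge $[x,y]$, forcing $J = H$, a contradiction. The only gap to fill is why $I(x,y)$ is an edge: if $z \in I(x,y)$ with $z\neq x,y$, then $z\in F$ (as $F$ is median, $z = \mu(x,y,z)\in F$), and $z$ lies on one side of the partition, say $z \in F^-$; then $d(z,y) < d(x,y)$ contradicts minimality. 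So the whole proof reduces to: (1) $F$ is a median subalgebra (already proved in the proof of Proposition \ref{prop:AxesFlat}), (2) any hyperplane with nontrivial trace on $F$ separates \emph{some} pair of vertices of $F$ at distance $\geq 1$, hence one can choose a closest such pair, (3) a hyperplane separating a closest pair is dual to the edge joining them, and (4) an edge has a unique dual hyperplane.

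\medskip\noindent
\textbf{Main obstacle.} The subtlety is step (2)–(3): I must be sure that when $J$ and $H$ induce the \emph{same} partition $\{F^-,F^+\}$ of $F$, the minimal-distance pair $(x,y)$ realising this partition is the \emph{same} for both, which is automatic since the partition — hence the pair of sides — is literally the same set. Then both $J$ and $H$ separate $x$ and $y$ and cross the geodesic $[x,y]$; since $[x,y]$ is a single edge, both are the hyperplane dual to that edge, so $J=H$. I expect no serious difficulty beyond being careful that "induce the same partition of $F$" is interpreted as: the two partitions of the vertex set $F$ into two pieces coincide. Thus the proof is short, and the heavy lifting (that $F$ is median, and that hyperplanes behave well with respect to axes) has already been done in Lemmas \ref{lem:MedianAxis}, \ref{lem:MinMedian} and Fact \ref{fact:TwoDirections}.
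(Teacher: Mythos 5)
Your core argument has a genuine gap at the step ``if $z \in I(x,y)$ with $z\neq x,y$, then $z\in F$ (as $F$ is median, $z=\mu(x,y,z)\in F$)''. Being a median subalgebra only guarantees that $\mu(a,b,c)\in F$ when \emph{all three} points $a,b,c$ lie in $F$; the identity $z=\mu(x,y,z)$ merely restates that $z$ lies in the interval and gives no membership in $F$. Median subalgebras are in general far from convex, and intervals of $X$ between two points of $F$ may leave $F$ entirely (this non-convexity is precisely why the paper works with median subalgebras rather than subcomplexes; see the remark in Section \ref{section:ClassIsom}, where $Y=\mathbb{R}\times\{1\}\cup\mathbb{R}\times\{-1\}$ inside $\mathbb{R}^2$ is such an example). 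So your claim that a closest pair across the common partition is joined by a single edge is unjustified as written. The error is not cosmetic: for a general median subalgebra the statement you are proving is simply false --- in $Y=\mathbb{R}\times\{\pm 1\}\subset\mathbb{R}^2$ the two distinct horizontal hyperplanes at heights $1/2$ and $-1/2$ induce the same partition of $Y$ --- so any correct proof must exploit the specific structure of $F$ as a union of axes of $g$, which your ``core argument'' never does. (A further small point: the minimal distance must be taken in the metric of $X$; measuring it in the cubulation of $F$ would be circular, since adjacency there is defined exactly by the walls whose injectivity is at stake.)

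The good news is that the repair is already contained in your ``plan'' paragraph. Since $J$ separates two vertices of $F$, it separates $\zeta$ from $\xi$, hence crosses every axis with endpoints $\zeta,\xi$ (your geodesics-cross-hyperplanes-at-most-once argument, or Lemma \ref{lem:HypCrossAxis}); in particular $J$ is dual to an edge of such an axis, whose two endpoints are adjacent vertices of $F$ lying on opposite sides of the induced partition. If $H$ induced the same partition, it would also separate these two adjacent vertices, and since an edge has a unique dual hyperplane, $H=J$. This contrapositive route is legitimate and short. The paper argues directly instead: $J$ and $H$ both cross a common axis $\alpha\subset F$, one picks a vertex $x\in\alpha$ between them, and for $n$ large $H$ separates $x$ from $g^nx$ while $J$ does not, so the partitions differ. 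Either way, the argument must pass through the axes; as submitted, the minimality-of-the-interval step is a gap that needs to be replaced by one of these.
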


\noindent
Let $J$ and $H$ be two hyperplanes of $X$ separating at least two vertices of $F$. Necessarily, $J$ and $H$ separates $\zeta$ and $\xi$. Therefore, if $\alpha$ denotes an axis of $g$ having $\zeta$ and $\xi$ as points at infinity, then $J$ and $H$ have to cross $\alpha$. Let $x \in \alpha$ be a vertex which is between $J$ and $H$ along $\alpha$. Say that $J$ separates $x$ from $\zeta$. Then there exists some sufficiently large $n \geq 1$ such that $H$ separates $x$ from $g^nx$ but $J$ does not separate these two vertices. Consequently, $J$ and $H$ induce distinct partitions of $F$, concluding the proof of our claim.

\medskip \noindent
We are now ready to show that $F$ is finite-dimensional. In fact, we will prove that the cubulation of $F$ is uniformly locally finite. By combining Lemma \ref{lem:TwoCubulations} with Claim~\ref{claim:HypPartition}, it follows that, for every $N \geq 1$, if the cubulation of $F$ contains a vertex with $N$ neighbors, then so does $F$ in $X$. So fix a vertex $x \in F$ and $N$ of its neighbors $x_1, \ldots, x_N$. According to Claim \ref{claim:AxisPassing}, for every $1 \leq i \leq N$, there exists an axis of $g$ passing through $x$; as a consequence, the hyperplane separating $x$ and $x_i$ has to separate $x$ from $gx$. Because two adjacent edges have to be dual to distinct hyperplanes, it follows that $N \leq d(x,gx)= \|g\|$. Thus, we have proved that:

\begin{fact}\label{fact:LocallyFinite}
Every vertex of the cubulation of $F$ admits at most $\|g\|$ neighbors.
\end{fact}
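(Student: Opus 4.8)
The plan is to transfer the question from the cubulation of $F$ to $F$ viewed inside $X$, and then to bound the $X$-neighbours of a vertex of $F$ using the two claims just established. First I would identify the edges of the cubulation of $F$. By Lemma \ref{lem:TwoCubulations}, the cubulation of $F$ is the cubulation of the wallspace $(F,\mathcal{W})$, where $\mathcal{W}$ is the set of partitions of $F$ induced by the hyperplanes of $X$, taken without multiplicity; and by Claim \ref{claim:HypPartition}, two distinct hyperplanes of $X$ that each separate two vertices of $F$ induce distinct partitions of $F$. Hence the walls of $(F,\mathcal{W})$ are in bijection with the hyperplanes of $X$ crossing $F$, the number of walls separating $u,v\in F$ equals $d_X(u,v)$, and two vertices of $F$ are joined by an edge in the cubulation of $F$ exactly when they are adjacent in $X$. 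So it is enough to bound, for a fixed vertex $x\in F$, the number of vertices of $F$ that are $X$-adjacent to $x$.

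For that bound, write $x_1,\dots,x_N$ for the vertices of $F$ adjacent to $x$ in $X$, and let $J_i$ be the hyperplane of $X$ dual to the edge $[x,x_i]$; the $J_i$ are pairwise distinct, since distinct edges sharing the vertex $x$ are dual to distinct hyperplanes. By Claim \ref{claim:AxisPassing}, for each $i$ the vertices $x$ and $x_i$ lie on a common axis $\alpha_i$ of $g$, so that $[x,x_i]$ is an edge of $\alpha_i$. Because $g$ translates $\alpha_i$ by $\|g\|$ and $d_X(x,gx)=\|g\|$ (here using that $x$ lies on an axis), the hyperplane $J_i$ is one of the $\|g\|$ hyperplanes crossing the subsegment of $\alpha_i$ from $x$ to $gx$. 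Thus the $J_i$ form a set of $N$ distinct hyperplanes all separating $x$ from $gx$, whence $N\le d_X(x,gx)=\|g\|$, which is the assertion. In particular the cubulation of $F$ is uniformly locally finite, hence finite-dimensional, which is the last point needed in Proposition \ref{prop:AxesFlat}.

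Essentially all the work is carried by the two inputs, which are already in hand: Claim \ref{claim:AxisPassing} (obtained by sliding axes across squares, and crucially using that every axis of $g$ contained in $F$ has the fixed endpoints $\zeta,\xi$) and Claim \ref{claim:HypPartition} (which prevents the passage to $F$ from identifying two distinct hyperplanes of $X$). Given these, the only point calling for care in the deduction itself is the orientation of the axes $\alpha_i$: an $X$-neighbour $x_i$ of $x$ might lie on the side of $J_i$ pointing towards $\zeta$, in which case $J_i$ separates $g^{-1}x$ from $x$ rather than $x$ from $gx$; this only changes the crude bound to $2\|g\|$, which is still finite and still enough to conclude that $F$ is finite-dimensional.
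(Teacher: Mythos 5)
Your argument is the paper's argument: transfer the degree count through Lemma \ref{lem:TwoCubulations} and Claim \ref{claim:HypPartition}, put each edge $[x,x_i]$ on an axis of $g$ via Claim \ref{claim:AxisPassing}, and bound the number of dual hyperplanes. The caveat you add at the end is not a mere scruple, however --- it is precisely the step the paper glosses over. The paper asserts that the hyperplane dual to $[x,x_i]$ must separate $x$ from $gx$; this is only true when $x_i$ lies on the $\xi$-side of $x$ along the common axis, whereas if $x_i$ lies on the $\zeta$-side that hyperplane separates $g^{-1}x$ from $x$ instead, and the count one actually gets is $N \leq \# \mathcal{W}(g^{-1}x,x) + \# \mathcal{W}(x,gx) = 2\|g\|$. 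Indeed the constant in Fact \ref{fact:LocallyFinite} is too strong as stated: for $X=\mathbb{Z}$ with $g$ the translation by $1$ one has $F=\mathbb{Z}$, $\|g\|=1$, and every vertex of the cubulation of $F$ has two neighbours. So your bound $2\|g\|$ is the correct one, and nothing downstream is affected, since the Fact is only used to get uniform local finiteness and finite dimension of $F$ in Proposition \ref{prop:AxesFlat} and local finiteness of the cubulation of $\mathrm{Med}(g)$ in Theorem \ref{thm:centralisers}, for which any finite bound suffices.
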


\noindent
A fortiori, the cubulation of $F$ has dimension at most $\|g\|$, concluding the proof of our proposition. 
\end{proof}

\section{Median sets of isometries}\label{section:MedianSets}

\noindent
Our goal in this section is to associate to any isometry a median subalgebra of the cube complex which behaves similarly to minimising sets of isometries in CAT(0) spaces. Our sets are:

\begin{definition}\label{def:MedianSet}
Let $X$ be a CAT(0) cube complex and $g \in \mathrm{Isom}(X)$ an isometry. The \emph{median set} of $g$, denoted by $\mathrm{Med}(g)$, is
\begin{itemize}
	\item the union of all the $d$-dimensional cubes stabilised by $\langle g \rangle$ if $g$ elliptic, where $d$ denotes the minimal dimension of a cube stabilised by $\langle g \rangle$;
	\item the union of all the axes of $g$ if $g$ is loxodromic;
	\item the pre-image under $\pi$ of the union of all the axes of $g$ in $X/ \mathcal{J}$ if $g$ is inverting, where $\mathcal{J}$ is the collection of the hyperplanes inverted by powers of $g$ and where $\pi : X \to X/ \mathcal{J}$ is the canonical map to the cubical quotient $X/ \mathcal{J}$.
\end{itemize}
\end{definition}

\noindent
The fact that the inverting isometry $g$ of $X$ induces a loxodromic isometry of $X/ \mathcal{J}$ is justified by Lemma \ref{lem:MinInX} below. So the median set of an inverting isometry is well-defined. Its definition may seem to be technical, but it turns out to be a very natural set. The reader can keep in mind the example given by Figure \ref{MedianInvEx}. 
\begin{figure}
\begin{center}
\includegraphics[trim={0 21cm 25cm 0},clip,scale=0.45]{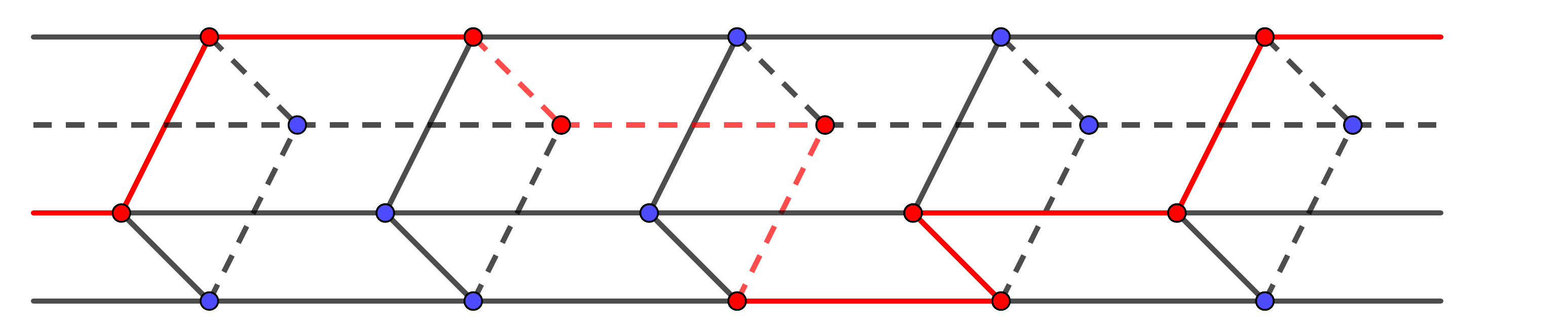}
\caption{The median set $[0,1]^2 \times \mathbb{R}$ of $g= ( \text{rotation}, \ \text{translation} )$.}
\label{MedianInvEx}
\end{center}
\end{figure}

\medskip \noindent
Then, the main result of this section is:

\begin{thm}\label{thm:MedianSet}
Let $G$ be a group acting on a CAT(0) cube complex $X$ and $g \in G$ an isometry such that $\langle g \rangle$ is a normal subgroup of $G$. Then $\mathrm{Med}(g)$ is a median subalgebra of $X$ which is $G$-invariant and which decomposes as a product $T \times F \times Q$ of three median algebras $T,F,Q$ such that:
\begin{itemize}
	\item the action $G \curvearrowright T \times F \times Q$ decomposes as a product of three actions $G \curvearrowright T,F,Q$;
	\item $F$ is a single vertex if $g$ is elliptic, and otherwise it is a median flat on which $g$ acts by translations of length $\lim\limits_{n \to + \infty} \frac{1}{n} d(x,g^nx)>0$;
	\item $Q$ is a finite-dimensional cube, possibly reduced to a single vertex;
	\item $g$ acts trivially on $T$.
\end{itemize}
Moreover, the dimension of $Q$ is zero if $g$ is loxodromic, it coincides with the minimal dimension of a cube of $X$ stabilised by $g$ if $g$ is elliptic, and otherwise it coincides with the number of hyperplanes inverted by powers of $g$.
\end{thm}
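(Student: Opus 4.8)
The plan is to treat the three types of isometries separately, since the structure of $\mathrm{Med}(g)$ is defined differently in each case, and then unify the conclusions. Throughout, the normality of $\langle g \rangle$ is used to get $G$-invariance: any $h \in G$ conjugates $g$ to $g^{\pm 1}$, hence sends cubes stabilised by $\langle g \rangle$ to cubes stabilised by $\langle g \rangle$ of the same dimension, sends axes of $g$ to axes of $g$, and permutes the hyperplanes inverted by powers of $g$; so $h$ descends to the quotient $X/\mathcal{J}$ in the inverting case. The fact that $\mathrm{Med}(g)$ is a median subalgebra comes, in the loxodromic case, directly from Lemma~\ref{lem:MedianAxis} (together with the identification of axes via their endpoints at infinity as in Proposition~\ref{prop:AxesFlat}); in the elliptic case one checks that the median of three vertices lying in minimal $\langle g\rangle$-cubes again lies in such a cube; in the inverting case one transports the loxodromic statement through $\pi$ and pulls back.

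The core of the work is the product decomposition $\mathrm{Med}(g) = T \times F \times Q$. I would build it from the hyperplanes of $X$ restricted to $\mathrm{Med}(g)$: by Lemma~\ref{lem:TwoCubulations} the cubulation of the median subalgebra $\mathrm{Med}(g)$ is governed by the partitions these hyperplanes induce, so a product decomposition amounts to partitioning the relevant set of hyperplanes into three sub-collections that are pairwise "transverse" in the appropriate sense (so that the induced wallspace splits as a product). The natural candidates are: $\mathcal{Q}$, the finitely many hyperplanes dual to the cube $Q$ — these are the hyperplanes inverted by a power of $g$ (inverting case), the hyperplanes of the minimal $\langle g\rangle$-cube (elliptic case), and none (loxodromic case); $\mathcal{F}$, the hyperplanes crossing (equivalently, separated-and-translated by) the axes of $g$ — those on which $g$ acts with nonzero "drift"; and $\mathcal{T}$, the remaining hyperplanes, on which $g$ acts "trivially" in the sense that $g$ fixes the corresponding wall-orientation so the factor $T$ is pointwise fixed by $g$. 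One must check these three families are mutually transverse when restricted to $\mathrm{Med}(g)$, which yields the product of median algebras with the product action (the action of $G$ preserves each family up to the finite permutations coming from $g\mapsto g^{\pm1}$, but since $\langle g\rangle\lhd G$ each $h\in G$ preserves the partition of hyperplanes into $\mathcal{T},\mathcal{F},\mathcal{Q}$ as a whole, so the action splits). That $F$ is a median flat is exactly Proposition~\ref{prop:AxesFlat} applied after quotienting out $\mathcal{Q}$ (for the inverting case) or directly (loxodromic case); that $g$ acts on $F$ by translations of length $\lim \frac1n d(x,g^nx) = \|g\|/|\mathcal{Q}\text{-period}|$ follows from Haglund's description of axes and Proposition~\ref{prop:AxesFlat}; that $Q$ is a finite-dimensional cube follows because the hyperplanes in $\mathcal{Q}$ are pairwise transverse on $\mathrm{Med}(g)$ (at most $\dim X$ of them), and $g$ permutes them.

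The main obstacle I expect is the inverting case, specifically showing that $\pi^{-1}$ of the union of axes in $X/\mathcal{J}$ really does split off an honest cube factor $Q$ of dimension $\#\mathcal{J}$, and that the pulled-back set is a median subalgebra of the original $X$ (not merely of $X/\mathcal{J}$). This requires understanding how the canonical map $\pi : X \to X/\mathcal{J}$ interacts with medians and with carriers of the inverted hyperplanes: one needs that the fibre of $\pi$ over a vertex on an axis, intersected with $\mathrm{Med}(g)$, is precisely a $\#\mathcal{J}$-cube, and that $g$ acts on these fibres by an isometry of that cube (a permutation of coordinates composed with some coordinate flips). I would handle this with a preliminary lemma (the "Lemma~\ref{lem:MinInX}" referenced in the text) establishing that an inverting $g$ induces a loxodromic isometry of $X/\mathcal{J}$ and that the hyperplanes of $\mathcal{J}$ are pairwise transverse along $\mathrm{Med}(g)$, so that locally $\mathrm{Med}(g)$ looks like (cube)$\times$(axis); globalising this to the product $T\times F\times Q$ is then a matter of assembling the three transverse hyperplane families as above. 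A secondary nuisance is verifying that the "trivial" factor $T$ is genuinely $g$-fixed pointwise rather than merely $g$-invariant — this is where one uses that any hyperplane not in $\mathcal{F}\cup\mathcal{Q}$ that separates two points of $\mathrm{Med}(g)$ is not translated by $g$, hence $g$ preserves each of its sides, so $g$ fixes every vertex of $T$.
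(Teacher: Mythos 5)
Your overall plan -- splitting into the elliptic, loxodromic and inverting cases, reducing the inverting case to the loxodromic one through the quotient $X \to X/\mathcal{J}$, getting medianness from Lemma \ref{lem:MedianAxis}, flatness of $F$ from Proposition \ref{prop:AxesFlat}, and encoding the splitting by a partition of the walls of $\mathrm{Med}(g)$ into three mutually transverse families -- is essentially the paper's proof, repackaged: the paper obtains the same decomposition by exhibiting explicit isomorphisms (with $T$ realised inside the Roller boundary as $\{\lim_n g^n x\}$, and with the cube factor coming from the isometry $\mathrm{Med}_X(g) \cap \bigcap_i D_i \to \mathrm{Med}_{X/\mathcal{J}}(g)$ of Lemmas \ref{lem:MinInNhyp} and \ref{lem:SectionPi}), which amounts to exactly the transversality and fibre statements you defer; your identification of the inverting case (fibres of $\pi$ meeting $\mathrm{Med}(g)$ in full $\#\mathcal{J}$-cubes) as the main obstacle is accurate.

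There is, however, one step that fails as written: your justification that $g$ acts trivially on $T$. You argue that a hyperplane inducing a $\mathcal{T}$-wall ``is not translated by $g$, hence $g$ preserves each of its sides''. This is false in general: such a hyperplane can be moved by $g$. The paper's own motivating example (the chain of squares times $\mathbb{R}$ with $g = r \times t_2$, discussed before Definition \ref{def:MedianSetsIntro}) shows this: there $\mathrm{Med}(g) = S \times \mathbb{R}$, the $\mathcal{T}$-walls are induced by the hyperplanes of the squares, and $r$ swaps the two hyperplanes of each square, so none of these hyperplanes is $\langle g \rangle$-invariant. What is true, and what must be proved instead, is that the \emph{wall induced on} $\mathrm{Med}(g)$ by such a hyperplane is preserved by $g$ together with its two sides. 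The correct mechanism is the one the paper exploits: $g$ fixes the pair of endpoints at infinity of every axis, hence preserves each fibre of $\mathrm{Med}(g) \to T$; and a hyperplane not crossing the axes cannot separate two axes sharing their endpoints (this uses Lemma \ref{lem:HypCrossAxis} and a short limit argument), so each side of a $\mathcal{T}$-wall is a union of such fibres and is therefore $g$-invariant. Without this replacement the last bullet point of the theorem is not established; with it, your wall-partition scheme does yield the stated decomposition. (A minor additional caveat: on $F$ the translation length is the drift $\lim_n \frac{1}{n} d(x,g^n x)$, which in the inverting case equals $\|g\|_{X/\mathcal{J}}$; it is not $\|g\|_X$ divided by a ``period'' of the cube factor.)
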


\noindent
In order to prove Theorem \ref{thm:MedianSet}, we distinguish three cases, depending on whether the isometry is elliptic, inverting or loxodromic. So Theorem \ref{thm:MedianSet} is the combination of Propositions \ref{prop:MedElliptic}, \ref{prop:MinSet} and \ref{prop:MinInvIsom} below. Before turning to the proofs, we emphasize that, although the median set of a loxodromic isometry $g$ coincides with its minimising set $\mathrm{Med}(g) = \left\{x \in X \mid d(x,gx) = \min \limits_{y \in X} d(y,gy) \right\}$ (see Lemma \ref{lem:MinVsMed}), it turns out that the minimising set of an elliptic or inverting isometry may not be median. See Remarks~\ref{remark:NotMinElliptic} and \ref{remark:NotMinInversing} below. Nevertheless, it can be shown that the median set coincides with the median hull of the minimising set. As this description will not be used in this article, we do not include a proof of this assertion.

\subsection{Elliptic groups of isometries}

\noindent
In this subsection, we prove Theorem \ref{thm:MedianSet} for elliptic isometries. In fact, we prove a more general statement dealing with arbitrary elliptic subgroups:

\begin{prop}\label{prop:MedElliptic}
Let $G$ be a group acting on a CAT(0) cube complex $X$ and $E$ a normal subgroup which has a bounded orbit. If $d$ denotes the minimal dimension of a cube stabilised by $E$, then the union of the $d$-dimensional cubes stabilised by $E$ is a $G$-invariant median subalgebra which decomposes as a product $Q \times Z$ of two median algebras $Q,Z$ such that:
\begin{itemize}
	\item the action $G \curvearrowright Q \times Z$ decomposes as a product of two actions $G \curvearrowright Q, Z$;
	\item $Q$ is a $d$-dimensional cube;
	\item $E$ acts trivially on $Z$.
\end{itemize}
\end{prop}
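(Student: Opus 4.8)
The plan is to first understand the structure of a single cube of minimal dimension $d$ stabilised by $E$, and then to show that the collection of all such cubes assembles into a product $Q \times Z$ in a controlled way. Let $\mathcal{C}$ denote the family of $d$-dimensional cubes stabilised by $E$, and let $M$ be their union. The first observation is that any cube $C \in \mathcal{C}$ is minimal not only in dimension but in a stronger sense: since $E$ stabilises $C$ and acts on the finite set of its vertices, $E$ permutes the $d$ hyperplanes of $X$ dual to the edges of $C$, and by minimality of $d$ it cannot fix any halfspace delimited by any of these hyperplanes (otherwise $E$ would stabilise a face of $C$, i.e. a cube of smaller dimension). Conversely, on \emph{every} other hyperplane $E$ does preserve the sides (this is the content of minimality, exploited via the standard ``majority vote'' argument: the median of an $E$-orbit of a vertex of $C$ together with two translates lies in a stabilised cube). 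Let me set $\mathcal{H}_C$ for the set of $d$ hyperplanes dual to $C$; the key claim will be that $\mathcal{H}_C$ is independent of $C \in \mathcal{C}$ — call it $\mathcal{H}_0$ — and that each hyperplane in $\mathcal{H}_0$ is transverse to all the others, so that $\mathcal{H}_0$ "is" a $d$-cube $Q$ in the sense that the walls of $\mathcal{H}_0$ cut out a product factor.

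Next I would set up the product decomposition. Using Lemma \ref{lem:TwoCubulations}, it suffices to identify the walls of the median algebra $M$ with traces of hyperplanes of $X$, and then to partition these hyperplanes into those lying in $\mathcal{H}_0$ and those not. The hyperplanes in $\mathcal{H}_0$ are pairwise transverse and each crosses $M$ (indeed each crosses every $C \in \mathcal{C}$), and they are pairwise transverse to every hyperplane crossing $M$ but not in $\mathcal{H}_0$ — this last transversality is where one must work: if $H \in \mathcal{H}_0$ and $K \notin \mathcal{H}_0$ both cross $M$ and were \emph{not} transverse, one would produce, by pushing $K$ across $H$ using the $E$-action (which fixes the sides of $K$ but swaps the sides of $H$), a cube of $\mathcal{C}$ on which $E$ fails to preserve a side of $K$, contradicting the minimality characterisation of $\mathcal{C}$. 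Granting this, the wallspace $(M, \mathcal{W})$ splits as a direct product of the wallspace carried by $\mathcal{H}_0$ (which cubulates to a single $d$-cube, since the $d$ walls are pairwise transverse, all of them crossing $M$) and the wallspace carried by the remaining walls; by the compatibility of cubulation with products this gives $M \cong Q \times Z$ with $Q = [0,1]^d$ and $Z$ a median algebra. That $M$ is a median subalgebra of $X$ then follows because a product of median algebras is one and the identification respects medians, or directly from the majority-vote argument above applied to arbitrary triples in $M$.

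Finally I would check the equivariance and the triviality of the $E$-action on $Z$. Since $\langle g\rangle$, and more generally $E$, is normal in $G$, any $\gamma \in G$ sends a cube stabilised by $E$ to a cube stabilised by $\gamma E \gamma^{-1} = E$ of the same dimension, so $G$ preserves $\mathcal{C}$ and hence $M$; moreover $G$ permutes $\mathcal{C}$, hence (by the claim that $\mathcal{H}_C$ is constant) preserves the collection $\mathcal{H}_0$ and therefore preserves the partition of walls of $M$ into the two factors. Thus the $G$-action on $M$ respects the product $Q\times Z$ and splits as $G\curvearrowright Q$ and $G\curvearrowright Z$ — here one uses the general fact that an isometry preserving a product decomposition of a CAT(0) cube complex and stabilising each factor-class of hyperplanes acts diagonally. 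The $E$-action on $Q$ records exactly how $E$ permutes the sides of the hyperplanes of $\mathcal{H}_0$, while the $E$-action on $Z$ records how $E$ acts on the other hyperplanes of $M$; but on those, by the minimality characterisation of $\mathcal{C}$, $E$ preserves every side, so it acts trivially on the set of walls of $Z$ and hence trivially on $Z$ itself.

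The main obstacle I expect is the transversality claim in the second paragraph — that a hyperplane of $\mathcal{H}_0$ is transverse to every other hyperplane crossing $M$ — together with the closely related fact that $\mathcal{H}_C$ does not depend on $C$. Both hinge on a careful use of the minimality of $d$: one must rule out configurations where an element of $E$ ``tilts'' a minimal cube off a fixed family of hyperplanes, and the cleanest way is probably to argue that if some $H\in\mathcal H_C$ failed to be transverse to some $K$ crossing $M$, then applying a suitable power of an element of $E$ and taking medians would yield a stabilised cube of dimension $<d$, or else a stabilised $d$-cube lying in a strictly smaller halfspace, contradicting either minimality of $d$ or the no-fixed-side property. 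Everything else is routine bookkeeping with wallspaces, Lemma \ref{lem:TwoCubulations}, and the product-cubulation correspondence.
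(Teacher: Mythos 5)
Your overall skeleton (a fixed set $\mathcal{H}_0$ of $d$ pairwise transverse hyperplanes cutting out the cube factor $Q$, the remaining walls of $M$ giving $Z$, equivariance from normality of $E$, triviality of $E$ on $Z$) is the right shape, but the proposal has a genuine gap at its load-bearing point: the claim that every $d$-cube of $\mathcal{C}$ is crossed by the same hyperplanes is never proved. You yourself flag it as the "main obstacle", and the repair you sketch (applying powers of elements of $E$ and taking medians to get either a stabilised cube of dimension $<d$ or a stabilised $d$-cube in a strictly smaller halfspace) is not an argument: the second horn is not a contradiction of anything you have established, and the related transversality sketch ("pushing $K$ across $H$ using the $E$-action, which fixes the sides of $K$") assumes $E$ stabilises $K$, which is false in general --- $E$ can permute distinct hyperplanes separating two stabilised cubes (e.g.\ the diagonal reflection of a square permutes its two hyperplanes, while the two fixed corners are the minimal stabilised cubes). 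The claim has a short proof you are missing: for $C_1,C_2\in\mathcal{C}$, the projection of $C_1$ onto the convex subcomplex $C_2$ is an $E$-invariant subcube of $C_2$, hence equals $C_2$ by minimality of $d$, and Lemma \ref{lem:HypProjSeparate} then gives that $C_1$ and $C_2$ are crossed by exactly the same hyperplanes. Once this is in place, the transversality you worry about is automatic at the level of traces on $M$: a hyperplane $K\notin\mathcal{H}_0$ crossing $M$ misses every cube of $\mathcal{C}$, so each cube lies wholly on one side of $K$, while every $H\in\mathcal{H}_0$ crosses every cube, so all four sectors meet $M$; so that part of your "obstacle" evaporates and no delicate tilting argument is needed.

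There is also an ordering problem: you invoke Lemma \ref{lem:TwoCubulations} to identify the walls of $M$ with traces of hyperplanes, which presupposes that $M$ is a median subalgebra, yet you propose to deduce medianness afterwards, partly from the product decomposition itself --- that is circular --- and the alternative "majority vote" gesture is not carried out. The direct argument is: given $x_i\in C_i$, the constancy of the hyperplane set plus Lemma \ref{lem:HypSepMedian} shows that any hyperplane separating two medians of triples taken from $C_1,C_2,C_3$ lies in $\mathcal{H}_0$, so the convex hull of these medians is a $d$-cube, visibly $E$-invariant, whence $\mu(x_1,x_2,x_3)\in M$. Similarly, your statement that $E$ "preserves the sides of every other hyperplane" should be rephrased at the level of walls of $M$: since each halfspace trace of such a wall is a union of cubes of $\mathcal{C}$ and $E$ stabilises each of these cubes, $E$ fixes every halfspace of $Z$ and hence acts trivially on $Z$; as a statement about hyperplanes of $X$ it is false. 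With these repairs your wallspace-splitting route would work and is a legitimate alternative to the paper's proof, which instead exhibits an explicit isometry $x\mapsto\left(\mathrm{proj}_Q(x),\mathrm{proj}_{C(x)}(v)\right)$ onto $Q\times Z$; but as written the proposal leaves the decisive steps unestablished.
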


\begin{proof}
Let $Y$ denote the union of the $d$-dimensional cubes stabilised by $E$. Our first goal is to show that $Y$ is median. We begin by proving an easy but useful observation.

\begin{claim}\label{claim:samehyp}
Two $d$-dimensional cubes stabilised by $E$ are crossed exactly by the same hyperplanes of $X$.
\end{claim}

\noindent
Indeed, if $C_1$ and $C_2$ are two such cubes, then the projection of $C_1$ onto $C_2$ provides a subcube $P \subset C_2$, which has to be $E$-invariant. By minimality of the dimension $d$, it follows that $P=C_2$. It follows from Lemma \ref{lem:HypProjSeparate} that $C_1$ and $C_2$ are crossed by the same hyperplanes of $X$, concluding the proof of our claim.

\medskip \noindent
We are now ready to show that $Y$ is median. Let $x_1,x_2,x_3 \in Y$ be three vertices. For every $i=1,2,3$, there exists a $d$-dimensional cube $C_i$ which is stabilised by $E$ and which contains $x_i$. According to Claim \ref{claim:samehyp}, these cubes are crossed by the same hyperplanes of $X$; let $\mathcal{H}$ denote the collection of these hyperplanes. It follows from Lemma \ref{lem:HypSepMedian} that any hyperplane separating two vertices of $\mu(C_1,C_2,C_3)$ has to cross one the cubes $C_1,C_2,C_3$, i.e. it has to belong to $\mathcal{H}$. As a consequence, because $\mathcal{H}$ is a collection of $d$ pairwise transverse hyperplanes, the convex hull of $\mu(C_1,C_2,C_3)$ has to be a $d$-dimensional cube. As $\mu(C_1,C_2,C_3)$ is clearly $E$-invariant, so is this cube. Thus, we have proved that $\mu(x_1,x_2,x_3)$ belongs to a $d$-dimensional cube stabilised by $E$, i.e. $\mu(x_1,x_2,x_3) \in Y$. 

\medskip \noindent
Now, our goal is to decompose $Y$ as a product. For this purpose, we fix a $d$-dimensional cube $Q$ stabilised by $E$ and a vertex $v \in Q$, and we set 
$$Z = \{ \mathrm{proj}_C(v) \mid \text{$C$ $d$-dimensional cube stabilised by $E$} \}.$$
Notice that any two distinct $d$-dimensional cubes stabilised by $E$ have an empty intersection. Indeed, otherwise their intersection would define a lower dimensional cube stabilised by $E$, contradicting the minimality of $d$. Consequently, for every vertex $x \in Y$, we can denote by $C(x)$ the unique $d$-dimensional cube stabilised by $E$ which contains $x$. We are interested in the map
$$\Phi : \left\{ \begin{array}{ccc} Y & \to & Q \times Z \\ x & \mapsto & \left( \mathrm{proj}_Q(x), \mathrm{proj}_{C(x)}(v) \right) \end{array} \right..$$
First of all, let us verify that $Z$ is a median subalgebra of $X$. Once again, we denote by $\mathcal{H}$ the collection of all the hyperplanes crossing the $d$-dimensional cubes stabilised by $E$. 
\begin{figure}
\begin{center}
\includegraphics[trim={0 19cm 10cm 0},clip,scale=0.45]{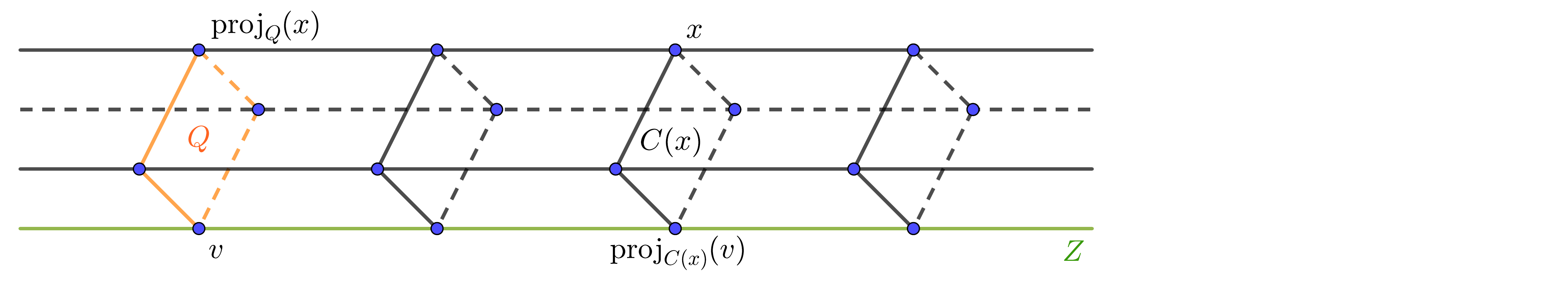}
\caption{The decomposition $Q \times Z$ of $Y$.}
\label{UnionCubeStab}
\end{center}
\end{figure}

\begin{claim}\label{claim:noHinZ}
No hyperplane of $\mathcal{H}$ separates two vertices of $Z$.
\end{claim}

\noindent
Let $a,b \in Z$ be two vertices. By definition of $Z$, one has $a= \mathrm{proj}_{C(a)}(v)$ and $b= \mathrm{proj}_{C(b)}(v)$. As a consequence of Lemma \ref{lem:HypProj}, the hyperplanes separating $v$ from $a$ or $b$ do not cross $C(a)$ or $C(b)$ respectively, i.e. they do not belong to $\mathcal{H}$. Because any hyperplane separating $a$ and $b$ necessarily separates $v$ from either $a$ or $b$, our claim follows.

\medskip \noindent
Now, let $a,b,c \in Z$ be three vertices and let $m$ denote their median point in $Y$. If there exists a hyperplane $J$ separating $m$ from $\mathrm{proj}_{C(m)}(v)$, then it has to belong to $\mathcal{H}$ since the two vertices belong to the cube $C(m)$. But, as a consequence of Claim \ref{claim:noHinZ}, the vertices $a,b,c$ necessarily belong to the halfspace delimited by $J$ which contains $\mathrm{proj}_{C(m)}(v)$, so that $J$ separates $m$ from $a,b,c$, contradicting the convexity of halfspaces. Therefore, no hyperplane separates $m$ and $\mathrm{proj}_{C(m)}(v)$, hence $m = \mathrm{proj}_{C(m)}(v) \in Z$. Thus, we have proved that $Z$ is a median subalgebra of $X$.

\medskip \noindent
The next step is to show that our map $\Phi$ is an isomorphism of median algebras. We begin by showing that $\Phi$ is surjective. So let $q \in Q$ and $z \in Z$ be two vertices. Notice that
$$\Phi \left( \mathrm{proj}_{C(z)}(q) \right) = \left( \mathrm{proj}_Q \left( \mathrm{proj}_{C(z)} (q ) \right), \mathrm{proj}_{C(z)}(v) \right).$$
It follows from Claim \ref{claim:noHinZ} that $\mathrm{proj}_{C(z)}(v)=z$. Also, by applying Lemma \ref{lem:HypProj} twice, we know that no hyperplane separating $q$ and $\mathrm{proj}_{C(z)}(q)$ crosses $C(z)$ and that no hyperplane separating $\mathrm{proj}_{C(z)}(q)$ and $\mathrm{proj}_{Q} (\mathrm{proj}_{C(z)}(q))$ crosses $Q$, so that no hyperplane of $\mathcal{H}$ separates $q$ and $\mathrm{proj}_{Q} (\mathrm{proj}_{C(z)}(q))$. Because these two points belong to $Q$ and that the collection of the hyperplanes crossing $Q$ coincides with $\mathcal{H}$, it follows that $\mathrm{proj}_{Q} (\mathrm{proj}_{C(z)}(q))=q$. Thus, $\Phi( \mathrm{proj}_{C(z)}(q))= (q,z)$, proving the surjectivity of $\Phi$. We record this equality for future use:

\begin{fact}\label{fact:PhiInverse}
The map
$$\Psi : \left\{ \begin{array}{ccc} Q \times Z & \to & Y \\ (q,z) & \mapsto & \mathrm{proj}_{C(z)}(q) \end{array} \right.$$
satisfies $\Phi \circ \Psi = \mathrm{Id}_{Q \times Z}$. 
\end{fact}

\noindent
Now, let $x,y \in Y$ be two vertices. As a consequence of Lemma \ref{lem:HypProjSeparate}, 
$$\mathcal{W}(x,y) \cap \mathcal{H}= \mathcal{W}\left( \mathrm{proj}_Q(x), \mathrm{proj}_Q(y) \right).$$
We also have
$$\mathcal{W}(x,y) \backslash \mathcal{H} = \mathcal{W}(C(x),C(y)) = \mathcal{W} \left(\mathrm{proj}_{C(x)}(v), \mathrm{proj}_{C(y)}(v) \right)$$
where the last equality is justified by Claim \ref{claim:noHinZ}. Therefore,
$$d(x,y) = d \left( \mathrm{proj}_Q(x), \mathrm{proj}_Q(y) \right) + d \left( \mathrm{proj}_{C(x)}(v), \mathrm{proj}_{C(y)}(v) \right).$$
Otherwise saying, if $Y,Q,Z$ are endowed with the metrics induced by $X$, then $\Phi$ is an isometry. It implies that $\Phi$ is injective, but also that $\Phi$ is an isomorphism of median algebras since the median structures of $Y,Q,Z$ are induced by the metric of $X$. 

\medskip \noindent
It remains to study the action of $G$ on $Y$ (which is clearly $G$-invariant because $E$ is a normal subgroup). By using the expression of $\Phi^{-1}$ given by Fact \ref{fact:PhiInverse}, the action of $G$ on $Q \times Z$ is given by
$$g \cdot (q,z) = \left( \mathrm{proj}_Q \left( g \cdot \mathrm{proj}_{C(z)}(q) \right), \mathrm{proj}_{g \cdot C(z)} (v) \right)$$
for every $(q,z) \in Q \times Z$ and $g \in G$. Let us simplify this description.

\begin{claim}\label{claim:ProjProj}
The equality $\mathrm{proj}_Q \left( g \cdot \mathrm{proj}_{C}(q) \right) = \mathrm{proj}_Q(g \cdot q)$ holds for every $d$-dimensional cube $C$ stabilised by $E$, for every $(q,z) \in Q \times Z$, and for every $g \in G$.
\end{claim}

\noindent
It follows from Lemma \ref{lem:HypProj} that no hyperplane separating $q$ from $\mathrm{proj}_{C}(q)$ crosses $C$, so that no hyperplane of $\mathcal{H}$ separates $q$ and $\mathrm{proj}_{C}(q)$. As $\mathcal{H}$ is $G$-invariant, no hyperplane of $\mathcal{H}$ separates $g \cdot q$ and $g \cdot \mathrm{proj}_{C} (q)$ either. We deduce from Lemma \ref{lem:HypProjSeparate} that $g \cdot q$ and $g \cdot \mathrm{proj}_{C} (q)$ have the same projection onto $Q$, proving our claim.

\medskip \noindent
Therefore, the action of $G$ on $Q \times Z$ simplifies as
$$g \cdot (q,z) = \left( \mathrm{proj}_Q \left( g \cdot q \right), \mathrm{proj}_{g \cdot C(z)} (v) \right)$$
for every $(q,z) \in Q \times Z$ and $g \in G$. It is clear that
$$g \cdot z = \mathrm{proj}_{g \cdot C(z)}(v), \ g \in G, z \in Z$$
defines an action $G \curvearrowright Z$, and the fact that
$$g \cdot q =  \mathrm{proj}_Q \left( g \cdot q \right), \ g \in G, q \in Q$$
defines an action $G \curvearrowright Q$ follows from Claim \ref{claim:ProjProj}. Thus, we have proved that the action $G \curvearrowright Q \times Z$ decomposes as a product of actions $G \curvearrowright Q$ and $G \curvearrowright Z$. Notice that, as $E$ stabilises the cube $C(z)$ for every $z \in Z$, necessarily $E$ is included into the kernel of the action $G \curvearrowright Z$. 
\end{proof}

\begin{remark}\label{remark:NotMinElliptic}
It is worth noticing that the median set of an elliptic isometry $g$ cannot be defined as $\mathrm{Min}(g)$. Although it can be proved that $\mathrm{Min}(g)$ is always included into $\mathrm{Med}(g)$, the inclusion can be proper and in fact $\mathrm{Min}(g)$ may not be median. Figure \ref{IsomCube} gives such an example: an isometry of a $3$-cube which acts as a rotation on a $6$-cycle (in blue) and which switches the two remaining vertices (in orange).
\begin{figure}
\begin{center}
\includegraphics[trim={0 18.5cm 48cm 0},clip,scale=0.45]{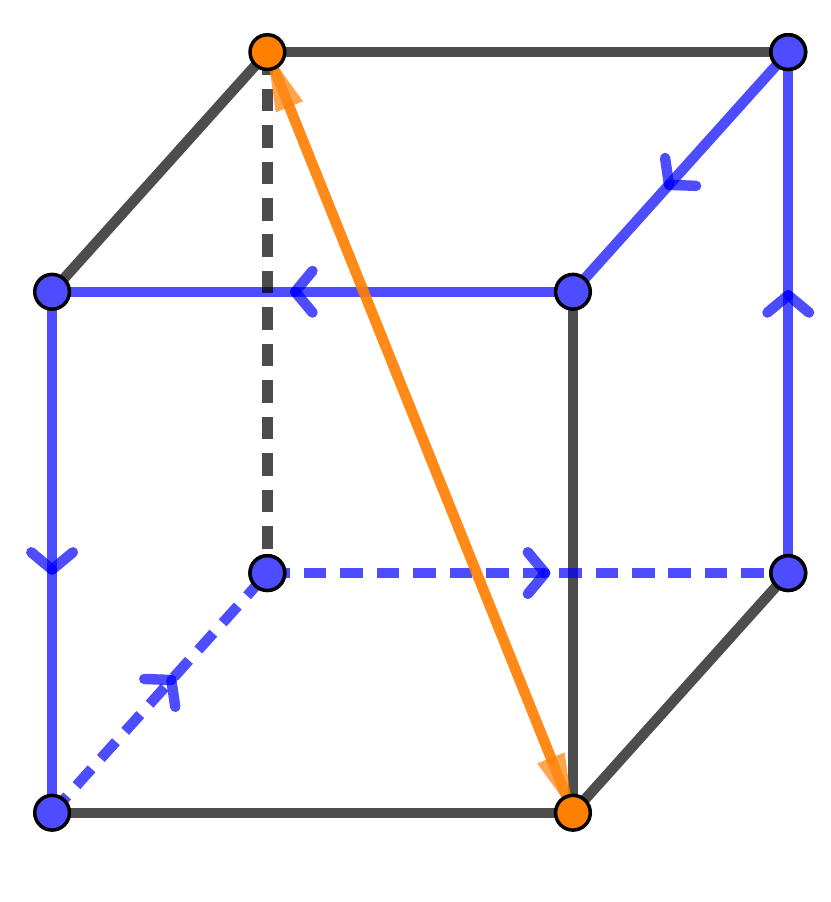}
\caption{An isometry $g$ of a cube such that $\mathrm{Min}(g)$ is not median and $\mathrm{Min}(g) \subsetneq \mathrm{Med}(g)$.}
\label{IsomCube}
\end{center}
\end{figure}
\end{remark}

\subsection{Loxodromic isometries}

\noindent
In this subsection, we prove Theorem \ref{thm:MedianSet} for loxodromic isometries, namely:

\begin{prop}\label{prop:MinSet}
Let $G$ be a group acting on a CAT(0) cube complex $X$ and $g\in G$ a loxodromic isometry such that $\langle g \rangle$ is normal in $G$. Then $\mathrm{Med}(g)$ is a median subalgebra of $X$ which is $G$-invariant and which decomposes as a product $T \times F$ of two median algebras $T,F$ such that:
\begin{itemize}
	\item the action $G \curvearrowright T \times F$ decomposes as a product of two actions $G \curvearrowright T, F$;
	\item $F$ is a median flat on which $g$ acts by translations of length $\|g\|$;
	\item $g$ acts trivially on $T$.
\end{itemize}
\end{prop}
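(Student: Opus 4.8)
The plan is to build on Proposition~\ref{prop:AxesFlat} and Lemma~\ref{lem:MedianAxis}. First I would establish that $\mathrm{Med}(g)$, the union of all axes of $g$, is a median subalgebra: this is exactly the content of Lemma~\ref{lem:MedianAxis}, which shows that the median point of three vertices lying on axes again lies on an axis (one checks $\bigcup_k g^k[m,gm]$ is a geodesic, hence an axis since $g$ translates along it with positive length). So the first step is essentially a citation. $G$-invariance is immediate: since $\langle g\rangle$ is normal, for $h\in G$ and an axis $\alpha$ of $g$, $h\alpha$ is an axis of $hgh^{-1}=g^{\pm1}$, hence an axis of $g$ (axes of $g$ and $g^{-1}$ coincide). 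Thus $\mathrm{Med}(g)$ is a $G$-invariant median subalgebra.

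Next I would produce the product decomposition $T\times F$. The natural candidate for $F$ is the set of axes of $g$ having one \emph{fixed} pair of endpoints $\zeta,\xi\in\mathfrak{R}X$ at infinity, which is a median flat by Proposition~\ref{prop:AxesFlat}; the factor $T$ should index the possible pairs of endpoints. Concretely, I expect the decomposition to be driven by a partition of the hyperplanes crossing $\mathrm{Med}(g)$ into those that are ``transverse to the translation direction'' (these are the hyperplanes crossed by every axis, i.e. those separating $\zeta$ from $\xi$ — they give the $F$-factor, on which $g$ acts by translations of length $\|g\|$) versus those crossed by some but not all axes (giving the $T$-factor). The key point, analogous to Claim~\ref{claim:HypPartition} and the Euclidean picture, is that every hyperplane $J$ meeting $\mathrm{Med}(g)$ falls cleanly into one of two classes: either $g$ (or rather every power) separates the two sides of $J$ along an axis — then $J$ separates $\zeta$ from $\xi$ and contributes to $F$ — or $J$ is transverse to the translation direction and $g^n$ eventually does not separate across $J$, contributing to $T$. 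I would show these two families of hyperplanes are ``mutually transverse'' in the appropriate sense and that $g$ acts trivially on the $T$-coordinate; fixing a basepoint $x_0\in\mathrm{Med}(g)$, define $T=\{$projections of vertices of $\mathrm{Med}(g)$ onto the ``$T$-walls''$\}$ and $F$ similarly, and check as in the proof of Proposition~\ref{prop:MedElliptic} (using Lemmas~\ref{lem:HypProj} and~\ref{lem:HypProjSeparate}, plus Lemma~\ref{lem:TwoCubulations}) that the map $x\mapsto(\text{$T$-part},\text{$F$-part})$ is a median isomorphism $\mathrm{Med}(g)\cong T\times F$ which conjugates the $G$-action into a product action.

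Finally I would verify the three bulleted properties: that the $G$-action splits as a product (this follows once one shows $G$ preserves the partition of walls, using normality of $\langle g\rangle$ and the $G$-invariance of $\mathrm{Med}(g)$, exactly as Claim~\ref{claim:ProjProj} is used in the elliptic case); that $F$ is a median flat with $g$ acting by translations of length $\|g\|$ (Proposition~\ref{prop:AxesFlat} gives the flat; the translation length is $\|g\|$ by definition of an axis and equals $\lim_n\frac1n d(x,g^nx)$ since $g$ acts loxodromically); and that $g$ acts trivially on $T$ (because $g$ does not separate the $T$-walls — for each such wall $J$ and each axis, $g^n$ stabilizes $J$ for $n$ large, and since $g$ is non-inverting it fixes each side, so the $T$-coordinate of $x$ and $gx$ agree). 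The main obstacle I anticipate is the wall-partition step: proving that the hyperplanes crossing $\mathrm{Med}(g)$ really do split into exactly two mutually-transverse families with the claimed behavior, and that the induced product structure is genuinely $g$-invariant with $g$ trivial on $T$ — this requires care with the Roller boundary and with powers of $g$ (one may need to pass to $g$ itself rather than a power, using that $g$ is loxodromic hence non-inverting), and is the cubical analogue of decomposing $\mathrm{Min}(g)=Y_0\times\mathbb{R}$ in the CAT(0) flat torus theorem.
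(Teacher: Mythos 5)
Your skeleton matches the paper's: $\mathrm{Med}(g)$ is median by Lemma~\ref{lem:MedianAxis}, $G$-invariance follows from normality (indeed $hgh^{-1}=g^{\pm 1}$), the flat $F$ is the union of axes with a fixed pair of endpoints via Proposition~\ref{prop:AxesFlat}, and $T$ should record the endpoint classes. The genuine gap is in the central step, the actual isomorphism $\mathrm{Med}(g)\cong T\times F$ and the splitting of the $G$-action. The mechanism you propose -- gate projections ``as in Proposition~\ref{prop:MedElliptic}'', via Lemmas~\ref{lem:HypProj} and~\ref{lem:HypProjSeparate} -- is not available here: those lemmas concern projections onto \emph{convex subcomplexes} of $X$, and neither $F$ nor the fibre over a point of $T$ is convex in $X$ ($\mathrm{Med}(g)$ is only a median subalgebra, which is exactly why the paper leaves the world of subcomplexes). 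So ``projections of vertices onto the $T$-walls'' is undefined with the tools you cite. If instead you mean the restriction quotients of the cubulation of $\mathrm{Med}(g)$ collapsing one family of walls, then $x\mapsto(\text{$T$-part},\text{$F$-part})$ is automatically an isometric embedding into the $\ell^1$-product, but its \emph{surjectivity} -- equivalently the ``mutual transversality'' of the two wall families that you say you would show -- is precisely the hard content of the proposition, and your sketch offers no argument for it. Note also that the wall dichotomy itself rests on a non-obvious fact you use implicitly: a hyperplane crossing one axis crosses every axis (Lemma~\ref{lem:HypCrossAxis}), proved in the paper by a counting argument.

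The paper resolves both points with the Roller compactification: $T$ is realised concretely as $\{\lim_{n\to+\infty}g^nx \mid x\in\mathrm{Med}(g)\}\subset\mathfrak{R}X$, the $F$-coordinate of $x$ is $\mu(x,\zeta,\xi)$, and the inverse map sends $(\delta,q)$ to $\mu(q,\delta,\delta^-)$; this needs Corollary~\ref{cor:DeltaMinus} (the negative endpoint $\delta^-$ is determined by $\delta$), Claim~\ref{claim:YmedianInf} (these medians with boundary points stay in $\mathrm{Med}(g)$), and the wall count $\#\mathcal{W}(x,y)=\#\left(\mathcal{W}(x,y)\cap\mathcal{W}(\zeta,\xi)\right)+\#\left(\mathcal{W}(x,y)\setminus\mathcal{W}(\zeta,\xi)\right)$ identifying the two coordinates (Lemma~\ref{lem:ProductDeltaF}); the product structure of the $G$-action is then checked on the explicit formulas. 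Without these constructions, or an equivalent device, your plan does not produce the decomposition. A small further remark: your argument that $g$ acts trivially on $T$ via powers of $g$ stabilising each $T$-hyperplane is more roundabout than necessary (and the passage from $g^n$ back to $g$ is not justified as written); it is simpler to observe that a $T$-wall crosses no axis, while $x$ and $gx$ lie on a common axis, so no $T$-wall separates $x$ from $gx$.
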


\noindent
In the rest of the section, we will occasionally use the following notation. If $g$ is a loxodromic isometry and $x$ a vertex which belongs to the minimising set of $g$, then the limit $\lim\limits_{k \to \pm \infty} g^k x$ exists in $\overline{X}$ and coincides with the point at infinity $\gamma(\pm \infty)$ if $\gamma$ is an axis of $g$ passing through $x$. For convenience, we may denote this limit by $g^{\pm \infty}x$.

\medskip \noindent
The first step towards the proof of Proposition \ref{prop:MinSet} is to show that the median set of a loxodromic isometry is median, which is a direct consequence of Lemma \ref{lem:MedianAxis}:

\begin{lemma}\label{lem:MinMedian}
Let $X$ be a CAT(0) cube complex and $g \in \mathrm{Isom}(X)$ a loxodromic isometry. Then $\mathrm{Med}(g)$ is median.
\end{lemma}

\noindent
The next step towards the proof of Proposition \ref{prop:MinSet} is to show that the median set of a loxodromic isometry decomposes as a product. Before proving this assertion, we need the following preliminary lemma:

\begin{lemma}\label{lem:HypCrossAxis}
Let $X$ be a CAT(0) cube complex and $g \in \mathrm{Isom}(X)$ a loxodromic isometry. A hyperplane of $X$ crossing an axis of $g$ crosses all the axes of $g$. As a consequence, such a hyperplane separates $\left\{ \lim\limits_{n \to + \infty} g^nx \mid x \in \mathrm{Med}(g) \right\}$ and $\left\{ \lim\limits_{n \to + \infty} g^{-n} x \mid x \in \mathrm{Med}(g) \right\}$. 
\end{lemma}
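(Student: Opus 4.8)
The plan is to prove the two assertions in turn, the second being an immediate consequence of the first. For the first assertion, let $J$ be a hyperplane crossing some axis $\alpha$ of $g$, and let $\beta$ be an arbitrary axis; I want to show $J$ crosses $\beta$. Since $\beta$ is a bi-infinite geodesic on which $g$ acts by translations of length $\|g\|$, the set $\mathcal{W}(\beta)$ of hyperplanes crossing it is invariant under $\langle g\rangle$, and the same holds for $\alpha$. The key point is that $J$ crosses $\alpha$, so for a suitable choice of basepoint $x\in\alpha$, the hyperplane $J$ separates $x$ from $g^{-N}x$ and from $g^{N}x$ for all large $N$ in one of the two directions — more precisely, $J$ separates $\{g^{k}x : k\le m\}$ from $\{g^{k}x : k > m\}$ for the unique $m$ with $J$ crossing the edge of $\alpha$ between $g^{m}x$ and $g^{m+1}x$ (this uses that $g$ does not invert $J$, which holds because $g$ is loxodromic and hence never inverting). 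Thus $J$ separates the two points at infinity $g^{+\infty}x=\alpha(+\infty)$ and $g^{-\infty}x=\alpha(-\infty)$ of $\alpha$.

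Now I invoke the fact, established in the discussion of Proposition~\ref{prop:AxesFlat} (cf.\ the proof of Claim~\ref{claim:AxisPassing}), or rather prove directly, that any two axes of $g$ have the same pair of points at infinity: indeed $g$ admits an axis, and by Haglund's description recalled in Section~\ref{section:ClassIsom} together with Lemma~\ref{lem:MinMedian}, the endpoints $g^{\pm\infty}x$ are the same for every $x$ in the minimising set, since for $x,y\in\mathrm{Min}(g)$ one has $d(g^{n}x,g^{n}y)=d(x,y)$ bounded, so $g^{n}x$ and $g^{n}y$ cross the same hyperplanes eventually and converge to the same point of $\mathfrak{R}X$. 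Hence $\beta$ has the same endpoints $\alpha(\pm\infty)$ as $\alpha$. Since $J$ separates these two points at infinity, and $\beta$ is a geodesic from $\beta(-\infty)$ to $\beta(+\infty)$, the hyperplane $J$ must cross $\beta$: any geodesic ray converging to a point of $\mathfrak{R}X$ lying in the halfspace not containing the other endpoint must eventually enter that halfspace, so $\beta$ crosses from one side of $J$ to the other.

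For the ``as a consequence'' clause, let $J$ be a hyperplane crossing an axis of $g$. By the first part, $J$ crosses every axis, hence for each $x\in\mathrm{Med}(g)$ the axis through $x$ crosses $J$, so $\lim_{n\to+\infty}g^{n}x$ and $\lim_{n\to+\infty}g^{-n}x$ lie in opposite halfspaces delimited by $J$ (they are the two endpoints of that axis, and $J$ separates them as shown above). Moreover the halfspace containing $\lim_{n\to+\infty}g^{n}x$ is the same for all $x\in\mathrm{Med}(g)$: if $x,y\in\mathrm{Med}(g)$ then $\mu(x,y,\cdot)$ arguments, or simply the fact that all axes share their endpoints, force $g^{+\infty}x=g^{+\infty}y$ in $\overline{X}$; but even without equality of the points themselves it suffices that they lie on the same side of $J$, which follows because $g^{+\infty}x$ and $g^{+\infty}y$ cannot be separated by $J$ — $J$ crosses both axes consistently with the $\langle g\rangle$-action, and $g$ fixes each halfspace of $J$ since $g$ is not inverting, so the direction in which an axis ``exits through $J$'' is determined by the $g$-orbit direction, not by the particular axis. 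Therefore $J$ separates $\{\lim_{n\to+\infty}g^{n}x \mid x\in\mathrm{Med}(g)\}$ from $\{\lim_{n\to+\infty}g^{-n}x \mid x\in\mathrm{Med}(g)\}$, as claimed.

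\textbf{Main obstacle.} The delicate point is the coherence statement: not merely that each axis crosses $J$, but that all the forward endpoints land on one fixed side of $J$. The clean way to handle this is to first prove that all axes of $g$ share the same ordered pair of points at infinity (using boundedness of $d(g^{n}x,g^{n}y)$ for $x,y$ in the minimising set and the geodesic-ray description of $\mathfrak{R}X$ from Section~\ref{section:Roller boundary}), after which the two endpoint-sets in the statement are each a single point and the separation assertion is tautological once we know $J$ crosses one — hence every — axis. I would therefore slot in that equality-of-endpoints observation as a preliminary remark (or cite the relevant piece of the proof of Proposition~\ref{prop:AxesFlat}) before running the argument above.
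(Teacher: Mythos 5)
There is a genuine gap: the central claim of your argument --- that any two axes of $g$ share the same pair of endpoints in the Roller boundary --- is false, and so is the reasoning you offer for it (boundedness of $d(g^nx,g^ny)$ does not force $g^nx$ and $g^ny$ to converge to the same point of $\mathfrak{R}X$: Roller convergence requires eventual agreement on \emph{every} halfspace, and a single persistent disagreement already gives distinct limits). Take $X=\mathbb{R}\times[0,1]$ cubulated as an infinite ladder and $g$ the unit translation. Both horizontal lines are axes, every vertex lies in $\mathrm{Min}(g)$, yet the two forward endpoints are distinct points of $\mathfrak{R}X$, separated by the single horizontal hyperplane. The paper itself allows for this: the set $T=\left\{ \lim_{n\to+\infty} g^nx \mid x\in\mathrm{Med}(g)\right\}$ of Lemma~\ref{lem:ProductDeltaF} is in general not a single point, and Proposition~\ref{prop:AxesFlat} explicitly restricts to the axes with a \emph{fixed} pair of endpoints. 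Since your proof of the first assertion runs entirely through this false claim (you deduce that $J$ separates $\beta(\pm\infty)$ from the fact that it separates $\alpha(\pm\infty)$), it collapses. Your fallback for the second assertion is also unsound: you assert that ``$g$ fixes each halfspace of $J$ since $g$ is not inverting'', but $g$ does not even stabilise $J$ --- for a hyperplane crossing an axis one has $g^kJ\neq J$ for every $k\neq 0$, since otherwise the axis, a geodesic, would cross $J$ twice. The coherence you need (all forward limit points on one fixed side of $J$) is precisely the content of Fact~\ref{fact:TwoDirections} and requires a genuine argument rather than an appeal to the $\langle g\rangle$-action on the halfspaces of $J$.

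For comparison, the paper proves the first assertion directly: if $J$ crosses $\gamma_1$ but misses $\gamma_2$, then $J$ separates a subray $r=\{g^kx \mid k\geq 0\}$ of $\gamma_1$ from $\gamma_2$, and the translates $J,gJ,\ldots,g^NJ$ with $N=d(x,y)$, $y\in\gamma_2$, are pairwise distinct and all separate $g^Nx$ from $g^Ny$, contradicting $d(g^Nx,g^Ny)=d(x,y)=N$. The second assertion then follows from Fact~\ref{fact:TwoDirections}, itself established by a similar translation-and-counting argument. If you wish to keep an endpoint-based strategy, the statement you would need is not ``all axes have the same endpoints'' but rather that no hyperplane crossing the axes separates $\lim_n g^nx$ from $\lim_n g^ny$ for $x,y\in\mathrm{Med}(g)$; since that is essentially the second assertion of the lemma, it cannot be assumed at the outset.
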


\begin{proof}
Let $\gamma_1, \gamma_2$ be two axes of $g$ and $J$ a hyperplane intersecting $\gamma_1$. Assume by contradiction that $J$ does not intersect $\gamma_2$. As a consequence, $\gamma_1$ contains a subray $r$ such that $J$ separates $r$ from $\gamma_2$. There exists some $x \in \gamma_1$ such that $r = \{ g^kx \mid k \geq 0\}$ or $r= \{ g^{-k}x \mid k \leq 0\}$. Up to replacing $k$ with $-k$, we may suppose without loss of generality that $r = \{g^kx \mid k \geq 0 \}$. Fix some $y \in \gamma_2$ and set $N=d(x,y)$. Notice that, for every $0 \leq j \leq N$, $g^jJ$ separates $\{ g^kx \mid k \geq j\}$ from $\gamma_2$ so that, in particular, it has to separate $g^Nx$ from $g^Ny$. As $J, gJ, \ldots, g^NJ$ all separate $g^Nx$ and $g^Ny$, it follows that
$$d(x,y)=d \left( g^Nx,g^Ny \right) \geq N+1 = d(x,y)+1,$$
a contradiction. Thus, we have shown that $J$ must intersect $\gamma_2$ as well, proving the first assertion of our lemma. 

\medskip \noindent
In order to prove the second assertion, fix two vertices $x,y \in \mathrm{Med}(g)$ and a hyperplane $J$ intersecting the axes of $g$. The only possibility for $J$ not to separate $g^{-\infty}y$ and $g^\infty x$ is that $J$ separates $\{g^\infty x, g^{- \infty}y \}$ and $\{g^{-\infty} x , g^\infty y \}$. But this is impossible according to Fact \ref{fact:TwoDirections}.
\end{proof}

\begin{cor}\label{cor:DeltaMinus}
Let $X$ be a CAT(0) cube complex and $g \in \mathrm{Isom}(X)$ a loxodromic isometry. If $\gamma_1$ and $\gamma_2$ are two axes of $g$ satisfying $\gamma_1(+ \infty)= \gamma_2(+ \infty)$, then necessarily $\gamma_1(-\infty)= \gamma_2(- \infty)$. 
\end{cor}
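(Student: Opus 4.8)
The plan is to reduce the statement to facts already available about axes and hyperplanes. Let $\gamma_1,\gamma_2$ be two axes of the loxodromic isometry $g$ with $\gamma_1(+\infty)=\gamma_2(+\infty)=:\xi$, and suppose for contradiction that $\gamma_1(-\infty)\neq\gamma_2(-\infty)$. Since $\gamma_1$ and $\gamma_2$ are distinct points of the Roller boundary in the backward direction, there is a hyperplane $J$ separating $\gamma_1(-\infty)$ from $\gamma_2(-\infty)$. Pick basepoints $x\in\gamma_1$ and $y\in\gamma_2$; both lie in $\mathrm{Med}(g)$, and by Lemma~\ref{lem:MinMedian} the median set is median, so all the tools from the loxodromic case apply.

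First I would observe that $J$ cannot separate $\xi$ from both $\gamma_1(-\infty)$ and $\gamma_2(-\infty)$ simultaneously in a ``crossing all axes'' fashion: if $J$ crossed $\gamma_1$ (equivalently, by Lemma~\ref{lem:HypCrossAxis}, all axes of $g$), then by that same lemma $J$ would separate $\{g^{+\infty}z \mid z\in\mathrm{Med}(g)\}$ from $\{g^{-\infty}z \mid z\in\mathrm{Med}(g)\}$, which would force $\gamma_1(-\infty)$ and $\gamma_2(-\infty)$ to lie in the same halfspace of $J$ (the one not containing $\xi$), contradicting the choice of $J$. Hence $J$ does not cross any axis of $g$; in particular it separates neither $\gamma_1$ nor $\gamma_2$ from $\xi$, so $\xi$ lies, say, in the halfspace $J^+$, and $\gamma_1(-\infty)\in J^-$ while $\gamma_2(-\infty)\in J^+$ (after relabelling which of the two backward endpoints is on the $J^-$ side). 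Since $J$ does not cross $\gamma_2$ and $\gamma_2$ goes from $\gamma_2(-\infty)\in J^+$ to $\xi\in J^+$, the whole axis $\gamma_2$ lies in $J^+$; in particular $y\in J^+$ and $g^k y\in J^+$ for all $k$.

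Now I would run the standard translation-of-hyperplanes contradiction, exactly as in the proof of Lemma~\ref{lem:HypCrossAxis}. Since $J$ separates $\gamma_1(-\infty)$ from $\xi$ and does not cross $\gamma_1$, there is a backward subray $r=\{g^{-k}x \mid k\ge 0\}$ of $\gamma_1$ contained in $J^-$, hence separated from all of $\gamma_2$ by $J$. Set $N=d(x,y)$. For each $0\le j\le N$, the hyperplane $g^{-j}J$ separates $\{g^{-k}x \mid k\ge j\}$ from $\gamma_2$, so in particular $g^{-j}J$ separates $g^{-N}x$ from $g^{-N}y$. The hyperplanes $J,g^{-1}J,\dots,g^{-N}J$ are pairwise distinct (they cross the geodesic $\gamma_1$ in distinct edges, or alternatively $g$ does not invert hyperplanes so the $g^{-j}J$ are all on the $J^-$-side and nested), and all of them separate $g^{-N}x$ from $g^{-N}y$, giving
$$d(x,y)=d(g^{-N}x,g^{-N}y)\ \ge\ N+1\ =\ d(x,y)+1,$$
a contradiction. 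Therefore $\gamma_1(-\infty)=\gamma_2(-\infty)$, as claimed.

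I expect the main obstacle to be purely bookkeeping: making sure the ``relabelling'' of which backward endpoint lies in $J^-$ is consistent with the fact that $\xi$ lies in $J^+$, and verifying that $J$ genuinely fails to cross the axes (the dichotomy ``$J$ crosses all axes or none, on $\mathrm{Med}(g)$'' is exactly Lemma~\ref{lem:HypCrossAxis} combined with Fact~\ref{fact:TwoDirections}, so this is not deep, but it is the step where one could slip). Everything else is the now-familiar pigeonhole argument on translates of a hyperplane.
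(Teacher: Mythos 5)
Your first paragraph is essentially a complete and correct proof, and it is close to the paper's: the paper notes that a hyperplane $J$ separating $\gamma_1(-\infty)$ from $\gamma_2(-\infty)$ must separate one of these backward endpoints from the other three endpoints at infinity, hence crosses exactly one of the two axes, contradicting the first assertion of Lemma~\ref{lem:HypCrossAxis}; your variant, using the second assertion (a hyperplane crossing an axis separates $\left\{ \lim_{n} g^{n}z \right\}$ from $\left\{ \lim_{n} g^{-n}z \right\}$), is equally valid. But having concluded that $J$ crosses no axis of $g$, you should stop: if $J$ crosses neither $\gamma_1$ nor $\gamma_2$, then each $\gamma_i$ lies entirely in one halfspace, so $\gamma_i(-\infty)$ lies on the same side of $J$ as $\xi=\gamma_i(+\infty)$ for $i=1,2$, and hence $\gamma_1(-\infty)$ and $\gamma_2(-\infty)$ lie on the same side of $J$, contradicting the choice of $J$. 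That one sentence finishes the argument.

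Instead, you set up the labelled configuration ``$\xi\in J^+$, $\gamma_1(-\infty)\in J^-$, $\gamma_2(-\infty)\in J^+$'' alongside ``$J$ crosses no axis''. That configuration is already self-contradictory: a hyperplane having the two Roller endpoints of a geodesic in opposite halfspaces necessarily crosses that geodesic, since each endpoint records the halfspace in which the corresponding subray is eventually contained. Not noticing this makes the second half unsound as written: the translation/counting argument re-runs the proof of Lemma~\ref{lem:HypCrossAxis}, and its key point, that $J, g^{-1}J,\dots,g^{-N}J$ are pairwise distinct, is justified by ``they cross $\gamma_1$ in distinct edges'', which flatly contradicts your own conclusion that $J$ crosses no axis; the alternative ``nested on the $J^-$ side'' justification is not substantiated. (A minor further slip: the backward subray of $\gamma_1$ contained in $J^-$ need not start at your chosen basepoint $x$; you must first move $x$ far enough backwards along $\gamma_1$, as in the paper's proof of Lemma~\ref{lem:HypCrossAxis}.) So keep the first paragraph, replace everything after it with the observation above, and delete the counting argument.
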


\begin{proof}
If there exists a hyperplane $J$ separating $\gamma_1(- \infty)$ and $\gamma_2(\infty)$, then either $J$ separates $\gamma_1(-\infty)$ from the three points $\gamma_1(+ \infty), \gamma_2(-\infty), \gamma_2(+ \infty)$ or it separates $\gamma_2(-\infty)$ from the three points $\gamma_2(+ \infty), \gamma_1(- \infty), \gamma_1(+ \infty)$. In either case, $J$ intersects only one axis among $\gamma_1$ and $\gamma_2$, contradicting Lemma \ref{lem:HypCrossAxis}. Consequently, no hyperplane separates $\gamma_1(- \infty)$ and $\gamma_2(- \infty)$, hence $\gamma_1(- \infty) = \gamma_2(- \infty)$ as desired.
\end{proof}

\noindent
We are now ready to show that the median set of a loxodromic isometry naturally decomposes as a product.
\begin{figure}
\begin{center}
\includegraphics[trim={0 12cm 30cm 0},clip,scale=0.45]{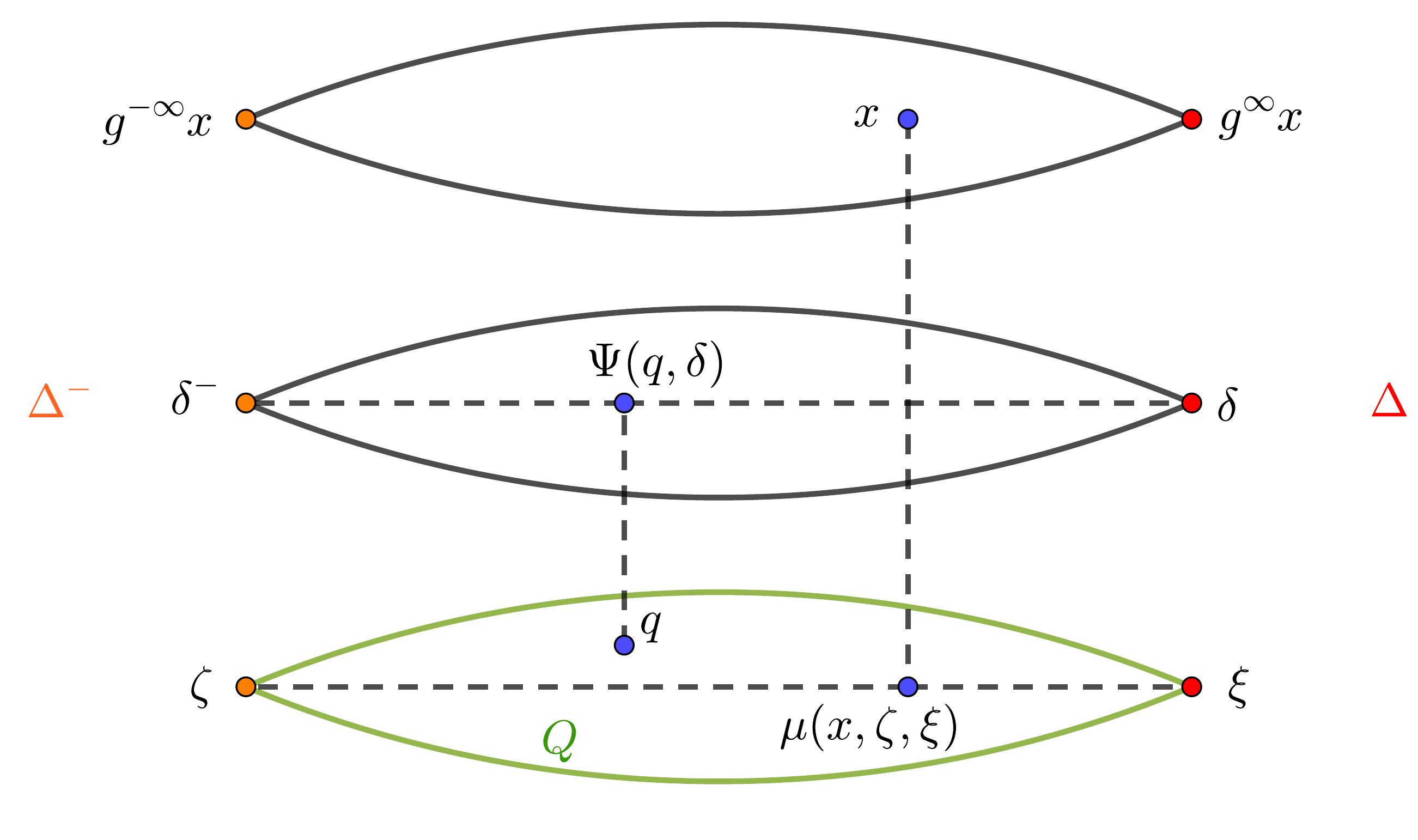}
\caption{The decomposition $T \times F$ of $\mathrm{Med}(g)$.}
\label{MedLoxo}
\end{center}
\end{figure}

\begin{lemma}\label{lem:ProductDeltaF}
Let $X$ be a CAT(0) cube complex and $g \in \mathrm{Isom}(X)$ a loxodromic isometry. Let $T \subset \mathfrak{R}X$ denote $\left\{ \lim\limits_{n \to + \infty}g^nx \mid x \in \mathrm{Med}(g) \right\}$ and let $F$ be the union of all the axes of $g$ having a fixed pair of points at infinity $(\zeta,\xi) \in \mathfrak{R}X \times \mathfrak{R}X$. Then $T$ and $F$ are median subalgebras of $\overline{X}$, and the map
$$\Phi : \left\{ \begin{array}{ccc}  \mathrm{Med}(g) & \to & T \times F \\ x & \mapsto & \left( \lim\limits_{n \to + \infty} g^nx, \mu(x,\zeta,\xi) \right) \end{array} \right.$$
defines an isomorphism of median algebras. 
\end{lemma}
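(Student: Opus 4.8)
The plan is to work inside the Roller compactification $\overline{X}$, using the description of its median recalled in Section~\ref{section:Median}: given orientations $\sigma_1,\sigma_2,\sigma_3$ and a hyperplane $J$, the halfspace delimited by $J$ chosen by $\mu(\sigma_1,\sigma_2,\sigma_3)$ is the one chosen by at least two of $\sigma_1,\sigma_2,\sigma_3$ (the ``majority rule''). Let $\mathcal{H}_{\mathrm{ax}}$ denote the set of hyperplanes of $X$ crossing some axis of $g$; by Lemma~\ref{lem:HypCrossAxis} this set is independent of the chosen axis, and a hyperplane lies in $\mathcal{H}_{\mathrm{ax}}$ if and only if it separates the two endpoints of any axis, if and only if it separates $T$ from $T^{-}:=\{g^{-\infty}x\mid x\in\mathrm{Med}(g)\}$. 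Arrange notation so that $\gamma_0$ is an axis with backward endpoint $\zeta$ and forward endpoint $\xi$, so $\xi\in T$ and $\zeta\in T^{-}$. Recall also that every vertex of $\mathrm{Med}(g)$ lies on an axis and hence in $\mathrm{Min}(g)$, so the limits $g^{\pm\infty}v=\lim_{n}g^{\pm n}v$ exist and are the endpoints of every axis through $v$; conversely, since $g$ is loxodromic, any $w\in\mathrm{Min}(g)$ lies on the axis $\bigcup_{k}g^{k}[w,gw]$, hence in $\mathrm{Med}(g)$. That $F$ is a median subalgebra is part of Proposition~\ref{prop:AxesFlat}. For $T$: given $x_1,x_2,x_3\in\mathrm{Med}(g)$, continuity and $g$-equivariance of $\mu$ on $\overline{X}$ give $\mu(g^{+\infty}x_1,g^{+\infty}x_2,g^{+\infty}x_3)=\lim_{n}g^{n}\mu(x_1,x_2,x_3)$, and $\mu(x_1,x_2,x_3)\in\mathrm{Med}(g)$ by Lemma~\ref{lem:MinMedian}, so this limit is $g^{+\infty}\mu(x_1,x_2,x_3)\in T$.

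The heart of the argument is the following ``projection formula''. Fix an axis $\gamma$ of $g$ with backward endpoint $p^{-}$ and forward endpoint $p^{+}$, so $\mathcal{H}_{\mathrm{ax}}$ is exactly the set of hyperplanes separating $p^{-}$ from $p^{+}$. For $v\in\mathrm{Med}(g)$ put $m:=\mu(v,p^{-},p^{+})$. By the majority rule, $m$ chooses $v$'s side of every $J\in\mathcal{H}_{\mathrm{ax}}$ and $p^{+}$'s side (which is also $p^{-}$'s side) of every $J\notin\mathcal{H}_{\mathrm{ax}}$. Since $g$ fixes $p^{\pm}$, the same description applied to $gm=\mu(gv,p^{-},p^{+})$ shows that the hyperplanes separating $m$ from $gm$ are precisely those of $\mathcal{W}(v,gv)\cap\mathcal{H}_{\mathrm{ax}}$; but $v$ and $gv$ lie on a common axis, so every hyperplane of $\mathcal{W}(v,gv)$ crosses it and hence lies in $\mathcal{H}_{\mathrm{ax}}$. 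Therefore $d(m,gm)=d(v,gv)=\|g\|$, so $m\in\mathrm{Min}(g)\subseteq\mathrm{Med}(g)$ and $m$ lies on the axis $\bigcup_{k}g^{k}[m,gm]$, whose endpoints are $g^{\pm\infty}m=\mu(g^{\pm\infty}v,p^{-},p^{+})$. As $g^{+\infty}v$ and $p^{+}$ both lie in $T$, they are on the same side of every $J\in\mathcal{H}_{\mathrm{ax}}$, and the majority rule gives $\mu(g^{+\infty}v,p^{-},p^{+})=p^{+}$; likewise $\mu(g^{-\infty}v,p^{-},p^{+})=p^{-}$. Thus $m$ lies on an axis of $g$ with endpoints $p^{-},p^{+}$. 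Taking $\gamma=\gamma_0$ shows $\mu(v,\zeta,\xi)\in F$, so $\Phi$ is a well-defined map $\mathrm{Med}(g)\to T\times F$.

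It remains to check that $\Phi$ is a bijection carrying intervals to intervals. From the projection formula with $\gamma=\gamma_0$, $\mu(v,\zeta,\xi)$ chooses $v$'s side of $J$ when $J\in\mathcal{H}_{\mathrm{ax}}$ and $\zeta$'s side otherwise; and for $J\notin\mathcal{H}_{\mathrm{ax}}$ the axis through $v$ lies entirely on $v$'s side of $J$, so $g^{+\infty}v$ also chooses $v$'s side. Hence for $v,v'\in\mathrm{Med}(g)$ one has $\mathcal{W}(\mu(v,\zeta,\xi),\mu(v',\zeta,\xi))=\mathcal{W}(v,v')\cap\mathcal{H}_{\mathrm{ax}}$ and $\mathcal{W}(g^{+\infty}v,g^{+\infty}v')=\mathcal{W}(v,v')\setminus\mathcal{H}_{\mathrm{ax}}$ (a hyperplane of $\mathcal{H}_{\mathrm{ax}}$ cannot separate two points of $T$). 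These two sets partition $\mathcal{W}(v,v')$, so $\Phi$ is injective, and this partition (matching the two coordinates of $\Phi$, and recalling that intervals in $\mathrm{Med}(g)$, $T$ and $F$ are all detected by splittings of their sets of separating hyperplanes) shows that $\Phi$ preserves and reflects intervals; being a bijection, it will then be an isomorphism of median algebras, since the median of a triple is the unique point common to the three pairwise intervals. For surjectivity, let $(\eta,f)\in T\times F$; write $\eta=g^{+\infty}y$ with $y\in\mathrm{Med}(g)$, let $\gamma_y$ be an axis through $y$ with endpoints $\eta^{-}:=g^{-\infty}y$ and $\eta$, and set $x:=\mu(f,\eta^{-},\eta)$. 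By the projection formula (with $\gamma=\gamma_y$ and $v=f\in F\subseteq\mathrm{Med}(g)$), $x$ lies on an axis with endpoints $\eta^{-},\eta$, so $g^{+\infty}x=\eta$, and $x$ chooses $f$'s side of every $J\in\mathcal{H}_{\mathrm{ax}}$. Then $\mu(x,\zeta,\xi)$ chooses $f$'s side of $J$ for $J\in\mathcal{H}_{\mathrm{ax}}$ and $\zeta$'s side otherwise; but for $J\notin\mathcal{H}_{\mathrm{ax}}$ an axis through $f$ with endpoints $\zeta,\xi$ lies on one side of $J$, so $f$ and $\zeta$ choose the same side there. Hence $\mu(x,\zeta,\xi)=f$ and $\Phi(x)=(\eta,f)$.

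The step I expect to be the main obstacle is the projection formula, specifically the assertion that $\mu(v,\zeta,\xi)$ lies on an axis of $g$ (and not merely somewhere in $\overline{X}$): the mechanism is to read off the sides chosen by this median via the majority rule and $\mathcal{H}_{\mathrm{ax}}$, deduce $d(m,gm)=\|g\|$ from $\mathcal{W}(v,gv)\subseteq\mathcal{H}_{\mathrm{ax}}$, and then recover an honest axis through $m$ via Haglund's theorem and identify its two endpoints. Once this is in place, everything else is bookkeeping with hyperplanes.
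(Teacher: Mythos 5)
Your argument is correct and follows the same skeleton as the paper's proof: the same map $\Phi$, an explicit right inverse of the shape $(\delta,q)\mapsto\mu(q,\delta,\delta^-)$, and the wall partition $\mathcal{W}(x,y)=\bigl(\mathcal{W}(x,y)\cap\mathcal{W}(\zeta,\xi)\bigr)\sqcup\bigl(\mathcal{W}(x,y)\setminus\mathcal{W}(\zeta,\xi)\bigr)$ matched with the two coordinates to obtain injectivity and the median isomorphism (your $\mathcal{H}_{\mathrm{ax}}$ is exactly $\mathcal{W}(\zeta,\xi)$, by Lemma \ref{lem:HypCrossAxis}). Where you genuinely diverge is the key well-definedness step: the paper proves $\mu(x,\zeta,\xi)\in\mathrm{Med}(g)$ by approximating $\zeta,\xi$ by the points $g^{\mp N}y$ on an axis and showing the finite medians stabilise (Claim \ref{claim:YmedianInf}), then identifies the endpoints via Fact \ref{fact:HypFandDelta}; you instead read off the orientation $m=\mu(v,p^-,p^+)$ hyperplane by hyperplane through the majority rule, deduce $d(m,gm)=\|g\|$, and invoke Haglund's description of $\mathrm{Min}(g)$ (in effect $\mathrm{Min}(g)=\mathrm{Med}(g)$, cf.\ Lemma \ref{lem:MinVsMed}) to produce an axis through $m$ with the prescribed endpoints; this same ``projection formula'' then streamlines surjectivity. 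One point you leave unstated and should add: before writing $m\in\mathrm{Min}(g)$ you need to know that the orientation $m$ is a vertex of $X$ rather than a point of $\mathfrak{R}X$ (a $g$-fixed boundary point also has zero displacement, so the displacement computation alone does not place $m$ in $X$). This is quickly repaired: by the majority rule the hyperplanes separating $m$ from $v$ are exactly those separating $v$ from both $p^-$ and $p^+$, and each such hyperplane separates $v$ from any fixed vertex $w$ of the axis $\gamma$ (otherwise it would cross the geodesic $\gamma$ twice), so there are only finitely many of them and $m$ lies at finite distance from $v$, hence in the cubical component $X$ of $\overline{X}$. With that line added, the proof is complete.
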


\begin{proof}
First of all, let us notice that $T$ and $F$ are median subalgebras of $\overline{X}$. We already know from Proposition \ref{prop:AxesFlat} that $F$ is a median subalgebra of $X$. So let $A_1,A_2,A_3 \in T$ be three points. So there exist vertices $a_1,a_2,a_3 \in \mathrm{Med}(g)$ such that $\lim\limits_{n \to + \infty} g^na_i = A_i$ for $i=1,2,3$. We have
$$\mu(A_1,A_2,A_3) = \mu \left( \lim\limits_{n \to + \infty} g^n a_1, \lim\limits_{n \to + \infty} g^n a_2, \lim\limits_{n \to + \infty} g^n a_3 \right) = \lim\limits_{n \to + \infty} g^n \mu (a_1,a_2,a_3)$$
where $\mu(a_1,a_2,a_3)$ belongs to $\mathrm{Med}(g)$ according to Lemma \ref{lem:MinMedian}. Therefore, $\mu(A_1,A_2,A_3)$ belongs to $T$.

\medskip \noindent
Now, we want to show our map $\Phi$ is well-defined. More precisely, we need to show that, for every $x \in \mathrm{Med}(g)$, the median point $\mu(x,\zeta,\xi)$ belongs to $F$. The first step is to show that $\mu(x,\zeta,\xi)$ belongs to $\mathrm{Med}(g)$. 

\begin{claim}\label{claim:YmedianInf}
Let $\gamma$ be an axis of $g$. For every $x \in \mathrm{Med}(g)$, the point $\mu(x, \gamma(-\infty), \gamma(+ \infty))$ belongs to $\mathrm{Med}(g)$.
\end{claim}

\noindent
Fix a vertex $y \in \gamma$, and let $N \geq 1$ be sufficiently large so that no hyperplane crossing $\gamma$ separate $x$ from $\{y,g^ny\}$ or $\{y,g^{-n}y\}$ for any $n \geq N$. Notice that, for every $p \geq q \geq N$, the points $\mu(x,g^py,g^{-p}y)$ and $\mu(x,g^qy,g^{-q}y)$ coincides. 

\medskip \noindent
Indeed, if $J$ is a hyperplane separating these two points, it follows from Lemma \ref{lem:HypSepMedian} that $J$ separates $g^py$ and $g^qy$ or $g^{-p}y$ and $g^{-q}y$. Suppose that $J$ separates $g^py$ and $g^qy$, the other case being similar. If $J^+, J^-$ denote the halfspaces delimited by $J$ such that $\mu(x,g^py,g^{-p}y) \in J^+$ and $\mu(x,g^qy,g^{-q}y) \in J^-$, then two cases may happen:
\begin{itemize}
	\item either $g^py \in J^-$ and $g^qy \in J^+$, which is impossible because $g^qy$ and $g^{-q}y$ cannot both belong to $J^+$ as soon as $\mu(x,g^qy,g^{-q}y)$ belongs to $J^-$; 
	\item or $g^py \in J^+$ and $g^q y \in J^-$, so that $y$ and $g^{-p}y$ must belong to $J^-$ (since $\gamma$ is a geodesic) and $x$ has to belong to $J^+$ (since $x$ and $g^{-p}y$ cannot be on the same side of $J$), which implies that $J$ separates $x$ from $\{y,g^q y\}$, a contradiction;
\end{itemize}
Consequently, no hyperplane separates $\mu(x,g^py,g^{-p}y)$ and $\mu(x,g^qy,g^{-q}y)$, whence the equality $\mu(x,g^py,g^{-p}y)=\mu(x,g^qy,g^{-q}y)$. 

\medskip \noindent
We conclude that 
$$\mu(x,\gamma(-\infty),\gamma(+ \infty)) = \lim\limits_{n \to + \infty} \mu \left(x,g^{-n}y,g^ny \right) = \mu \left( x , g^{-N} y , g^Ny \right)$$
belongs to $\mathrm{Med}(g)$ because $\mathrm{Med}(g)$ is median according to Lemma \ref{lem:MinMedian} and because $x, g^{-N}y, g^Ny$ also belong to $\mathrm{Med}(g)$. The proof of Claim \ref{claim:YmedianInf} is complete.

\medskip \noindent
So we know that $\mu(x,\zeta, \xi)$ belongs to $\mathrm{Med}(g)$. As a consequence, there exists a geodesic passing through $\mu(x,\zeta, \xi)$ on which $g$ acts by translations. We have 
$$\gamma(+ \infty)= \lim\limits_{n \to + \infty} g^n \mu(x,\zeta, \xi) = \lim\limits_{n \to + \infty} \mu \left( g^n x , \zeta, \xi \right) = \mu \left( g^{\infty} x ,\zeta, \xi \right).$$
The equality $\xi = \mu( g^{\infty}x,\zeta,\xi)$ follows from the observation that a hyperplane separating $\zeta$ and $\xi$ does not separate $\xi$ and $g^{\infty} x$, as implied by the following immediate consequence of Lemma \ref{lem:HypCrossAxis}:

\begin{fact}\label{fact:HypFandDelta}
Any hyperplane crossing $F$ separates $T$ and $T^- := \left\{ \lim\limits_{n \to + \infty} g^{-n} x \mid x \in \mathrm{Med}(g) \right\}$.
\end{fact}

\noindent
One shows similarly that $\gamma(-\infty)= \lim\limits_{n \to + \infty} g^{-n} \mu(x,\zeta,\xi)=\zeta$. Thus, we have shown that $\mu(x,\zeta,\xi)$ belongs to $F$, proving that our map $\Phi$ is well-defined.

\medskip \noindent
Now, we want to prove that $\Phi$ is surjective. More precisely, we will prove:

\begin{claim}\label{claim:PsiAsPhiInverse}
For every $\delta \in T$, let $\delta^-$ denote the common endpoint in $T^-$ of all the axes $\gamma$ of $g$ satisfying $\gamma(+ \infty)=\delta$. Then the map
$$\Psi : \left\{ \begin{array}{ccc} T \times F & \mapsto & \mathrm{Med}(g) \\ ( \delta, q ) & \mapsto &  \mu \left( q , \delta, \delta^- \right) \end{array} \right..$$
satisfies $\Phi \circ \Psi = \mathrm{Id}_{T \times F}$.  
\end{claim}

\noindent
The uniqueness of $\delta^-$ is justified by Corollary \ref{cor:DeltaMinus}. And the fact that $\Psi$ is well-defined, i.e. that $\mu(q,\delta,\delta^-)$ belongs to $\mathrm{Med}(g)$ for every $(\delta,q) \in T \times F$, follows from Claim \ref{claim:YmedianInf}.

\medskip \noindent
Fix some $(\delta,q) \in T \times F$. We have
$$\Phi \circ \Psi( \delta,q)= \left( g^\infty \mu \left( q, \delta,\delta^- \right), \ \mu \left( \mu \left( q, \delta, \delta^- \right), \zeta, \xi \right) \right).$$
Let $J$ be a hyperplane separating $\zeta$ and $q$. Notice that $J$ does not separate $q$ and $m : = \mu(q, \delta, \delta^-)$, because otherwise it would not separate $\delta$ and $\delta^-$, contradicting Fact~\ref{fact:HypFandDelta}. Therefore, $J$ separates $\zeta$ from $\{ m, \xi\}$. Similarly, one shows that any hyperplane separating $\xi$ and $q$ has to separate $\xi$ from $\{m, \zeta\}$. Finally, let $J$ be a hyperplane separating $m$ and $q$. Then $J$ separates $q$ from $\{\delta, \delta^- \}$, so that $J$ does not separate $\delta$ and $\delta^-$. It follows from Fact \ref{fact:HypFandDelta} that $J$ does not separate $\zeta$ and $\xi$ either. Therefore, $J$ separates $m$ from $\{ \zeta, \xi \}$. Thus, we have proved that $q$ is the median point of $\{m, \zeta, \xi\}$, i.e. 
$$q = \mu \left( \mu \left( q, \delta, \delta^- \right), \zeta ,\xi \right).$$
Next, assume that there exists a hyperplane $J$ separating $\delta$ and $g^\infty m$. As a consequence of Lemma \ref{lem:HypCrossAxis}, $J$ has to separate $\{\delta, \delta^-\}$ and $\{g^\infty m, g^{-\infty} m\}$. But this is impossible since $m$ belongs to geodesics between $g^\infty m$ and $g^{-\infty} m$, and $\delta$ and $\delta^-$. Therefore, no hyperplane separates $\delta$ and $g^\infty m$, proving that $g^\infty m = \delta$.

\medskip \noindent
Thus, we have proved that $\Phi \circ \Psi (\delta,q) = (\delta,q)$ for every $(\delta,q) \in T \times F$, i.e. $\Phi \circ \Psi = \mathrm{Id}_{T \times F}$, concluding the proof of our claim.

\medskip \noindent
It is worth noticing that each of $\mathrm{Med}(g)$, $T$ and $F$ is included into a single cubical component of $\overline{X}$, so that the median structures defined on these subalgebra all come from the metrics defined on the cubical components of $\overline{X}$. As a consequence, in order to show that $\Phi$ defines an isomorphism of median algebras $\mathrm{Med}(g) \to T \times F$, it is sufficient to show that it defines an isometry when $\mathrm{Med}(g)$, $T$ and $F$ are all endowed with the metrics induced by $\overline{X}$.  

\medskip \noindent
So let $x,y \in \mathrm{Med}(g)$ be two vertices. Notice that, as a consequence of Lemma \ref{lem:HypCrossAxis}, we have
$$\mathcal{W}(x,y) \backslash \mathcal{W}(\zeta,\xi) = \mathcal{W}(g^\infty x, g^\infty y).$$
Next, we claim that
$$\mathcal{W}(x,y) \cap \mathcal{W}(\zeta,\xi) = \mathcal{W}( \mu(x, \zeta, \xi), \mu(y, \zeta, \xi)).$$
Indeed, let $J$ be a hyperplane separating $\mu(x,\zeta, \xi)$ and $\mu(y,\zeta,\xi)$. Let $J^-$ denote the halfspace delimited by $J$ which contains the former median point and $J^+$ the halfspace containing the latter point. As $\zeta, \mu(x, \zeta,\xi) \in J^-$ and $\xi \notin J^-$, necessarily $x \in J^-$. Similarly, as $\xi, \mu(y,\zeta,\xi) \in J^+$ and $\zeta \notin J^+$, necessarily $y \in J^+$. Therefore, $J$ has to separate $x$ and $y$ (and it separates $\zeta$ and $\xi$ since $\mu(x,\zeta,\xi)$ and $\mu(y,\zeta,\xi)$ belong to $F$). Conversely, suppose that $J$ is a hyperplane separating both $x$ and $y$ and $\zeta$ and $\xi$. Let $J^-,J^+$ be the halfspaces delimited by $J$ such that $\zeta \in J^-$ and $\xi \in J^+$. We may suppose without loss of generality that $x \in J^-$ and $y \in J^+$. Because $x$ and $\zeta$ both belong to $J^-$, necessarily $\mu(x,\zeta,\xi) \in J^-$; and because $y$ and $\xi$ both belong to $J^+$, necessarily $\mu(y, \zeta, \xi) \in J^+$. Therefore, $J$ separates $\mu(x,\zeta,\xi)$ and $\mu(y, \zeta,\xi)$, concluding the proof of our equality. 

\medskip \noindent
Consequently, 
$$\begin{array}{lcl} d(x,y) & = & \# \mathcal{W}(x,y) = \# \left( \mathcal{W}(x,y) \cap \mathcal{W}(\zeta,\xi) \right) + \# \left( \mathcal{W}(x,y) \backslash \mathcal{W}(\zeta,\xi) \right) \\ \\ & = & \# \mathcal{W}( \mu(x,\zeta,\xi), \mu(y,\zeta,\xi) ) + \# \mathcal{W}( g^\infty x, g^\infty y) \\ \\ & = & d( \mu(x,\zeta,\xi), \mu(y,\zeta,\xi) ) + d( g^\infty x, g^\infty y) \end{array}$$
Thus, we have proved that $\Phi$ is an isometry, concluding the proof of our lemma.
\end{proof}

\begin{proof}[Proof of Proposition \ref{prop:MinSet}.]
We know from Lemma \ref{lem:MinMedian} that $\mathrm{Med}(g)$ is a median subalgebra. Moreover, if we fix some $(\zeta,\xi) \in \mathfrak{R}X \times \mathfrak{R}X$ which are the endpoints of an axis of $g$, if we denote by $F$ the union of all the axes of $g$ with endpoints $(\zeta,\xi)$, and if we set $T = \left\{ \lim\limits_{n \to + \infty} g^nx \mid x \in \mathrm{Med}(g) \right\}$, then we know from Lemma \ref{lem:ProductDeltaF} that $T$ and $F$ are two median subalgebras of $\overline{X}$, that 
$$\Phi : \left\{ \begin{array}{ccc}  \mathrm{Med}(g) & \to & T \times F \\ x & \mapsto & \left( \lim\limits_{n \to + \infty} g^nx, \mu(x,\zeta,\xi) \right) \end{array} \right.$$
defines an isomorphism of median algebras, and we know from Proposition \ref{prop:AxesFlat} that $F$ is a median flat. Notice that $\mathrm{Med}(g)$ is $G$-invariant. Indeed, let $x \in \mathrm{Med}(g)$ be a vertex and $h \in G$ an element. As $\langle g \rangle$ is normal in $G$, there exists some $k \in \mathbb{Z} \backslash \{0\}$ such that $hgh^{-1}=g^k$. But
$$\|g \| = \| hgh^{-1} \| = \| g^k \| = |k| \cdot \|g\|,$$
hence $k=\pm 1$. Therefore,
$$d(g \cdot hx, hx) = d(h^{-1}gh \cdot x, x) =d( g^{\pm 1} \cdot x,x) = d(gx,x) = \min \{ d(y,gy) \mid y \in X \},$$
hence $hx \in \mathrm{Med}(g)$. 

\medskip \noindent
It remains to study the action $G \curvearrowright T \times F$. By using the expression of $\Phi^{-1}$ given by Claim \ref{claim:PsiAsPhiInverse}, we deduce that
$$g \cdot ( \delta, q) = \left( g^\infty h \mu(q,\delta,\delta^-), \ \mu\left( h \mu(q,\delta,\delta^-), \zeta, \xi \right) \right)$$
for every $(\delta,q) \in T \times F$. First, we want to simplify this expression by rewriting the two terms in the right-hand side. For the right term:

\begin{claim}\label{claim:MuMu}
The equality
$$\mu(hq,\zeta,\xi) = \mu( h \mu(q,\delta,\delta^-), \zeta, \xi)$$ 
holds for every $h \in G$ and $(\delta,q) \in T \times F$
\end{claim}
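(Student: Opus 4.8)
The plan is to reduce the claimed equality of median points in $\overline X$ to a statement about which hyperplanes separate $hq$ from $\mu(h\mu(q,\delta,\delta^-),\zeta,\xi)$. Since both sides lie in $F$ (and $F$ lies in a single cubical component of $\overline X$, so that separation by hyperplanes determines equality), it suffices to show that no hyperplane of $\overline X$ separates $\mu(hq,\zeta,\xi)$ from $\mu(h\mu(q,\delta,\delta^-),\zeta,\xi)$. First I would record the key geometric input: writing $m:=\mu(q,\delta,\delta^-)\in\mathrm{Med}(g)$ (this is in $\mathrm{Med}(g)$ by Claim~\ref{claim:YmedianInf}), the element $m$ lies on a geodesic joining $\delta$ to $\delta^-$, hence the only hyperplanes separating $q$ from $m$ are hyperplanes separating $\delta$ from $\delta^-$. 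By Corollary~\ref{cor:DeltaMinus} the pair $(\delta,\delta^-)$ is the pair of endpoints of an axis of $g$, so by Fact~\ref{fact:HypFandDelta} such a hyperplane does \emph{not} separate $\zeta$ from $\xi$ (it would have to separate $T$ from $T^-$, forcing it to separate $\delta$ from $\delta^-$, which is what it does — wait, rather: a hyperplane crossing $F$ separates $T$ from $T^-$; a hyperplane separating $\delta$ from $\delta^-$ need not cross $F$, so one argues contrapositively). Let me instead phrase the input cleanly: a hyperplane separating $q$ from $m$ does not separate $\zeta$ from $\xi$, because it separates $\delta$ from $\delta^-$ while $\zeta,\xi$ are the endpoints of axes crossed by all hyperplanes that separate $\zeta$ from $\xi$ — I would derive this from Lemma~\ref{lem:HypCrossAxis} applied to the axes with endpoints $(\delta,\delta^-)$ and $(\zeta,\xi)$, exactly as in the proof of Claim~\ref{claim:PsiAsPhiInverse}.

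The core step is then the following: since $h$ is an isometry of $\overline X$, it maps hyperplanes to hyperplanes, so ``no hyperplane separates $q$ from $m$ unless it separates $\delta$ from $\delta^-$'' transports to ``no hyperplane separates $hq$ from $hm$ unless it separates $h\delta$ from $h\delta^-$''. But $h\delta,h\delta^-\in T\cup T^-$ (these sets are $\langle g\rangle$-invariant and, since $\langle g\rangle\lhd G$ and $\mathrm{Med}(g)$ is $G$-invariant, they are $G$-invariant), so a hyperplane separating $h\delta$ from $h\delta^-$ crosses every axis of $g$ and hence, by Lemma~\ref{lem:HypCrossAxis} / Fact~\ref{fact:HypFandDelta}, does not separate $\zeta$ from $\xi$. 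Combining: every hyperplane separating $hq$ from $hm$ fails to separate $\zeta$ from $\xi$. From here I would run the standard ``median point is determined by hyperplane separation'' argument: given a hyperplane $J$, one checks that $J$ separates $\mu(hq,\zeta,\xi)$ from $\mu(hm,\zeta,\xi)$ only if $J$ separates two of the three pairs $\{hq,hm\}$, $\{\zeta,\zeta\}$, $\{\xi,\xi\}$ — via Lemma~\ref{lem:HypSepMedian} — and the latter two pairs are never separated, so $J$ would have to separate $hq$ from $hm$ \emph{and} separate $\zeta$ from $\xi$ (since both median points lie in $F$, a hyperplane separating them separates $\zeta$ from $\xi$). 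This contradicts the previous paragraph. Hence no such $J$ exists, and the two median points coincide.

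The main obstacle I anticipate is bookkeeping around the Roller boundary: the three ``points'' $hq$, $hm$, $\zeta$, $\xi$, and the two median points live in $\overline X$, not in $X$, so I must be careful that Lemma~\ref{lem:HypSepMedian} (stated for vertices of $X$) is applied correctly — in practice one either invokes the continuous extension of $\mu$ to $\overline X$ and the fact that ``hyperplane separates'' makes sense for orientations, or one approximates by finite medians $\mu(hq,g^{-n}y,g^ny)$ as in Claim~\ref{claim:YmedianInf} and passes to the limit. The cleanest route is probably to note that all relevant points lie in the single cubical component of $\overline X$ containing $F$, where $\mu$ is genuinely the median of a CAT(0) cube complex, and then Lemma~\ref{lem:HypSepMedian} applies verbatim. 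A secondary point to get right is the identification $\mu(hm,\zeta,\xi)=\Phi(hm)_F$ and that $hm\in\mathrm{Med}(g)$ (so that $\Phi$ is defined on it), which follows since $\mathrm{Med}(g)$ is $G$-invariant — established in the proof of Proposition~\ref{prop:MinSet} above.
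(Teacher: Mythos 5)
Your overall architecture is the paper's: assume a hyperplane $J$ separates the two median points, note both points lie in $F$, use Lemma~\ref{lem:HypSepMedian} to force $J$ to separate $hq$ from $h\mu(q,\delta,\delta^-)$, transport by $G$-invariance, and contradict Fact~\ref{fact:HypFandDelta}. However, the two separation statements on which your ``core step'' rests are both stated backwards, so the argument as written is not valid. First, since $m:=\mu(q,\delta,\delta^-)$ lies in $I(q,\delta)\cap I(q,\delta^-)$, every hyperplane separating $q$ from $m$ separates $q$ from \emph{both} $\delta$ and $\delta^-$, hence does \emph{not} separate $\delta$ from $\delta^-$ --- the opposite of your assertion that ``the only hyperplanes separating $q$ from $m$ are hyperplanes separating $\delta$ from $\delta^-$.'' Second, a hyperplane separating $h\delta$ from $h\delta^-$ crosses an axis of $g$ with these endpoints, hence by Lemma~\ref{lem:HypCrossAxis} crosses all axes and separates $T$ from $T^-$, hence it \emph{does} separate $\zeta$ from $\xi$ --- again the opposite of what you claim (your parenthetical ``a hyperplane separating $\delta$ from $\delta^-$ need not cross $F$'' is false for the same reason). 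Your two reversed statements happen to compose to the true conclusion ``no hyperplane separating $hq$ from $hm$ separates $\zeta$ from $\xi$,'' but neither intermediate claim is correct, so the derivation fails.

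The fix is short and turns your text into the paper's proof: if $J$ separates the two median points, then $J$ crosses $F$ (both points lie in $F$, so $J$ separates $\zeta$ from $\xi$ and crosses every axis of $g$) and, by Lemma~\ref{lem:HypSepMedian}, $J$ separates $hq$ from $hm$. The collection of hyperplanes crossing $F$ (equivalently, crossing all axes of $g$) is $G$-invariant because $\langle g\rangle\lhd G$, so $h^{-1}J$ is a hyperplane crossing $F$ that separates $q$ from $m$. But, as above, a hyperplane separating $q$ from $m$ does not separate $\delta$ from $\delta^-$, whereas by Fact~\ref{fact:HypFandDelta} any hyperplane crossing $F$ separates $T$ from $T^-$ and hence separates $\delta$ from $\delta^-$ --- a contradiction. (A minor further inaccuracy: $\zeta,\xi,\delta,\delta^-$ lie in $\mathfrak{R}X$, not in the cubical component containing $F$, so your ``cleanest route'' for justifying Lemma~\ref{lem:HypSepMedian} is not available as stated; but the hyperplane/orientation formalism on $\overline{X}$, which the paper uses throughout, handles this without difficulty.)
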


\noindent
Indeed, if there exists a hyperplane $J$ separating these two median points (which both belong to $F$), then $J$ crosses $F$ and it follows from Lemma \ref{lem:HypSepMedian} that $J$ separates $hq$ and $h \mu(q, \delta, \delta^-)$. Since the collection of the hyperplanes crossing $F$ is $G$-invariant, we deduce that there must exist a hyperplane crossing $F$ which separates $q$ and $\mu(q,\delta, \delta^-)$. But such a hyperplane does not separate $\delta$ and $\delta^-$, so that it cannot cross $F$ according to Fact \ref{fact:HypFandDelta}. This complete the proof of our claim. 

\medskip \noindent
For our left term, we need to introduce some notation first. Recall that we set $T^- = \left\{ \lim\limits_{n \to + \infty} g^{-n}x \mid x \in \mathrm{Med}(g) \right\}$. For every $\delta \in T^-$, we define $\delta^+$ as the common endpoint in $T$ of all the axes $\gamma$ of $g$ satisfying $\gamma(- \infty) = \delta$. Such an element is well-defined according to Corollary \ref{cor:DeltaMinus}. For convenience, we set $\delta^+=\delta$ for every $\delta \in T$. Now, we claim that
$$g^\infty h \mu(q,\delta,\delta^+) = (h \delta)^+ \ \text{for every $h \in G$ and $(\delta, q ) \in T \times F$}.$$
Notice that
$$g^\infty h \mu(q,\delta,\delta^-) = \left\{ \begin{array}{cl} hg^\infty \mu(q,\delta,\delta^-) & \text{if $h^{-1}gh=g$} \\ hg^{- \infty} \mu(q,\delta,\delta^-) & \text{if $h^{-1}gh=g^{-1}$} \end{array} \right.$$
and that
$$g^\infty \mu(q, \delta,\delta^-)= \delta \ \text{and} \ g^{- \infty} \mu(q,\delta,\delta^-) = \delta^-.$$
The proof of our second equality is complete. 

\medskip \noindent
Consequently, the action $G \curvearrowright T \times F$ can be now described by
$$h \cdot (\delta, q) = \left( (h\delta)^+, \ \mu(hq,\zeta,\xi) \right) \ \text{for every $h \in G$ and $(\delta,q) \in T \times F$}.$$
It is clear that $h \cdot \delta = (h\delta)^+$, where $h \in G$ and $\delta \in T$, defines an action $G \curvearrowright T$; and the fact that $h \cdot q = \mu(hq, \zeta,\xi)$, where $h \in G$ and $q \in F$, defines an action follows from Claim \ref{claim:MuMu}. Thus, we have proved that the action $G \curvearrowright T \times F$ decomposes as a product of actions $G \curvearrowright T$ and $G \curvearrowright F$. Moreover, it is clear that $g$ acts trivially on $T$ and by translations of length $\|g\|$ on $F$, concluding the proof of our proposition. 
\end{proof}

\subsection{Inversing isometries}

\noindent
In this subsection, we prove Theorem \ref{thm:MedianSet} for inverting isometries, namely:

\begin{prop}\label{prop:MinInvIsom}
Let $G$ be a group acting on a CAT(0) cube complex $X$ and $g \in G$ an inverting isometry such that $\langle g \rangle$ is normal in $G$. Denote by $k$ the number of hyperplanes inverted by powers of $g$. Then $\mathrm{Med}(g)$ is a $G$-invariant median subalgebra which decomposes as a product $T \times F \times Q$ of three median algebras $T,F,Q$ such that:
\begin{itemize}
	\item the action $G \curvearrowright T \times F \times Q$ is a product of three actions $G \curvearrowright T,F,Q$;
	\item $F$ is a median flat on which $g$ acts by translations of length $ \lim\limits_{n \to + \infty} d(x,g^nx)/n$;
	\item $Q$ is a cube of finite dimension $k$;
	\item $g$ acts trivially on $T$.
\end{itemize}
\end{prop}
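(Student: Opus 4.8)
The plan is to deduce the inverting case from the loxodromic case (Proposition~\ref{prop:MinSet}) by passing to the cubical quotient $\pi\colon X\to\overline{X}:=X/\mathcal J$, where $\mathcal J$ is the collection of hyperplanes inverted by powers of $g$. First I would record the structural properties of $\mathcal J$. It is $G$-invariant: because $\langle g\rangle\lhd G$, each $h\in G$ conjugates $g$ to a power of $g$ with the same translation length, hence to $g^{\pm1}$, so $h$ sends a hyperplane inverted by $g^n$ to one inverted by $g^{\pm n}$; in particular $\mathcal J$ is $g$-invariant. Next, invoking Lemma~\ref{lem:MinInX}, $\mathcal J$ is finite, consists of $k$ pairwise transverse hyperplanes, and $g$ induces on $\overline{X}$ a loxodromic isometry $\overline{g}$ with $\|\overline{g}\|=\lim_{n\to+\infty}d(x,g^nx)/n$. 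Since $\mathcal J$ is $G$-invariant, $G$ acts on $\overline{X}$, the map $\pi$ is $G$-equivariant, and the image of $\langle g\rangle$ in $\mathrm{Isom}(\overline{X})$ --- generated by $\overline{g}$ --- is normal in the image of $G$.

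By Definition~\ref{def:MedianSet} (and since the median set of a loxodromic isometry is the union of its axes), $\mathrm{Med}(g)=\pi^{-1}(\mathrm{Med}(\overline{g}))$. As $\pi$ is a morphism of median algebras, the preimage of the median subalgebra $\mathrm{Med}(\overline{g})$ is a median subalgebra, so $\mathrm{Med}(g)$ is a median subalgebra of $X$, and it is $G$-invariant since $\pi$ is $G$-equivariant and $\mathrm{Med}(\overline{g})$ is $G$-invariant by Proposition~\ref{prop:MinSet}. Applying Proposition~\ref{prop:MinSet} to $\overline{g}\curvearrowright\overline{X}$ gives $\mathrm{Med}(\overline{g})=T\times F$ as median algebras, with $G\curvearrowright T\times F$ splitting as $G\curvearrowright T,F$, with $F$ a median flat on which $\overline{g}$ translates by $\|\overline{g}\|$, and with $\overline{g}$ acting trivially on $T$. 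The crucial point that remains is the containment $\mathrm{Med}(g)\subseteq N(J)$ for every $J\in\mathcal J$, equivalently that over each point of $\mathrm{Med}(\overline{g})$ the $\pi$-fibre is a full $k$-cube. Granting this, for $x\in\mathrm{Med}(g)$ and any $\mathcal J'\subseteq\mathcal J$ the vertex obtained from $x$ by flipping exactly the hyperplanes of $\mathcal J'$ exists in $X$ (carriers), lies in every $N(J)$, and has the same $\pi$-image, hence lies in $\mathrm{Med}(g)$; so the $\pi$-fibre through $x$ is a $k$-cube, and by Lemma~\ref{lem:DistQuotient} two vertices of $\mathrm{Med}(g)$ with equal $\pi$-image are separated only by hyperplanes of $\mathcal J$. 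Therefore $x\mapsto(\pi(x),\ \text{orientation of }\mathcal J\text{ at }x)$ is a bijection $\mathrm{Med}(g)\to\mathrm{Med}(\overline{g})\times Q$ with $Q$ the $k$-cube, and it is an isomorphism of median algebras: by Lemma~\ref{lem:TwoCubulations} the walls of $\mathrm{Med}(g)$ are the traces of the hyperplanes of $X$, which split into the $k$ pairwise transverse traces of $\mathcal J$ (cutting out $Q$) and the traces of the remaining hyperplanes, and the carrier containment forces every wall of the second family to be transverse to every wall of the first. An orthogonal, $G$-invariant partition of the walls of a median algebra yields a $G$-equivariant product decomposition, so $\mathrm{Med}(g)\cong T\times F\times Q$ with $G$ acting as a product; $g$ acts on $T$ and $F$ through $\overline{g}$, hence trivially on $T$ and by translations of length $\|\overline{g}\|=\lim_{n\to+\infty}d(x,g^nx)/n$ on the median flat $F$, while $Q$ is a cube of dimension $k$.

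The hard part is the carrier containment $\mathrm{Med}(g)\subseteq N(J)$ for $J\in\mathcal J$. The idea is that if $\pi(x)$ lies on an axis of $\overline{g}$ and $g^m$ inverts $J$, then $J$ separates $x$ from $g^mx$ while $\#(\mathcal W(x,g^mx)\cap\mathcal J)\le|\mathcal J|=k$; combined with the fact that $\pi(x)$ minimises the displacement of $\overline{g}$, this should prevent the hyperplanes separating $x$ from $N(J)$ from accumulating and pin $x$ into $N(J)$. Controlling those hyperplanes --- they are the ones that are parallel to $J$ and lie on its $N(J)$-side, and one must rule out $\langle g^m\rangle$-invariant families of them --- is the delicate step, and it is presumably where Lemma~\ref{lem:MinInX} or a companion lemma does the real work. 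Everything else is a routine transfer along $\pi$ of the loxodromic statement already proved.
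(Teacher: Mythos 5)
Your overall strategy is the same as the paper's: transfer everything through $\pi \colon X \to X/\mathcal{J}$, apply Proposition \ref{prop:MinSet} to the induced loxodromic isometry, and account for the $k$ inverted hyperplanes by a $k$-cube factor. The formal transfer steps you give (G-invariance of $\mathcal{J}$ and of $\mathrm{Med}(g)=\pi^{-1}(\mathrm{Med}_{X/\mathcal{J}}(g))$, medianness of the preimage, recovery of the translation length from Lemma \ref{lem:DistQuotient}) are all fine. But there is a genuine gap exactly where you flag it: the containment $\mathrm{Med}(g)\subseteq N(J)$ for every $J\in\mathcal{J}$ is nowhere proved in your argument. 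It is not contained in Lemma \ref{lem:MinInX}, which only says that $g$ becomes loxodromic in $X/\mathcal{J}$ (likewise, the finiteness and pairwise transversality of $\mathcal{J}$ that you attribute to that lemma are really Lemma \ref{lem:InvHypFiniteTransverse}); and your sketched route via bounding $\#(\mathcal{W}(x,g^mx)\cap\mathcal{J})$ and the minimality of the displacement of $\pi(x)$ is not an argument. This step is load-bearing, not a technicality: without it the fibres of $\pi$ over $\mathrm{Med}_{X/\mathcal{J}}(g)$ need not be full $k$-cubes, your map $x\mapsto(\pi(x),\text{orientation of }\mathcal{J}\text{ at }x)$ need not be surjective onto $\mathrm{Med}_{X/\mathcal{J}}(g)\times Q$, and the product decomposition together with the statement $\dim Q=k$ collapses.

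The paper closes this gap with a short doubling argument (Lemma \ref{lem:MinInNhyp}) that you could have reproduced. Suppose $x\notin N(J)$ for some $J\in\mathcal{J}$, say $x$ lies in the halfspace $J^-$. By Lemma \ref{lem:HypProj} there is a hyperplane $H$ separating $x$ from $N(J)$; in particular $H$ does not cross $J$, so $H\notin\mathcal{J}$ by pairwise transversality, and $H\subset J^-$. If $g^m$ inverts $J$, then $g^mx\in J^+$, which lies in the halfspace of $H$ containing $N(J)$, so $H$ separates $x$ and $g^mx$; the same reasoning applied to $g^mH\subset J^+$ shows $g^mH$ also separates $x$ and $g^mx$, hence (translating by $g^{-m}$) $H$ separates $g^{-m}x$ and $x$ as well. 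Thus $H$ is crossed twice by the $\langle g\rangle$-periodic path $\bigcup_{k\in\mathbb{Z}}g^k[x,gx]$, and since $H$ survives in $X/\mathcal{J}$, its image is crossed twice by the image path; this image path would have to be a geodesic if $\pi(x)$ lay on an axis of $g$ in $X/\mathcal{J}$, so $x\notin\mathrm{Med}(g)$. With this lemma in hand, your fibre/orientation bookkeeping does complete the proof along essentially the same lines as the paper (which uses a fixed cube $Q$ crossed by $\mathcal{J}$ and the projections $\mathrm{proj}_Q$, $\mathrm{proj}_{D(q)}$ instead of orientations of $\mathcal{J}$ --- a cosmetic difference only).
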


\noindent
In this decomposition, the cube $Q$ will correspond to the $k$ hyperplanes inverted by the powers of $g$. We begin by showing that there exist only finitely many such hyperplanes and that they are pairwise transverse:

\begin{lemma}\label{lem:InvHypFiniteTransverse}
Let $X$ be a CAT(0) cube complex and $g \in \mathrm{Isom}(X)$ an inverting isometry. There exist only finitely many hyperplanes which are inverted by powers of $g$, and they are pairwise transverse.
\end{lemma}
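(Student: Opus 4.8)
The plan is to first understand what it means for a hyperplane $J$ to be inverted by some power $g^m$ of $g$. If $g^m$ inverts $J$, then $g^{2m}$ stabilises each of the two halfspaces delimited by $J$; moreover, since $g^m$ swaps the halfspaces, the hyperplane $J$ must lie at bounded distance from any orbit point --- more precisely, one checks that $g^m$ must fix a vertex of the carrier $N(J)$, or at least stabilise a midcube, so $J$ passes close to a periodic point. So the first step is to fix a basepoint $x \in X$ and bound the distance $d(x, N(J))$ for every inverted hyperplane $J$. Writing $J^-, J^+$ for the two halfspaces with $x \in J^-$, say, and letting $g^m$ invert $J$, one has $g^m x \in J^+$, hence $J$ separates $x$ from $g^m x$; but also $J$ separates $g^m x$ from $g^{2m} x$, and inductively $J$ separates $g^{km} x$ from $g^{(k+1)m} x$ whenever the sign alternates. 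The key point is that one can choose the orientation so that $d(x, N(J)) \le d(x, g^m x)$ is \emph{not} automatic for arbitrary $m$; instead I would argue that if $g^m$ inverts $J$ then the projection of $x$ onto $N(J)$ and its image under $g^m$ are swapped across $J$, so $J$ separates $\mathrm{proj}_{N(J)}(x)$ from $g^m \cdot \mathrm{proj}_{N(J)}(x)$, and combining this with the fact that $g$-translates of the projection stay inside the carrier, one gets that $J$ separates $x$ from $g^m x$ and lies within distance $\tfrac{1}{2}d(x,g^mx)$ of the segment. This still depends on $m$, so the real input has to be stronger.

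The better approach, which I would actually carry out, is the following. Suppose $g^m$ inverts $J$; replacing $g$ by a power we may assume the exponents are controlled. The crucial observation is that if two distinct hyperplanes $J_1, J_2$ are inverted by powers $g^{m_1}, g^{m_2}$ and are \emph{not} transverse, then they are nested or facing, and one derives a contradiction with the bounded-orbit-type behaviour on the corresponding halfspaces: if $J_1$ separates $J_2$ from one of its own $g^{m_1}$-images then iterating pushes $J_2$ to infinity in a way incompatible with $g^{m_2}$ stabilising $J_2$ (which $g^{2m_2}$ does). Concretely, I would show: (i) every hyperplane inverted by a power of $g$ separates $x$ from $gx$ up to replacing $g$ by a bounded power and $x$ by a well-chosen vertex --- more robustly, one fixes a vertex $x$ in the minimising set of $g^2$ restricted to the relevant carriers; and then (ii) pairwise transversality follows because two nested inverted hyperplanes $J_1 \subsetneq$ (a halfspace of) $J_2$ would, under the appropriate power of $g$, be moved to a configuration where infinitely many $g$-translates of $J_1$ separate $x$ from $gx$, contradicting that $d(x,gx)$ is finite and that each such hyperplane is crossed at most once by a geodesic. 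Since pairwise transverse hyperplanes in a CAT(0) cube complex, all crossing a common edge or cube, number at most $\dim(X)$ --- but here we do not yet know $X$ is finite-dimensional, so instead finiteness comes from step (i): they all separate $x$ from $gx$, and there are only $d(x,gx)$ such hyperplanes.

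So the clean logical order is: (1) Show that if $g^m$ inverts $J$, then $g$ (not just $g^m$) stabilises $J$ --- because the set of hyperplanes inverted by \emph{some} power is permuted by $g$, and I claim it is finite, hence $g$ acts on it by a finite permutation, but more directly: if $g^m$ inverts $J$ then $g^m \cdot J = J$, and $g \cdot J$ is again inverted by $g^m$ (conjugate), so I first prove the collection $\mathcal{J}$ is $g$-invariant. (2) Fix $x \in X$; show each $J \in \mathcal{J}$ satisfies that $J$ or one of a bounded set of its $\langle g\rangle$-translates separates $x$ from $gx$; the honest way is: pick $J \in \mathcal{J}$ inverted by $g^m$, let $y$ be the projection of $x$ onto $N(J)$; then $g^m y \in N(J)$ lies on the opposite side, so some $g^i y$ with $0 \le i < m$ and $g^{i+1} y$ lie on opposite sides of $J$, i.e. $g^i J$ (wait --- rather $J$) separates $g^i y$ from $g^{i+1}y$; applying $g^{-i}$, the hyperplane $g^{-i} J \in \mathcal{J}$ separates $y$ from $gy$, hence separates $x$ from $gx$ after enlarging by the (bounded) $\mathrm{proj}$ correction. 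Hmm, the $\mathrm{proj}$ correction is where care is needed. (3) Conclude $\mathcal{J}$ meets the finite set $\mathcal{W}(x,gx)$ after applying bounded powers of $g$, and since $\mathcal{J}$ is $\langle g \rangle$-invariant, $\mathcal{J} \subseteq \bigcup_{i} g^i \cdot \mathcal{W}(x,gx)$ is finite. Wait --- that union is infinite; the fix is that $\mathcal{J}$ is $g$-invariant and each element separates $x$ from $gx$ \emph{after a bounded shift}, so $\mathcal{J}$ is a single $\langle g \rangle$-orbit-union of finitely many $\mathcal{W}(x,gx)$-hyperplanes, and each such hyperplane $J$ has $g^m J = J$, so its orbit under $\langle g \rangle$ is finite. (4) For transversality: if $J_1, J_2 \in \mathcal{J}$ are not transverse, WLOG a halfspace $J_1^+$ contains $J_2$ strictly; pick $m$ with $g^m J_1 = J_1$, $g^m J_2 = J_2$ and $g^m$ inverting both; then $g^m J_1^+ = J_1^-$, forcing $g^m J_2 \subset J_1^-$, i.e. $J_2 \subset J_1^- \cap J_1^+ = \emptyset$ --- contradiction. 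That last computation \textbf{is} the heart of it and it is short. \textbf{The main obstacle} is step (2)/(3): making rigorous that \emph{every} hyperplane inverted by a power of $g$ --- not just by $g$ itself --- lies in the finite region near the $\langle g\rangle$-orbit of $x$, controlling the exponent $m$ and the projection correction; everything else (finiteness from $\#\mathcal{W}(x,gx) < \infty$ plus $g$-invariance, and the transversality contradiction) is then immediate.
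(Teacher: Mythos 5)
Your overall route is the same as the paper's: show that every $\langle g\rangle$-orbit in $\mathcal{J}$ meets the finite set $\mathcal{W}(x,gx)$ and is itself finite, then get transversality by applying a well-chosen power of $g$ to a nested configuration. But as written there are two genuine gaps. First, in your step (2) the detour through $y=\mathrm{proj}_{N(J)}(x)$ creates exactly the problem you flag and do not resolve: a hyperplane separating $y$ from $gy$ need not separate $x$ from $gx$, and there is no ``bounded proj correction'' that makes that inference valid. The fix is to drop the projection entirely: if $g^m$ inverts $J$ and $D$ is the halfspace of $J$ containing $x$, then $g^mx\in g^mD=D^c$, so $x$ and $g^mx$ already lie on opposite sides of $J$; hence some consecutive pair $g^ix,\ g^{i+1}x$ with $0\le i<m$ is separated by $J$, i.e.\ $g^{-i}J\in\mathcal{J}$ separates $x$ from $gx$. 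With that, your count goes through exactly as in the paper: at most $d(x,gx)$ orbits, each orbit finite because $g^mJ=J$. (Your parenthetical claim in (1) that $g$ itself must stabilise $J$ is false in general --- $g$ may permute the inverted hyperplanes --- but you only end up using $\langle g\rangle$-invariance of $\mathcal{J}$, which is correct.)

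Second, and more seriously, your transversality step begins ``pick $m$ with $g^m$ inverting both $J_1$ and $J_2$''; this is unjustified, and in general no such $m$ exists. For example, take $g=(\rho,\sigma,t)$ acting on $[0,1]^2\times[0,1]^3\times\mathbb{R}$, where $\rho$ is the order-four rotation of the square, $\sigma$ is an order-six isometry of the $3$-cube cyclically permuting its three hyperplanes with $\sigma^3$ the antipodal map (as in Figure \ref{IsomCube}), and $t$ a translation: the powers inverting a square-hyperplane are exactly those $\equiv 2 \pmod 4$, while those inverting a cube-hyperplane are exactly those $\equiv 3 \pmod 6$, and these classes are disjoint. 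Inside your proof by contradiction you would have to derive the existence of a common inverting power from non-transversality, which you do not do. Fortunately your displayed computation only uses that some power inverts one hyperplane and stabilises the other, and that weaker statement always holds: with $g^{m_1},g^{m_2}$ inverting $J_1,J_2$, set $\ell=\mathrm{lcm}(m_1,m_2)$; then $g^{\ell}$ stabilises both, and $\ell$ cannot be a multiple of both $2m_1$ and $2m_2$ (else $\ell/2$ would be a smaller common multiple), so $\ell/m_i$ is odd for some $i$ and $g^{\ell}$, being an odd power of $g^{m_i}$, inverts $J_i$. Since non-transversality is symmetric in $J_1,J_2$, your inclusion argument $J_j\subset J_i^+\Rightarrow J_j=g^{\ell}J_j\subset g^{\ell}J_i^+=J_i^-$ then yields the contradiction. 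This lcm/parity observation is precisely the paper's argument; adding it, together with the basepoint fix above, makes your proof complete and essentially identical to the paper's.
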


\begin{proof}
Let $\mathcal{J}$ be the collection of the hyperplanes inverted by powers of $g$. Fix a basepoint $x \in X$ and some hyperplane $J \in \mathcal{J}$. By definition of $\mathcal{J}$, there exists some power $n \in \mathbb{Z}$ such that $g^n$ sends the halfspace delimited by $J$ which contains $x$ to the halfspace delimited by $J$ which does not contain $x$. As a consequence, there exists some $k \in \mathbb{Z}$ such that $g$ sends the halfspace delimited by $g^kJ$ which contains $x$ to the halfspace delimited by $g^{k+1}J$ which does not contain $x$. Necessarily, $g^{k+1}J$ separates $x$ and $gx$. Thus, we have proved that any $\langle g \rangle$-orbit in $\mathcal{J}$ contains a hyperplane separating $x$ and $gx$. Since there exist only finitely many hyperplanes separating $x$ and $gx$, it is sufficient to show that every $\langle g \rangle$-orbit in $\mathcal{J}$ is finite in order to deduce that $\mathcal{J}$ must be finite itself. But, if $J \in \mathcal{J}$ then there exists some power $n \in \mathbb{Z}$ such that $g^n$ inverts $J$; a fortiori, $g^n$ stabilises $J$, so that the $\langle g \rangle$-orbit of $J$ must have cardinality at most $n$, concluding the proof of the first assertion of our lemma.

\medskip \noindent
Now, let $J,H \in \mathcal{H}$ be two hyperplanes. Let $m,n \in \mathbb{Z}$ be two powers such that $g^n$ and $g^m$ invert $J$ and $H$ respectively. If $\ell$ denotes the least common multiple of $m$ and $n$, then $\ell$ cannot be a multiple of both $2m$ and $2n$, since otherwise $\ell/2$ would be a lower common multiple of $m$ and $n$. Say that $\ell$ is not a multiple of $2m$. Because $g^n$ and $g^m$ stabilise $J$ and $H$ respectively, necessarily $g^{\ell}$ stabilises both $J$ and $H$. Also, because $\ell$ is not a multiple of $2m$, then $g^{\ell}$ inverts $H$. It follows that $J$ and $H$ must be transverse, since otherwise $g^{\ell} J$ and $J$ would be contained into two different halfspaces delimited by $H$, contradicting $g^\ell J=J$. Thus, we have proved that any two hyperplanes of $\mathcal{J}$ are transverse, concluding the proof of our lemma. 
\end{proof}

\noindent
Now, we are ready to prove the following statement, which we claimed after Definition~\ref{def:MedianSet} and which is needed to justify that the median set of an inverting isometry is well-defined.

\begin{lemma}\label{lem:MinInX}
Let $X$ be a CAT(0) cube complex and $g \in \mathrm{Isom}(X)$ an inverting isometry. Let $\mathcal{J}$ denote the collection of the hyperplanes of $X$ which are inverted by powers of $g$. Then $g$ defines a loxodromic isometry of the cubical quotient $X/ \mathcal{J}$. 
\end{lemma}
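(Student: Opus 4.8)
The plan is to exhibit $g$ as a non-elliptic, non-inverting isometry of $X/\mathcal{J}$ and then to invoke the trichotomy of Section~\ref{section:ClassIsom}. First I would check that $g$ induces a well-defined isometry of $X/\mathcal{J}$. The collection $\mathcal{J}$ is $\langle g \rangle$-invariant: if $g^n$ inverts a hyperplane $J$ then, since $g$ commutes with $g^n$, the isometry $g^n$ stabilises $gJ$ and exchanges its two halfspaces $gJ^-,gJ^+$, so $gJ \in \mathcal{J}$ (and likewise $g^{-1}J \in \mathcal{J}$). Hence $\mathcal{H}(X)\setminus\mathcal{J}$ is $g$-invariant, so $g$ acts on the wallspace defining $X/\mathcal{J}$, and therefore on $X/\mathcal{J}$ itself, compatibly with the canonical map $\pi : X \to X/\mathcal{J}$. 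Along the way I would record the $g$-equivariant identification of the hyperplanes of $X/\mathcal{J}$ with those of $\mathcal{H}(X)\setminus\mathcal{J}$, an identification which also matches the two halfspaces of a hyperplane of $X/\mathcal{J}$ with the traces of the two halfspaces of the corresponding hyperplane of $X$.

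Next I would show that $g$ has unbounded orbits on $X/\mathcal{J}$. By Lemma~\ref{lem:InvHypFiniteTransverse} the set $\mathcal{J}$ is finite; write $k = \#\mathcal{J}$. Fixing a vertex $x \in X$ and applying Lemma~\ref{lem:DistQuotient}, for every $n \in \mathbb{Z}$ one has
$$d_{X/\mathcal{J}}\!\left(\pi(x),\pi(g^n x)\right) = \#\!\left(\mathcal{W}(x,g^n x)\setminus\mathcal{J}\right) \geq \#\mathcal{W}(x,g^n x) - k = d_X(x,g^n x) - k.$$
Since $g$ is inverting on $X$, its orbits in $X$ are unbounded, so $d_X(x,g^n x)$ is unbounded; hence $d_{X/\mathcal{J}}(\pi(x),\pi(g^n x))$ is unbounded too, and $g$ is not elliptic on $X/\mathcal{J}$.

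Then I would rule out $g$ being inverting on $X/\mathcal{J}$. Suppose some power $g^n$ inverted a hyperplane $\bar H$ of $X/\mathcal{J}$, corresponding to $H \in \mathcal{H}(X)\setminus\mathcal{J}$. Then $g^n$ stabilises $\bar H$, hence by equivariance $g^n H = H$; and $g^n$ swaps the two halfspaces of $\bar H$, which are the traces of the two halfspaces of $H$, so $g^n$ swaps the halfspaces of $H$ as well. Thus $g^n$ inverts $H$, forcing $H \in \mathcal{J}$ — a contradiction. Therefore no power of $g$ inverts a hyperplane of $X/\mathcal{J}$, so $g$ is not inverting on $X/\mathcal{J}$. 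By the trichotomy recalled in Section~\ref{section:ClassIsom}, an isometry that is neither elliptic nor inverting is loxodromic, so $g$ is loxodromic on $X/\mathcal{J}$.

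The hard part will be purely the bookkeeping in the first and third steps: one must pin down carefully the $g$-equivariant identification of hyperplanes — and of their halfspaces — between $X/\mathcal{J}$ and $\mathcal{H}(X)\setminus\mathcal{J}$, so that the words ``stabilises'' and ``inverts'' transfer faithfully between the two complexes. Once that is in place, the estimate via Lemmas~\ref{lem:DistQuotient} and~\ref{lem:InvHypFiniteTransverse} is immediate, and no finer analysis of the dynamics of $g$ is needed.
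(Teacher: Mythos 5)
Your proposal is correct and follows essentially the same route as the paper: check that $\mathcal{J}$ is $\langle g\rangle$-invariant so that $g$ acts on $X/\mathcal{J}$, deduce unbounded orbits from the finiteness of $\mathcal{J}$ (Lemma \ref{lem:InvHypFiniteTransverse}) together with the distance formula of Lemma \ref{lem:DistQuotient}, observe that a power of $g$ inverting a hyperplane of $X/\mathcal{J}$ would invert the corresponding hyperplane of $X$ and hence land in $\mathcal{J}$, and conclude by the trichotomy of Section \ref{section:ClassIsom}. Your write-up simply makes explicit the quantitative estimate and the equivariant identification of hyperplanes that the paper leaves implicit.
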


\begin{proof}
First of all, notice that $\mathcal{J}$ is $\langle g \rangle$-invariant, so any element of $\langle g \rangle$ naturally defines an isometry of $X / \mathcal{J}$. In order to show that $g$ defines a loxodromic isometry of $X \backslash \mathcal{J}$, it is sufficient to show that $g$ has an unbounded orbit in $X / \mathcal{J}$ and that its powers do not invert any hyperplane of $X / \mathcal{J}$. The former assertion follows from the facts that $g$ has an unbounded orbit in $X$ and that $\mathcal{J}$ is finite (according to Lemma \ref{lem:InvHypFiniteTransverse}). And the latter assertion follows from the observation that, if a power of $g$ inverts a hyperplane of $X / \mathcal{J}$, then it has to invert the corresponding hyperplane of $X$ as well. 
\end{proof}

\noindent
We need two last preliminary lemmas before turning to the proof of Proposition \ref{prop:MinInvIsom}. The first one is the following:

\begin{lemma}\label{lem:MinInNhyp}
Let $X$ be a CAT(0) cube complex and $g \in \mathrm{Isom}(X)$ an inverting isometry. Denote by $\mathcal{J}=\{J_1, \ldots, J_n \}$ the collection of the hyperplanes which are inverted by powers of $g$. Then $\mathrm{Med}(g) \subset \bigcap\limits_{i=1}^n N(J_i)$. 
\end{lemma}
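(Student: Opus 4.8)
The plan is to treat one inverted hyperplane at a time: fix $x\in\mathrm{Med}(g)$ and $J=J_i$, pick $m\geq 1$ with $g^m$ inverting $J$, and prove $x\in N(J)$; as $i$ is arbitrary this gives $x\in\bigcap_i N(J_i)$. Recall (Lemma \ref{lem:MinInX}) that $g$ induces a loxodromic isometry $\bar g$ of $X/\mathcal{J}$ and that, by definition of $\mathrm{Med}(g)$, the vertex $\pi(x)$ lies on an axis $\gamma$ of $\bar g$. Since $\bar g^m$ translates $\gamma$ by $m\|\bar g\|>0$, $\gamma$ is also an axis of $\bar g^m$, so $\pi(x)$ realises the minimal displacement of $\bar g^m$: $d_{X/\mathcal{J}}(\pi(x),\bar g^m\pi(x))=\|\bar g^m\|$, whereas $d_{X/\mathcal{J}}(\pi(y),\bar g^m\pi(y))\geq\|\bar g^m\|$ for every vertex $y$.

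Next I would record two easy observations. First, $g^m$ inverts $J$, hence stabilises $J$ and therefore its carrier $N(J)$; so $\mathrm{proj}_{N(J)}$ is $g^m$-equivariant (Proposition \ref{projection}). Second, since the hyperplanes of $\mathcal{J}$ are pairwise transverse (Lemma \ref{lem:InvHypFiniteTransverse}), every element of $\mathcal{J}$ crosses $N(J)$; consequently no hyperplane separating a vertex from $N(J)$ can belong to $\mathcal{J}$.

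The core of the argument is a decomposition of a geodesic from $x$ to $g^m x$ through the carrier. Put $p=\mathrm{proj}_{N(J)}(x)$ and $r=d(x,N(J))=d(x,p)$, so that $g^m p=\mathrm{proj}_{N(J)}(g^m x)$. I claim that the concatenation $[x,p]\cup[p,g^m p]\cup[g^m p,g^m x]$ (with $[g^m p,g^m x]=g^m[p,x]$) is a geodesic, equivalently that no hyperplane is crossed by two of its three pieces. The pairs $([x,p],[p,g^m p])$ and $([p,g^m p],[g^m p,g^m x])$ are fine because the hyperplanes of $[x,p]$ separate $x$ from $N(J)$ (Lemma \ref{lem:HypProj}) while those of $[p,g^m p]$ cross the convex set $N(J)$. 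The only delicate case is a hyperplane $H$ crossed by both $[x,p]$ and $g^m[p,x]$, i.e. separating $x$ from $N(J)$ and $g^m x$ from $N(J)$: such an $H$ does not cross $N(J)$, so it is disjoint from $J$ and lies in a single halfspace of $J$, yet $N(J)$ meets both halfspaces of $J$ and — $g^m$ inverting $J$ — the vertices $x$ and $g^m x$ lie in opposite halfspaces; one checks these constraints are incompatible. Ruling out this $H$ is exactly where the inverting hypothesis is used, and I expect it to be the one genuinely non-routine point of the proof.

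Granting the decomposition, $\mathcal{W}(x,g^m x)=\mathcal{W}(x,p)\sqcup\mathcal{W}(p,g^m p)\sqcup\mathcal{W}(g^m p,g^m x)$; the outer two sets each have $r$ elements and, being $g^m$-translates of one another and disjoint from the $\langle g\rangle$-invariant set $\mathcal{J}$ (by the second observation), contribute nothing to $\mathcal{W}(\cdot)\setminus\mathcal{J}$. Applying Lemma \ref{lem:DistQuotient} to $\mathcal{W}(x,g^m x)$ and to $\mathcal{W}(p,g^m p)$ then gives
\[
d_{X/\mathcal{J}}(\pi(x),\bar g^m\pi(x))=2r+d_{X/\mathcal{J}}(\pi(p),\bar g^m\pi(p))\geq 2r+\|\bar g^m\|,
\]
while the first paragraph forces the left-hand side to equal $\|\bar g^m\|$. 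Hence $r=0$, i.e. $x\in N(J)$, completing the proof. Everything apart from the highlighted geometric claim is routine bookkeeping with hyperplanes, the cited projection lemmas (Lemmas \ref{lem:HypProj}, \ref{lem:HypProjSeparate}), and the quotient-distance formula.
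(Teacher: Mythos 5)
Your proof is correct, but it takes a genuinely different route from the paper's. The paper argues by contraposition and qualitatively: if $x \notin N(J)$, it picks a single hyperplane $H$ separating $x$ from $N(J)$ (Lemma \ref{lem:HypProj}), uses the inversion to see that a suitable $\langle g\rangle$-translate of $H$ separates two consecutive pairs of points along the periodic path $\bigcup_{k} g^k[x,gx]$, and notes that $H \notin \mathcal{J}$ (it misses $N(J)$, while hyperplanes of $\mathcal{J}$ are pairwise transverse by Lemma \ref{lem:InvHypFiniteTransverse}); hence the image of this path in $X/\mathcal{J}$ crosses a hyperplane twice, is not a geodesic, and $\pi(x)$ cannot lie on an axis. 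You instead give a direct, quantitative argument: projecting onto the carrier and splitting a geodesic from $x$ to $g^m x$ into three pieces, you get $d_{X/\mathcal{J}}(\pi(x),\bar g^m\pi(x)) = 2\,d(x,N(J)) + d_{X/\mathcal{J}}(\pi(p),\bar g^m\pi(p))$ and compare with the minimal displacement of $\bar g^m$, forcing $d(x,N(J))=0$. The step you flag as non-routine does check out: a hyperplane separating both $x$ and $g^m x$ from $N(J)$ would put $N(J)$ on one side and $x, g^m x$ on the other, while any geodesic from $x$ to $g^m x$ must cross $J$ and hence enter $N(J)$, so it would cross that hyperplane twice; this is essentially the same use of the inverting hypothesis as in the paper. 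Your route costs a bit more machinery (convexity of the carrier, equivariance of the projection, and Haglund's fact that a stabilised geodesic with positive translation length is an axis, needed to pass from $\bar g$ to $\bar g^m$), but it buys the clean distance formula above, whereas the paper's argument is shorter and never needs the power $g^m$ or the projection. One wording slip: the two outer sets of hyperplanes do not ``contribute nothing to $\mathcal{W}(x,g^mx)\setminus\mathcal{J}$''; being disjoint from $\mathcal{J}$, they contribute all of their $2r$ elements to it, which is exactly what your displayed formula correctly records.
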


\begin{proof}
Let $J \in \mathcal{J}$ be a hyperplane and let $x \in X$ be a vertex which does not belong to $N(J)$. As a consequence of Lemma \ref{lem:HypProj}, there exists a hyperplane $H$ separating $x$ from $N(J)$. If $k \in \mathbb{Z}$ is a power such that $g^k$ inverts $J$, then $g^kH$ separates $x$ and $g^kx$. Notice that, as $H$ separates $x$ and $g^kx$, necessarily $g^k H$ also separates $g^kx$ and $g^{2k}x$, so that $H$ crosses twice the path $\gamma := \bigcup\limits_{k \in \mathbb{Z}} g^k [x,gx]$, where $[x,gx]$ is a geodesic between $x$ and $gx$ we fix. 

\medskip \noindent
Notice that, since $\mathcal{J}$ is $\langle g \rangle$-invariant, the map $\pi$ is $\langle g \rangle$-equivariant. Consequently, $\pi([x,gx])$ is a geodesic $[\pi(x), g \pi(x)]$ between $\pi(x)$ and $g \pi(x)$. We have
$$\pi(\gamma)= \pi \left( \bigcup\limits_{k \in \mathbb{Z}} g^k [x,gx] \right) = \bigcup\limits_{k \in \mathbb{Z}} g^k [\pi(x), g \pi(x)].$$
However, since $H$ is not transverse to $J$, we know from Lemma \ref{lem:InvHypFiniteTransverse} that $H$ does not belong to $\mathcal{J}$, so that $H$ defines a hyperplane of $X/ \mathcal{J}$ which crosses $\pi(\gamma)$ twice. As a consequence, $\pi(\gamma)$ is not a geodesic, which implies that $\pi(x)$ does not belong to $\mathrm{Med}_{X/ \mathcal{J}}(g)$, or equivalently, that $x$ does not belong to $\mathrm{Med}(g)$. 

\medskip \noindent
Thus, we have proved that, if $x$ belongs to $\mathrm{Med}(g)$, then it has to belong to $N(J)$ for every $J \in \mathcal{J}$.
\end{proof}

\noindent
Finally, our last preliminary lemma is:

\begin{lemma}\label{lem:SectionPi}
Let $X$ be a CAT(0) cube complex and $g \in \mathrm{Isom}(X)$ an inverting isometry. Let $\mathcal{J} = \{J_1, \ldots, J_n\}$ denote the collection of the hyperplanes of $X$ which are inverted by powers of $g$, and $\pi : X \to X/ \mathcal{J}$ the canonical map from $X$ to the cubical quotient $X/ \mathcal{J}$. If we fix an arbitrary halfspace $D_i$ delimited by $J_i$ for every $1 \leq i \leq n$, then $\pi$ induces an isometry $\mathrm{Med}_X(g) \cap \bigcap\limits_{i=1}^n D_i \to \mathrm{Med}_{X / \mathcal{J}} (g)$ when $\mathrm{Med}_X(g)$ and $\mathrm{Med}_{X / \mathcal{J}}(g)$ are endowed with the metrics induced by $X$ and $X / \mathcal{J}$ respectively.
\end{lemma}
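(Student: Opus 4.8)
The plan is to verify that $\pi$ restricts to a well-defined, distance-preserving, and surjective map $M \to \mathrm{Med}_{X/\mathcal{J}}(g)$, where $M := \mathrm{Med}_X(g) \cap \bigcap_{i=1}^n D_i$. The first point is a matter of unwinding definitions: since the isometry induced by $g$ on $X/\mathcal{J}$ is loxodromic (Lemma \ref{lem:MinInX}), its median set is the union of its axes, so Definition \ref{def:MedianSet} gives $\mathrm{Med}_X(g) = \pi^{-1}\big(\mathrm{Med}_{X/\mathcal{J}}(g)\big)$; as $\pi$ is onto, this also yields $\pi\big(\mathrm{Med}_X(g)\big) = \mathrm{Med}_{X/\mathcal{J}}(g)$, and in particular $\pi(M) \subseteq \mathrm{Med}_{X/\mathcal{J}}(g)$.

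For distance preservation, I would take $x, y \in M$ and observe that, since $x$ and $y$ both lie in $D_i$ for every $i$, no hyperplane of $\mathcal{J} = \{J_1, \dots, J_n\}$ separates them, i.e. $\mathcal{W}(x,y) \cap \mathcal{J} = \emptyset$. Lemma \ref{lem:DistQuotient} then gives
$$d_{X/\mathcal{J}}\big(\pi(x),\pi(y)\big) = \#\big(\mathcal{W}(x,y) \backslash \mathcal{J}\big) = \#\mathcal{W}(x,y) = d_X(x,y),$$
so $\pi|_M$ is an isometric embedding, and in particular injective.

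The remaining, and only substantial, point is surjectivity. Given $\bar{z} \in \mathrm{Med}_{X/\mathcal{J}}(g)$, I would pick $x_0 \in \pi^{-1}(\bar{z})$, which lies in $\mathrm{Med}_X(g)$ by the first step. By Lemma \ref{lem:MinInNhyp}, $x_0 \in N(J_i)$ for every $i$, so there is an edge $e_i$ incident to $x_0$ dual to $J_i$; and by Lemma \ref{lem:InvHypFiniteTransverse} the hyperplanes $J_1, \dots, J_n$ are pairwise transverse. The flag condition on the link of $x_0$ then shows that $e_1, \dots, e_n$ span an $n$-cube $C$ through $x_0$ whose hyperplanes are exactly $J_1, \dots, J_n$. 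Exactly one vertex $x'$ of $C$ lies on the $D_i$-side of $J_i$ for every $i$, so $x' \in \bigcap_{i=1}^n D_i$; and since $\mathcal{W}(x_0, x') \subseteq \{J_1, \dots, J_n\} = \mathcal{J}$, one gets $\pi(x') = \pi(x_0) = \bar{z}$, hence $x' \in \mathrm{Med}_X(g)$ and therefore $x' \in M$. Together with the previous steps, this shows that $\pi|_M$ is an isometry onto $\mathrm{Med}_{X/\mathcal{J}}(g)$.

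I expect the construction of the cube $C$ to be the only place requiring care: its content is that Lemma \ref{lem:InvHypFiniteTransverse} supplies pairwise transversality while Lemma \ref{lem:MinInNhyp} supplies adjacency of the basepoint to every inverted hyperplane, so that the standard link/flag argument produces an honest $n$-cube crossed by precisely these hyperplanes. Everything else reduces to Lemma \ref{lem:DistQuotient} and the identity $\mathrm{Med}_X(g) = \pi^{-1}\big(\mathrm{Med}_{X/\mathcal{J}}(g)\big)$.
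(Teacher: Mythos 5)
Your proof is correct, and its skeleton is the same as the paper's: the identity $\mathrm{Med}_X(g)=\pi^{-1}\big(\mathrm{Med}_{X/\mathcal{J}}(g)\big)$ together with Lemma \ref{lem:DistQuotient} gives the isometric embedding on $\bigcap_i D_i$, and surjectivity is obtained by lifting $\bar z$ and then moving the lift into $\bigcap_i D_i$ while crossing only hyperplanes of $\mathcal{J}$. Where you diverge is the mechanism for this last move: the paper takes the projection of the lift onto the convex set $\bigcap_i D_i$ and combines Lemma \ref{lem:HypProj} with Lemma \ref{lem:MinInNhyp} to see that only hyperplanes of $\mathcal{J}$ are crossed, whereas you build an explicit $n$-cube at the lift and take the corner lying in every $D_i$. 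Your route works, but be aware that producing the cube is not literally an application of the flag condition alone: the flag condition only reduces the problem to showing that any two of the edges $e_i,e_j$ span a square, and it is exactly there that the transversality of $J_i$ and $J_j$ from Lemma \ref{lem:InvHypFiniteTransverse} must be used (for instance, a short median argument produces a common neighbour of the two endpoints across $J_i$ and $J_j$, and the resulting $4$-cycle bounds a square). This is a standard fact, so it is not a gap, but it deserves a sentence; the paper's projection argument sidesteps it at the cost of an equally implicit claim that hyperplanes separating the lift from $\bigcap_i D_i$ lie in $\mathcal{J}$. A small benefit of your version is that it makes the fibres of $\pi$ over $\mathrm{Med}_{X/\mathcal{J}}(g)$ explicit, which is essentially the cube factor $Q$ that appears afterwards in Proposition \ref{prop:MinInvIsom}.
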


\begin{proof}
Because no two vertices of $\bigcap\limits_{i=1}^n D_i$ are separated by a hyperplane of $\mathcal{J}$, it follows from Lemma \ref{lem:DistQuotient} that $\pi$ induces an isometric embedding $\bigcap\limits_{i=1}^n D_i \to X/ \mathcal{J}$. It remains to show that, for every $z \in \mathrm{Med}_{X/\mathcal{J}}(g)$, there exists some $x \in \mathrm{Med}_X(g) \cap \bigcap\limits_{i=1}^n D_i$ such that $\pi(x)=z$. 

\medskip \noindent
By definition of $\mathrm{Med}_X(g)$, there exists some $a \in \mathrm{Med}_X(g)$ such that $\pi(a)=z$. Let $x$ denote the projection of $a$ onto $\bigcap\limits_{i=1}^n D_i$. According to Lemma \ref{lem:HypProj}, the hyperplanes separating $x$ and $a$ have to separate $a$ from $\bigcap\limits_{i=1}^n D_i$. Since $a$ belongs to $\bigcap\limits_{i=1}^n N(J_i)$ according to Lemma \ref{lem:MinInNhyp}, it follows that $x$ and $a$ are only separated by hyperplanes of $\mathcal{J}$. Therefore, $\pi(x)=\pi(a)=z$. We record what we have just proved for future use:

\begin{fact}\label{fact:InversePhi}
Let $z \in \mathrm{Med}_{X/ \mathcal{J}}(g)$ be a vertex. For every $a \in \pi^{-1}(z)$, we have
$$\pi \left( \mathrm{proj}_D(a) \right) = z \ \text{and} \ \mathrm{proj}_D(a) \in \mathrm{Med}_X(g) \cap D$$
where $D:= \bigcap\limits_{i=1}^n D_i$. 
\end{fact}

\noindent
The proof of our lemma is complete. 
\end{proof}

\begin{proof}[Proof of Proposition \ref{prop:MinInvIsom}.]
Let $\mathcal{J}= \{J_1, \ldots, J_n\}$ be the collection of the hyperplanes of $X$ which are inverted by powers of $g$. Notice that $\mathrm{Med}_X(g)$ is $G$-invariant. Indeed, the fact that $\langle g \rangle$ is a normal subgroup of $G$ implies that $\mathcal{J}$ is $G$-invariant, so that $G$ naturally acts on $X/ \mathcal{J}$ and $\pi : X \to X/ \mathcal{J}$ is $G$-equivariant. Moreover, we know from Proposition \ref{prop:MinSet} that $\mathrm{Med}_{X/ \mathcal{J}}(g)$ must be $G$-invariant. Next, notice that $\mathrm{Med}_X(g)$ is a median subalgebra of $X$. Indeed, let $x,y,z \in \mathrm{Med}_X(g)$ be three vertices and let $m:=\mu(x,y,z)$ denote their median point. Then $\pi(m)$ is the median point of $\pi(x)$, $\pi(y)$ and $\pi(z)$. It follows from Lemmas \ref{lem:MinMedian} that $m \in \pi^{-1}(\pi(m)) \subset \pi^{-1} \left( \mathrm{Med}_{X/ \mathcal{J}}(g) \right) = \mathrm{Med}_X(g)$, concluding the proof of our claim. 
\begin{figure}
\begin{center}
\includegraphics[trim={0 15.5cm 28cm 0},clip,scale=0.45]{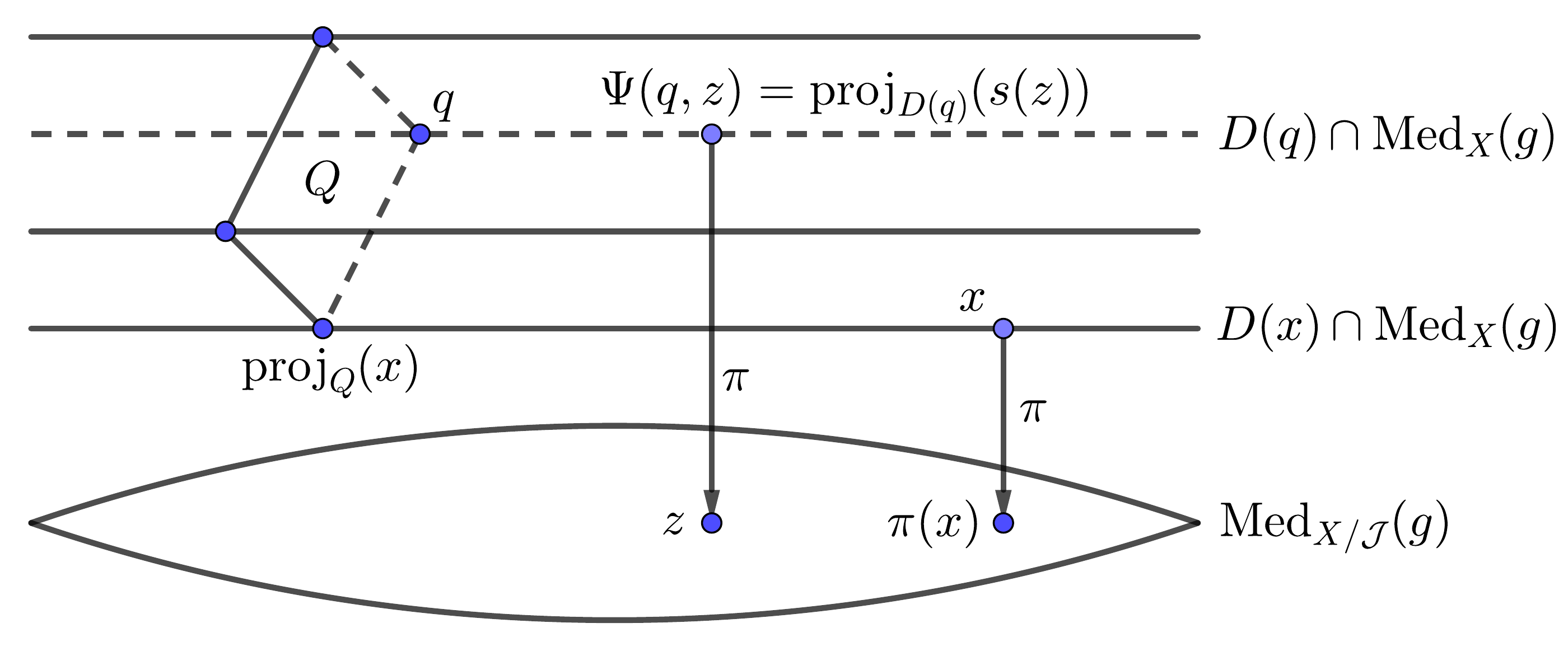}
\caption{The decomposition $Q \times \mathrm{Med}_{X/ \mathcal{J}}(g)$ of $\mathrm{Med}_X(g)$.}
\label{MedInv}
\end{center}
\end{figure}

\medskip \noindent
According to Lemma \ref{lem:InvHypFiniteTransverse}, the hyperplanes of $\mathcal{J}$ are pairwise transverse, so there exists a cube $Q \subset X$ such that $\mathcal{J}$ coincides with the collection of the hyperplanes crossing $Q$. Our goal now is to show that the map
$$\Phi : \left\{ \begin{array}{ccc} \mathrm{Med}_X(g) & \to & Q \times \mathrm{Med}_{X/ \mathcal{J}}(g) \\ x & \mapsto & \left( \mathrm{proj}_Q(x), \pi(x) \right) \end{array} \right..$$
is an isomorphism of median algebras. We begin by showing that $\Phi$ is surjective. For every $q \in Q$, let $D(q)$ denote the intersection of all the halfspaces delimited by the hyperplanes of $\mathcal{J}$ which contain $q$. Also, fix a section $s : X/ \mathcal{J} \to X$ of $\pi$. Then we claim that
$$\Psi: \left\{ \begin{array}{ccc} Q \times \mathrm{Med}_{X/ \mathcal{J}}(g) & \to & \mathrm{Med}_X(g) \\ (q,z) & \mapsto & \mathrm{proj}_{D(q)} (s(z)) \end{array} \right.$$
satisfies $\Phi \circ \Psi = \mathrm{Id}_{Q \times \mathrm{Med}_{X/ \mathcal{J}}(g)}$. Indeed, for every $(q,z) \in Q \times \mathrm{Med}_{X/ \mathcal{J}}(g)$, we have $\pi( \mathrm{proj}_{D(q)}(s(z)))=z$ according to Fact \ref{fact:InversePhi}; and we also have $\mathrm{proj}_Q( \mathrm{proj}_{D(q)}(s(z))) = q$ because by construction $q$ and $\mathrm{proj}_{D(q)}(s(z))$ are not separated by any hyperplane of $\mathcal{J}$. Thus, the surjectivity of $\Phi$ is proved. 

\medskip \noindent
Notice that, because the median structures of $Q$ and $\mathrm{Med}_{X/ \mathcal{J}}(g)$ come from the metrics of $X$ and $X/ \mathcal{J}$ respectively, it is sufficient to show that $\Phi$ is an isometry, when $\mathrm{Med}_X(g)$, $Q$ and $\mathrm{Med}_{X/ \mathcal{J}}(g)$ are endowed with the metrics induced by $X$ and $X/ \mathcal{J}$, in order to deduce that $\Phi$ defines an isomorphism of median algebras. 

\medskip \noindent
So let $x,y \in \mathrm{Med}_X(g)$ be two vertices. For every $1 \leq i \leq n$, let $D_i$ denote the halfspace delimited by $J_i$ which contains $x$. Also, let $y'$ denote the projection of $y$ onto $D:= \bigcap\limits_{i=1}^n D_i$. Because no hyperplane of $\mathcal{J}$ separates $x$ and $y'$, it follows that $x$ and $y'$ have the same projection onto $Q$. Moreover, it follows from Lemmas \ref{lem:HypProj} and \ref{lem:MinInNhyp} that the hyperplanes separating $y$ and $y'$ all belong to $\mathcal{J}$. Consequently,
$$d(y,y') = d \left( \mathrm{proj}_Q(y), \mathrm{proj}_Q(y') \right)=d \left( \mathrm{proj}_Q(y), \mathrm{proj}_Q(x) \right).$$
Next, once again because the hyperplanes separating $y$ and $y'$ all belong to $\mathcal{J}$, we also know that $\pi(y)=\pi(y')$. Therefore, since $x$ and $y'$ both belong to $D$, it follows from Lemma \ref{lem:SectionPi} that 
$$d(x,y')=d(\pi(x),\pi(y'))= d( \pi(x), \pi(y)).$$
We conclude that 
$$d(x,y)=d(x,y')+d(y',y) = d \left( \mathrm{proj}_Q(y), \mathrm{proj}_Q(x) \right) + d( \pi(x), \pi(y)),$$
concluding the proof of our claim. 

\medskip \noindent
Now, let us focus on the action $G \curvearrowright Q \times \mathrm{Med}_{X/ \mathcal{J}}(g)$. By using the expression of $\Psi = \Phi^{-1}$ given above, this action can be described as
$$h \cdot (q,z) = \left( \mathrm{proj}_Q \left( h \cdot \mathrm{proj}_{D(q)}(s(z)) \right), \ \pi \left( h \cdot \mathrm{proj}_{D(q)} (s(z)) \right) \right)$$
for every $(q,z) \in Q \times \mathrm{Med}_{X/ \mathcal{J}}(g)$. We would like to simplify this expression. 

\medskip \noindent
First, notice that, as a consequence of Lemmas \ref{lem:HypProj} and \ref{lem:MinInNhyp}, the vertices $s(z)$ and $\mathrm{proj}_{D(q)}(s(z))$ are only separated by hyperplanes of $\mathcal{J}$, so they must have the same image under $\pi$. Consequently,
$$ \pi \left( h \cdot \mathrm{proj}_{D(q)} (s(z)) \right)=  h \cdot \pi \left( \mathrm{proj}_{D(q)} (s(z)) \right)= h \cdot \pi (s(z)) = h \cdot z.$$
Next, notice that no hyperplane of $\mathcal{J}$ separates $q$ and $\mathrm{proj}_{D(q)}(s(z))$. Because $\mathcal{J}$ is $G$-invariant (since $\langle g \rangle$ is a normal subgroup of $G$), it follows that no hyperplane of $\mathcal{J}$ separates $h \cdot q$ and $h \cdot \mathrm{proj}_{D(q)}(s(z))$ either. Hence
$$\mathrm{proj}_Q \left( h \cdot \mathrm{proj}_{D(q)}(s(z)) \right) = \mathrm{proj}_Q(hq).$$
Thus, we have proved that the action $G \curvearrowright Q \times \mathrm{Med}_{X/ \mathcal{J}}(g)$ can be described as
$$ h \cdot (q,z) = \left( \mathrm{proj}_Q(h \cdot q), \ h \cdot z \right) \ \text{for every $(q,z) \in Q \times \mathrm{Med}_{X/ \mathcal{J}}(g)$}.$$
Notice that
$$\mathrm{proj}_Q \left( h_1 \cdot \mathrm{proj}_Q \left( h_2 \cdot q \right) \right) = \mathrm{proj}_Q \left( h_1h_2 \cdot q \right) \ \text{for every $h_1,h_2 \in G$ and $q \in Q$}.$$
Indeed, $h_2q$ and $\mathrm{proj}_Q(h_2q)$ are not separated by any hyperplane of $\mathcal{J}$. As $\mathcal{J}$ is $G$-invariant, it follows that $h_1h_2q$ and $h_1 \mathrm{proj}_Q(h_2q)$ are not separated by any hyperplane of $\mathcal{J}$ as well, showing that these two vertices have the same projection onto $Q$. 

\medskip \noindent
Thus, we have proved that the action $G \curvearrowright Q \times \mathrm{Med}_{X/ \mathcal{J}}(g)$ decomposes as a product of two actions $G \curvearrowright Q, \mathrm{Med}_{X/ \mathcal{J}}(g)$. It is worth noticing that our action $G \curvearrowright \mathrm{Med}_{X/ \mathcal{J}}(g)$ coincides with the action on $\mathrm{Med}_{X/ \mathcal{J}}(g)$ induced by $G \curvearrowright X/ \mathcal{J}$. 

\medskip \noindent
By applying Proposition \ref{prop:MinSet} to the action of $G$ on $X/ \mathcal{J}$ and to the element $g$, we find that $\mathrm{Med}_{X/ \mathcal{J}}(g)$ decomposes as a product of median algebras $T \times F$ such that:
\begin{itemize}
	\item the action $G \curvearrowright T \times F$ decomposes as a product of two actions $G \curvearrowright T,F$;
	\item $L$ is a median flat on which $g$ acts by translations of length $\|g\|_{X/ \mathcal{J}}$;
	\item $g$ acts trivially on $T$.
\end{itemize}
Notice that, as a consequence of Lemma \ref{lem:DistQuotient}, we have
$$d_{X/ \mathcal{J}}(\pi(x), g \pi(x)) \leq d_X(x,gx) \leq d_{X/ \mathcal{J}}( \pi(x),g \pi(x)) + \# \mathcal{J}$$
for every vertices $x,y \in X$. Consequently, 
$$\|g \|_{X/ \mathcal{J}} = \lim\limits_{n \to + \infty} \frac{1}{n} d_{X/ \mathcal{J}}(x,g^nx) = \lim\limits_{n \to + \infty} \frac{1}{n} d_{X}(x,g^nx).$$
Therefore, the decomposition $Q \times T \times F$ of the median set $\mathrm{Med}_X(g)$ is the decomposition we were looking for.
\end{proof}

\begin{remark}\label{remark:NotMinInversing}
It is worth noticing that the median set of an inverting isometry $g$ cannot be defined as $\mathrm{Min}(g)$, since the minimising set may not be median. Indeed, let $g=(g_1,g_2)$ be the isometry of $[0,1]^2 \times \mathbb{R}$ such that $g_1$ is the isometry of $[0,1]^2$ given by Figure \ref{IsomCube} and $g_2$ a translation of length one. Then $\mathrm{Min}(g)= \mathrm{Min}(g_1) \times \mathbb{R}$ is not median because $\mathrm{Min}(g_1)$ is not a median subset of the square.
\end{remark}

\section{Applications}\label{section:applications}

\subsection{Cubulating centralisers}

\noindent
This section is dedicated to the proof of the following statement, which improves the cubulation of centralisers proved in \cite{HaettelArtin}:

\begin{thm}\label{thm:centralisers}
Let $G$ be a group acting geometrically on a CAT(0) cube complex $X$. For every infinite-order element $g \in G$,  the centraliser $C_G(g)$ also acts geometrically on a CAT(0) cube complex, namely the cubulation of the median subalgebra $\mathrm{Med}_{X'}(g)$ of the barycentric subdivision $X'$ of $X$.
\end{thm}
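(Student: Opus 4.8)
The plan is to pass to the subdivision, apply Theorem~\ref{thm:MedianSet}, and then verify properness and cocompactness of the resulting action directly.

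First I would reduce to a loxodromic isometry. Since $G$ acts properly on $X$ it acts properly on $X'$, so vertex-stabilisers in $G$ are finite; as $g$ has infinite order it cannot be elliptic on $X'$ (an elliptic isometry stabilises a cube, so some positive power of $g$ would fix a vertex and hence be torsion), and no isometry of a subdivision inverts a hyperplane, so $g$ is loxodromic on $X'$. Write $Y:=\mathrm{Med}_{X'}(g)$. Because $\langle g\rangle$ is central in $C_G(g)$, Theorem~\ref{thm:MedianSet} applies to the action $C_G(g)\curvearrowright X'$ and the element $g$, and tells us that $Y$ is a $C_G(g)$-invariant median subalgebra of $X'$. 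Consequently $C_G(g)$ acts by cubical automorphisms on the CAT(0) cube complex $\mathrm{cub}(Y)$, whose vertex set is $Y$ regarded as a set of vertices of $X'$. Properness is immediate: the $C_G(g)$-stabiliser of a vertex $v$ of $\mathrm{cub}(Y)$ is contained in the finite $G$-stabiliser of $v$, and a cellular action on a cube complex with finite vertex-stabilisers is properly discontinuous.

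The substantial point is cocompactness, and the key idea is to count conjugates of $g$ using properness. Fix $x_0\in Y$. By cocompactness of $G\curvearrowright X'$ there is a constant $\Delta$ such that every vertex of $X'$ lies within $\Delta$ of $G\cdot x_0$. Given $y\in Y$, pick $h\in G$ with $d_{X'}(h^{-1}y,x_0)\le\Delta$ and set $g_h:=h^{-1}gh$; since $\mathrm{Med}_{X'}(g)=\mathrm{Min}_{X'}(g)$ for the loxodromic $g$, the point $h^{-1}y$ lies in $\mathrm{Min}_{X'}(g_h)$, so $d_{X'}(x_0,g_hx_0)\le 2\Delta+\|g\|=:C_0$. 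The action being proper, $\{a\in G:d_{X'}(x_0,ax_0)\le C_0\}$ is finite, so only finitely many conjugates $b_1,\dots,b_p$ of $g$ satisfy this bound; fix $c_j\in G$ with $c_jgc_j^{-1}=b_j$. Then $g_h=b_j$ for some $j$, which rearranges to $hc_j\in C_G(g)$, and with $c:=hc_j$ we get $d_{X'}(c^{-1}y,x_0)\le\Delta+\max_j d_{X'}(c_j^{-1}x_0,x_0)=:R$, a constant independent of $y$. Hence $Y=C_G(g)\cdot\bigl(B_{X'}(x_0,R)\cap Y\bigr)$, and since $X'$ is locally finite this shows that $C_G(g)\curvearrowright\mathrm{cub}(Y)$ has only finitely many orbits of vertices.

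Finally I would upgrade this to genuine cocompactness. The complex $\mathrm{cub}(Y)$ is finite-dimensional (a family of pairwise transverse walls of $Y$ corresponds to a family of pairwise transverse hyperplanes of $X'$, so $\dim\mathrm{cub}(Y)\le\dim X'$) and locally finite: for a $\mathrm{cub}(Y)$-neighbour $w$ of a vertex $v$, the hyperplane $J_w$ of $X'$ dual to the first edge of an $X'$-geodesic from $v$ to $w$ lies in the carrier of $v$, and $w\mapsto J_w$ is injective because the trace of $J_w$ on $Y$ is the unique wall of $Y$ separating $v$ from $w$; hence the valence of $v$ in $\mathrm{cub}(Y)$ is at most its valence in $X'$. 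A locally finite, finite-dimensional cube complex carrying a cellular action with finitely many vertex-orbits has finitely many cube-orbits, so $C_G(g)\curvearrowright\mathrm{cub}(Y)$ is cocompact, hence geometric. The main obstacle is the cocompactness step, precisely the conjugacy-counting argument above, in which a cocompactness witness $h\in G$ is corrected to an element of $C_G(g)$; the remaining points are bookkeeping on top of Theorem~\ref{thm:MedianSet}.
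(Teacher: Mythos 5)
Your proposal is correct and follows essentially the same route as the paper: pass to the barycentric subdivision so that $g$ becomes loxodromic, invoke Theorem \ref{thm:MedianSet} (via Proposition \ref{prop:MinSet} and Lemma \ref{lem:MinVsMed}) to get the $C_G(g)$-invariant median subalgebra $\mathrm{Med}_{X'}(g)=\mathrm{Min}_{X'}(g)$, and obtain finite stabilisers and finitely many vertex orbits by the same conjugate-counting argument that the paper runs in contrapositive form in Lemma \ref{lem:GeomActionMin}. The only minor difference is bookkeeping: you bound the valence of the cubulation by the valence in $X'$ directly, where the paper appeals to Fact \ref{fact:LocallyFinite}.
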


\noindent
Our theorem will be essentially a consequence of the following two preliminary lemmas:

\begin{lemma}\label{lem:MinVsMed}
Let $X$ be a CAT(0) cube complex and $g \in \mathrm{Isom}(X)$ a loxodromic isometry. Then $\mathrm{Min}(g)= \mathrm{Med}(g)$. 
\end{lemma}

\noindent
This lemma is a direct consequence of \cite[Corollary 6.2]{HaglundAxis}.

\begin{lemma}\label{lem:GeomActionMin}
Let $G$ be a group acting geometrically on a CAT(0) cube complex $X$ and $g \in G$ a loxodromic isometry. Then $C_G(g)$ acts on $\mathrm{Min}(g)$ with finite stabilisers and with finitely many orbits.
\end{lemma}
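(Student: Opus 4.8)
The plan is to exploit the product decomposition $\mathrm{Med}(g) = T \times F$ furnished by Proposition~\ref{prop:MinSet} (applicable since $\langle g \rangle$ is normal in $C_G(g)$), combined with the fact that, by Lemma~\ref{lem:MinVsMed}, $\mathrm{Min}(g)$ and $\mathrm{Med}(g)$ coincide. The geometric action of $G$ on $X$ restricts to an action of $C_G(g)$ on the $C_G(g)$-invariant subalgebra $\mathrm{Min}(g)$, and the two clauses to verify are that vertex-stabilisers are finite and that there are finitely many orbits of vertices.

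First I would address finiteness of stabilisers. If $h \in C_G(g)$ fixes a vertex $x \in \mathrm{Min}(g)$, then $h$ fixes $x$ in $X$; since $G$ acts geometrically, in particular properly, vertex-stabilisers in $G$ are finite, so the stabiliser of $x$ in $C_G(g)$ is finite a fortiori. Strictly, geometric actions are often not assumed free, so ``finite stabilisers'' is exactly what we get; no subdivision is even needed for this half.

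Next, the cocompactness clause. Here I would use the decomposition $\mathrm{Min}(g) = T \times F$ with the product action $C_G(g) \curvearrowright T, F$, where $g$ acts trivially on $T$ and by a translation of length $\|g\|$ on the median flat $F$. The natural approach: fix a vertex $x \in \mathrm{Min}(g)$ with coordinates $(t_0, f_0)$. Because $g$ translates an axis through $x$ by $\|g\|$ and the whole action $G \curvearrowright X$ is cocompact, the $\langle g \rangle$-orbit of $x$ is already ``coarsely cofinal'' along $F$: more precisely, I would argue that $C_G(g)$ (even $\langle g \rangle$ together with finitely many coset representatives) moves $x$ to a net in $\mathrm{Min}(g)$. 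The cleanest route is to invoke the Milnor--Švarc lemma: show $C_G(g)$ acts properly (stabilisers finite, done above) and coboundedly on $\mathrm{Min}(g)$ with its induced $\ell^1$-metric. Coboundedness is the crux — I would prove it by taking $y \in \mathrm{Min}(g)$ arbitrary, using cocompactness of $G \curvearrowright X$ to find $h \in G$ with $d(hy, x) \leq C$ for a uniform constant $C$, then showing that such an $h$ can be taken in $C_G(g)$ because $hy$ and $x$ both lie in $\mathrm{Min}(g)$ and $h$ must approximately conjugate $g$ to $g^{\pm 1}$ (as in the proof of Proposition~\ref{prop:MinSet}, $\|hgh^{-1}\| = \|g\|$ forces $hgh^{-1} = g^{\pm 1}$ when $\langle g\rangle$ is normal — but here $\langle g \rangle$ is \emph{not} normal in $G$, so one instead passes to a bounded-index adjustment). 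This last point is the main obstacle.

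The main obstacle is therefore promoting an element $h \in G$ with $d(hy,x)$ bounded to an element of $C_G(g)$ with $d(h'y,x)$ bounded. The standard device (cf.\ \cite{HaettelArtin}, and the proof of the flat torus theorem) is: there are only finitely many $G$-conjugates of $g$ at bounded distance from $g$ in the sense that $h g h^{-1}$ and $g$ have a common axis up to bounded Hausdorff distance; using properness and cocompactness of $G \curvearrowright X$ one shows the set $\{hgh^{-1} : h \in G, \ d(h \cdot x, x) \le C\}$ is finite, and each such conjugate $hgh^{-1}$ that equals $g$ (or differs from an element of $C_G(g)$ by a stabiliser element) contributes; quotienting out the finite stabiliser and the finitely many ``exceptional'' conjugates leaves $C_G(g)$ acting coboundedly. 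Once coboundedness is established, Milnor--Švarc gives that $C_G(g)$ is finitely generated and quasi-isometric to $\mathrm{Min}(g) = \mathrm{Med}_{X'}(g)$ (the barycentric subdivision being invoked only to ensure $g$ is genuinely loxodromic rather than inverting, so that $\mathrm{Med}$ has the clean form used), and the cubulation of this median subalgebra is the desired CAT(0) cube complex on which $C_G(g)$ acts geometrically. I would finish by noting that a geometric action on any median subalgebra of a CAT(0) cube complex is a geometric action on its cubulation, since the cubulation has the same vertex set with the same metric (Lemma~\ref{lem:TwoCubulations}).
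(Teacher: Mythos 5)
Your treatment of finite stabilisers is correct and identical to the paper's. The gap is in the cocompactness half, at exactly the point you flag as the main obstacle. The finite set you propose to use, $\{hgh^{-1} \mid h \in G,\ d(hx,x) \leq C\}$, is the wrong one: the conjugators $h$ you actually obtain satisfy $d(hy,x) \leq C$ for a \emph{varying} point $y \in \mathrm{Min}(g)$, and such an $h$ may move the fixed basepoint $x$ arbitrarily far, so $hgh^{-1}$ need not lie in that set. The finiteness you need concerns the conjugates arising from these $h$, and it comes from an estimate you never write down and which is the heart of the matter: because $y \in \mathrm{Min}(g)$ one has $d(y,gy)=\|g\|$, hence $d(x,hgh^{-1}x) \leq d(x,hy)+d(hy,hgy)+d(hgy,hgh^{-1}x) \leq \|g\|+2C$, and properness of $G \curvearrowright X$ (local finiteness plus finite vertex stabilisers) then leaves only finitely many possibilities for $hgh^{-1}$. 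Moreover, the sentence about conjugates that ``equal $g$ (or differ from an element of $C_G(g)$ by a stabiliser element)'' and about ``quotienting out the exceptional conjugates'' is not an argument: none of these conjugates need equal $g$. The step that actually closes the proof is the pigeonhole: if $h$ and $h'$, attached to points $y,y' \in \mathrm{Min}(g)$, yield the same conjugate, then $h'^{-1}h \in C_G(g)$ and $d(h'^{-1}hy,\,y') \leq d(h'^{-1}hy,h'^{-1}x)+d(h'^{-1}x,y') \leq 2C$; combined with the finiteness of the set of conjugates and local finiteness of $X$, this bounds the number of $C_G(g)$-orbits. As written, your proposal contains neither the displacement estimate nor this pigeonhole, so the crux is not established.

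For comparison, the paper's proof is exactly this pigeonhole, run by contradiction: choose representatives $x_i$ of infinitely many orbits, elements $g_i$ with $d(x_0,g_ix_i) \leq C$, pass to a subsequence with $g_ix_i$ constant and $g_igg_i^{-1}$ constant (the latter using precisely the estimate $d(x_0,g_igg_i^{-1}x_0) \leq \|g\|+2C$), and conclude that $g_i^{-1}g_j \in C_G(g)$ sends $x_j$ to $x_i$. None of your preliminary scaffolding --- the decomposition of $\mathrm{Med}(g)$ as $T \times F$ from Proposition \ref{prop:MinSet}, Lemma \ref{lem:MinVsMed}, the Milnor--\v{S}varc lemma, or the remarks about the cubulation and the barycentric subdivision --- is needed for this lemma (those ingredients belong to the deduction of Theorem \ref{thm:centralisers} from it), and in your write-up they do no work in the crucial step.
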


\begin{proof}
Because $G$ acts properly on $X$, it is clear that $C_G(g)$ acts on $\mathrm{Min}(g)$ with finite stabilisers. Now, suppose by contradiction that $C_G(g)$ acts on $\mathrm{Min}(g)$ with infinitely many orbits. Fix a collection of vertices $x_0, x_1, \ldots \in \mathrm{Min}(g)$ which belong to pairwise distinct $C_G(g)$-orbits. Because $G$ acts cocompactly on $X$, there exists some constant $C \geq 0$ such that, for every $i \geq 1$, there exists some $g_i \in G$ satisfying $d(x_0,g_i x_i) \leq C$. Because $X$ is locally finite, up to taking a subsequence, we may suppose without loss of generality that $g_ix_i = g_jx_j$ for every $i,j \geq 1$. Next, notice that
$$d(x_0,g_igg_i^{-1} x_0) = d( g_i^{-1}x_0, gg_i^{-1}x_0) \leq d(x_i,gx_i) +2C = \| g\| +2C.$$
Because $X$ is locally finite and since its vertex-stabilisers are finite, up to taking a subsequence, we may suppose without loss of generality that $g_ig g_i^{-1}= g_jgg_j^{-1}$ for every $i,j \geq 1$. Fixing two distinct indices $i,j \geq 1$, we have
$$C_G(g) \cdot x_j = C_G(g) \cdot g_i^{-1}g_j x_j = C_G(g) \cdot g_i^{-1} g_ix_i = C_G(g) \cdot x_i$$
because $g_i^{-1}g_j$ belongs to $C_G(g)$, contradicting the fact that $x_i$ and $x_j$ have distinct $C_C(g)$-orbits. 
\end{proof}

\begin{proof}[Proof of Theorem \ref{thm:centralisers}.]
Up to taking the barycentric subdivision of $X$, we may suppose without loss of generality that $g$ is a loxodromic isometry of $X$. We know from Proposition \ref{prop:MinSet} that $\mathrm{Med}(g)$ is a median subalgebra of $X$ which is $C_G(g)$-invariant, so $C_G(g)$ naturally acts on the cubulation $C(\mathrm{Med}(g))$. By combining Lemmas \ref{lem:MinVsMed} and \ref{lem:GeomActionMin}, it follows that $C_g(G)$ acts on $C(\mathrm{Med}(g))$ with finite vertex-stabilisers and with finitely many orbits of vertices. Because $C(\mathrm{Med}(g))$ is locally finite according to Fact \ref{fact:LocallyFinite}, we conclude that $C_G(g)$ acts geometrically on the CAT(0) cube complex $C(\mathrm{Med}(g)$.
\end{proof}

\subsection{A splitting theorem}\label{section:splitting}

\noindent
In this section, we show that normal abelian subgroups in groups acting properly on CAT(0) cube complexes are direct factors up to finite index. Compare with \cite[Theorem II.6.12]{MR1744486} in the CAT(0) setting. 

\begin{thm}\label{thm:splitting}
Let $G$ be a group acting on a CAT(0) cube complex $X$ and $A \lhd G$ a normal finitely generated subgroup. Assume that a non-trivial element of $A$ is never elliptic. Then $A$ is a direct factor of some finite-index subgroup of $G$.
\end{thm}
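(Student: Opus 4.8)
The plan is to combine Theorem~\ref{thm:MedianSet} with the Busemann morphisms produced in Proposition~\ref{prop:Busemann}, after reducing to a standard form. First I would replace $X$ by its barycentric subdivision (on which $G$ still acts, and which affects neither the hypothesis nor the conclusion): then every isometry with unbounded orbits is loxodromic, so by assumption every nontrivial element of $A$ is loxodromic, and in particular $A$ is torsion-free.

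\emph{The cyclic case.} Suppose first $A=\langle g\rangle$. Since $\langle g\rangle\lhd G$ and $g$ has infinite order, $C_{G}(g)$ is the kernel of $G\to\operatorname{Aut}(\langle g\rangle)\cong\mathbb{Z}/2$, hence has index at most $2$, so we may replace $G$ by $C_{G}(g)$ and assume $g$ is central. Apply Theorem~\ref{thm:MedianSet}: $\operatorname{Med}(g)=T\times F$ (the cube factor being trivial since $g$ is loxodromic), $G$ acts on $T$ and on $F$ through homomorphisms $\rho_{T},\rho_{F}$, with $\rho_{T}(g)=\operatorname{id}$ and $\rho_{F}(g)$ a translation of positive length of the finite-dimensional median flat $F$. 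By Proposition~\ref{prop:Busemann} there are $\operatorname{Isom}_{0}(F)\le\operatorname{Isom}(F)$ of finite index and a Busemann morphism $\beta\colon\operatorname{Isom}_{0}(F)\to\mathbb{Z}^{n}$ whose kernel consists of the elliptic isometries. Put $G_{1}:=\rho_{F}^{-1}(\operatorname{Isom}_{0}(F))$, of finite index in $G$, and $\theta:=\beta\circ\rho_{F}\colon G_{1}\to\mathbb{Z}^{n}$; for a suitable $\ell\ge 1$ one has $g^{\ell}\in G_{1}$ with $\theta(g^{\ell})\ne 0$, because $\rho_{F}(g^{\ell})$ is a nontrivial translation, hence not elliptic. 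Using that $g$ is central, the Verlagerung (transfer) of $\theta$ yields a genuine homomorphism $\widetilde\theta\colon G_{2}\to\mathbb{Z}^{n}$ on the finite-index subgroup $G_{2}:=G_{1}\langle g\rangle$ with $\widetilde\theta(g)=\theta\bigl(g^{[G_{2}:G_{1}]}\bigr)\ne 0$. Writing $\widetilde\theta(g)=d\,w_{0}$ with $w_{0}$ primitive and choosing $p\colon\mathbb{Z}^{n}\to\mathbb{Z}$ with $p(w_{0})=1$, set $\sigma:=p\circ\widetilde\theta$ and $H:=\sigma^{-1}(d\mathbb{Z})$, a finite-index subgroup of $G$ containing $g$, on which $\sigma(g)=d$ generates $\sigma(H)=d\mathbb{Z}$. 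The short exact sequence $1\to\ker(\sigma|_{H})\to H\xrightarrow{\ \sigma\ }d\mathbb{Z}\to 1$ splits by $d\mapsto g$, and since $g$ is central in $H$ this gives $H=\langle g\rangle\times\ker(\sigma|_{H})$, as required.

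\emph{The general case.} The centre $Z(A)$ is characteristic in $A$, hence normal in $G$, torsion-free, and (this needs an argument) finitely generated, so $Z(A)\cong\mathbb{Z}^{s}$. The key reduction is that, up to finite index, $G$ acts on $A$ by inner automorphisms, i.e.\ the image of $G\to\operatorname{Out}(A)$ is finite; this is where the geometry must enter, through the fact that $a\mapsto\operatorname{Med}(a)$ is $G$-equivariant and that each $\operatorname{Med}(a)$ is a $G$-rigid product by Theorem~\ref{thm:MedianSet}. Granting this, $H_{1}:=A\cdot C_{G}(A)$ has finite index in $G$ and contains $A$, and $Z(A)$ is central in $C_{G}(A)$; it then suffices to split $Z(A)$ off $C_{G}(A)$ up to finite index, for if $C_{0}\le C_{G}(A)$ has finite index and $C_{0}=Z(A)\times B$, then $B$ is centralised by $A$, whence $B\lhd AB=:H$, $[H_{1}:H]=[C_{G}(A):C_{0}]<\infty$, $A\cap B\le Z(A)\cap B=1$ and $[A,B]=1$, so that $H=A\times B$. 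Finally, a central free abelian normal subgroup $Z\cong\mathbb{Z}^{s}$ of a group $C$ acting on $X$ with no nontrivial $Z$-element elliptic is treated by iterating the cyclic argument: apply Theorem~\ref{thm:MedianSet} to a generator, pass to a finite-index subgroup (subdividing if needed) so that the remaining generators stay loxodromic and act by translations on the resulting product of median flats, repeat, and eventually embed $Z$ into a product of translation groups of median flats and split it off via the Busemann morphisms and the same rational-saturation argument as above; centrality of $Z$ again turns the retraction into a direct splitting.

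The step I expect to be hardest is the structural reduction in the general case, namely that $A\cdot C_{G}(A)$ has finite index in $G$ — equivalently, that $G$ cannot twist $A$ by infinitely many outer automorphisms. Even for $A$ cyclic or abelian this is not formal (since $\operatorname{Aut}(A)$ is infinite), and it must in all cases come from the rigidity of the median sets $\operatorname{Med}(a)$ and of their product decompositions; converting that rigidity into the required finiteness statement is the delicate part, the remainder being Theorem~\ref{thm:MedianSet}, Proposition~\ref{prop:Busemann}, and the homological bookkeeping carried out above in the cyclic case.
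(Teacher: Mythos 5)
Your cyclic case is correct and is essentially the paper's argument: reduce to $g$ central, apply Proposition \ref{prop:MinSet} to get the product action on $T\times F$, push the $F$-factor through Proposition \ref{prop:Busemann} to get a finite-index subgroup and a homomorphism to $\mathbb{Z}$ which is nonzero on (a power of) $g$, and split $\langle g\rangle$ off by elementary bookkeeping using centrality; the transfer step is fine, though avoidable. The problem is the general case, and it lies exactly where you flag it: the claim that $A\cdot C_G(A)$ has finite index in $G$ (equivalently, that $G\to\mathrm{Out}(A)$ has finite image) is not proved, only ``granted''. Since for abelian $A$ this claim \emph{is} the whole geometric input beyond the cyclic mechanism, the proposal is incomplete precisely at its load-bearing step.

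For the situation the paper actually treats (its proof reduces to a central element and then inducts on the rank of the abelian group $A$), the missing step is Proposition \ref{prop:Centralising}, and the argument is not an abstract rigidity statement about the sets $\mathrm{Med}(a)$ but a properness argument for translation length: using the $A$-invariant subalgebra $Q\times F$ of Lemma \ref{lem:AbelianFlat}, comparing the combinatorial metric of $X$ with the metrics on the cubulation of this subalgebra, and invoking the classical flat torus theorem for the CAT(0) metric there, one shows that $\{a\in A \mid |a|_X\le R\}$ is finite for every $R$, where $|a|_X=\lim_n \frac{1}{n}d_X(x,a^nx)$; since $|\cdot|_X$ is invariant under conjugation, $G$ permutes a finite subset of $A$ containing a basis, and the kernel of this action is a finite-index subgroup containing and centralising $A$. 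Without this (or an equivalent), your reduction via $Z(A)$ and $C_G(A)$ never gets started; moreover, for genuinely nonabelian $A$ no such finiteness is supplied by the paper's toolkit (its proof only covers the case where $A$ is centralised up to finite index), and the finite generation of $Z(A)$, which you also leave open, is a further unaddressed point. Finally, once centrality is available, the induction is cleaner than your ``one generator at a time inside iterated flats'' sketch: with $\varphi$ nonzero on $A$, pick $a\in A$ with $\varphi(a)$ generating $\varphi(A)$, set $H=\varphi^{-1}(\varphi(A))$, write $H=H_0\oplus\langle a\rangle$ and $A=A_0\oplus\langle a\rangle$ with $H_0=\ker\varphi\cap H$, and induct on the rank of $A_0\lhd H_0$, which is exactly how the paper concludes.
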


\noindent
We begin by observing that the normal subgroups under consideration in Theorem \ref{thm:splitting} are central in a finite-index subgroup.

\begin{prop}\label{prop:Centralising}
Let $G$ be a group acting on a CAT(0) cube complex $X$ and $A \leq G$ a normal finitely generated abelian subgroup. Assume that the only element in $A$ with bounded orbits is the trivial element. Then there exists a finite-index subgroup in $G$ which contains $A$ and which centralises it.
\end{prop}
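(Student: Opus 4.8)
The plan is to show that the conjugation action of $G$ on $A$ has finite image; since $A$ is abelian this image is a subgroup of $\mathrm{Aut}(A)$, its kernel is exactly $C_G(A)$, and $A \leq C_G(A)$, so $C_G(A)$ will be the finite-index subgroup we are after. Because the only element of $A$ with bounded orbits is trivial, $A$ is torsion-free; being finitely generated abelian, $A \cong \mathbb{Z}^k$ for some $k \geq 0$ (if $k=0$ there is nothing to prove, so assume $k \geq 1$). An element of $\mathrm{Aut}(A) = GL_k(\mathbb{Z})$ is determined by its action on a fixed basis $a_1, \dots, a_k$ of $A$, so the image of the conjugation homomorphism $\phi : G \to \mathrm{Aut}(A)$ embeds into $\prod_{i=1}^k \mathrm{Sym}(O_i)$, where $O_i$ is the conjugation orbit of $a_i$. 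Hence it suffices to prove that every conjugation orbit $\{ gag^{-1} : g \in G\}$ with $a \in A$ is finite.

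The tool to control these orbits is the stable translation length $\ell(g) = \lim_{n \to +\infty} \tfrac1n d(x, g^n x)$, which exists by subadditivity of $n \mapsto d(x, g^n x)$, does not depend on the basepoint $x$ (two choices give functions differing by a bounded amount), and is conjugation-invariant. Two properties will be crucial. First, $\ell$ is $\mathbb{Z}$-valued: if $g$ is loxodromic then, by Haglund's description recalled in Section \ref{section:ClassIsom}, $d(x, g^n x) = n\|g\|$ for $x \in \mathrm{Min}(g)$, so $\ell(g) = \|g\| \in \mathbb{Z}$; if $g$ is inverting then by Lemma \ref{lem:MinInX} it is loxodromic on $X/\mathcal{J}$ (where $\mathcal{J}$ is the finite set of hyperplanes inverted by powers of $g$) and Lemma \ref{lem:DistQuotient} gives $d_{X/\mathcal{J}}(\pi x, g^n \pi x) \leq d_X(x, g^n x) \leq d_{X/\mathcal{J}}(\pi x, g^n\pi x) + \#\mathcal{J}$, whence $\ell_X(g) = \ell_{X/\mathcal{J}}(g) = \|g\|_{X/\mathcal{J}} \in \mathbb{Z}$; and $\ell(g) = 0$ when $g$ is elliptic. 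Second, by our hypothesis $\ell(a) \geq 1$ for every $a \in A \setminus \{1\}$.

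Next I would show that $\ell$ restricted to $A$ extends to a norm on $V := A \otimes \mathbb{R} \cong \mathbb{R}^k$. Since $A$ is abelian, for commuting $a, b$ we have $d(x, (ab)^n x) = d(x, a^n b^n x) \leq d(x, a^n x) + d(x, b^n x)$, hence $\ell(ab) \leq \ell(a) + \ell(b)$; together with $\ell(a^n) = |n|\ell(a)$ this makes $\ell|_A$ a subadditive, absolutely homogeneous map, which therefore extends (first to $A \otimes \mathbb{Q}$ by homogeneity, then continuously to $V$) to a seminorm $\bar\ell$ on $V$. Its kernel $N = \bar\ell^{-1}(0)$ is a linear subspace with $N \cap A = \{a \in A : \ell(a) = 0\} = \{0\}$. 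If $N \neq 0$, then $A$ would embed into $V/N$, a space of dimension $< k$; but $\bar\ell$ induces a norm on $V/N$ taking values $\geq 1$ on the nonzero vectors of the image of $A$, so this image is a \emph{discrete} subgroup of $V/N$ isomorphic to $\mathbb{Z}^k$, contradicting $\dim(V/N) < k$. Hence $N = 0$ and $\bar\ell$ is a norm, so every sublevel set $\{a \in A : \ell(a) \leq r\}$ is finite, being the intersection of the lattice $A$ with the compact convex body $\{\bar\ell \leq r\}$.

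Finally, combining the pieces: the conjugation orbit of any $a \in A$ is contained in $\{b \in A : \ell(b) = \ell(a)\}$, which is finite by the previous paragraph. Therefore $\phi(G)$ is finite, $C_G(A) = \ker\phi$ has finite index in $G$, and it contains $A$. The main obstacle in this argument is the properness of $\ell$ on $A$: everything hinges on the integrality of the stable translation length — which is where Haglund's classification and the behaviour of translation length under cubical quotients enter — and on the elementary rank/discreteness argument upgrading the induced seminorm on $V$ to a norm; the rest is bookkeeping.
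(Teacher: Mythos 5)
Your proposal is correct, and its endgame is the same as the paper's: both proofs reduce the statement to showing that the stable translation length $a \mapsto \lim_n \frac{1}{n}d(x,a^nx)$ is conjugation-invariant and has finite sublevel sets on $A$, and then extract the finite-index centralising subgroup from the finiteness of the conjugation orbits of a basis of $A$ (equivalently, as the kernel of the $G$-action on a finite invariant set containing a basis). Where you genuinely diverge is in the proof of properness. The paper applies Lemma \ref{lem:AbelianFlat} (which rests on the median-set machinery of Theorem \ref{thm:MedianSet}, used inductively) to get an $A$-invariant median subalgebra $Q \times F$ with $F$ a median flat, compares the metrics $d_X \geq d_Y \geq d_Y^2$ via Lemma \ref{lem:TwoCubulations}, and then invokes the classical CAT(0) flat torus theorem for the CAT(0) metric on the cubulation of $Y$. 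You instead use the integrality of the stable translation length of non-elliptic isometries --- immediate from Haglund's classification in the loxodromic case, and obtained in the inverting case by passing to the cubical quotient by the finitely many inverted hyperplanes via Lemmas \ref{lem:InvHypFiniteTransverse}, \ref{lem:MinInX} and \ref{lem:DistQuotient}, which is exactly the comparison the paper makes at the end of the proof of Proposition \ref{prop:MinInvIsom} --- together with an elementary seminorm argument: subadditivity on the abelian group $A$ and homogeneity extend $\ell|_A$ to a seminorm on $A \otimes \mathbb{R}$, and the uniform gap $\ell \geq 1$ on $A \setminus \{1\}$ kills the kernel by a rank/discreteness count, so $\ell|_A$ is the restriction of a norm to a lattice and hence proper. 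Your route is more elementary relative to this paper, avoiding Theorem \ref{thm:MedianSet}, median flats and the CAT(0) flat torus theorem entirely, and it isolates the true mechanism (combinatorial translation lengths are integers, so every isometry is ``semisimple'' with a uniform gap, in the spirit of the Bridson--Haefliger semisimple arguments); what the paper's route buys is that it reuses structure --- the invariant median subalgebra and its product decomposition --- that is needed anyway for Theorem \ref{thm:FTT} and the later applications, rather than introducing a separate lattice argument.
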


\noindent
The proof of the proposition lies on the following lemma, which will be also used in Section \ref{section:FTT}.

\begin{lemma}\label{lem:AbelianFlat}
Let $A$ be a finitely generated abelian group acting on a CAT(0) cube complex $X$. Then $X$ contains a median subalgebra which is $A$-invariant and which decomposes as a product of a finite-dimensional cube which is a median flat or a single point.
\end{lemma}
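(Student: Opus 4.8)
The plan is to build the desired subalgebra by a recursion that peels off one product factor at each step. Set $Y_0 := X$. Suppose $A$ acts on a CAT(0) cube complex $Y_i$ and that some $a_i \in A$ is not elliptic on $Y_i$; since $A$ is abelian, $\langle a_i \rangle$ is normal in $A$, so Theorem~\ref{thm:MedianSet} applies and gives an $A$-invariant median subalgebra $\mathrm{Med}_{Y_i}(a_i)$ together with an $A$-equivariant isomorphism $\mathrm{Med}_{Y_i}(a_i) \cong T_i \times F_i \times Q_i$, where the $A$-action is diagonal, $F_i$ is a median flat, $Q_i$ is a finite-dimensional cube, and $a_i$ acts trivially on $T_i$. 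We record $F_i,Q_i$, set $Y_{i+1} := T_i$ (a discrete median algebra, hence again a CAT(0) cube complex, carrying an $A$-action) and iterate. To see this terminates, consider the kernels $N_i := \ker\!\left( A \to \mathrm{Isom}(Y_i) \right)$. An element acting trivially on $Y_i$ acts trivially on $\overline{Y_i}$ (and on $\overline{Y_i/\mathcal{J}}$, in the inverting case) by continuity of the extended median action, hence trivially on $T_i$; so $N_i \subseteq N_{i+1}$. On the other hand $a_i \in N_{i+1}$ since $a_i$ is trivial on $T_i = Y_{i+1}$, whereas $a_i$ is a non-trivial isometry of $Y_i$, so $a_i \notin N_i$. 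Thus $N_0 \subsetneq N_1 \subsetneq \cdots$ is a strictly increasing chain of subgroups of $A$, and since a finitely generated abelian group satisfies the ascending chain condition on subgroups, the recursion stops at some stage $m$ at which every element of $A$ is elliptic on $Y_m$.

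For the terminal stage I use the following claim: a finitely generated abelian group all of whose elements act elliptically on a CAT(0) cube complex $Z$ has a finite orbit. This is proved by induction on the number $n$ of generators $a_1,\dots,a_n$. For $n=1$ an elliptic isometry stabilises a cube and hence has a finite orbit. For $n \geq 2$, the subgroup $\langle a_1 \rangle$ is normal, so Proposition~\ref{prop:MedElliptic} yields an $A$-invariant median subalgebra of $Z$ isomorphic (as an $A$-set, with diagonal action) to $Q \times Z_1$ with $Q$ a finite cube and $a_1$ acting trivially on $Z_1$; then $A$ acts on $Z_1$ through the subgroup generated by the $n-1$ elliptic isometries $a_2|_{Z_1},\dots,a_n|_{Z_1}$, which by induction has a finite orbit on $Z_1$, and lifting through the product (using that $Q$ is finite) produces a finite $A$-orbit on $Z$. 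Applying this claim to $A \curvearrowright Y_m$ and then Proposition~\ref{prop:MedElliptic} to $E = A$ (which now has a bounded orbit) yields an $A$-invariant median subalgebra of $Y_m$ isomorphic to $Q_m \times Z_m$ (diagonal action, $A$ trivial on $Z_m$); fixing any vertex $z \in Z_m$, the subalgebra $Q_m \times \{z\} \cong Q_m$ is an $A$-invariant finite-dimensional cube inside $Y_m$ — this is the case where the median-flat factor of the final answer is a single point.

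Finally I reassemble from the leaf upwards. Given an $A$-invariant median subalgebra $W_{i+1}$ of $Y_{i+1} = T_i$ of the form $(\text{finite-dimensional cube}) \times (\text{median flat or point})$, the $A$-equivariant isomorphism $\mathrm{Med}_{Y_i}(a_i) \cong T_i \times F_i \times Q_i$ carries $W_{i+1} \times F_i \times Q_i$ to an $A$-invariant median subalgebra $W_i$ of $Y_i$, which rearranges as $\left( \text{cube} \times Q_i \right) \times \left( \text{flat} \times F_i \right)$; this is again a finite-dimensional cube times a median flat, because finite products of finite-dimensional cubes are finite-dimensional cubes and, by Lemma~\ref{lem:ProductFlats}, finite products of median flats are median flats. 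Iterating down to $i=0$ produces the required $A$-invariant median subalgebra $W_0 \subseteq X$, decomposing as a finite-dimensional cube times a median flat (or a single point, if the recursion had length zero, i.e. $A$ was elliptic on $X$ from the start). The two delicate points are the termination of the recursion — handled by the ascending chain condition — and the inductive claim on elliptic abelian actions; the rest is bookkeeping with product decompositions, keeping track of the fact that every factor extracted along the way is a subalgebra of $X$ (or of a quotient of it), so that $W_0$ is genuinely a median subalgebra of $X$.
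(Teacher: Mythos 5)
Your proof is correct, and it runs on the same engine as the paper's (Theorem \ref{thm:MedianSet}, Proposition \ref{prop:MedElliptic}, Lemma \ref{lem:ProductFlats}, and reassembly through the equivariant product decompositions), but it is organised genuinely differently. The paper argues by induction on the rank of $A$: it writes $A = A' \oplus \langle a \rangle$, applies Theorem \ref{thm:MedianSet} to the chosen generator $a$ \emph{whether or not $a$ is elliptic} (in the elliptic case the flat factor is just a single vertex), notes that the action on the $T$-factor factors through $A'$, and concludes by induction and Lemma \ref{lem:ProductFlats}; since elliptic generators are absorbed into the same statement, no separate termination argument and no statement about groups of elliptic isometries are needed. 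You instead run a basis-free recursion that only peels off median sets of non-elliptic elements, with termination supplied by Noetherianity of finitely generated abelian groups via the strictly increasing chain of kernels $N_i$ (strictness being exactly that $a_i$ acts trivially on $T_i$ but not on $Y_i$); this is sound. The price is the terminal all-elliptic stage, for which you prove that a finitely generated abelian group all of whose elements are elliptic has a bounded orbit --- a strengthening of the paper's Lemma \ref{lem:EllipticSubgroup} (proved later, in Section \ref{section:FTT}), and your inductive argument via Proposition \ref{prop:MedElliptic} is essentially the paper's proof of that lemma. Two points deserve more careful phrasing, though neither is a gap: the ``restrictions'' $a_j|_{Z_1}$ are not literal restrictions of the ambient action (the factor $Z_1$ in Proposition \ref{prop:MedElliptic} carries an action defined via projections), so their ellipticity should be justified by the fact that the decomposition $Q \times Z_1$ is an $A$-equivariant isometry for the induced metrics, combined with Lemma \ref{lem:TwoCubulations} --- exactly as the paper argues in Lemma \ref{lem:EllipticSubgroup}; and applying Proposition \ref{prop:MedElliptic} to $E=\langle a_1\rangle$ or $E=A$ implicitly uses that a subgroup with bounded orbits stabilises a cube, a fact the paper also invokes without comment. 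On balance the paper's route is shorter and uniform; yours avoids choosing a splitting of $A$ and, in passing, establishes the group-level ellipticity statement that the paper proves separately.
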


\begin{proof}
We argue by induction over the rank of $A$, i.e. the minimal cardinality of a generating set. If $A$ has rank zero, then it suffices to take an arbitrary point in $X$. From now on, assume that $A$ has rank at least one. Write $A$ as $A' \oplus \langle a \rangle$ such that the rank of $A'$ is smaller than the rank of $A$. According to Theorem \ref{thm:MedianSet}, there exists a median subalgebra $Y'$ (namely, the median set of $a$) which is $A$-invariant and which decomposes as a product $T' \times Q' \times F'$ of median algebras $T',Q',F'$ such that:
\begin{itemize}
	\item $A \curvearrowright Y'$ decomposes as a product of actions $A \curvearrowright T',Q',F'$;
	\item $Q'$ is a finite-dimensional cube and $F'$ is a median flat or a single point;
	\item $a$ acts trivially on $T'$.
\end{itemize}
The latter point implies that the action $A \curvearrowright T'$ factorises through $A'$. We know by induction that the action $A' \curvearrowright T'$ preserves a submedian algebra $Y'' \subset T'$ which decomposes as a product $Q'' \times F''$ where $Q''$ is a finite-dimensional cube and where $F''$ is a median flat or a single point. We conclude that $A$ preserves the median subalgebra $Y'' \times Q' \times F' \subset Y'$, which decomposes as the product $(Q' \times Q'') \times (F' \times F'')$ of a finite-dimensional cube $Q' \times Q''$ with $F' \times F''$, which is a median flat or a single point according to Lemma \ref{lem:ProductFlats}. 
\end{proof}

\begin{proof}[Proof of Proposition \ref{prop:Centralising}.]
There is nothing to prove if $A$ is trivial, so from now on we assume that $A$ is non-trivial. As a consequence of Lemma \ref{lem:AbelianFlat}, $A$ preserves a median subalgebra $Y$ which decomposes as a product $Q \times F$ where $Q$ a finite-dimensional cube and $F$ a median flat or a single point. Notice that, if $a \in A$ is is elliptic in $Y$, then it has a finite orbit in $Y$ and so in $X$, hence $a=1$. In other words, $A$ also acts on $Y$ in such a way that the only element with bounded orbits is the trivial element.

\medskip \noindent
Fix an $R \geq 0$ and set $S:= \{ a \in A \mid |a|_X \leq R\}$ where $|\cdot|_X : a \mapsto \lim\limits_{n \to + \infty} \frac{1}{n} d_X(x,a^nx)$ for some fixed $x \in X$. Here we denote by $d_X$ the metric of $X$ in order to avoid confusion with the metric $d_Y$ defined on $Y$ when thought of as a CAT(0) cube complex on its own. Because $d_Y \leq d_X$ as a consequence of Lemma \ref{lem:TwoCubulations}, and because $d_Y^2 \leq d_Y$ where $d_Y^2$ denotes the CAT(0) metric defined on (the cubulation of) $Y$, we have the inclusion $S \subset S' := \{a \in A \mid |a|_Y^2 \leq R\}$ where $|\cdot |_Y^2 : a \mapsto \lim\limits_{n \to + \infty} \frac{1}{n} d_Y^2(x,a^nx)$. It follows from the classical flat torus theorem \cite[Theorem II.7.20]{MR1744486} that $S'$ (and a fortiori $S$) must be finite.

\medskip \noindent
Thus, we have proved that $|\cdot |_X$ defines a proper function on $A$. Moreover, it is also invariant under the action of $G$ on $A$ by conjugations. Therefore, if we fix a basis $a_1, \ldots, a_n \in A$ and if we set $M:= \max \{ |a_i |_X \mid 1 \leq i \leq n\}$, then $G$ acts on the finite set $\{a \in A \mid |a|_X \leq M\}$ and the kernel of the action provides a finite-index subgroup of $G$ which contains $A$ and which centralises it. 
\end{proof}

\begin{proof}[Proof of Theorem \ref{thm:splitting}.]
Fix a non-trivial element $g \in A$. Up to subdividing $X$, we may suppose without loss of generality that $g$ is a loxodromic element. Because $g$ is central in $G$, it follows from Proposition \ref{prop:MinSet} that $G$ acts on a median subalgebra $T \times F \subset X$ such that: 
\begin{itemize}
	\item the action $G \curvearrowright T \times F$ decomposes as a product of two actions $G \curvearrowright T,F$;
	\item $F$ is a median flat on which $g$ acts by translations of positive length. 
\end{itemize}
As a consequence of Proposition \ref{prop:Busemann} there exist a finite-index subgroup $G_0 \leq G$ and a morphism $\varphi : G_0 \to \mathbb{Z}$ such that $\varphi(g) \neq 0$. Fix an element $a \in A$ such that $\varphi(a)$ generates the image $\varphi(A)$, and set $H:= \varphi^{-1}(\varphi(A))$. Notice that $H$ is a finite-index subgroup of $G$. Because $a$ is central in $G$, it follows that $H= H_0 \oplus \langle a \rangle$ and $A= A_0 \oplus \langle a \rangle$ where $H_0= \mathrm{ker}(\varphi) \cap H$ and $A_0 = \mathrm{ker}(\varphi) \cap A$. Notice that the rank of $A_0$ is smaller than the rank of $A$, so by arguing by induction we may suppose without loss of generality that $H_0$ contains a finite-index subgroup which decomposes as a direct sum $A_0 \oplus K$. Then $A_0 \oplus K \oplus \langle a \rangle = A \oplus K$ has finite index in $G$, concluding the proof of the theorem. 
\end{proof}

\noindent
A nice application of Theorem \ref{thm:splitting} is the following result, which is a particular of a much more general statement \cite{MR0189027}:

\begin{cor}\label{cor:Euler}
Let $X$ be a compact nonpositively curved cube complex. If the Euler characteristic $\chi(X)$ is non-zero, then the center of $\pi_1(X)$ must be trivial.
\end{cor}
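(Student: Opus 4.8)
The plan is to apply the splitting theorem (Theorem~\ref{thm:splitting}) to a single non-trivial central element; this bypasses any question of finite generation of the full centre. So I would begin by setting up the geometry: writing $G = \pi_1(X)$, the universal cover $\widetilde{X}$ is a CAT(0) cube complex, finite-dimensional and locally finite because $X$ is compact, and the deck action of $G$ on $\widetilde{X}$ is free, properly discontinuous and cocompact. A free action on a contractible complex is torsion-free, and in a locally finite complex a bounded $\langle h\rangle$-orbit is finite, hence forces $h$ to have finite order; therefore every non-trivial element of $G$ acts on $\widetilde{X}$ with unbounded orbits, in particular non-elliptically. Assuming for contradiction that $Z(G) \neq \{1\}$, fix a non-trivial $g \in Z(G)$ and set $A := \langle g \rangle \cong \mathbb{Z}$; this is a finitely generated normal (indeed central) subgroup of $G$, and by the preceding remark no non-trivial element of $A$ is $\widetilde{X}$-elliptic.

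I would then invoke Theorem~\ref{thm:splitting} for the pair $(G,A)$: it yields a finite-index subgroup $H \leq G$ containing $A$ and a direct-product decomposition $H = A \times K \cong \mathbb{Z} \times K$. Let $Y \to X$ be the finite covering associated with $H$. Then $Y$ is again a compact nonpositively curved cube complex whose universal cover is $\widetilde{X}$; in particular $Y$ is aspherical, so it is a finite $K(H,1)$, and $\chi(Y) = [G:H]\cdot\chi(X) \neq 0$.

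It remains to reach a contradiction by computing $\chi(Y)$ from the product decomposition of $H = \pi_1(Y)$. Since $Y$ is a finite aspherical complex, $H$ is of type $\mathrm{FP}$ over $\mathbb{Z}$; as $K$ is a retract of $H$ (being a direct factor) and the class of $\mathrm{FP}$ groups is closed under retracts, $K$ is of type $\mathrm{FP}$, so $\chi(K) \in \mathbb{Z}$ is defined. Multiplicativity of the Euler characteristic over direct products of $\mathrm{FP}$ groups then gives $\chi(Y) = \chi(H) = \chi(\mathbb{Z})\cdot\chi(K) = 0$, because $\chi(\mathbb{Z}) = \chi(S^1) = 0$. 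This contradicts $\chi(Y) \neq 0$, so $Z(G)$ must be trivial.

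The one step that is not purely formal is this last computation: the algebraic splitting $H = \mathbb{Z}\times K$ does not exhibit $Y$ itself as a product of $S^1$ with a \emph{finite} complex, since a $K(K,1)$ need not be compact, so one genuinely needs the standard homological facts that type $\mathrm{FP}$ is inherited by retracts and that $\chi$ is multiplicative on products of type-$\mathrm{FP}$ groups. Equivalently, once the decomposition $H = \mathbb{Z}\times K$ is available one may simply quote Gottlieb's theorem~\cite{MR0189027}, and the substance of the argument is precisely that Theorem~\ref{thm:splitting} reduces Corollary~\ref{cor:Euler} to this classical input; the remaining ingredients — asphericity of $Y$, the behaviour of $\chi$ under finite covers, and checking the hypotheses of Theorem~\ref{thm:splitting} — are immediate.
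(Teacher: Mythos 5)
Your proof is correct and follows essentially the same route as the paper: apply Theorem~\ref{thm:splitting} to a central subgroup to split off an infinite cyclic (in the paper, free abelian) direct factor of a finite-index subgroup, then force the Euler characteristic of that subgroup to vanish by multiplicativity (K\"unneth, $\chi(\mathbb{Z})=0$), contradicting $\chi(X)\neq 0$ via the finite cover. The only deviation is that you apply the splitting theorem to $\langle g\rangle$ for a single non-trivial central element rather than to the full center $Z(\pi_1(X))$ as the paper does, which has the minor virtue of making the hypotheses of Theorem~\ref{thm:splitting} (finite generation and absence of non-trivial elliptic elements) immediate, whereas the paper leaves these checks for $Z(\pi_1(X))$ implicit.
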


\noindent
Recall that the \emph{Euler characteristic} of a group $G$ is 
$$\chi(G)= \sum\limits_{i \geq 0} (-1)^i~ \mathrm{rank}_\mathbb{Z}H_i(X)$$
where $X$ is a classifying space of $G$. This sum does not depend on the classifying space we choose, but it may not converge. A case where the Euler characteristic is clearly well-defined is when the group admits a classifying space with only finitely many cells. Because CAT(0) cube complexes are contractible, this case includes fundamental groups of compact nonpositively curved cube complexes. 

\begin{proof}[Proof of Corollary \ref{cor:Euler}.]
Assume that $\pi_1(X)$ has a non-trivial center. According to Theorem \ref{thm:splitting}, there exists a finite-index subgroup $H \leq \pi_1(X)$ which decomposes as a product $G=Z \times K$ where $Z$ is the (infinite) center of $\pi_1(X)$ and $K$ some subgroup. Notice that it follows from K\"{u}nneth theorem that $\chi(K)$ is well-defined and that $\chi(G)= \chi(Z) \cdot \chi(K)$. Because $\chi(X)$ has to be a multiple of $\chi(G)$ and that $\chi(Z)=0$, it follows that $\chi(X)$ has to be zero. 
\end{proof}

\subsection{Mapping class groups of surfaces}

\noindent
As mentioned in the introduction, it is an open problem to determine whether or not mapping class groups of surfaces act on CAT(0) cube complexes without global fixed points. Thanks to Theorem \ref{thm:MedianSet}, we are able to show the following restriction by reproducing the proof of \cite[Theorem B]{MR2665003}. 

\begin{thm}\label{thm:MCG}
Let $\Sigma$ be an orientable surface of finite type with genus $\geq 3$. Whenever $\mathrm{Mod}(\Sigma)$ acts on a CAT(0) cube complex, all Dehn twists are elliptic.
\end{thm}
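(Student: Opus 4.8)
The plan is to follow the proof of \cite[Theorem B]{MR2665003} verbatim on the group-theoretic and surface-topological side, while replacing its CAT(0) ingredients by the cubical ones established above: minimising sets and the flat torus theorem are replaced by median sets and Theorem \ref{thm:MedianSet}, and CAT(0) Busemann functions are replaced by the Busemann morphism of Proposition \ref{prop:Busemann}. Write $G=\mathrm{Mod}(\Sigma)$. First I would reduce: after replacing $X$ by its barycentric subdivision we may assume every isometry with bounded orbits fixes a vertex, and subdivision does not change which elements of $G$ are elliptic. Since all Dehn twists along non-separating curves are conjugate in $G$ (change of coordinates) and being elliptic is a conjugacy invariant, either all non-separating Dehn twists are elliptic or none is; the separating case is extracted from the non-separating one by the same relation-theoretic device as in \cite{MR2665003}. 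So it suffices to derive a contradiction from the assumption that there is a non-separating curve $a$ with $t:=T_a$ having unbounded orbits on $X$.

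Next I would produce a Busemann morphism on the centraliser $C:=C_G(t)$. Here $\langle t\rangle$ is central, hence normal, in $C$, so Theorem \ref{thm:MedianSet} applies to $C\curvearrowright X$ and to $t$: the median set $\mathrm{Med}(t)$ is a $C$-invariant median subalgebra splitting as $T\times F\times Q$ with $C$ acting as a product of actions, where $F$ is a median flat on which $t$ acts by translations of positive length (positive since $t$ has unbounded orbits), $Q$ is a finite-dimensional cube, and $t$ acts trivially on $T$. Composing the factor action $C\to\mathrm{Isom}(F)$ with the Busemann morphism of Proposition \ref{prop:Busemann} (applied to the cubulation of the median flat $F$) yields a finite-index subgroup $C_0\leq C$, which I would take normal in $C$, together with a homomorphism $\phi:C_0\to\mathbb{Z}^n$ whose kernel is exactly the set of elements of $C_0$ acting elliptically on $F$. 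Choosing $N\geq 1$ with $t^N\in C_0$, the element $t^N$ is loxodromic on $F$, so $\phi(t^N)\neq 0$.

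The contradiction then comes from the lantern relation, and this is where the genus hypothesis is used. Because $\Sigma$ has genus at least $3$, one may embed (following \cite{MR2665003}) a four-holed sphere $P\subset\Sigma$ having $a$ as one of its boundary components, so that the other three boundary curves $b_1,b_2,b_3$ and the three interior curves $c_1,c_2,c_3$ of $P$ are all non-separating and all lie in a single $C$-orbit of curves. As $b_i$ and $c_j$ are disjoint from $a$, the elements $T_{b_i},T_{c_j}$ lie in $C$, $t$ commutes with each of them, and the $T_{b_i}$ (resp.\ the $T_{c_j}$) pairwise commute; the lantern relation gives $t\,T_{b_1}T_{b_2}T_{b_3}=T_{c_1}T_{c_2}T_{c_3}$, hence $t^N T_{b_1}^N T_{b_2}^N T_{b_3}^N=T_{c_1}^N T_{c_2}^N T_{c_3}^N$ in $C_0$. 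Since ellipticity on $F$ is a conjugacy-invariant property in $\mathrm{Isom}(F)$, the kernel of $\phi$ is $C$-invariant, so $C/C_0$ acts linearly on the image of $\phi$; under this finite action the six vectors $\phi(T_{b_i}^N),\phi(T_{c_j}^N)$ form a single orbit. Applying $\phi$ to the displayed relation and running the bookkeeping of \cite{MR2665003} — which compares a product of four conjugate twists (one of them $t^N$) against a product of three and forces the surplus term $\phi(t^N)$ to vanish — gives $\phi(t^N)=0$, contradicting the previous paragraph; hence $t$ is elliptic, proving Theorem \ref{thm:MCG}.

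The hard part will be this last step. One must check that the combinatorial accounting of \cite{MR2665003}, carried out there for a homomorphism to $\mathbb{R}$ on which conjugation by the relevant index-$2$ overgroup acts merely by a sign, still closes when the target is $\mathbb{Z}^n$ and the conjugation action of $C/C_0$ may genuinely permute and sign the coordinates; and one must import the topological input (again from \cite{MR2665003}) that for genus $\geq 3$ the lantern can be realised with $a$ as a boundary curve and all seven curves non-separating and in one $\mathrm{Stab}(a)$-orbit, together with the symmetries of the four-holed sphere that make the six twists $T_{b_i},T_{c_j}$ comparable under $\phi$. By contrast, the cubical side — namely that a non-elliptic Dehn twist produces, on its centraliser, an action on a median flat carrying an integral Busemann morphism that is non-trivial on a power of the twist — is precisely what Theorem \ref{thm:MedianSet} and Proposition \ref{prop:Busemann} supply, so no new difficulty arises there.
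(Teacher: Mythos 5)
Your overall strategy (a non-elliptic twist $t$ forces $C:=C_{\mathrm{Mod}(\Sigma)}(t)$ to act on the median flat $F$ of $\mathrm{Med}(t)$ by Theorem \ref{thm:MedianSet}, a Busemann-type homomorphism as in Proposition \ref{prop:Busemann} detects a power of $t$, and the lantern relation kills it) is the right one, and the cubical half is sound. But the group-theoretic half has a genuine gap, in two places. First, a concrete error: the three interior curves $c_1,c_2,c_3$ of a lantern pairwise intersect (in two points each), so $T_{c_1},T_{c_2},T_{c_3}$ do \emph{not} commute; from $t\,T_{b_1}T_{b_2}T_{b_3}=T_{c_1}T_{c_2}T_{c_3}$ you may deduce $(t\,T_{b_1}T_{b_2}T_{b_3})^N=(T_{c_1}T_{c_2}T_{c_3})^N$ and expand the left-hand side, but not the right-hand side, so the relation $t^NT_{b_1}^NT_{b_2}^NT_{b_3}^N=T_{c_1}^NT_{c_2}^NT_{c_3}^N$ that you feed into $\phi$ is unjustified --- and it is your only mechanism for producing a relation among elements of $C_0$. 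Second, the decisive step $\phi(t^N)=0$ is precisely what you label ``the hard part'' and leave unverified, and it does not follow formally: $\phi$ is defined only on the finite-index subgroup $C_0$, the twists $T_{b_i},T_{c_j}$ need not lie in $C_0$, they are conjugate only in $C$ and not in $C_0$, so ``conjugate elements have equal $\phi$-image'' is unavailable; normality of $\ker\phi$ in $C$ does not make $\phi$ itself conjugation-invariant. Nor can you bypass this by quoting the finite-order statement in $H_1(C)$ from \cite{MR2665003} and contradicting it with $\phi$: having $\phi(t^N)\neq 0$ on $C_0$ does not preclude $t$ from being torsion in $H_1(C)$.

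The gap is reparable, but it requires an actual argument: promote $\phi$ to a homomorphism on all of $C$, for instance via the transfer $C\to H_1(C_0)$, or by averaging $g\mapsto\sum_{x\in C/C_0}\phi\left(xgx^{-1}\right)$ over the normal finite-index subgroup (rationally this realises $\mathrm{Hom}(C,\mathbb{Q}^n)\cong\mathrm{Hom}(C_0,\mathbb{Q}^n)^{C/C_0}$), using centrality of $t$ to see the result is nonzero on $t$; then the lantern relation, applied directly in $H_1(C;\mathbb{Q})$ with the six twists conjugate in $C$, gives the contradiction without any powering. This is in effect what the paper does, much more economically: it applies Theorem \ref{thm:MedianSet} to obtain the $C(\tau)$-invariant finite-dimensional flat $F$, then quotes \cite[Proposition 2.3]{MR2665003} to conclude that $\tau$ has infinite order in the abelianisation of $C(\tau)$ (a homomorphism on the whole centraliser, so conjugation-invariance is automatic), and contradicts this with \cite[Proposition 2.4]{MR2665003}, whose lantern computation is valid for all Dehn twists in genus $\geq 3$, so no separate reduction of separating twists to non-separating ones is needed.
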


\begin{proof}
Let $X$ be a CAT(0) cube complex on which $\mathrm{Mod}(\Sigma)$ acts and let $\tau \in \mathrm{Mod}(\Sigma)$ be a Dehn twist around a simple closed curve $\gamma \subset \Sigma$. Assume for contradiction that $\tau$ has unbounded orbits. As a consequence of Theorem \ref{thm:MedianSet}, there exists a median subalgebra $Y \subset X$ (namely, the median set of $\tau$) which is left invariant by the centraliser $C(\tau)$ of $\tau$ in $\mathrm{Mod}(\Sigma)$ and which decomposes as a product $T \times F \times Q$ where $F$ is a median flat. Moreover, we know that the action $C(\tau) \curvearrowright Y$ induces an action $C(\tau) \curvearrowright F$. Because $F$ is finite-dimensional, it follows from \cite[Proposition 2.3]{MR2665003} that $\tau$ has infinite order in the abelianisation of $C(\tau)$, contradicting \cite[Proposition 2.4]{MR2665003}.
\end{proof}

\subsection{A cubical flat torus theorem}\label{section:FTT}

\noindent
This section is dedicated to the proof of the following cubical version of the flat torus theorem (compare to \cite[Theorem II.7.20]{MR1744486} in the CAT(0) setting):

\begin{thm}\label{thm:FTT}
Let $G$ be a group acting on a CAT(0) cube complex $X$ and $A \leq G$ a normal finitely generated abelian subgroup. Then there exist a finite-index subgroup $H \leq G$ containing $A$ and a median subalgebra $Y \subset X$ which is $H$-invariant and which decomposes as a product $T \times F \times Q$ of median algebras $T,F,Q$ such that:
\begin{itemize}
	\item $H \curvearrowright Y$ decomposes as a product of actions $H \curvearrowright T,F,Q$;
	\item $F$ is a median flat or a single point; 
	\item $Q$ is a finite-dimensional cube;
	\item $A$ acts trivially on $T$. 
\end{itemize}
\end{thm}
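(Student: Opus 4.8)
The plan is to combine three ingredients already in hand: the elliptic case of Theorem~\ref{thm:MedianSet} (i.e.\ Proposition~\ref{prop:MedElliptic}), the centralising statement Proposition~\ref{prop:Centralising}, and the inductive use of Theorem~\ref{thm:MedianSet} that underlies Lemma~\ref{lem:AbelianFlat} --- but now carrying the whole group along, not just $A$. The first step is to isolate the elliptic part of $A$. Fixing a basepoint $x$ and setting $\ell(a)=\lim_{n\to+\infty}\tfrac1n d(x,a^nx)$, one checks that $\ell$ is subadditive on $A$ (because $(ab)^n=a^nb^n$ for commuting $a,b$, so $d(x,(ab)^nx)\le d(x,a^nx)+d(x,b^nx)$), hence $\mathcal E:=\{a\in A\mid \ell(a)=0\}=\{a\in A\mid a\text{ is }X\text{-elliptic}\}$ is a subgroup of $A$; since $\ell$ is a conjugation invariant, $\mathcal E$ is normal in $G$. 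As $\mathcal E$ is abelian and generated by elliptic elements, the triangle inequality together with commutativity shows that every $\mathcal E$-orbit is bounded, so Proposition~\ref{prop:MedElliptic} applies with $E=\mathcal E$ and yields a $G$-invariant median subalgebra $Q_0\times Z_0\subset X$ with $G\curvearrowright Q_0\times Z_0$ a product of actions, $Q_0$ a finite-dimensional cube, and $\mathcal E$ acting trivially on $Z_0$.

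Next I would analyse the induced action $G\curvearrowright Z_0$. In fact $\mathcal E$ is exactly the kernel of $A\to\mathrm{Isom}(Z_0)$: if $a\in A$ acts trivially on $Z_0$, then a power of $a$ acts trivially on $Q_0$ as well (finite automorphism group of a cube), hence fixes $Q_0\times C$ pointwise for some cube $C\subset Z_0$, hence is $X$-elliptic, so $a\in\mathcal E$. Thus the image $\bar A$ of $A$ equals $A/\mathcal E$, is finitely generated abelian, and is normal in the image $\bar G$ of $G$. The same argument (a power fixing $Q_0\times C$ pointwise is $X$-elliptic) shows that every nontrivial element of $\bar A$ has unbounded orbits in $Z_0$; in particular $\bar A$ is torsion-free. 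Proposition~\ref{prop:Centralising}, applied to $\bar A\lhd\bar G\curvearrowright Z_0$, then provides a finite-index subgroup $G_1\le\bar G$ containing $\bar A$ and centralising it.

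Since $\bar A$ is central in $G_1$, every cyclic subgroup of $\bar A$ is normal in $G_1$, so I can run the induction from the proof of Lemma~\ref{lem:AbelianFlat} $G_1$-equivariantly: fix a basis $a_1,\dots,a_m$ of $\bar A$, put $T_0:=Z_0$, and for $i=1,\dots,m$ apply Theorem~\ref{thm:MedianSet} to the group $G_1$ acting on $T_{i-1}$ and to the element $a_i$ to split off $\mathrm{Med}_{T_{i-1}}(a_i)=T_i\times F_i\times Q_i$; each step is legitimate because the image of the central subgroup $\bar A$ remains central (hence its cyclic subgroups remain normal), each step is $G_1$-invariant with a product $G_1$-action, and $a_i$ acts trivially on $T_i$. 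Stopping after step $m$ --- and, crucially, keeping $T_m$ rather than contracting it to a point as in Lemma~\ref{lem:AbelianFlat}, which would break $G_1$-invariance --- gives a $G_1$-invariant subalgebra $T_m\times(F_1\times\cdots\times F_m)\times(Q_1\times\cdots\times Q_m)$ of $Z_0$ with a product $G_1$-action, on which $\bar A=\langle a_1,\dots,a_m\rangle$ acts trivially on the first factor, while $F_1\times\cdots\times F_m$ is a median flat or a point by Lemma~\ref{lem:ProductFlats} and $Q_1\times\cdots\times Q_m$ is a finite-dimensional cube. Recombining with the $Q_0$-factor and pulling back, let $H\le G$ be the preimage of $G_1$ (of finite index, and containing $A$ since the image of $A$ is $\bar A\le G_1$), and set $T:=T_m$, $F:=F_1\times\cdots\times F_m$, $Q:=Q_0\times Q_1\times\cdots\times Q_m$, $Y:=T\times F\times Q\subset X$. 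Then $H\curvearrowright Y$ decomposes as a product of actions on $T,F,Q$ (composing all the product decompositions above), $F$ is a median flat or a point, $Q$ is a finite-dimensional cube, and $A$ acts trivially on $T$ because its action on $Z_0$ factors through $\bar A$, which fixes $T_m$.

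Each step is a direct appeal to a previously established result, so the work is mostly in the equivariant bookkeeping. The delicate point I expect to be the main obstacle is controlling how $X$-ellipticity and boundedness of orbits transfer between the ambient complex $X$, the median subalgebra $Z_0$, and the nested factors $T_i$ --- all of these transfers rest on the remark that if some power of an isometry fixes a median subalgebra pointwise then that isometry is $X$-elliptic --- together with checking that the finite-index subgroup and the triple product decomposition assemble correctly at the end.
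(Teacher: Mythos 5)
Your argument is correct, and it uses the same toolbox as the paper (Proposition~\ref{prop:MedElliptic}, Proposition~\ref{prop:Centralising}, Theorem~\ref{thm:MedianSet}, Lemma~\ref{lem:ProductFlats}) but organises it differently. The paper argues by induction on the rank of $A$, with a dichotomy at each stage according to whether the subgroup $A_0$ of elliptic elements of $A$ is trivial: in the first case it splits off a cube via Proposition~\ref{prop:MedElliptic} and inducts on $G/A_0 \curvearrowright Z'$, in the second it centralises via Proposition~\ref{prop:Centralising}, splits $A=A'\oplus\langle a\rangle$, applies the median-set theorem to $a$ and inducts on the $T$-factor. You instead make a single pass: split off the whole elliptic subgroup $\mathcal{E}\lhd G$ at once, observe that $\mathcal{E}$ is exactly the kernel of $A\curvearrowright Z_0$ and that no nontrivial element of $\bar A=A/\mathcal{E}$ is elliptic on the cubulation of $Z_0$ (your transfer argument, via a power fixing a point of $Q_0\times Z_0\subset X$, is the right one and is the genuinely delicate point, since boundedness in the cubulation of a median subalgebra does not by itself imply boundedness in $X$), invoke Proposition~\ref{prop:Centralising} only once, and then iterate Theorem~\ref{thm:MedianSet} along a basis of the free abelian group $\bar A$, letting the elliptic case of that theorem absorb the possibility that some $a_i$ becomes elliptic on $T_{i-1}$. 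This buys a cleaner, non-recursive proof at the level of the statement (the paper's induction exists precisely because new elliptic behaviour can appear after each splitting, which you handle inside the iteration), at the cost of slightly heavier equivariant bookkeeping through the chain of isomorphisms identifying $T_m\times\prod F_i\times\prod Q_i$ with an actual subset of $X$ -- bookkeeping you carry out correctly and which matches the paper's own implicit identifications.

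One small point to tighten: your definition $\mathcal{E}=\{a\in A\mid \ell(a)=0\}$ agrees with the set of $X$-elliptic elements only because non-elliptic isometries of CAT(0) cube complexes have positive stable translation length (loxodromic isometries translate along an axis by $\|g\|\geq 1$, and inverting isometries are loxodromic in the quotient $X/\mathcal{J}$ with $d_X\geq d_{X/\mathcal{J}}$); this uses Haglund's classification and should be cited, or you can simply quote Lemma~\ref{lem:EllipticSubgroup}, which gives the subgroup property of the elliptic part of $A$ directly. With that reference added, your boundedness claim for $\mathcal{E}$-orbits (each element of $\mathcal{E}$ is a bounded product of powers of finitely many elliptic generators) legitimately puts you in the hypotheses of Proposition~\ref{prop:MedElliptic}, and the rest of the proof goes through as you describe.
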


\noindent
We begin with an elementary observation:

\begin{lemma}\label{lem:EllipticSubgroup}
Let $A$ be an abelian group acting on a CAT(0) cube complex $X$. The set of elliptic elements in $A$ defines a subgroup. 
\end{lemma}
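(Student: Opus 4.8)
The plan is to prove that the set $E$ of elliptic elements of $A$ is a subgroup by checking the three defining properties. That $1 \in E$ is immediate, since the identity fixes every vertex. For $a \in E$ one has $\langle a \rangle = \langle a^{-1} \rangle$, so $a$ and $a^{-1}$ have the same orbits and thus $a^{-1} \in E$. The only substantial point is closure under products.

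So let $a,b \in E$; the goal is to show that $ab$ is elliptic. Since $A$ is abelian, $\langle a \rangle$ is normal in $A$, and $a$ is elliptic, so Theorem~\ref{thm:MedianSet} applies to the action $A \curvearrowright X$ and the element $a$: the median set $\mathrm{Med}(a)$ is an $A$-invariant median subalgebra of $X$ which decomposes as a product $T \times Q$ (the flat factor being a single vertex here, because $a$ is elliptic), where $Q$ is a finite-dimensional cube, the action $A \curvearrowright T \times Q$ is a product of actions $A \curvearrowright T$ and $A \curvearrowright Q$, and $a$ acts trivially on $T$. Consequently $ab$ acts on $T$ exactly as $b$ does, and on $Q$ as an isometry of the finite-dimensional cube $Q$. (One could equally invoke Proposition~\ref{prop:MedElliptic} with $E_0 = \langle a\rangle$, which is normal in $A$ and has bounded orbit.)

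I then finish by analysing the two factors separately. On $Q$: every isometry of a finite-dimensional cube has finite order, so for any $q_0 \in Q$ the orbit $\langle ab\rangle q_0$ is finite. On $T$: since $b$ is elliptic in $X$, its $\langle b\rangle$-orbits in $X$ are bounded; as the metric of the cubulation of $\mathrm{Med}(a)$ is bounded above by the metric of $X$ (Lemma~\ref{lem:TwoCubulations}) and is the sum of the metrics on the two factors, the $\langle b\rangle$-orbits in $T$ are bounded, so $b$ restricted to $T$ is elliptic and hence has a finite orbit, say $\langle b\rangle t_0 = \langle ab\rangle t_0$ is finite. Then $\langle ab\rangle(t_0,q_0) \subseteq \langle ab\rangle t_0 \times \langle ab\rangle q_0$ is finite, i.e.\ $ab$ has a finite orbit inside $T \times Q = \mathrm{Med}(a) \subseteq X$, which shows $ab$ is elliptic in $X$ by the characterisation of elliptic isometries recalled in Section~\ref{section:ClassIsom}. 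Thus $ab \in E$, and $E$ is a subgroup. The only thing requiring care is the bookkeeping between the equivalent descriptions of elliptic isometries (bounded orbits / finite orbit / stabilising a cube) and the comparison of the metrics on $\mathrm{Med}(a)$ and $X$; neither presents a genuine obstacle, as all the needed facts are already available.
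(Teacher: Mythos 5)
Your proof is correct, and it is a genuine (if close) variant of the paper's argument rather than a reproduction of it. Both proofs reduce to showing that a product of two commuting elliptic isometries $a,b$ is elliptic, pass to the median set of one of them, and use Lemma \ref{lem:TwoCubulations} (the cubulation metric of a median subalgebra is bounded above by $d_X$) together with the characterisation ``bounded orbits $\Rightarrow$ stabilises a cube $\Rightarrow$ finite orbit''. The difference is the finishing move. The paper works inside $\mathrm{Med}(b)$ and only uses the easy part of Proposition \ref{prop:MedElliptic}, namely that this set is a median subalgebra (clearly $\langle a\rangle$-invariant): it picks a finite $\langle a\rangle$-orbit in it and saturates it by $\langle b\rangle$; since every point of $\mathrm{Med}(b)$ lies in a cube stabilised by $\langle b\rangle$, each $\langle b\rangle$-orbit is finite, so the saturation is a finite $\langle a,b\rangle$-invariant set and $ab$ has a finite orbit. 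You instead invoke the full strength of Theorem \ref{thm:MedianSet}/Proposition \ref{prop:MedElliptic} for $\mathrm{Med}(a)$ --- the splitting $T\times Q$, the product action, the triviality of $a$ on $T$ --- and conclude factorwise: $Q$ has finitely many vertices, and $ab$ acts on $T$ as $b$ does, which is elliptic there, so the $\langle ab\rangle$-orbit of a suitable point is finite. Both arguments have the same generality; the paper's is lighter, never needing the product decomposition, while yours uses heavier machinery (already proved at this stage, so legitimately available) and in exchange makes the finiteness of the $\langle ab\rangle$-orbit structurally transparent. The bookkeeping points you flag (equivariance of the identification $\mathrm{Med}(a)\cong T\times Q$, the $\ell^1$ behaviour of the product metric, and the equivalence of the three descriptions of ellipticity) are all covered by the preliminaries, so there is no gap.
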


\begin{proof}
It suffices to show that, if $a,b \in \mathrm{Isom}(X)$ are two commutating elliptic isometries, then $ab$ is also elliptic. According to Proposition \ref{prop:MedElliptic}, the union $\mathrm{Med}(b)$ of all the cubes of minimal dimension stabilised by $b$ is a median subalgebra. Moreover, it is clearly $\langle a \rangle$-invariant. Because the $\langle a \rangle$-orbits are bounded in $X$, it follows from Lemma \ref{lem:TwoCubulations} that $a$ is also elliptic in $\mathrm{Med}(b)$. Fix a point $x \in \mathrm{Med}(b)$ such that the orbit $\langle a \rangle \cdot x$ is finite. By construction, $\bigcup\limits_{y \in \langle a \rangle \cdot x} \langle b \rangle \cdot y$ is a finite set which is both $\langle a \rangle$- and $\langle b \rangle$-invariant. We conclude that $ab$ has a finite orbit.
\end{proof}

\begin{proof}[Proof of Theorem \ref{thm:FTT}.]
We argue by induction over the rank of $A$, i.e. the minimal cardinality of a generating set. If $A$ is trivial then setting $H=G$, $F=Q=\{\mathrm{vertex} \}$ and $T= X$ leads to the desired conclusion. Now, fixing some $r \geq 0$, assume that our theorem holds for abelian groups of rank $\leq r$ and suppose that $A$ has rank $r+1$. According to Lemma \ref{lem:EllipticSubgroup}, the set $A_0$ of elliptic elements in $A$ defines a subgroup. Moreover, because conjugates of an elliptic isometry remain elliptic, $A_0$ is a normal subgroup in $G$. We distinguish two cases.

\medskip \noindent
First, assume that $A_0$ is non-trivial. According to Proposition \ref{prop:MedElliptic}, there exists a median subalgebra $Y' \subset X$ (namely, the union of all the cubes of minimal dimension stabilised by $A_0$) which is $G$-invariant and which decomposes as a product $Q' \times Z'$ of median algebras $Z',Q'$ such that:
\begin{itemize}
	\item $G \curvearrowright Q' \times Z'$ decomposes as a product $G \curvearrowright Q',Z'$;
	\item $Q'$ is a finite-dimensional cube;
	\item $A_0$ acts trivially on $Z'$.
\end{itemize}
The latter point implies that the action $G \curvearrowright Z'$ factorises through $G/A_0$. Notice that, because $A_0$ is a non-trivial subgroup such that every element in $A$ having a non-zero power in $A_0$ must belong to $A_0$, the rank of the abelian group $A/A_0$ is smaller than the rank of $A$. Consequently, by considering the action of $G/A_0$ on $Z'$ and the normal subgroup $A/A_0$ in $G/A_0$, we know that there exist a finite-index subgroup $K \leq G/A_0$ containing $A/A_0$ and a median subalgebra $Z'' \subset Z'$ which is $K$-invariant and which decomposes as a product $T'' \times F'' \times Q''$ of median algebras $T'',F'',Q''$ such that:
\begin{itemize}
	\item $K \curvearrowright Z''$ decomposes as a product $K \curvearrowright T'',F'',Q''$;
	\item $Q''$ is a finite-dimensional cube and $F''$ is a median flat or a single point;
	\item $A/A_0$ acts trivially on $T''$.
\end{itemize}
Now, let $H$ denote the pre-image of $K$ under the quotient map $G \to G/A_0$. Clearly, $H$ is a finite-index subgroup of $G$ which contains $A$. Also, let $Y$ denote the median algebra $Q' \times Z'' \subset Z'$. Notice that $Y$ decomposes as $T'' \times F'' \times (Q'' \times Q')$ where $Q'' \times Q'$ is finite-dimensional cube. Verifying that $H$ and $Y$ satisfy the conditions of our theorem is straightforward from the construction.

\medskip \noindent
Next, assume that $A_0$ is trivial. In other words, every non-trivial element in $A$ has unbounded orbits. According to Proposition \ref{prop:Centralising}, there exists a finite-index subgroup $K \leq G$ which contains $A$ and which centralises it. Decompose $A$ as $A' \oplus \langle a \rangle$ such that the rank of $A'$ is smaller than the rank of $A$. Notice that, as $K$ centralises $A$, $\langle a \rangle$ is a normal subgroup in $K$. According to Proposition \ref{prop:MinSet}, there exists a median subalgebra $Y'$ (namely, the median set of $a$) which is $K$-invariant and which decomposes as a product $T' \times F' \times Q'$ of median algebras $T',F',Q'$ such that:
\begin{itemize}
	\item $K \curvearrowright Y'$ decomposes as a product $K \curvearrowright T',F',Q'$;
	\item $Q'$ is a finite-dimensional cube and $F'$ is a median flat or a single point;
	\item $a$ acts trivially on $T'$.
\end{itemize}
By considering the action $K \curvearrowright T'$ and the normal subgroup $A' \leq K$, we know that there exist a finite-index subgroup $K' \leq K$ containing $A'$ and a median subalgebra $Y'' \subset Y'$ which is $K'$-invariant and which decomposes as a product $T'' \times F'' \times Q''$ such that:
\begin{itemize}
	\item $K' \curvearrowright Y''$ decomposes as a product $K' \curvearrowright T'',F'',Q''$;
	\item $Q''$ is a finite-dimensional cube and $F''$ is a median flat or a single cube;
	\item $A'$ acts trivially on $T''$.
\end{itemize}
Now, set $H:=\langle K'',a \rangle$ and let $Y$ denote the median subalgebra $Y'' \times F' \times Q' \subset Y'$. Notice that $Y$ decomposes as $T'' \times (F'' \times F') \times (Q'' \times Q')$ where $Q'' \times Q'$ is a finite-dimensional cube and where $F'' \times F'$ is either a single point or a median flat according to Lemma~\ref{lem:ProductFlats}. Verifying that $H$ and $Y$ satisfy the conditions of our theorem is straightforward from the construction. 
\end{proof}

\subsection{Polycyclic groups acting on CAT(0) cube complexes}\label{section:Polycyclic}

\noindent
Recall that the \emph{derived series} of a group $G$ is the decreasing sequence of normal subgroups 
$$G=G_0 \rhd G_1 \rhd \cdots \rhd G_n \rhd \cdots$$
where $G_i$ is the commutator subgroup $[G_{i-1},G_{i-1}]$ of $G_{i-1}$ for every $i \geq 1$. If the series is eventually constant to $\{1\}$, then $G$ is \emph{solvable}, and the smallest $k \geq 0$ such that $G_k= \{1\}$ is referred to as the \emph{derived length} of $G$. A group is \emph{polycyclic} if it is solvable and if all its subgroups are finitely generated.

\medskip \noindent
The main result of this section is the following, which motivates the idea that the action of a polycyclic group on a CAT(0) cube complex essentially factors through a virtually abelian quotient:

\begin{thm}\label{thm:solvable}
Let $G$ be a polycyclic group acting on a CAT(0) cube complex $X$. Then 
\begin{itemize}
	\item $G$ contains a finite-index $H$ such that $$\mathcal{E}= \{ g \in H \mid \text{$g$ is $X$-elliptic} \}$$ defines a normal subgroup of $H$ and such that $H/\mathcal{E}$ is free abelian;
	\item and $H$ stabilises a median subalgebra which is either a median flat or a single point.
\end{itemize}
In particular, $G$ contains a finite-index subgroup which is (locally $X$-elliptic)-by-(free abelian).
\end{thm}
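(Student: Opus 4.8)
The plan is to reduce \textbf{Theorem \ref{thm:solvable}} to the single assertion that a polycyclic group acting on a CAT(0) cube complex virtually stabilises a median flat or a single point, and then to feed this into \textbf{Proposition \ref{prop:Busemann}}. Write $h(G)$ for the Hirsch length of the polycyclic group $G$. Since polycyclic groups are virtually torsion-free, after replacing $G$ by a finite-index subgroup I may assume that $G$ is torsion-free polycyclic; then $h(G)=0$ forces $G=\{1\}$, and every nontrivial normal subgroup of $G$ has positive Hirsch length.

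\emph{Step 1 (virtual stabilisation of a flat).} I would prove by induction on $h(G)$ that a finite-index subgroup of $G$ stabilises a median subalgebra of $X$ which is a median flat or a single point. If $h(G)=0$ this is clear. Otherwise let $k$ be the derived length of $G$ and set $A:=G^{(k-1)}$, a nontrivial characteristic, hence normal, finitely generated free abelian subgroup, so $A\cong\mathbb{Z}^d$ with $d\geq1$. Applying \textbf{Theorem \ref{thm:FTT}} to $A$ gives a finite-index subgroup $H_1\leq G$ containing $A$ together with an $H_1$-invariant median subalgebra $Y_1=T_1\times F_1\times Q_1\subset X$ on which $H_1$ acts as a product of actions, with $F_1$ a median flat or a point, $Q_1$ a finite-dimensional cube, and $A$ acting trivially on $T_1$. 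Since $\mathrm{Aut}(Q_1)$ is finite, a further finite-index normal subgroup $H_2$ of $H_1$ acts trivially on $Q_1$, so it stabilises a slice $T_1\times F_1\times\{q_0\}$, still as a product action. Now $A\cap H_2$ is a finite-index subgroup of $A$, hence isomorphic to $\mathbb{Z}^d$, it is normal in $H_2$, and it acts trivially on $T_1$; therefore the kernel $N$ of $H_2\curvearrowright T_1$ is a normal subgroup of $H_2$ with $h(N)\geq d\geq1$, and $\overline{H}:=H_2/N$ is polycyclic with $h(\overline{H})\leq h(G)-1$. Applying the induction hypothesis to $\overline{H}\curvearrowright T_1$ yields a finite-index subgroup $\overline{K}\leq\overline{H}$ stabilising a median flat or a point $F_T\subset T_1$; pulling $\overline{K}$ back to $H_2$ gives a finite-index subgroup $K\leq G$ stabilising $F_T\times F_1\times\{q_0\}\subset Y_1$, which is a median flat or a single point by \textbf{Lemma \ref{lem:ProductFlats}}.

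\emph{Step 2 (extracting the Busemann morphism).} With $K$ and $F:=F_T\times F_1$ as above, if $F$ is a single point then $K$ fixes a point, every element of $K$ is $X$-elliptic, and the statement holds with $H=\mathcal{E}=K$. Otherwise $F$ is a median flat, $K$ acts on its cubulation by automorphisms via some $\rho:K\to\mathrm{Isom}(F)$, and \textbf{Proposition \ref{prop:Busemann}} provides a finite-index subgroup $\mathrm{Isom}_0(F)\leq\mathrm{Isom}(F)$, an integer $n\geq1$ and a morphism $\beta:\mathrm{Isom}_0(F)\to\mathbb{Z}^n$ whose kernel is the set of $F$-elliptic isometries and is locally $F$-elliptic. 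Put $H:=\rho^{-1}(\mathrm{Isom}_0(F))$, a finite-index subgroup of $G$ stabilising $F$. The point to check is that for $h\in H$ the isometry $h$ is $X$-elliptic if and only if $\rho(h)$ is $F$-elliptic: since $d_F\leq d_X$ on $F$ by \textbf{Lemma \ref{lem:TwoCubulations}}, an $X$-bounded $\langle h\rangle$-orbit in $F$ is $F$-bounded, and conversely a finite $\rho(h)$-orbit in $F$ consists of vertices of $X$ and so is a finite $\langle h\rangle$-orbit in $X$. Hence $\mathcal{E}=\ker(\beta\circ\rho|_H)$, which is normal in $H$ with $H/\mathcal{E}$ embedding in $\mathbb{Z}^n$, so free abelian; and if $P\leq\mathcal{E}$ is finitely generated then $\rho(P)\leq\ker\beta$ is finitely generated, hence $F$-elliptic, so it has a finite orbit in $F$, which is a finite $P$-orbit in $X$, showing that $\mathcal{E}$ is locally $X$-elliptic. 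Since $H$ also stabilises the median flat $F$, all conclusions of \textbf{Theorem \ref{thm:solvable}} hold.

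\emph{Main obstacle.} The bookkeeping with finite-index subgroups and the metric comparison in Step 2 are routine; the real work is in Step 1, where one must arrange that the three factors coming out of \textbf{Theorem \ref{thm:FTT}} recombine into a single median flat. This forces the passage to the factor $T_1$ on which the chosen normal abelian subgroup acts trivially, and it is precisely to guarantee that this drop in Hirsch length is genuine — so that the induction terminates — that one first reduces to a torsion-free group, ensuring that $A=G^{(k-1)}$ is infinite rather than merely nontrivial.
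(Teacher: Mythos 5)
Your proposal is correct and follows essentially the same route as the paper: an induction that peels off the last nontrivial (characteristic, finitely generated abelian) derived subgroup via Theorem \ref{thm:FTT}, kills the cube factor by finiteness of its automorphism group, recombines the flat factors via Lemma \ref{lem:ProductFlats} (this is the paper's Proposition \ref{prop:solvable}), and then applies Proposition \ref{prop:Busemann} to the resulting invariant flat to produce $\mathcal{E}$ as the kernel of a homomorphism to $\mathbb{Z}^n$. The only deviations are cosmetic: you induct on Hirsch length, which forces the (harmless) preliminary reduction to a torsion-free finite-index subgroup, whereas the paper inducts on derived length and needs no such reduction since the quotient by the penultimate derived subgroup drops the derived length even when that subgroup is finite; and your Step 2 spells out, correctly, the comparison of $X$-ellipticity with $F$-ellipticity via Lemma \ref{lem:TwoCubulations}, a verification the paper leaves implicit in calling the theorem a direct consequence of Propositions \ref{prop:solvable} and \ref{prop:Busemann}.
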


\noindent
We begin by proving the first point of the theorem:

\begin{prop}\label{prop:solvable}
Let $G$ be a polycyclic group acting on a CAT(0) cube complex $X$. Then there exist a finite-index subgroup $H \leq G$ and a median subalgebra $Y \subset X$ which is $H$-invariant and which is a median flat or a single point. 
\end{prop}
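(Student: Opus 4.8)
The plan is to argue by induction on the derived length of $G$, peeling off one abelian layer at a time with the cubical flat torus theorem (Theorem~\ref{thm:FTT}). If $G$ is trivial, any vertex of $X$ works, so assume $G$ has derived length $\ell \geq 1$ and let $A := G^{(\ell-1)}$ be the last non-trivial term of the derived series. Then $A$ is characteristic in $G$, hence normal, and it is finitely generated abelian since $G$ is polycyclic. Applying Theorem~\ref{thm:FTT} to $A \lhd G$ produces a finite-index subgroup $H_0 \leq G$ containing $A$ and an $H_0$-invariant median subalgebra $Y' \subset X$ decomposing as a product $T \times F \times Q$ of median algebras, where $H_0 \curvearrowright Y'$ splits as a product of actions $H_0 \curvearrowright T, F, Q$, the factor $F$ is a median flat or a single point, $Q$ is a finite-dimensional cube, and $A$ acts trivially on $T$.

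The next step is to exploit that $A$ acts trivially on $T$. Since $A \lhd H_0$ and $A$ is in the kernel of $H_0 \curvearrowright T$, this action factors through $H_0/A$, which is again polycyclic and has derived length at most $\ell-1$ because $H_0^{(\ell-1)} \leq G^{(\ell-1)} = A$. Regarding $T$ as a CAT(0) cube complex in its own right and applying the inductive hypothesis to $H_0/A \curvearrowright T$, we obtain a finite-index subgroup $\overline{H} \leq H_0/A$ and an $\overline{H}$-invariant median subalgebra $Y'' \subset T$ which is a median flat or a single point. Letting $H$ denote the preimage of $\overline{H}$ in $H_0$, this subgroup has finite index in $G$, and, acting on $T$ through $\overline{H}$, it preserves $Y''$. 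Hence $H$ preserves the median subalgebra $Y'' \times F \times Q \subset Y'$ of $X$, with the product action restricted from that of $H_0$.

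It remains to absorb the cube factor. By Lemma~\ref{lem:ProductFlats}, the product $Y'' \times F$ is a median flat or a single point. Since $Q$ is a finite cube, its isometry group is finite, so the action $H \curvearrowright Q$ has a finite-index kernel $H'$; choosing a vertex $v \in Q$, the group $H'$ fixes $v$ and therefore preserves $(Y'' \times F) \times \{v\}$, which is isomorphic to $Y'' \times F$ and hence is a median flat or a single point. Taking $H'$ together with this subalgebra completes the induction.

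I expect the argument to be mostly bookkeeping: Theorem~\ref{thm:FTT} does the real work of producing the flat-plus-cube decomposition, and Lemma~\ref{lem:ProductFlats} lets the flats accumulated along the induction be merged into one. The points that need a little care — and the main potential obstacle — are verifying that the induced action $H_0/A \curvearrowright T$ is genuinely an action of a polycyclic group of strictly smaller derived length, so the induction is well-founded, and checking that the finite cube factor can be discarded at the end without losing $H'$-invariance; both are routine given the stated results.
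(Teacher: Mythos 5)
Your proof is correct and follows essentially the same route as the paper: induction on derived length, applying Theorem~\ref{thm:FTT} to the last non-trivial derived subgroup, inducting on the factor $T$, merging flats via Lemma~\ref{lem:ProductFlats}, and discarding the cube factor by passing to the finite-index kernel of the action on $Q$. The only cosmetic difference is that the paper treats the abelian case as a separate base case via Lemma~\ref{lem:AbelianFlat}, whereas you absorb it into the inductive step, which works just as well.
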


\begin{proof}
We argue by induction over the derived length of $G$. If $G$ is trivial, there is nothing to prove; and if $G$ is abelian, the desired conclusion follows from Lemma \ref{lem:AbelianFlat}. So suppose that $G$ has length at least two. Write its derived series
$$\{1\} =G_0 \lhd G_1 \lhd \cdots \lhd G_{n-1} \lhd G_n=G.$$
Notice that $G_1$ is a normal finitely generated abelian subgroup of $G$. It follows from Theorem \ref{thm:FTT} that there exist a finite-index subgroup $K \leq G$ containing $G_1$ and a median subalgebra $Y$ which is $K$-invariant and which decomposes as a product $T \times F \times Q$ such that
\begin{itemize}
	\item the action $K \curvearrowright Y$ decomposes as a product of actions $G \curvearrowright T,F,Q$;
	\item $Q$ is a finite-dimensional cube and $F$ is a median flat or a single point;
	\item $G_1$ acts trivially on $T$.
\end{itemize}
Notice that $K/G_1$ is again polycyclic and that its derived length is smaller than the length of $G$. Therefore, we can apply our induction hypothesis to the action $K/G_1 \curvearrowright T$ in order to find a finite-index subgroup $K' \leq K/G_1$ and a median subalgebra $M \subset T$ which is $K'$-invariant and which is a median flat or a single point. Let $H'$ denote the pre-image of $K'$ under the quotient map $G \to G/G_1$. Then $H'$ is a finite-index subgroup of $G$ which stabilises the median subalgebra $M \times F \times Q$. As a consequence, the kernel $H$ of the action $H' \curvearrowright Q$ stabilises $M \times F \times \{q\}$ for every $q \in Q$, leading to the desired conclusion since $M \times F$ is a median flat or a single point according to Lemma \ref{lem:ProductFlats}. 
\end{proof}

\begin{proof}[Proof of Theorem \ref{thm:solvable}.]
The conclusion is a direct consequence of Propositions \ref{prop:solvable} and~\ref{prop:Busemann}.
\end{proof}

\noindent
The motivation behind Theorem \ref{thm:solvable} was the following question: which solvable groups act properly on CAT(0) cube complexes? As a direct consequence of Theorem \ref{thm:solvable}, we get the following partial answer:

\begin{cor}
Let $G$ be a polycyclic group acting properly on a CAT(0) cube complex. Then $G$ must be virtually free abelian.
\end{cor}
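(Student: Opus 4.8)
The plan is to obtain the corollary as a short deduction from Theorem~\ref{thm:solvable}, the only real content being that theorem. First I would record what properness contributes: if $K \leq G$ is finitely generated and $X$-elliptic, then $K$ has a bounded orbit on $X$; properness forces this orbit to be finite, so the pointwise stabiliser of the orbit has finite index in $K$ and is contained in the (finite) stabiliser of a vertex of the orbit, whence $K$ is finite. Consequently, \emph{every} locally $X$-elliptic subgroup of $G$ is locally finite.

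Next I would apply Theorem~\ref{thm:solvable} to get a finite-index subgroup $H \leq G$ such that $\mathcal{E} = \{ g \in H \mid g \text{ is } X\text{-elliptic}\}$ is a normal subgroup of $H$ with $H/\mathcal{E}$ free abelian. By the previous paragraph $\mathcal{E}$ is locally finite; since $G$ is polycyclic every subgroup of $G$ is finitely generated, so $\mathcal{E}$ is finitely generated, and a finitely generated locally finite group is finite. Hence $\mathcal{E}$ is finite, and $H$ is (finite)-by-(free abelian).

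It then remains the purely group-theoretic step that a finitely generated group $H$ with finite commutator subgroup is virtually free abelian. Here $[H,H] \leq \mathcal{E}$ is finite because $H/\mathcal{E}$ is abelian; the conjugation action of $H$ on the finite set $\mathcal{E}$ has finite image, so $C := C_H(\mathcal{E})$ has finite index in $H$. Since $[C,C] \leq \mathcal{E} \cap C \leq Z(C)$, the group $C$ is finitely generated and nilpotent of class $\leq 2$, and the alternating commutator pairing $C/Z(C) \times C/Z(C) \to [C,C]$ has values in a finite group while being nondegenerate on $C/Z(C)$; raising to the power $|[C,C]|$ shows $C/Z(C)$ is torsion, hence finite. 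Thus $Z(C)$ is a finitely generated abelian group of finite index in $G$, and so $G$ contains a free abelian subgroup of finite index. The main obstacle is all packaged in Theorem~\ref{thm:solvable}; among the elementary leftovers, the mildly delicate one is this last reduction, but it is standard.
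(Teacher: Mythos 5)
Your proof is correct and is exactly the deduction the paper intends: the corollary is stated there as a direct consequence of Theorem~\ref{thm:solvable}, and you supply the routine details, namely that properness makes the locally $X$-elliptic kernel $\mathcal{E}$ finite (using that all subgroups of a polycyclic group are finitely generated) and that a finitely generated (finite)-by-(free abelian) group is virtually free abelian via the centraliser and class-two commutator argument. No gaps.
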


\noindent
It is worth noticing that nevertheless many solvable groups act properly on CAT(0) cube complexes. For instance, for every $n \geq 1$ and every finite group $F$, the lamplighter group $F \wr \mathbb{Z}^n$ acts properly on a CAT(0) cube complex of dimension $2n$ \cite[Proposition~9.33]{Qm}; for every $n,m \geq 1$, the wreath product $\mathbb{Z}^m \wr \mathbb{Z}^n$ acts properly on an infinite-dimensional CAT(0) cube complex \cite{Haagerupwreath, MoiLamp}; and for every $n \geq 1$, the Houghton group $H_n$ acts properly on a CAT(0) cube complex of dimension $n$ \cite{LeeHoughton}, \cite[Example 4.3]{FarleyHughes}.

\medskip \noindent
However, Theorem \ref{thm:solvable} can also be useful to study polycyclic subgroups of groups acting on CAT(0) cube complexes with infinite vertex-stabilisers. In a forthcoming article, we plan to apply this strategy to braided Thompson-like groups. We conclude this section with a remark dedicated to the extension of Proposition \ref{prop:solvable} to more general solvable groups.


\begin{remark}
Let $L_2$ denote the lamplighter group $\left( \bigoplus\limits_{n \in \mathbb{Z}} \mathbb{Z}_2 \right) \rtimes \mathbb{Z}$, where $\mathbb{Z}$ acts on the direct sum by permuting the coordinates. \cite[Proposition 9.33]{Qm} provides a locally finite two-dimensional CAT(0) cube complex $X$ on which $L_2$ acts properly. The median operator can be understood thanks to \cite[Remark 5.9]{MoiLamp}, and it follows that $X$ coincides with the median hull of any $L_2$-orbit. However, $X$ is not a median flat. Consequently, Proposition \ref{prop:solvable} does not hold for the solvable (and even metabelian) group $L_2$. 
\end{remark}

\addcontentsline{toc}{section}{References}

\bibliographystyle{alpha}
{\footnotesize\bibliography{CubicalFTT}}

\end{document}